\documentclass[11pt]{article}
\usepackage[final]{epsfig}
\usepackage[left=3cm,right=3cm, top=2.5cm,bottom=2.5cm,bindingoffset=0cm]{geometry}
\usepackage{graphics}
\usepackage{amsmath}
\usepackage{amsfonts}
\usepackage{latexsym}
\usepackage{amssymb, mathabx}
\usepackage{amsthm}
\usepackage{graphicx}
\usepackage{epstopdf}
\usepackage{multicol,multirow}
\usepackage{hyperref, enumitem}
\usepackage{mathtools}

 \usepackage{relsize}

\usepackage[nottoc]{tocbibind}

\usepackage{amsthm}

\mathtoolsset{showonlyrefs}

\usepackage{tikz}
\usetikzlibrary{positioning}
\usetikzlibrary{decorations.text}
\usetikzlibrary{decorations.pathmorphing}
\usetikzlibrary{decorations.markings}

\usetikzlibrary{arrows} 
\tikzset{
    >=stealth',
    pil/.style={
           ->,
           thick,
           shorten <=2pt,
           shorten >=2pt,}
}
\tikzset{->-/.style={decoration={
  markings,
  mark=at position .7 with {\arrow{>}}},postaction={decorate}}}
  \tikzset{a/.style={decoration={
  markings,
  mark=at position .52 with {\arrow{angle 90}}},postaction={decorate}}}
\tikzset{-<-/.style={decoration={
  markings,
  mark=at position .4 with {\arrow{<}}},postaction={decorate}}}

\makeatletter
\def\thmhead@plain#1#2#3{%
  \thmname{#1}\thmnumber{\@ifnotempty{#1}{ }\@upn{#2}}%
  \thmnote{ {\the\thm@notefont#3}}}
\let\thmhead\thmhead@plain

\makeatother

\newcounter{AppCounter}

\def\restrict#1{\raise-.5ex\hbox{\ensuremath|}_{#1}}

\newtheorem{lemma}{Lemma}[section]
\newtheorem{proposition}[lemma]{Proposition}
\newtheorem{hproposition}[lemma]{``Proposition''}
\newtheorem{remark-definition}[lemma]{Remark-Definition}
\newtheorem{theorem}[lemma]{Theorem}
\newtheorem{corollary}[lemma]{Corollary}

\newtheorem{proposition-conjecture}[lemma]{Proposition-conjecture}

\theoremstyle{definition}
\newtheorem{example}[lemma]{Example}

\newtheorem{definition}[lemma]{Definition}
\newtheorem{remark}[lemma]{Remark}

\newcommand{\proofend}{\hfill$\Box$\bigskip}

\newcommand{\R}{{\mathbb R}}

\newcommand{\G}{\mathcal G}
\newcommand{\B}{  B}
\newcommand{\metric}{\langle \,, \rangle}       
\renewcommand{\div}{\mathrm{div} \,}

\newcommand{\SDiff}{\mathrm{SDiff}}
\newcommand{\DSDiff}{\mathrm{DSDiff}}
\newcommand{\SVect}{\mathrm{SVect}}
\newcommand{\vect}{\Vect}
\newcommand{\svect}{\SVect}
\newcommand{\dvect}{\mathrm{DVect}}
\newcommand{\dgrad}{\mathrm{DGrad}}
\newcommand{\dforms}{\mathrm{D}\Omega^1}
\newcommand{\dexactforms}{\mathrm{D}\Omega_{ex}^1}
\newcommand{\dccforms}{\mathrm{D}\Omega_{cc}^1}
\newcommand{\dcinfty}{{D}\Cont^\infty}
\newcommand{\dsvect}{\mathrm{DSVect}}
\newcommand{\dsvectext}{ {\dsvect'}}

\newcommand{\Ker}{\mathrm{Ker}}
\renewcommand{\Im}{\mathrm{Im}}
\newcommand{\grad}[1]{\nabla #1}
\newcommand{\Grad}{\mathrm{Grad}}
\newcommand{\diff}[1]{{d}  #1}

\newcommand{\T}{{T}}
\newcommand{\Cont}{{C}}
\newcommand{\LieBracket}{ [\, , ] }

\newcommand{\g}{\mathfrak{g}}
\newcommand{\ad}{\mathrm{ad}}
\newcommand{\Ad}{\mathrm{Ad}}
\newcommand{\id}{\mathrm{id}}

\newcommand{\Hom}{\mathrm{H}}

\newcommand{\Diffeo}{\mathrm{Diff}}
\newcommand{\VS}{{\mathrm{VS}}}
\newcommand{\vs}{{\mathrm{VS}}}
\newcommand{\W}{\textnormal{Dens}} 
\newcommand{\Vect}{\mathrm{Vect}}
\newcommand{\Diff}{\textnormal{Diff}} 
\newcommand{\Sheet}{\Gamma}
\newcommand{\Src}{\mathrm{src}}
\newcommand{\Trg}{\mathrm{trg}}
\newcommand{\units}{{\id}}
\newcommand{\Dom}{D}
\newcommand{\chiplus}{\chi^+_\Sheet}
\newcommand{\chiplust}{\chi^+_{\Sheet_t}}
\newcommand{\chimint}{\chi^-_{\Sheet_t}}
\newcommand{\chimin}{\chi_\Sheet^-}
\newcommand{\Domplus}{\Dom_\Sheet^+}
\newcommand{\Dommin}{\Dom_\Sheet^-}
\newcommand{\Dompm}{\Dom_\Sheet^\pm}
\renewcommand{\L}{\mathcal L}

\newcommand{\A}{\mathcal{A}}
\newcommand{\I}{\mathcal{I}}
\renewcommand{\H}{\mathcal{H}}
\newcommand{\F}{\mathcal{F}}
\renewcommand{\P}{\mathcal{P}}
\newcommand{\tanjump}{{jump^\parallel}}
\newcommand{\normjump}{{jump^\bot}}
\newcommand{\Reg}{{\mathcal R}}
\newcommand{\DTN}{{\mathrm{DtN}}}
\newcommand{\NTD}{{\mathrm{NtD}}}

\newcommand{\low}[1]{\raise-.0ex\hbox{$\scriptstyle #1$}}
\newcommand{\high}[1]{\raise.5ex\hbox{$\scriptstyle #1$}}

\newcommand{\normsq}[1]{\langle #1, #1 \rangle}

\renewcommand{\tfrac}{\frac}



%
%

%


\newcommand{\marginnote}[1]
{
}

\newcounter{ai}

\newcounter{bk}

\sloppy

\title {Vortex sheets and diffeomorphism groupoids}

\author{Anton Izosimov\thanks{
Department of Mathematics,
University of Arizona;
e-mail: {\tt izosimov@math.arizona.edu}
} \,
and Boris Khesin\thanks{
Department of Mathematics,
University of Toronto;
e-mail: \tt{khesin@math.toronto.edu}
} }


\date{}

\begin{document}

\maketitle
\begin{abstract}
In 1966 V.Arnold suggested a group-theoretic approach to ideal hydrodynamics in which the motion of an 
inviscid incompressible fluid is described as the geodesic flow of the right-invariant 
$L^2$-metric on the group of volume-preserving diffeomorphisms of the flow domain. Here we propose 
geodesic, group-theoretic, and Hamiltonian frameworks to include fluid flows with vortex sheets. It turns out 
that the corresponding dynamics is related to a certain groupoid of pairs of volume-preserving diffeomorphisms
with common interface. We also develop a general framework for  Euler-Arnold equations 
for geodesics on groupoids equipped with one-sided invariant metrics.
\end{abstract}

\tableofcontents

\section{Introduction} \label{intro}

Vortex sheets are hypersurfaces of discontinuity in fluid velocity with different speed of fluid layers 
on different sides of the hypersurface. 
They naturally appear, e.g., in the flow past an airplane wing \cite{childress2009introduction}.
In this paper we develop geodesic, group-theoretic, and Hamiltonian frameworks for their description.

In 1966 V.~Arnold  proved that the Euler equation for an ideal fluid describes the geodesic flow of a right-invariant metric on the group of volume-preserving 
diffeomorphisms of  the flow domain \cite{Arn66}.
This insight turned out to be indispensable for the study of Hamiltonian properties
and conservation laws in hydrodynamics, fluid instabilities, topological properties of flows, 
as well as a powerful tool for obtaining sharper existence and uniqueness results for Euler-type equations \cite{AK}. 
However, the scope of applicability of Arnold's approach is limited
to systems whose symmetries form a Lie group. At the same time, there are many problems in fluid
dynamics, such as free boundary problems or (discontinuous)
fluid  flows with vortex sheets, whose symmetries should instead be regarded as a groupoid: e.g., 
only those of the maps corresponding to fluid configurations with moving boundary admit composition, for
which the image of one map coincides with the source of the other.

In this paper we  propose a strategy to extend Arnold's framework to Lie groupoids and develop
a  groupoid-theoretic description for incompressible  fluid  flows with vortex sheets, 
i.e., flows whose  velocity field has a jump discontinuity along a hypersurface. It turns out that the 
corresponding configuration space  has a natural groupoid structure. By using this {\it vortex sheet groupoid}
instead of the {\it diffeomorphisms group} in Arnold's description and describing the corresponding
algebroid, we obtain a geometric interpretation for discontinuous fluid flows.  We prove that vortex sheet type
 solutions of the Euler equation are
precisely the geodesics of an $L^2$-type right-invariant metric on the Lie groupoid of discontinuous
volume-preserving diffeomorphisms. 

\medskip

\subsection[{Groupoid framework for vortex sheets}]{Groupoid framework for vortex sheets}

Recall that  the hydrodynamical Euler equation for an  ideal incompressible fluid filling a Riemannian manifold $M$
(possibly, with boundary $\partial M$) is the following evolution law of the velocity field $u$:
\begin{equation}\label{idealEulerIntro}
\partial_t u+\nabla_u u=-\nabla p\,,
\end{equation}
supplemented by the divergence-free condition ${\rm div}\, u=0$ on $M$
and tangency to the boundary, $u\parallel\partial M$. 
Arnold's theorem sheds light on the origin of this equation:

\begin{theorem}   {\rm \cite{Arn66} }
The Euler equation can be regarded as an equation of the geodesic flow on the group $\SDiff(M)$ 
of volume-preserving diffeomorphisms of $M$ with respect to the right-invariant 
 metric given at the identity of the group by the squared $L^2$-norm of the fluid's velocity field (i.e., the fluid kinetic energy\footnote{The 
$L^2$-metric is twice the kinetic energy of the fluid, which leads to a simple time rescaling, and we will not be mentioning this throughout 
the paper.}).
\end{theorem}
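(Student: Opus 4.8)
The plan is to realize equation~\eqref{idealEulerIntro} as the reduction to the Lie algebra $\svect(M)$ of the geodesic equation of the right-invariant $L^2$-metric on $\SDiff(M)$. First I would set up the kinematic dictionary between the Lagrangian and Eulerian descriptions: a fluid motion is a path $g_t\in\SDiff(M)$, its material velocity is the tangent vector $\dot g_t\in T_{g_t}\SDiff(M)$, and the spatial velocity field is its right translation to the identity, $u_t=\dot g_t\circ g_t^{-1}$. The condition that each $g_t$ preserve the Riemannian volume is equivalent to $\div u_t=0$ together with tangency $u_t\parallel\partial M$, so the fields $u_t$ range exactly over $\svect(M)$, and fluid trajectories correspond precisely to paths in $\SDiff(M)$.

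Next, because $g_t$ preserves the volume form, a change of variables shows that the length element of the right-invariant metric equals $\langle u_t,u_t\rangle=\int_M \lvert u_t\rvert^2\,d\mathrm{vol}$, so the energy functional $S[g]=\int_0^1\tfrac12\langle u_t,u_t\rangle\,dt$ depends on $g_t$ only through $u_t$; this is what makes the Euler--Arnold reduction possible. I would then derive the geodesic (critical-point) equation by the least-action principle: take a variation $g^\eps_t$ through paths in $\SDiff(M)$ with fixed endpoints, set $v_t=\partial_\eps\big|_{0}g^\eps_t\circ g_t^{-1}\in\svect(M)$, and use the standard kinematic relation $\delta u=\partial_t v+\ad_u v$ expressing the commutation of the time and variational derivatives. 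Integrating by parts in $t$ and introducing the coadjoint operator $\ad^*_u$, defined by $\langle\ad^*_u u,w\rangle=\langle u,\ad_u w\rangle$ for $w\in\svect(M)$, turns the first variation into $\delta S=-\int_0^1\langle\partial_t u+\ad^*_u u,\,v\rangle\,dt$, so its vanishing for all divergence-free $v$ yields the Euler--Arnold equation $\partial_t u+\ad^*_u u=0$.

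It remains to identify $\ad^*_u u$ explicitly. Using that $\ad_u w=-[u,w]$ is the bracket on $\svect(M)$ (with the diffeomorphism-group sign convention) and that $u$ is divergence-free and tangent to $\partial M$, two integrations by parts, whose boundary terms vanish precisely because $u\parallel\partial M$ and $\div u=0$, reduce the defining pairing to $\langle\ad^*_u u,w\rangle=\langle\nabla_u u,w\rangle$ for every $w\in\svect(M)$. By the Hodge--Helmholtz decomposition, the $L^2$-orthogonal complement of $\svect(M)$ among all vector fields consists of gradients, so this identity says exactly that $\ad^*_u u$ is the divergence-free (Leray) projection of the covariant acceleration $\nabla_u u$, while the complementary gradient part is some $\nabla p$. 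Substituting $\nabla_u u=\ad^*_u u+\nabla p$ into $\partial_t u+\ad^*_u u=0$ gives $\partial_t u+\nabla_u u=-\nabla p$ with $\div u=0$ and $u\parallel\partial M$, which is \eqref{idealEulerIntro}.

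The main obstacle is this last step---the computation of the coadjoint operator and the attendant appearance of the pressure as a Lagrange multiplier enforcing incompressibility; this is where the constraint that paths stay in $\SDiff(M)$ is converted into the gradient term $\nabla p$, and it is the only place where the Riemannian structure, the divergence-free condition, and the boundary behaviour genuinely interact. A secondary point, which Arnold's original argument treats formally, is the infinite-dimensionality of $\SDiff(M)$: making the variational computation and the Hodge decomposition rigorous requires working in suitable Sobolev completions, but this does not affect the formal structure of the derivation.
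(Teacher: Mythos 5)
Your proposal is correct, but it takes a genuinely different route from the one the paper sketches. The paper's argument (Remark \ref{rem:EulerDerivation}) works in Lagrangian coordinates: it pulls the Euler equation back along the flow to get $\partial_t^2\phi(t,x)=-(\nabla p)(t,\phi(t,x))$, observes that this acceleration is a right-translated gradient and hence $L^2$-orthogonal to the tangent space of $\SDiff(M)$ at every point, and concludes by the standard characterization of geodesics of an induced metric on a submanifold as curves with normal acceleration. That version is shorter and avoids computing $\ad^*_u$ entirely, but it is presented only for flat $M$ without boundary and relies on viewing $\SDiff(M)$ as sitting inside an ambient flat space of maps. You instead run the Euler--Poincar\'e reduction intrinsically on the group: least action, reduction of the variation to $\delta u=\partial_t v+\ad_u v$, derivation of $\partial_t u+\ad^*_u u=0$, explicit computation of $\ad^*_u u$ via integration by parts, and identification of the pressure as the gradient component in the Hodge decomposition. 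This costs you the $\ad^*$ computation but handles curvature and boundary from the outset and, more importantly, produces exactly the abstract Euler--Arnold form that the paper later generalizes to algebroids, so your derivation is arguably the better fit for the paper's subsequent development. One bookkeeping remark: as written, substituting $\nabla_u u=\ad^*_u u+\nabla p$ into $\partial_t u+\ad^*_u u=0$ yields $\partial_t u+\nabla_u u=+\nabla p$; you should either write the decomposition as $\nabla_u u=\ad^*_u u-\nabla p$ or rename the multiplier, though since $p$ is only defined by the equation this is cosmetic rather than a gap.
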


This setting assumes sufficient smoothness of the initial velocity field $u$. 
In particular, it does not, generally speaking, describe flows with vortex sheets, i.e. with jump discontinuities 
in the velocity.
On the other hand, it was recently discovered by F.\,Otto and C.\,Loeschcke~\cite{Loesch} 
that the motion of vortex sheets is also 
governed by a geodesic flow, but of somewhat different origin. 
Consider the space $\VS(M)$ of vortex sheets (of a given topological type) in $M$, 
i.e. the space of hypersurfaces which bound fixed volume in $M$.
Define the following (weak) metric  on the space $\VS(M)$. A tangent vector to a point $\Sheet$ 
in the space  of all vortex sheets $\VS(M)$ can be regarded as   a vector field $v$  
attached at the vortex sheet $\Sheet \subset M$ and normal to it. 
Then its square length is set to be
\begin{align}\label{vsmetric}
\normsq{v}_{\low\vs}:=\inf\left\{\normsq{u}_{ L^2} \mid {\div} u=0 \mbox{ and } (u,\nu)\,\nu = v  \mbox{ on } \Sheet\right\}\,
\end{align}
where $\normsq{u}_{L^2} := \int_M (u,u)\,\mu$ is the squared $L^2$-norm of a vector field $u$ on $M$, 
and $\nu$ is the unit normal field to $\Sheet$ (see Definition \ref{def:baseMetric} below).
Then the fluid flow with such vortex sheets satisfies the following variational principle:

\begin{theorem}\label{LoeschThm0}{\rm \cite{Loesch}}
Geodesics with respect to the metric $\metric_{\vs}$ on  the space  $\VS(M)$ 
describe the motion of  vortex sheets in an incompressible flow 
which is globally potential outside of the vortex sheet. 
\end{theorem}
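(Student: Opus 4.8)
The plan is to proceed in three stages: first identify the field realizing the infimum in the definition of $\metric_{\vs}$, then set up and compute the geodesic equations for the resulting energy functional on $\VS(M)$, and finally recognize those equations as the equations of an incompressible potential flow carrying a vortex sheet.

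First I would pin down the minimizer in \eqref{vsmetric}. Fix a sheet $\Sheet$ and a normal field $v=(u,\nu)\,\nu$ on it. Any divergence-free $u$ with the prescribed normal trace on $\Sheet$ (and tangent to $\partial M$) splits $L^2$-orthogonally as $u=\nabla\phi+w$, where $\phi$ is harmonic on each component of $M\setminus\Sheet$ with Neumann data $\partial_\nu\phi=(v,\nu)$ on $\Sheet$ and $\partial_\nu\phi=0$ on $\partial M$, while $w$ is divergence-free with vanishing normal component on $\Sheet$. Integration by parts shows $\langle\nabla\phi,w\rangle_{L^2}=0$, so $\normsq{u}_{L^2}=\normsq{\nabla\phi}_{L^2}+\normsq{w}_{L^2}$ and the infimum is attained exactly at $w=0$. (Solvability of the Neumann problem on each side is guaranteed by the compatibility condition $\int(v,\nu)=0$, which is precisely the hypothesis that $\Sheet$ bounds a fixed volume.) Thus the minimizer is a potential flow $u=\nabla\phi$ off the sheet, and by Green's formula $\normsq{v}_{\vs}=\int_M|\nabla\phi|^2\,\mu$ reduces to the interface pairing $\langle(v,\nu),\NTD_\Sheet(v,\nu)\rangle_\Sheet$ of the normal velocity with a Neumann-to-Dirichlet operator $\NTD_\Sheet$ attached to the sheet.

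Next I would write the energy of a path $t\mapsto\Sheet_t$ as $E=\tfrac12\int\normsq{\dot\Sheet_t}_{\vs}\,dt$, where $\dot\Sheet_t$ is the normal velocity of the moving sheet, and compute its first variation. The delicate point is that the potential $\phi_t$ solving the above Neumann problem lives on the \emph{moving} domains $M\setminus\Sheet_t$, so the variation must be carried out as a shape/domain derivative: one perturbs the path of sheets, differentiates the harmonic extensions via Hadamard-type formulas, and integrates by parts both in space and in time. I expect the boundary terms from the time integration by parts to combine with the domain-derivative terms so that the stationarity condition localizes on the sheet.

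Finally, I would read off the Euler--Lagrange equation. The Neumann condition $\partial_\nu\phi^\pm=(\dot\Sheet,\nu)$ already forces the normal fluid velocity on either side to agree with the sheet's motion (the kinematic condition); variational stationarity should then yield the dynamic condition that the jump of the Bernoulli quantity $\partial_t\phi+\tfrac12|\nabla\phi|^2$ across $\Sheet$ is spatially constant. Setting $p:=-(\partial_t\phi+\tfrac12|\nabla\phi|^2)$ on each side and absorbing the time-dependent normalization, this is exactly continuity of pressure across $\Sheet$, which together with harmonicity is equivalent to $u=\nabla\phi$ satisfying the Euler equation \eqref{idealEulerIntro} with a globally defined pressure---an incompressible flow, potential off the sheet, whose vortex sheet evolves along the geodesic. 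The main obstacle is the rigorous shape-derivative computation on moving domains for a merely \emph{weak} metric: one must justify differentiating the solution of the moving-boundary Neumann problem and control the interface terms, which is where the analysis (rather than the formal geometry) concentrates.
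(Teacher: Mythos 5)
Your proposal is correct in outline but follows a genuinely different route from the paper. You minimize the defining infimum explicitly, then compute the first variation of the energy of a path of sheets directly via shape derivatives of the moving-boundary Neumann problem, and read off pressure continuity (the Bernoulli jump condition) as the Euler--Lagrange equation. The paper never performs this moving-domain variational computation. Instead it realizes $\metric_\vs$ as the projected metric of a Riemannian submersion: the target map $\Trg\colon(\DSDiff(M)_\Sheet,\metric_{L^2})\to(\VS(M),\metric_\vs)$, or infinitesimally the anchor map on the algebroid $\dsvect(M)$ (Proposition \ref{prop:sub} and Theorem \ref{geodDescription}). Geodesics on $\VS(M)$ then lift to horizontal geodesics upstairs, where the horizontal space consists exactly of the fields $\chiplus\grad f^+ + \chimin\grad f^-$ with $f^\pm$ harmonic and matching normal derivatives, and Theorem \ref{thmMain2} already identifies all geodesics upstairs with solutions of the Euler system \eqref{twoPhaseEuler}; horizontality is precisely global potentiality off the sheet. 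What the submersion route buys is that the only first-variation computation is done once, for the right-invariant $L^2$-metric on the groupoid, and the hard analysis you flag (Hadamard derivatives of harmonic extensions on moving domains) is replaced by the structural Proposition \ref{prop:sub}. What your route buys is self-containedness and closeness to the original argument of \cite{Loesch}. Two cautions if you pursue it: first, the infimum in Definition \ref{def:baseMetric} ranges over \emph{smooth} divergence-free fields and is not attained there --- the minimizer $\nabla\phi$ has a tangential jump across $\Sheet$, so you must pass to the $L^2$-closure (the paper's Corollary \ref{dense} and the space $\dsvect(M,\Sheet)$) before claiming attainment at $w=0$; this is exactly the obstruction that forces the groupoid in the first place. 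Second, the solvability and orthogonality you invoke are the paper's Theorem \ref{dlp} and the singular Hodge decomposition \eqref{SHD2}, and the pressure-continuity endpoint you predict is the content of \eqref{twoPhaseEuler2}--\eqref{presCont2}; so your third stage, once the shape-derivative step is actually justified rather than sketched, does land on the right equations.
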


\medskip

To unify these two geodesic approaches, as well as to develop Arnold's approach to cover velocity fields 
with discontinuities, we introduce the Lie  groupoid $\DSDiff(M)$ of volume-preserving diffeomorphisms 
of a manifold $M$ that are discontinuous along a hypersurface. Namely, the elements of  
$\DSDiff(M)$ are  quadruples $(\Sheet_1, \Sheet_2, \phi^+, \phi^-)$, where 
$\Sheet_1, \Sheet_2 \in \VS(M) $ are hypersurfaces (vortex sheets) in $M$ confining the same total volume, while  $\phi^\pm \colon \Dom^\pm_{\Sheet_1} \to \Dom^\pm_{\Sheet_2}$ 
are volume preserving diffeomorphisms between  connected  components of $M\, \setminus \, \Sheet_i$ 
denoted by $\Dom_{\Sheet_i}^+, \Dom_{\Sheet_i}^-$.  
The multiplication of the quadruples in $\DSDiff(M)$ is given by the natural composition of discontinuous diffeomorphisms and is shown in Figure \ref{fig:composition}.
\medskip

 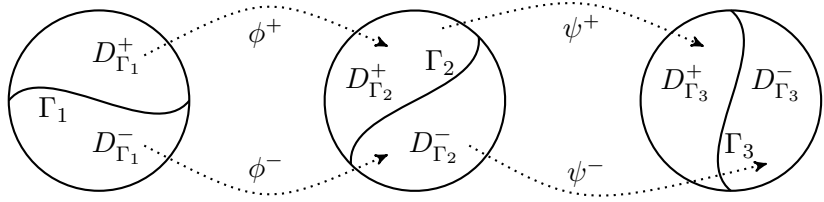
\begin{figure}[t]
\centerline{
\begin{tikzpicture}[thick, scale = 1.2]
 \node  at (0,0) () {
 \begin{tikzpicture}[thick, scale = 1.2]
    \draw (1, 1) ellipse (1cm and 1cm);
     \draw (0,1)  .. controls (0.4,1.5) and  (1.6,0.5) ..  (2,1);
      \node  at (0.5,0.9) () {$\Sheet_1$};
        \node  at (1.2,1.5) () {$\Dom^+_{\Sheet_1}$};
             \node  at (1.2,0.5) () {$\Dom^-_{\Sheet_1}$};
     \end{tikzpicture}
     };
      \node  at (3.5,0) () {
 \begin{tikzpicture}[thick, rotate = 45, scale = 1.2]
    \draw (1, 1) ellipse (1cm and 1cm);
     \draw (0,1)  .. controls (0.4,1.5) and  (1.6,0.5) ..  (2,1);
        \node  at (1.5,1.1) () {$\Sheet_2$};
                \node  at (0.8,1.5) () {$\Dom^+_{\Sheet_2}$};
             \node  at (0.8,0.5) () {$\Dom^-_{\Sheet_2}$};
     \end{tikzpicture}
     };
           \node  at (7,0) () {
 \begin{tikzpicture}[thick, rotate = 90, scale = 1.2]
    \draw (1, 1) ellipse (1cm and 1cm);
     \draw (0,1)  .. controls (0.4,1.5) and  (1.6,0.5) ..  (2,1);
          \node  at (0.5,0.9) () {$\Sheet_3$};
                  \node  at (1.2,1.5) () {$\Dom^+_{\Sheet_3}$};
             \node  at (1.2,0.5) () {$\Dom^-_{\Sheet_3}$};
     \end{tikzpicture}
     };
     \draw [dotted, ->] (0.5,0.5) .. controls (1.85, 1.2) .. (3.2,0.6);
          \draw [dotted, ->] (0.5,-0.5) .. controls (1.85, -1.2) .. (3.2,-0.6);
            \node  at (1.85,0.75) () {$\phi^+$};
                     \node  at (1.85,-0.75) () {$\phi^-$};
                          \draw [dotted, ->] (3.8,0.8) .. controls (5.35, 1.2) .. (6.7,0.6);
            \node  at (5.35,0.8) () {$\psi^+$};
                    \draw [dotted, ->] (4.1,-0.5) .. controls (5.35, -1.2) .. (7.4,-0.7);
                      \node  at (5.4,-0.8) () {$\psi^-$};
\end{tikzpicture}
}
\caption{Elements of the groupoid  $\DSDiff(M)$ and their composition rule.}\label{fig:composition}
\end{figure}

\medskip

The  infinitesimal object corresponding to this Lie groupoid is the Lie algebroid $\dsvect(M) \to \VS(M)$, which 
is the space of ``possible velocities" of the fluid with a vortex sheet.
Given a vortex sheet $\Sheet$, the corresponding velocities are (discontinuous) vector fields on $M$ 
of the form $u = \chiplus u^+ + \chimin u^-\!,$
where $\chiplus, \chimin$ are the indicator functions of the connected components $\Dompm$ 
of $M\, \setminus \,\Sheet$, and $u^\pm$ are smooth divergence-free vector fields on $\Dompm$ such that 
the restrictions of $u^+$ and $u^-$ to $\Sheet$ have the same normal component, see Section \ref{sec:kinematics}.
The map from such  vector fields $u$ to their normal components on $\Sheet$  is the so-called anchor map $\#$
of the corresponding algebroid.
Note that such vector fields discontinuous along $\Sheet$ do not have a Lie algebra structure, 
as the Lie bracket of two such fields will not, in general, be a field with matching normal components on $\Sheet$.
There is, however, a Lie bracket on sections of those fields (explicitly given in Section \ref{sect:algebroid_vf}).

We describe below how to define a right-invariant $L^2$-metric on this groupoid and construct an analog of the geodesic Euler-Arnold equation. Recall that the Euler equation in a manifold $M$ for a fluid flow discontinuous along a vortex sheet $\Sheet\subset M$ has the form:
\begin{align}\label{twoPhaseEuler}
\begin{cases}
\partial_t u^+ + \nabla_{u^+} u^+ = -\grad p^+\!, \\ 
\partial_t u^- + \nabla_{u^-} u^-  = -\grad p^-\!,\\
 \qquad \partial_t \Sheet = \# u\,,
\end{cases}
\end{align}
where $u = \chiplus u^+ + \chimin u^-$ is the fluid velocity, ${\rm div}\, u^\pm=0$, and 
$p^\pm \in \Cont^\infty(\Dompm)$ are functions satisfying the continuity condition
$p^+\vert_{\Sheet} = p^-\vert_{\Sheet}$. These equations naturally arise from the weak form of the Euler equation, see
Appendix \ref{app:weak}. The first main result of the paper is the following

\begin{theorem}{\bf (=Theorem \ref{thmMain2})}
The Euler equation \eqref{twoPhaseEuler} for a fluid flow with a vortex sheet $\Sheet\subset M$
is the groupoid  Euler-Arnold equation  corresponding to the 
$L^2$-metric on the algebroid $\dsvect(M)$. Equivalently, the Euler equation \eqref{twoPhaseEuler} is a geodesic
equation for the right-invariant $L^2$-metric on (source fibers of) the Lie groupoid $\DSDiff(M)$ 
of discontinuous volume-preserving diffeomorphisms.
\end{theorem}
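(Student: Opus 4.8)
The plan is to obtain both assertions from the general groupoid Euler--Arnold reduction developed earlier in the paper: once it is known that geodesics of a right-invariant metric on the source fibers of a Lie groupoid correspond, after right-translation to the algebroid, to solutions of the associated Euler--Arnold equation, the two formulations in the statement are \emph{a priori} equivalent, and the whole content of the theorem reduces to making the abstract algebroid equation for $\dsvect(M)$ and the $L^2$-metric explicit and recognizing it as the system \eqref{twoPhaseEuler}. I would therefore first invoke the general reduction to dispose of the equivalence ``geodesic on $\DSDiff(M)$ $\Leftrightarrow$ Euler--Arnold on $\dsvect(M)$'', and then concentrate all the work on the explicit computation. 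A useful preliminary remark is that, for the $L^2$-metric, the inertia operator identifies a velocity $u$ with its own $L^2$-momentum $u^\flat$, so the reduced equation will be the bare Euler equation with a pressure rather than an equation involving a nontrivial symmetric operator.

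For the kinematics, I would take a path in a source fiber, that is a family $g(t) = (\Sheet_0, \Sheet_t, \phi_t^+, \phi_t^-)$ with $\phi_t^\pm \colon \Dom^\pm_{\Sheet_0} \to \Dom^\pm_{\Sheet_t}$ volume-preserving, and form its right-logarithmic derivative $u_t = \chiplust u_t^+ + \chimint u_t^-$, where $u_t^\pm := \dot\phi_t^\pm \circ (\phi_t^\pm)^{-1}$ is the Eulerian velocity on $\Dom^\pm_{\Sheet_t}$. Volume-preservation of $\phi_t^\pm$ gives $\div u_t^\pm = 0$, while the requirement that $g(t)$ remain a genuine element of $\DSDiff(M)$---the two diffeomorphisms sharing the single moving interface $\Sheet_t$---forces the normal components of $u_t^+$ and $u_t^-$ to coincide along $\Sheet_t$. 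Hence $u_t$ is an admissible section of $\dsvect(M)$ over $\Sheet_t$, and applying the anchor $\#$ to the interface-matching relation gives $\partial_t \Sheet_t = \# u_t$, which is already the third line of \eqref{twoPhaseEuler}.

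To produce the remaining two lines I would use the least-action characterization: the geodesic $g(t)$ is critical for the energy $\int_0^1 \normsq{u_t}_{L^2}\, dt$ among paths in the source fiber with fixed endpoints. A variation of the path yields, exactly as in Arnold's group reduction but now with the algebroid bracket and with extra terms recording the motion of the base point $\Sheet_t$, a velocity variation of the schematic form $\delta u = \partial_t w + [w, u] + (\text{anchor/transport terms})$ for an admissible section $w$ vanishing at $t = 0, 1$. Pairing against $u$ in $L^2$, integrating by parts in time and then on each component $\Dom^\pm_{\Sheet_t}$ separately, the interior terms give $\partial_t u^\pm + \nabla_{u^\pm} u^\pm = -\grad p^\pm$, where the functions $p^\pm$ enter as Lagrange multipliers enforcing $\div u^\pm = 0$; this is the content of the first two lines. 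The boundary contributions along $\Sheet_t$, after using $\nu^+ = -\nu^-$ and the fact that admissible $w$ have matching normal components $(w^+,\nu) = (w^-,\nu)$ on $\Sheet_t$, collapse into a single term proportional to $\oint_{\Sheet_t}(p^+ - p^-)(w,\nu)$; its vanishing for all admissible normal variations forces the continuity condition $p^+\restrict{\Sheet_t} = p^-\restrict{\Sheet_t}$.

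The step I expect to be the main obstacle is the careful handling of the moving interface in the first-variation computation. Because the domains $\Dom^\pm_{\Sheet_t}$ themselves evolve according to $\partial_t \Sheet_t = \# u_t$, the variation of the energy cannot be taken over a fixed region: one must differentiate integrals over $t$-dependent domains by a transport (Reynolds-type) argument and track every boundary term produced at $\Sheet_t$ under integration by parts, both in time and in space. The delicate point is to organize these interface terms so that they recombine into the \emph{single} pressure-continuity condition rather than into two independent constraints; this is precisely where the groupoid structure enters, through the matching-normal-component constraint that defines the admissible sections of $\dsvect(M)$ and thereby restricts the allowed variations $w$. Verifying that no further interface conditions survive---and that the anchor/transport terms in $\delta u$ do not contribute spurious boundary obstructions---would complete the identification of the reduced equation with \eqref{twoPhaseEuler}.
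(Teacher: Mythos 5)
Your route is sound but genuinely different from the paper's. The paper does not run a least-action computation on the source fiber at all: it first establishes the Hamiltonian form of the equation on the dual algebroid $\dsvect(M)^*$ (Theorem \ref{thmMain}), by computing the differential of the energy Hamiltonian and applying the explicit Hamiltonian operator \eqref{HamOperator} coming from the Poisson bracket of Theorem \ref{thm:bracket}; Theorem \ref{thmMain2} is then obtained by taking the metric dual of the resulting coset equation and using the identity $(\L_u u^\flat - \tfrac{1}{2}\diff (u,u))^\sharp = \nabla_u u$ to land on $\partial_t^\Reg u + \nabla^\Reg_u u \in \dgrad(M,\Sheet)$. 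Your Lagrangian derivation buys a more direct, Arnold-style argument that bypasses the Poisson-bracket machinery, at the price of exactly the step you flag as the main obstacle: the first variation over paths with a moving interface. Two remarks on that step. First, the interface bookkeeping you describe is done more cleanly in the paper's framework by the singular Hodge decomposition (Proposition \ref{Hodge}): criticality against all admissible variations $w \in \dsvect(M,\Sheet)$ says precisely that $\partial_t^\Reg u + \nabla^\Reg_u u$ lies in the $L^2$-orthogonal complement $\dgrad(M,\Sheet)$, whose elements are \emph{by definition} of the form $\chiplus\nabla p^+ + \chimin\nabla p^-$ with $p^+\vert_\Sheet = p^-\vert_\Sheet$ -- so the single pressure-continuity condition falls out of the orthogonality statement rather than from tracking $\oint_{\Sheet_t}(p^+-p^-)(w,\nu)$ by hand. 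Second, your schematic variation formula $\delta u = \partial_t w + [w,u] + (\text{anchor terms})$ is left unproved, and in the algebroid setting it is not free: making it precise requires the covariant derivative along the moving base (Remark \ref{connectionInExtBundle}) and the description of the tangent space to $\dsvect(M)$ (Proposition \ref{tanSpaceDes}), the analogue of which you would have to develop. So the proposal is a correct and viable alternative plan, but its central computation remains to be carried out, and the paper's decomposition \eqref{SHD2} is the tool you would want to import to finish it.
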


\begin{remark}
One can  see that the standard hydrodynamical Euler equation is a particular case of the above equations 
with a vortex sheet (where $\Sheet$ is empty). In this case the space 
$\VS(M)$ of vortex sheets consists of one point, while the groupoid $\DSDiff(M)$ of diffeomorphism pairs
becomes the group  $\SDiff(M)$ of volume-preserving diffeomorphisms.
\end{remark} 

We note that the geodesics on the groupoid turn out to be weak solutions of the Euler equation 
with vortex sheet initial data, as we show in Appendix \ref{app:weak}. 
Furthermore, this equation can be described within the Hamiltonian framework:

\begin{theorem}{\bf (=Theorem \ref{thmMain})}
The Euler equation \eqref{twoPhaseEuler} for a  flow with a vortex sheet 
written on the dual  $\dsvect(M)^*$ of the algebroid is Hamiltonian with respect to 
the natural Poisson structure on  the dual algebroid and the Hamiltonian function given by the $L^2$ kinetic energy.
\end{theorem}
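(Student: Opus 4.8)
The plan is to realize Theorem \ref{thmMain} as the Lie-algebroid counterpart of the classical statement that every Euler--Arnold equation is a Lie--Poisson Hamiltonian system on the dual of the symmetry algebra. Concretely, I would first recall that the total space of the dual bundle $\dsvect(M)^* \to \VS(M)$ carries a canonical linear Poisson structure, determined by its values on two natural families of functions: the base functions $\pi^* f$ pulled back from $\VS(M)$ along the projection $\pi \colon \dsvect(M)^* \to \VS(M)$, and the fiberwise-linear functions $\ell_\sigma$ attached to sections $\sigma$ of the algebroid $\dsvect(M)$. On these generators the bracket is fixed by
\[
\{\ell_\sigma, \ell_\tau\} = \ell_{[\sigma, \tau]}\,, \qquad \{\ell_\sigma, \pi^* f\} = \pi^*\bigl((\# \sigma)\, f\bigr)\,, \qquad \{\pi^* f, \pi^* g\} = 0\,,
\]
where $[\,,\,]$ is the Lie bracket on sections from Section \ref{sect:algebroid_vf} and $\#$ is the anchor (so $\# \sigma$ is a vector field on the base $\VS(M)$). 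The Jacobi identity for this bracket is equivalent to the algebroid axioms, so this step is essentially bookkeeping once those axioms have been verified.

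Next I would fix the Hamiltonian. The weak $L^2$-metric on $\dsvect(M)$ supplies a fiberwise inertia map $A \colon \dsvect(M) \to \dsvect(M)^*$, and the kinetic-energy function is the quadratic Hamiltonian $H(m) = \tfrac12 \langle A^{-1} m, m\rangle$ on the dual, with $u = A^{-1} m$ the corresponding velocity. With the Poisson structure above, Hamilton's equation $\dot F = \{F, H\}$ can then be evaluated on the two generating families of test functions. Testing against base functions $\pi^* f$ reproduces, through the anchor term $\{\ell_\sigma, \pi^* f\}$, exactly the kinematic law $\partial_t \Sheet = \# u$: the motion of the vortex sheet is the image of the velocity under the anchor. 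Testing against linear functions $\ell_\sigma$ produces a coadjoint-type term $\ad^*_u u$ built from the algebroid bracket, which is the dynamical content of the first two lines of \eqref{twoPhaseEuler}.

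To finish, I would identify this coadjoint term with the $-\grad{p^\pm}$ dynamics, i.e. show that the Hamiltonian vector field of $H$ coincides with the right-hand side of \eqref{twoPhaseEuler}. The cleanest route is to invoke Theorem \ref{thmMain2}, which already identifies \eqref{twoPhaseEuler} as the groupoid Euler--Arnold equation of the $L^2$-metric on $\dsvect(M)$; it then suffices to prove the general fact that the Euler--Arnold equation of any metric Lie algebroid, transported to the dual by the inertia operator, is the Hamiltonian system for the energy with respect to the linear Poisson structure. This general statement is proved exactly as in the Lie-group case, by matching $\dot m = \ad^*_{A^{-1} m} m$ with the Hamiltonian flow $\{\,\cdot\,, H\}$ on each of the two families of generators.

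The main obstacle is analytic rather than formal: the algebroid is infinite-dimensional and the $L^2$-metric is only weak, so the inertia operator $A$ is not a bundle isomorphism onto the full dual, and $A^{-1}$ exists only on an appropriate subspace. One must therefore pin down which functionals constitute the effective dual $\dsvect(M)^*$ and check that the coadjoint term $\ad^*_u u$ genuinely lands in this space. This is precisely where the pressure $p^\pm$ enters as a Lagrange multiplier enforcing the defining constraints of $\dsvect(M)$ (divergence-free fields with matching normal components on $\Sheet$); the continuity condition $p^+\vert_\Sheet = p^-\vert_\Sheet$ should emerge as the compatibility requirement that makes $\grad{p^\pm}$ the unique representative keeping $u$ inside the constraint space, the vortex-sheet analog of the Leray projection in the smooth Euler equation.
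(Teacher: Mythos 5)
Your overall strategy --- put the canonical fiberwise-linear Poisson structure \eqref{DualPoissonBracket} on the dual bundle $\dsvect(M)^*$, take the $L^2$ kinetic energy as Hamiltonian, and evaluate the Hamiltonian vector field on basic and fiberwise-linear test functions --- is exactly the paper's. But the step where you actually produce equations \eqref{twoPhaseEuler} is deferred in a way that does not work. Invoking Theorem \ref{thmMain2} is circular here: in the paper that theorem is \emph{derived} from the coset-form Hamiltonian equation \eqref{twoPhaseEulerCosets}, i.e. from the very computation you are trying to outsource. And the ``general fact'' you propose to prove instead (Euler--Arnold $=$ Hamiltonian flow of the energy) is, in the paper's setup, the \emph{definition} of the groupoid Euler--Arnold equation, so it carries no content; the substance of the theorem is the explicit evaluation of $\delta\H$ and of the Hamiltonian operator, which your sketch never performs.

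More substantively, the group-case template $\dot m = \ad^*_{A^{-1}m}m$ that you plan to match against the Hamiltonian flow is wrong for this algebroid. The energy depends on the base point $\Sheet$ (through the co-closed representative of the coset), so its differential has a nonzero base component, computed in the paper via the splitting \eqref{cotanSpaceSplitting} to be ${\delta^B\H}([\alpha]) = \tfrac12\,[jump(\alpha,\alpha)]$. Feeding this into the Hamiltonian operator \eqref{HamOperator} produces the extra term $\#^*[\tfrac12\,jump(\alpha,\alpha)] = [\tfrac{1}{2}\diff^\Reg i_u\alpha]$ in \eqref{twoPhaseEulerCosets}; since $(u^+,u^+)$ and $(u^-,u^-)$ generally disagree on $\Sheet$, this coset does \emph{not} vanish, and it is precisely what encodes the pressure continuity $p^+\vert_\Sheet = p^-\vert_\Sheet$. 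A pure coadjoint term misses it. Finally, the ``analytic obstacle'' you flag is resolved in the paper not by restricting $A^{-1}$ to a subspace but by the singular Hodge decomposition: the inertia operator is an isomorphism onto the smooth dual $\dforms(M,\Sheet)\,/\,\dexactforms(M,\Sheet)$ (unique co-closed representative in each coset), and the explicit bracket of Theorem \ref{thm:bracket} is what makes your ``canonical linear Poisson structure'' well-defined in this infinite-dimensional setting --- that verification is most of the work, not bookkeeping.
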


This theorem is an analog of the Hamiltonian property of the 
Euler-Arnold equation  on the dual to a Lie algebra with respect to the Lie-Poisson structure.

\medskip

Return to the metric properties of the groupoid Euler-Arnold equation.  
Now the metric~\eqref{vsmetric} on vortex sheets appears as a natural projection from the groupoid of diffeomorphism pairs 
to the space of vortex sheets.
Namely, given initial vortex sheet $\Sheet \in \VS(M)$, consider the subset $\DSDiff(M)_{\Sheet}\subset 
\DSDiff(M)$ of pairs of diffeomorphisms with  domains  $\Dom^\pm_{\Sheet}$ (a so-called \textit{source fiber} of the groupoid $\DSDiff(M)$)
and equipped with the right-invariant source-wise $L^2$-metric on $\DSDiff(M)$. 
Then the following statement generalizes Theorem \ref{LoeschThm0}:

\begin{theorem}{\bf (=Theorem \ref{geodDescription})}
For any initial vortex sheet $\Sheet \in \VS(M)$, the groupoid target mapping 
$\Trg \colon (\DSDiff(M)_{\Sheet}, \metric_{L^2}) \to (\VS(M), \metric_\vs)$ is a Riemannian submersion
to the space  $\VS$ of vortex sheets equipped with the metric  $\metric_{\vs}$.
In particular, horizontal geodesics on  $\DSDiff(M)_{\Sheet}$ project to geodesics on vortex sheets $\VS$.
 \end{theorem}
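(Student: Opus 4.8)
The plan is to verify the two defining properties of a Riemannian submersion—surjectivity of the differential and the fact that $d\Trg$ is an isometry on horizontal spaces—and then to invoke the classical O'Neill property that such maps carry horizontal geodesics to geodesics. Since the source-wise metric $\metric_{L^2}$ is right-invariant and the target map satisfies $\Trg \circ R_h = \Trg$, right translation by $g^{-1}$ sends any $g \in \DSDiff(M)_\Sheet$ to the unit over $\Trg(g)$ isometrically while preserving the values of $\Trg$; hence it suffices to verify the submersion property at the units $(\Sheet,\Sheet,\id,\id)$. There the tangent space to the source fiber is the algebroid fiber $\dsvect(M)_\Sheet$, and the differential $d\Trg$ is precisely the anchor $\#$ sending $u = \chiplus u^+ + \chimin u^-$ to its normal component $(u,\nu)\,\nu$ on $\Sheet$, since a curve in the fiber moves the target sheet by $\partial_t \Sheet = \# u$.

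First I would identify the vertical distribution. By definition $\Ker d\Trg = \Ker \#$ consists of the algebroid fields whose normal component vanishes, i.e.\ divergence-free fields $u^\pm$ on $\Dompm$ tangent to $\Sheet$ (the infinitesimal generators of sheet-preserving diffeomorphisms). Surjectivity of $\#$ onto $T_\Sheet\VS$, the space of normal fields along $\Sheet$, follows by solving on each side the Neumann problem $\Delta p^\pm = 0$ with $\partial_\nu p^\pm$ prescribed by $v$; the requisite mean-zero compatibility holds because vortex sheets confine a fixed volume, so admissible $v$ satisfy $\int_\Sheet (v,\nu) = 0$. This already shows that $\Trg$ is a submersion.

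Next I would compute the horizontal space $\H = (\Ker\#)^{\perp}$ inside $\dsvect(M)_\Sheet$ with respect to $\metric_{L^2}$. Because $u^+$ and $u^-$ can be varied independently on $\Domplus$ and $\Dommin$ (the vertical fields have vanishing, hence automatically matching, normal traces), $L^2$-orthogonality to all divergence-free fields tangent to $\Sheet$ splits into one condition on each domain, which by the Hodge--Helmholtz decomposition is equivalent to $u^\pm = \grad{p^\pm}$ being gradient fields; combined with $\div u^\pm = 0$ this forces $p^\pm$ harmonic, the matching-trace constraint becoming $\partial_\nu p^+ = \partial_\nu p^-$ across $\Sheet$. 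Thus horizontal velocities are exactly the flows that are potential on each side—precisely the class appearing in Theorem \ref{LoeschThm0}—and $\#$ restricts to a linear isomorphism $\H \to T_\Sheet\VS$ whose inverse is the harmonic extension of Neumann data.

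The crux is the isometry $\normsq{u}_{L^2} = \normsq{\# u}_{\vs}$ for horizontal $u$, and here the orthogonal decomposition does the work: any field with the same normal trace as $u$ has the form $u + k$ with $k \in \Ker\#$, and since $u \perp k$ the Pythagorean identity $\normsq{u+k}_{L^2} = \normsq{u}_{L^2} + \normsq{k}_{L^2}$ gives a minimum $\normsq{u}_{L^2}$ at $k = 0$. Comparing with the definition \eqref{vsmetric}, in which $\normsq{\#u}_{\vs}$ is exactly this infimum over divergence-free fields with prescribed normal component, yields the claimed equality, so $d\Trg$ is an isometry from $\H$ onto $(T_\Sheet\VS,\metric_\vs)$. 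With the submersion and the isometry in hand, the final assertion follows from the standard O'Neill property. The main obstacle I anticipate lies not in the algebra above but in the functional-analytic bookkeeping of this weak, infinite-dimensional setting: rigorously justifying the Hodge decomposition and the solvability and regularity of the Neumann problems on $\Dompm$ across the interface $\Sheet$, and confirming that the right-invariance reduction genuinely transports the computation at the units to every point of the source fiber.
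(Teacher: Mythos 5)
Your proposal is correct, and its computational core is the same as the paper's: the vertical space is $\Ker\#$ (fields tangent to $\Sheet$ on each side), the orthogonal complement consists of two-sided harmonic gradients with matching normal derivatives obtained by solving Neumann problems on $\Dompm$, and the isometry on horizontal vectors is the Pythagorean comparison with the infimum in \eqref{vsmetric}. The packaging differs: the paper does not argue directly at the level of the groupoid but first proves an abstract statement (Proposition \ref{prop:sub}) valid for any transitive algebroid with an orthogonal splitting $\A=\Ker\#\oplus(\Ker\#)^\perp$, and then only has to exhibit that splitting for $\dsvect(M,\Sheet)$; moreover, the geodesic correspondence there is deduced not from O'Neill's lemma but from the observation that $\#^*\colon \T^*\VS(M)\to\dsvect(M)^*$ is an injective Poisson map pulling the Euler--Arnold Hamiltonian back to the geodesic Hamiltonian of $\metric_\vs$, an argument that sidesteps curvature-type identities and is better adapted to this weak, infinite-dimensional setting than the O'Neill machinery you invoke (whose validity here is exactly the kind of ``bookkeeping'' you defer). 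One substantive point you gloss over: Definition \ref{def:baseMetric} takes the infimum over \emph{smooth} divergence-free fields on $M$, whereas your minimizer is the discontinuous horizontal field, which is not in the competition class; your Pythagorean decomposition $w=u+k$ only gives the lower bound $\normsq{w}_{L^2}\ge\normsq{u}_{L^2}$ for smooth competitors $w$, and identifying the two infima requires the $L^2$-density of $\SVect(M)$ in $\dsvect(M,\Sheet)$, which the paper establishes separately (Corollary \ref{dense}, via the singular Hodge decomposition) and uses in a dedicated proposition showing that the submersion metric coincides with \eqref{vsmetric}. With that density statement supplied, your argument goes through.
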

\begin{figure}[t]
\centerline{
\begin{tikzpicture}[thick, scale = 1.2]
\draw [thin] (0,-1.) -- (5,-1.);
  \fill [opacity = 0.05] (-0.2,3) -- (5.2,3) -- (5.2,0.5) -- (-0.2,0.5) -- cycle;
\draw [very thin] (0,0.7) -- (0,2.8);
\draw [very thin] (1,0.7) -- (1,2.8);
\draw [] (2,0.7) -- (2,2.8);
\draw [very thin] (3,0.7) -- (3,1.5);
\draw [very thin] (4,0.7) -- (4,1.5);
\draw [very thin] (5,0.7) -- (5,1.5);
\draw [very thin] (4,2.8) -- (4,2);
\draw [very thin] (5,2.8) -- (5,2);
\draw [very thin] (3,2) -- (3,2.3);
\draw [very thin] (3,2.7) -- (3,2.8);
\draw  [very thick, ->] (2,1.5) -- (2.5,1.5);
   \fill (2,1.5) circle [radius=1.5pt];
      \fill (2,-1.) circle [radius=1.5pt];
      \node  at (1.7,1.5) () {$\id_\Sheet$};
      \draw [thin] (2,1.35) -- (2.15,1.35) -- (2.15,1.5);
           \node []  at (3.8,1.75) () {$u = \chiplus \grad f^+ + \chimin \grad f^-$};
          \node  at (2,-1.2) () {$\Sheet$};
                 \node  at (3.4,-0.75) () {$\xi = \#u := \Trg_*u$};
          \draw  [very thick, ->] (2,-1) -- (2.5,-1);
                   \draw  [->] (2,0.2) -- (2,-0.5);   
                             \node  at (2.3,-0.1) () {$\Trg$};    
                                       \node  at (0.2,-1.25) () {$\VS(M)$};  
                                          \node  at (0.2,0.25) () {$\DSDiff(M)_\Sheet$};      
                                                                                    \node  at (2.95,2.5) () {$\DSDiff(M)_\Sheet^\Sheet$};      
 %
\end{tikzpicture}
}
\caption{Riemannian submersion for the groupoid. Here $\DSDiff(M)_\Sheet^\Sheet$ is the group of discontinuous diffeomorphisms $\SDiff(\Domplus) \times \SDiff(\Dommin)$, and $u$ is a horizontal vector field projecting to $\xi = \#u$. The latter can be regarded as the velocity of the vortex sheet $\Sheet$, and $\langle \xi, \xi \rangle_\VS = \langle u, u \rangle_{L^2}$. }\label{fig:submersion}
\end{figure}
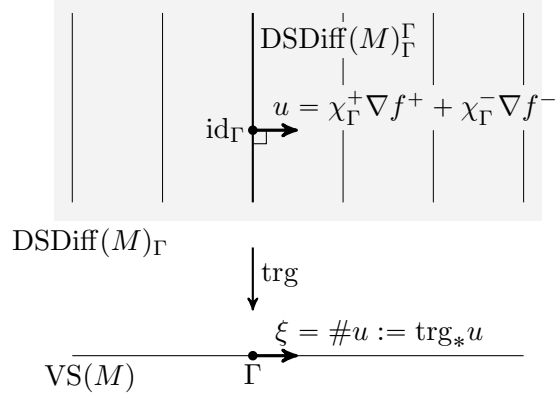

In particular, the submersion property means that  horizontal geodesics in the space $\DSDiff(M)_{\Sheet}$ 
correspond to gradient fields, which implies Theorem \ref{LoeschThm0}.
Note that these gradient fields have the form $u =  \chiplus \grad f^+ + \chimin \grad f^-$ in $M$, see Corollary \ref{cor:horiz} below and Figure \ref{fig:submersion}. This also allows one to define a nonlocal $H^{-1/2}$-type metric
 of hydrodynamical origin (based on Neumann-to-Dirichlet operators) 
 on the space of shapes, and it contrasts previously considered local  $H^s$ metrics on those spaces with $s\ge 0$, 
 see Appendix \ref{app:shapes}.
\begin{remark}The 
smoothness of the groupoid and algebroid is understood below 
in the Fr\'{e}chet $C^\infty$ setting. Similarly, one can consider the setting of Hilbert manifolds modeled on Sobolev $H^s$ spaces for sufficiently large $s$, 
cf. \cite{EbMars70}.\end{remark}
\medskip



\subsection[Examples]{Examples}

First consider solutions of the Euler equation that are irrotational outside of vortex sheets, i.e. those whose 
vorticity includes only singular part supported on $\Sheet$. The corresponding velocity is locally potential 
outside of $\Sheet$. If the velocity field is globally potential (initially and hence for all times) outside $\Sheet$, 
then such a solution corresponds to a horizontal geodesic on  $\DSDiff(M)_{\Sheet}$ 
and to a geodesic on the space $\VS(M)$ of vortex sheets (we call such solutions \textit{pure vortex sheet motions}).
In the case where the velocity field is only locally potential (we call such solutions \textit{irrotational  flows with vortex sheets}) the corresponding solution 
can be understood as a trajectory of a Newtonian system in a magnetic field.

 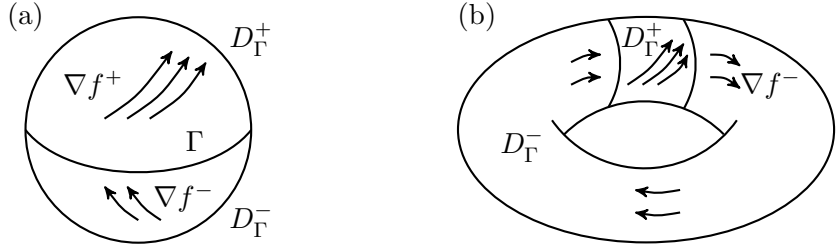
\begin{figure}[t]
\centerline{
\begin{tikzpicture}[thick, scale = 1.5]
\node at (-1,1) () {(a)}; 
\node at (3,1) () {(b)}; 
\node at (1,0.8) () {$\Domplus$}; 
\node at (1,-0.8) () {$\Dommin$}; 
\node at (0,0) (){
\begin{tikzpicture}[thick, scale = 1.5]
    \draw (1, 1) ellipse (1cm and 1cm);
     \draw (0,1)  .. controls (0.4,0.5) and  (1.6,0.5) ..  (2,1);
      \node  at (1.5,0.9) () {$\Sheet$};
        \node  at (0.6,1.4) () {$\grad f^+$};
             \draw [->] (0.7,1.1) .. controls (1,1.3) .. (1.3,1.7);
                    \draw [->] (0.9,1.1) .. controls (1.2,1.3) .. (1.45,1.65);
                           \draw [->] (1.1,1.1) .. controls (1.4,1.3) .. (1.6,1.6);
                               \draw [<-] (0.7,0.5) .. controls (0.85,0.3) .. (1,0.2);
                                     \draw [<-] (0.9,0.5) .. controls (1.05,0.3) .. (1.2,0.2);
                                          \node  at (1.4,0.4) () {$\grad f^-$};   
          \end{tikzpicture}
          };
          \node at (4.5,0) () {
          \begin{tikzpicture}[thick, rotate = 0, scale = 1]
       \draw [->] (0.7,1.1) .. controls (1,1.3) .. (1.3,1.7);
                    \draw [->] (0.9,1.1) .. controls (1.2,1.3) .. (1.45,1.65);
                           \draw [->] (1.1,1.1) .. controls (1.4,1.3) .. (1.5,1.5);
                           \draw [->] (1.8,1.5) .. controls (2,1.5) .. (2.2,1.35);
                             \draw [->] (1.8,1.2) .. controls (2,1.2) .. (2.2,1.05);
                                \draw [->] (-0.05,1.4) .. controls (0.15, 1.5) ..  (0.35,1.5);
                                     \node  at (2.6,1.1) () {$\grad f^-$};
                                   \draw [->] (-0.05,1.1) .. controls (0.15, 1.2) ..  (0.35,1.2);
                                         \draw [<-] (0.8,-0.6) .. controls (1.1,-0.65) ..  (1.4,-0.6);
                                              \draw [<-] (0.8,-0.3) .. controls (1.1,-0.35) ..  (1.4,-0.3);
                                               \node  at (-0.7,0.3) () {$\Dommin$};
                                                \node  at (0.9,1.7) () {$\Domplus$};
 \node  at (0.95,0.5) () {
\begin{tikzpicture}[thick, rotate = -90, scale = 1.25]
    \draw (3,-3) ellipse (1.2cm and 2cm);
    \draw   (3.05,-3.87) arc (225:135:1.2cm);
    \draw   (2.9,-4) arc (-55:55:1.2cm);
        \draw   (1.83,-3.4) arc (120:60:0.93cm);
        \draw   (1.83,-2.6) arc (120:60:0.95cm);
     \end{tikzpicture}
     };
    %
\end{tikzpicture}
          };
\end{tikzpicture}
}
\caption{(a) Pure vortex sheet solutions on a sphere. Their motion is 
 a geodesic on $\VS(S^2)$. (b) Irrotational, but not pure, vortex sheet solutions.
Their motion  can be viewed as a trajectory of a Newtonian system in a magnetic field on  $\VS(T^2)$.
}\label{fig:curve_onS2}
\end{figure}
\begin{example} 
Consider a closed curve $\Sheet$ on a two-dimensional sphere $M=S^2$ as a vortex sheet, see Figure \ref{fig:curve_onS2}(a).
In this case the domains $\Dom^\pm_{\Sheet}$ are simply-connected, and hence irrotational fields in $\Dompm$ are globally potential with harmonic potentials $f^\pm$. The curve motion is defined by the normal to $\Sheet$ vector field, and hence the potentials $f^\pm$ are solutions of the corresponding Neumann problem, see Section  \ref{sect:pure_motion}. The common normal component of $\nabla f^\pm$, i.e., the curve velocity, can be regarded as a tangent vector to the space of vortex sheets.
The corresponding curve motion is a geodesic in the metric $\metric_\vs$
on the space of curves in $S^2$ bounding the same area, see Theorem \ref{LoeschThm0}.

\end{example}
\medskip

\begin{example} 
Assume now that the vorticity has support only on $\Sheet$, but the field is only locally potential (i.e., the corresponding potential is multivalued). 
For instance, consider a two-torus $M=T^2$ with vortex sheet $\Sheet = \Sheet_1\cup \Sheet_2$ being 
the union of two cross-sections, see Figure \ref{fig:curve_onS2}(b), where the velocity field in 
$\Dom^+_\Sheet$ has positive circulation in the meridional direction, while  the velocity field in 
$\Dom^-_\Sheet$ is potential. 
The corresponding irrotational vortex sheet solutions 
can be described as a Hamiltonian motion on the cotangent bundle $T^*\VS(T^2)$, 
which is a natural system with kinetic and potential energy and where the standard symplectic structure 
is twisted by adding a ``magnetic term", a closed 2-form on the base $\VS(T^2)$, see Section \ref{sec:irrot}. 
The geodesic motion itself
would correspond to a pure kinetic energy in the standard symplectic structure on $T^*\VS(T^2)$, 
see Section  \ref{sect:pure_motion}.

\medskip
\end{example}

\medskip

Finally, note that general solutions with vortex sheets include both a smooth part of the vorticity 
and its singular part. As we mentioned, they correspond to arbitrary geodesics in the right-invariant 
 $L^2$-metric on the groupoid $\DSDiff(M)$ and have a Hamiltonian description as well. 
 \begin{figure}[t]
\centerline{
\begin{tikzpicture}[thick, scale = 1.5]
\node at (0,0) (){
\begin{tikzpicture}[thick, scale = 1.5]
    \draw (1, 1) ellipse (1cm and 1cm);
     \draw [dashed] (0,1)  .. controls (0.4,0.5) and  (1.6,0.5) ..  (2,1);
         \draw [] (1,0)  .. controls (1.5,0.4) and  (1.5,1.6) ..  (1,2);
          \node  at (1.05,1.7) () {$\Sheet$};
                   \node  at (0.7,0.5) () {$\omega = c$};
                    \draw [->] (0.8, 0.9) -- (1.1, 1.2);
                          \draw [->] (0.8, 1.1) -- (1.1, 1.4);
                              \node  at (0.65,1.05) () {$u^+$};
                                                  \draw [<-] (1.8, 0.9) -- (1.5, 1.2);
                                                     \draw [<-] (1.8, 1.1) -- (1.5, 1.4);
                                                          \node  at (1.6,1.55) () {$u^-$};
          \end{tikzpicture}
          };
          \draw [->] (1.6,0) -- (2.1,0);
\node at (4,0) (){
\begin{tikzpicture}[thick, scale = 1.5]
    \draw (1, 1) ellipse (1cm and 1cm);
         \draw [] (1,0)  .. controls (1.7,0.4) and  (1.7,1.6) ..  (1,2);
            \draw [dashed] (0.05,0.8)  .. controls (0.4,0.7) and  (1.1,0.8) ..  (1.5, 1.2);
               \draw [dashed] (1.4, 0.4) .. controls (1.6,0.5) and  (1.85,0.6) ..  (1.97,0.8);
                                  \node  at (0.7,0.65) () {$\omega = c$};
                                      \node  at (2.35, 0.7) (A) {$\omega= c$};
          \node  at (1.1,1.7) () {$\Sheet$};
          \end{tikzpicture}
          };          
 \end{tikzpicture}
}
\caption{Smooth levels of the vorticity $\omega$ on a sphere are transported by the flow with velocity~$u^\pm$  and hence getting broken  along the vortex sheet $\Sheet$. 
}\label{fig:sing+smooth}
\end{figure}
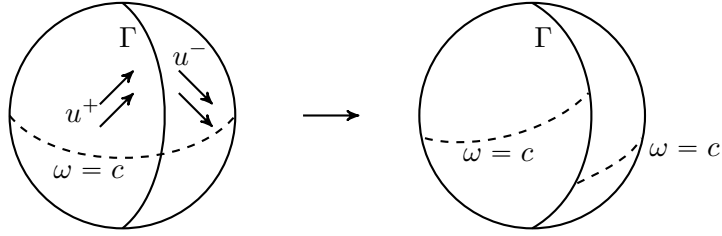
\begin{example}   \label{ex:sing+smooth}
Consider a vortex sheet solution on a two-dimensional sphere  $M=S^2$, where 
initially the singular part of the vorticity, 
the vortex sheet itself, is  a meridian, while level curves of the smooth part of the vorticity
are parallels, i.e. flat horizontal sections, see Figure~\ref{fig:sing+smooth}. 
Then on  different sides of the  meridian the velocity has a jump and it starts moving the frozen-in vorticity 
in different directions. Indeed, the fluid flow in this case is a combination of two 
motions: the smooth part of the vorticity pulls the fluid along horizontal sections, while the singular part of the vorticity rotates eastern and western hemispheres
with respect to each other. 
So the vorticity levels will immediately break once the motion starts. 
This example shows that such a system 
(as well as a generic initial condition with both singular and smooth vorticity parts present) does not admit 
a description within the framework of smooth diffeomorphisms. The vorticity as a whole is not transported by the \textit{smooth} flow
any longer and the introduction of a diffeomorphism groupoid is a necessity!

\end{example}

\begin{remark}
It is also worth mentioning that in 2D the evolution of vortex curves in $\R^2$ is governed by the 
Birkhoff-Rott equation, which is obtained from the Euler equation \eqref{twoPhaseEuler} provided that one can regard the strength of the singular vorticity as a parameter 
along the vortex curve. 
The framework in this paper provides a similar description in the 
setting of a compact ambient manifold (say, sphere or torus) and without assuming
that kind of parametrization. 
\end{remark}

\medskip


\subsection{General groupoid framework for Euler-Arnold-type 
equations }

Since the groupoid-theoretic  framework for Euler-Arnold-type equations 
developed in the paper has a universal character, we provide in Table \ref{table1} 
a dictionary of corresponding notions in the cases of groups and groupoids, as well as their 
hydrodynamical implementation.

\begin{table*}[h]
\scriptsize
\begin{center}
\begin{tabular}{c|c|c|c}
general notion  /  &  group setting / & groupoid setting / &   section for general /  \\
hydro setting     &  ideal fluid on $M$ & flow with vortex sheet &  hydro setting \\
\hline
\hline
   &   &  &  \\
   &  Lie  group $G$   & Lie  groupoid  $\G \rightrightarrows \B$ & Section \ref{sect:groupoid} \\
 configuration space  &   ---------------------   &   ---------------------  &  --------------------- \\
  (positions) &    $\SDiff(M)$    &     $\DSDiff(M)\rightrightarrows \VS(M)$ & \\
   &   volume-preserving   &   groupoid of  & Section \ref{sect:diffeo_pair}  \\
  &  diffeomorphism group & diffeomorphism pairs &  \\
\hline
   &   &  &  \\
   &  Lie  algebra $\frak g$   & Lie  algebroid  $\A \to \B$ & Section \ref{sect:algebroid} \\
   phase space &   ---------------------   &   ---------------------  &  --------------------- \\
   (velocities)   &    ${\SVect}(M)$    &     $\dsvect(M) \to \VS(M)$ & \\
   &   divergence-free   &   algebroid of discontinuous & Section \ref{sect:algebroid_vf}  \\
  &  vector fields & vector fields  &  \\
\hline
   &   &  &  \\
Lie bracket &   commutator of     &   bracket of sections,  & Section \ref{sect:algebroid_vf}  \\
   &  vector fields &  formula \eqref{sectionsBracket} &   \\
   &   &  &  \\
\hline
   &   &  &  \\
   & Lie algebra dual $\frak g^*$  & algebroid dual  $\A ^*\to \B$ & Section \ref{sect:poisson_br} \\
dual space &      ---------------------   &   ---------------------  &  --------------------- \\
   &$ \Omega^1(M) \, / \, \Omega^1_{ex}(M)$  & $\dforms(M, \Sheet) \, / \, \dexactforms(M,\Sheet) $
                                                                                            & Section \ref{sect:dual} \\
   & 1-form cosets  & discontinuous 1-form cosets  &  \\
\hline
  &   &  &  \\
Poisson structure     &  Lie-Poisson bracket on  $\frak g^*$ & Poisson bracket on $\A ^*$  & Section \ref{sect:poisson_br} \\
(Hamiltonian &      ---------------------   &   ---------------------  &  --------------------- \\
 operator)   &  Lie derivative  & Hamiltonian operator  
                        & Section \ref{sect:poisson_vs} \\
  &  ${\rm ad}_u^* =\L_u$ & $\P_{[\alpha]}$  &  \\
\hline
    &   &  &  \\
 inertia operator    &   $I:   \frak g\to\frak g^*$ &   $\I:\A\to \A^*$  & Section \ref{section:algsub} \\
 (kinetic energy)  &  ---------------------   &   ---------------------  &  --------------------- \\
    & metric operator $u\mapsto [u^\flat]$  &  
      $u\mapsto [u^\flat]$  & Section \ref{sect:euler-arn_vs}\\
    &   & on discontinuous vector fields &  \\
\hline
   &   &  &  \\
hydrodynamical   &   $\partial_t [\alpha]+\L_u [\alpha]=0$    &  
 $\partial_t^\Reg [\alpha] + [i_u \diff^\Reg \alpha + \frac{1}{2}\diff^\Reg i_u\alpha]  =0$& Section \ref{sect:euler-arn_vs}\\
 Euler equation  &   &  &  \\
\hline
   &   &  &  \\
base / space of  &  one point   &  $\VS(M)$  & Section \ref{sect:pure_motion}\\
vortex sheets &   &  &  \\
\hline
\end{tabular}
\end{center}
\caption{Group- and groupoid-theoretic  frameworks for Euler-Arnold-type equations }\label{table1}
\end{table*}

This universal groupoid-theoretic  framework can be also applied to include such systems as
flows with a free moving boundary or a rigid body moving in a fluid, as well as the existing algebroid approach to nonholonomic systems with 
symmetry, etc., see e.g. \cite{ML05}. 

\begin{remark}
While the literature on vortex sheets is enormous, here we would like to mention several papers
which might be related to a more geometric point of view, in addition to those mentioned above. 
In \cite{MW83, Kh12}  point vortices and more generally vortex membranes were regarded as singular 
coadjoint orbits of the group of volume-preserving diffeomorphisms. 
They correspond to vorticities with support on codimension-two submanifolds. 
This approach does not quite work for vortex sheets, because the vorticity 
on two sides of the sheet is transported by two different diffeomorphisms, and hence the discontinous flow 
does not need to preserve the orbits of the group coadjoint action, see Example \ref{ex:sing+smooth}.
The paper \cite{LMMR86} gives a Hamiltonian formulation for the incompressible Euler equation 
in the moving boundary case. That approach is based on Hamiltonian reduction of the cotangent bundle 
of a principal bundle, which might be thought of as dual to our groupoid approach, cf. Section \ref{sect:otto}. 
The  groupoid framework proposed below seems to be more natural in this setting, while the derived algebroid 
formulas have a universal character. We will touch on the free boundary problem 
in more detail in a future publication.
Lagrangian formalism for Lie groupoids was discussed in~\cite{We96}. A possibility of using the language of Lie algebroids
in fluid dynamics was also discussed in \cite{Jac12}.
\end{remark}

We expect that this approach can be also  applied to derive  Kelvin-Helmholtz instabilities of vortex sheets
\cite{Mar-Pul}, to study relations to problems of optimal mass transport, 
cf. \cite{KLMP, Villani}, as well as to  use this technique to obtain existence and uniqueness results for the Euler equation with discontinuous initial data, cf. e.g. \cite{Lebeau}.
\medskip

\textbf{Acknowledgements.} We are grateful to 
F.\,Otto for fruitful discussions. 
This research was partially supported by  an NSERC research grant. A.I. would like to thank Max Planck Institute for Mathematics, Bonn, for hospitality and support during some phases of this project.

\medskip

\section{Group setting of ideal hydrodynamics}

\subsection{Geodesic setting for the Euler equation}\label{sect:geod_Euler}
We start with the general setting  of ideal fluid dynamics. 
Let $M$ be an $n$-dimensional Riemannian manifold
with the Riemannian volume form $\mu$ and, possibly, with boundary $\partial M$. 
As we discussed in the introduction, an ideal incompressible fluid filling $M$ moves according to the hydrodynamical Euler equations:
\begin{equation}\label{idealEuler}
\begin{cases}
\begin{aligned}
&\partial_t u+\nabla_u u=-\grad p\,,\\
&{\rm div}\, u=0\quad\text{ and }\quad u\parallel\partial M\,.
\end{aligned}
\end{cases}
\end{equation}
The notation $\nabla_u u$ stands for the Riemannian covariant derivative  of the field $u$ along itself. 
The pressure function $p$ entering the Euler equation 
is defined uniquely modulo an additive constant by the above constraints on the velocity $u$.

\smallskip

Arnold  \cite{Arn66} showed that the Euler equations   can be regarded as an 
equation of the geodesic flow on the group $\SDiff(M):=\{\phi\in \Diffeo(M)~|~\phi^*\mu=\mu\}$ 
of volume-preserving diffeomorphisms of $M$ with respect to the right-invariant $L^2$-metric 
given by the kinetic energy of the fluid. 
In this approach an evolution of the  velocity field $u(t)$ according to the Euler equations is understood as 
an evolution of the vector in the Lie algebra ${\SVect}(M)=\{u\in {\vect}(M) \mid \L_u\mu=0\}$
of divergence-free vector fields on $M$ tangent to the boundary $\partial M$  
(here $\L_u$ stands for the Lie derivative along the field $u$). 
This time-dependent velocity $u(t)$ traces the geodesic 
on the group $\SDiff(M)$ defined by the given initial condition $u(0)=u_0$. 
Note that this setting assumes sufficient smoothness of the initial velocity: it is usually  taken to be 
in the Sobolev $H^s$ space for sufficiently high $s$: $s>\frac n2+1$, where $n=\dim M$, 
cf. \cite{EbMars70}. Alternatively, one can also consider the Fr\'echet setting of 
$C^\infty$-diffeomorphisms, rather than the setting of Hilbert manifolds  of $H^s$-diffeomorphisms.

\begin{remark}\label{rem:EulerDerivation} 
The geodesic interpretation of the Euler equation (\ref{idealEuler}) can be obtained, e.g., for a flat $M$ 
without boundary,  as follows, see
 \cite{Arn66, AK}. Consider the flow $(t,x)\mapsto \phi(t,x)$ defined by the 
velocity field $u(t,x)$, which describes the motion of fluid particles:
$ 
\partial_t \phi(t,x)=u(t,\phi(t,x)), \,\, \phi(0,x)=x 
$
for all $x\in M$ and $t\ge0$. The chain rule for the time derivative immediately gives 
$$
\partial^2_{t} \phi(t,x) 
=(\partial_t u+\nabla_uu)(t,\phi(t,x))\,, 
$$
and hence the Euler equation (\ref{idealEuler}) is equivalent to 
$$ 
\partial^2_{t} \phi(t,x)=-(\nabla p)(t,\phi(t,x))\,, 
$$ 
while the incompressibility condition  $\mathrm{ div }~ u=0$ becomes 
$\det(\partial_x \phi(t,x))=1$ for any $t$. 
The latter form of the Euler equation (for a smooth flow $\phi(t,x)$) 
says that  the  acceleration of the flow at any position $\phi$ is given by the gradient 
of a function (right-translated to $\phi$). Therefore this acceleration  is $L^2$-orthogonal to  the set of 
volume-preserving diffeomorphisms at any point $\phi \in \SDiff(M)$: 
$    \int_M(\nabla p,w)\,\mu = 0 $ 
for all divergence-free vector fields $w$ and all smooth functions $p$ 
on $M$. In other words, any solution 
$\phi(t,.)$ of the Euler equation is  a geodesic line with respect to the induced $L^2$-metric on the group $\SDiff(M)$. 
This $L^2$-metric is the kinetic energy of the fluid flow: for a velocity field $u$
its energy is $E(v)=\frac 12\int_M(u,u)\,\mu$ and it is right-invariant on the group 
of volume-preserving diffeomorphisms. 
Thus the Euler equation defines geodesics in the right-invariant $L^2$-metric on the group  $\SDiff(M)$.
 \end{remark}

\par

\subsection{Hamiltonian framework of the Euler equation}

The geodesic description implies  the following Hamiltonian framework for the Euler equation.  
Consider the (smooth) dual space $\mathfrak g^*={\SVect}^*(M)$ 
to the Lie algebra $\mathfrak g={\SVect}(M)$ of divergence-free vector fields on $M$ (tangent to the boundary).
This dual space  has a natural  description 
as the  space of cosets  $\mathfrak g^*=\Omega^1(M) / \diff \Omega^0(M)$, where $\Omega^k(M)$ is the space of $C^\infty$ $k$-forms on $M$. For a 1-form $\alpha$ on $M$ its coset of 1-forms is
$$
[\alpha]=\{\alpha+df\,|\,\text{ for all } f\in \Cont^\infty(M)\}\in \Omega^1(M) / \diff \Omega^0(M)\,.
$$
The pairing between cosets and divergence-free vector fields is straightforward:
$\langle [\alpha],w\rangle:=\int_M\alpha(w)\,\mu$ for any  field $w\in {\SVect}(M)$.
(Note that in the case of $M$ with boundary, no assumptions on the behaviour of 1-forms or function 
differentials on the boundary are imposed.)
The coadjoint action of the group $\SDiff(M)$ on the dual 
 $\mathfrak g^*$ is given by the change of coordinates in (cosets of) 1-forms on $M$ 
 by means of volume-preserving diffeomorphisms.
 
A Riemannian metric $(\,,)$ on the manifold $M$ allows one to
identify the Lie algebra and  (the smooth part of) its dual by means of the so-called inertia operator:
given a vector field $u$ on $M$  one defines the 1-form $\alpha=u^\flat$ 
as the pointwise inner product with  the velocity field $u$:
$u^\flat(w): = (u,w)$ for all $w\in T_xM$. Note also  that divergence-free fields $u$ correspond to co-closed 1-forms $u^\flat$.
The Euler equation \eqref{idealEuler} rewritten on 1-forms $\alpha=u^\flat$ is
$$
\partial_t \alpha+\L_u \alpha=-dP\,
$$
for  an appropriate function $P$ on $M$.
In terms of the cosets of 1-forms $[\alpha]$, the Euler equation on the dual space $\mathfrak g^*$ looks as follows: 
\begin{equation}\label{1-forms}
\partial_t [\alpha]+\L_u [\alpha]=0\,.
\end{equation}

It follows from the geodesic description that the  Euler equation \eqref{1-forms} on 
$\mathfrak g^*=\SVect^*(M)$ is a Hamiltonian equation with the
Hamiltonian functional  $\H$ given by the fluid's kinetic energy,
$$
{\H}\left([\alpha]\right)=E(u)= \frac 12\int_M(u,u)\,\mu
$$ 
for $\alpha=u^\flat$.
The corresponding Poisson structure is given by  the 
natural linear Lie-Poisson bracket on the dual space $\mathfrak g^*$ 
of the Lie algebra $\mathfrak g$, see details in \cite{Arn66, AK}.
The corresponding Hamiltonian operator is given by the Lie algebra coadjoint action ${\rm ad}^*_u$, 
which  in the case of the diffeomorphism group corresponds to the Lie derivative: ${\rm ad}^*_u=\L_u$.
Its symplectic leaves are coadjoint orbits of the corresponding group $\SDiff(M)$.

\begin{remark} 
According to the Euler equation  \eqref{1-forms} in any dimension  the 
coset of 1-forms $[\alpha]$ evolves by a volume-preserving change of coordinates, 
i.e. during the Euler evolution it remains in the same coadjoint orbit in $\mathfrak g^*$.
Introduce the {\it vorticity 2-form} $\omega:=\diff u^\flat$ as the differential of the 1-form 
$\alpha=u^\flat$ and note  that  the vorticity exact 2-form is well-defined for cosets $[\alpha]$: 
1-forms $\alpha$ in the same coset have equal vorticities $\omega=\diff \alpha$. 
The corresponding Euler equation assumes the vorticity (or Helmholtz) form
\begin{equation}\label{idealvorticity}
\partial_t \omega+\L_u \omega=0\,,
\end{equation}
which means that the vorticity form is transported by (or ``frozen into") the fluid flow (Kelvin's theorem).
The definition of vorticity $\omega$ as an exact 2-form $\omega=\diff u^\flat$ makes sense for a manifold $M$ 
of any dimension. (In 2D  the vorticity 2-form can be identified with the vorticity function, while in 3D 
it can be regarded as the vorticity vector field $\mathrm{curl}\,v$  by means of the relation 
$i_{\mathrm{curl}\,v} \mu=\omega$ for the volume form $\mu$.) 
\end{remark}

\begin{remark} \label{rem:types_vs}
The cases of singular vorticity are of particular importance. 
A vortex membrane (or a vortex filament in 3D) corresponds to a distributional 2-form 
(i.e., {\it  de Rham $(n-2)$-current}) $\omega=C\cdot \delta_N$ supported on a submanifold $N$ 
of codimension 2 in $M$, where the constant $C$ is called 
the strength of the membrane. A vortex sheet can be associated with  a  de Rham $(n-2)$-current
$\omega=df\wedge \delta_\Sheet$  supported on a hypersurface $\Sheet$ (of codimension 1) in $M$, 
where $df $ is an exact 1-form on  $\Sheet$.
Vortex sheets appear in a fluid with a jump discontinuity of the velocity along a hypersurface. 
In this paper we will be particularly concerned with the setting where $\Sheet\subset M$ is homologous to zero,
and it splits the manifold $M$ into two connected parts, 
$ \Dom^+_{\Sheet}\cup \Dom^-_{\Sheet}=M\,\setminus\, \Sheet$, being
 the boundary of each of them (with appropriate orientations). 
For the vorticity 2-form $\omega=\diff u^\flat$  supported on $\Sheet$, $\omega=df\wedge \delta_\Sheet$, the velocity field
 $u$ is irrotational (i.e. curl-free) outside $\Sheet$ and hence locally potential in $M\,\setminus\, \Sheet$. 
 Note that the difference of the velocities on different sides of $\Sheet$ corresponds to the 
 1-form $df$ with help of the metric: $(v^+-v^-)^\flat|_\Sheet=df$, see e.g. \cite{Kh12}.

 If the field $u$ is {\it globally potential} on $M\,\setminus\, \Sheet$, we call 
 the corresponding solution $u(t)$ a {\it pure vortex sheet solution}. 
 The motion of  vortex sheets for such solutions  is the main object of our consideration and it admits 
 a geodesic interpretation in terms of a metric on $\Sheet$'s.
 If  the field $u$ is only {\it locally potential} on $M\,\setminus\, \Sheet$,
 we call the corresponding evolution of velocity $u$ an {\it irrotational vortex sheet solution}. 
This evolution admits a Hamiltonian interpretation on the cotangent bundle to the space
of vortex sheets. Alternatively, it also admits a geodesic interpretation on a certain extension
of the space of $\Sheet$'s, which we will address in a future publication.
For {\it general solutions} $u$ with vortex sheets (and not necessarily vanishing curl) we give a Hamiltonian description, 
as well as the geodesic interpretation on the diffeomorphism groupoid, which however does not reduce to the 
geodesic interpretation on vortex sheets only, see Section~\ref{sect:dyn_vs}.
 \end{remark}

\medskip

\section{Spaces of densities and vortex sheets}

\subsection{Otto calculus on the space of densities}\label{sect:otto}
The Euler equation, being a geodesic equation on the group of volume-preserving diffeomorphisms,
is closely related (in a sense, dual) to
the theory of optimal mass transport, and in particular, to the problem 
of moving one mass (or density) to another while minimizing a certain cost. 
In this section we discuss the relation of metric properties
of the diffeomorphism group and the space of densities to show an analogy with the framework of vortex sheets 
in the next section.

Assume that $M$ is a compact $n$-dimensional Riemannian manifold without boundary and
 consider the group $\SDiff(M)$ of diffeomorphisms preserving the volume form $\mu$
as a subgroup in the group $\Diff(M)$ of all smooth diffeomorphisms of $M$. 
Define a (weak) Riemannian metric on the group $\Diff(M)$ in the following straightforward way: 
given $u,v\in \vect(M)$, the inner product of
two vectors $u\circ\phi,v\circ\phi\in T_\phi\Diff(M)$ 
at any point $\phi\in\Diff(M)$  is
\begin{equation}\label{dmetric}
\langle u\circ\phi,v\circ\phi\rangle_\Diff = \int_M (u,v)\,\phi_* \mu\,.
\end{equation}
This metric is right-invariant with respect to the action of the subgroup
$\SDiff(M)$ of volume-preserving diffeomorphisms,
although it is not right-invariant with respect to the action of the whole group $\Diff(M)$. 

Let $\mu$ be a smooth reference volume form (or density) 
on $M$ of unit total mass, 
and consider the projection $\pi : \Diff(M) \to \W(M)$ of diffeomorphisms 
onto the space $\W(M)$ of (normalized) smooth
densities on $M$. The  diffeomorphism group $\Diff(M)$
is fibered over $\W(M)$ by means of this projection
$\pi$ as follows: the fiber over a volume form $\tilde \mu$  consists of all 
diffeomorphisms $\phi$ that push $\mu$ to $\tilde \mu$,  $\phi_*\mu=\tilde \mu$. (Note that diffeomorphisms from $\Diff(M)$ act transitively on smooth normalized densities, 
according to Moser's theorem.) In other words, 
two diffeomorphisms $\phi_1$ and $\phi_2$ 
belong to the same fiber 
if and only if $\phi_1=\phi_2\circ\varphi$ for some 
diffeomorphism $\varphi$ preserving the volume form $\mu$.

\smallskip

\begin{remark}
{\rm 
The projection $\pi$ can be extended to Borel maps and densities
that are absolutely continuous with respect to the Lebesgue measure.
More precisely, let $\mu$ and $\tilde \mu$ be two measures of the same total volume, and let $\operatorname{dist}(x,y)$
be the (geodesic) distance function on~$M$.
Consider the following \index{mass transport problem} {\it optimal mass
transport problem}: Find a Borel map $\phi:M\to M$ that pushes
the  measure $\mu$ forward to $\tilde \mu$ and attains the minimum
of the $L^2$-cost functional 
$\int_M \operatorname{dist}^2(x,\phi(x))\mu$ among all such maps. 
The minimal cost of transport defines a metric (called the {\it Kantorovich} or 
{\it Wasserstein metric})\index{Kantorovich metric}\index{Wasserstein metric}
$\operatorname{Dist}$ on densities:
\begin{equation}\label{optimal}
{\operatorname{Dist}}^2(\mu, \tilde \mu)
:=\inf_\phi\Big\{\int_M \operatorname{dist}^2(x,\phi(x))\mu~
|~\phi_*\mu=\tilde \mu\Big\}\,.
\end{equation}

It turns out that this mass transport problem admits a unique solution 
(defined up to measure-zero sets), called the optimal map $\tilde\phi$, see, e.g.,  
\cite{Brenier91, Villani}. Furthermore, a 1-parameter family 
of maps $\phi(t)$ joining $\phi(0)=\id$ with  the 
 map $\phi(1)=\tilde\phi$, such that $\phi(t)$ 
pushes $\mu$ to $\mu(t):=\phi(t)_*\mu$
in an optimal way for every $t$, defines a geodesic $\mu(t)$ between  
$\mu$ and $\tilde \mu$ in the  space 
of densities with respect to the metric $\operatorname{Dist}$; 
see \cite{Otto, Villani} for details. 

Here we recall an infinitely smooth (formal) setting of this problem,  while the corresponding 
setting of Sobolev spaces can be found e.g. in \cite{KLMP}.
One can see that the Kantorovich metric $\operatorname{Dist}$  is  
generated by a (weak) Riemannian metric on the space $\W$ of smooth densities~\cite{BB}. Thus both $\Diff$ and $\W$ can be regarded as infinite-dimensional Riemannian 
manifolds.
}
\end{remark}

\begin{proposition}\label{rsub} {\rm{\cite{Otto}}}
The bundle map $\pi:\Diff(M)\to \W(M)$ is a
Riemannian submersion of the metric $\metric_\Diff$ 
on the diffeomorphism group 
$\Diff(M)$ to the metric $\operatorname{Dist}$ 
on the density space $\W(M)$.
The horizontal (i.e., normal to fibers) spaces in the bundle $\Diff(M) \to \W(M)$ 
consist of right-translated gradient fields.
\end{proposition}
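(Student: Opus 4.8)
The plan is to verify the two defining properties of a Riemannian submersion: that the kernel of $d\pi$ is the tangent space to the fibers, and that $d\pi$ restricts to a linear isometry from each horizontal space onto the tangent space of $\W(M)$ equipped with the (Benamou--Brenier) Riemannian metric generating $\operatorname{Dist}$. First I would describe the fibers. As observed just before the statement, $\phi_1$ and $\phi_2$ lie in the same fiber iff $\phi_1=\phi_2\circ\varphi$ with $\varphi\in\SDiff(M)$, so the fiber through $\phi$ is the right coset $\phi\circ\SDiff(M)$. Differentiating a curve $\phi\circ\varphi_t$ with $\varphi_0=\id$ and $\dot\varphi_0=v\in\SVect(M)$, and writing tangent vectors in the right-trivialization $u\circ\phi$ used in \eqref{dmetric}, the vertical space at $\phi$ consists of the fields $w\circ\phi$ with $w=\phi_*v$. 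Since $\phi_*\mu=\tilde\mu$, these $w$ are precisely the vector fields that are divergence-free with respect to $\tilde\mu:=\phi_*\mu$.

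Next I would identify the horizontal spaces by computing the $\metric_\Diff$-orthogonal complement of the vertical space. For such a $w$ the metric \eqref{dmetric} gives $\langle u\circ\phi, w\circ\phi\rangle_\Diff=\int_M (u,w)\,\tilde\mu$, so $u\circ\phi$ is horizontal iff $u$ is $L^2(\tilde\mu)$-orthogonal to every $\tilde\mu$-divergence-free field. By the Hodge--Helmholtz decomposition on the closed manifold $M$ (taken with respect to the metric and the volume form $\tilde\mu$), this orthogonal complement is exactly the space of gradients $\grad{\theta}$, $\theta\in\Cont^\infty(M)$. Hence $H_\phi=\{\grad{\theta}\circ\phi\}$ consists of right-translated gradient fields, which is the second assertion of the proposition.

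Finally I would compute $d\pi$ and verify the isometry. The continuity equation for $\mu_t=(\phi_t)_*\mu$ reads $d\pi_\phi(u\circ\phi)=-\L_u\tilde\mu$, so on a horizontal vector $\grad{\theta}\circ\phi$ we obtain $\dot{\tilde\mu}:=-\L_{\grad{\theta}}\tilde\mu=-\mathrm{div}_{\tilde\mu}(\grad{\theta})\,\tilde\mu$. Since the weighted Laplacian $\theta\mapsto\mathrm{div}_{\tilde\mu}(\grad{\theta})$ has kernel the constants and image the mean-zero densities $T_{\tilde\mu}\W$, the map $d\pi$ sends $H_\phi$ bijectively onto $T_{\tilde\mu}\W$, giving the submersion property. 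For lengths, recall the Benamou--Brenier form \cite{BB} of the metric generating $\operatorname{Dist}$,
\[
\norm{\dot{\tilde\mu}}^2=\inf\left\{\int_M (v,v)\,\tilde\mu \;:\; \dot{\tilde\mu}+\L_v\tilde\mu=0\right\}.
\]
Writing any admissible $v=\grad{\theta}+v_0$ with $v_0$ being $\tilde\mu$-divergence-free, the constraint fixes the gradient part to be our $\grad{\theta}$, and the same $L^2(\tilde\mu)$-orthogonal splitting gives $\int_M(v,v)\,\tilde\mu=\int_M(\grad{\theta},\grad{\theta})\,\tilde\mu+\int_M(v_0,v_0)\,\tilde\mu$, so the infimum is attained at $v=\grad{\theta}$ with value $\int_M(\grad{\theta},\grad{\theta})\,\tilde\mu$. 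This equals $\norm{\grad{\theta}\circ\phi}_\Diff^2=\int_M(\grad{\theta},\grad{\theta})\,\phi_*\mu$, so $d\pi|_{H_\phi}$ is an isometry. The one step requiring care --- and the main obstacle --- is precisely this final matching: the Hodge decomposition must be invoked twice in a coherent way, once to single out the horizontal gradient and once to identify it as the Benamou--Brenier minimizer, so that both norms are computed on the very same field $\grad{\theta}$; the remaining points are routine in the smooth formal setting.
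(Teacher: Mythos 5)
Your argument is correct and takes essentially the same route as the paper's own justification (given in the remark following the proposition): use the Hodge--Helmholtz decomposition to identify the vertical spaces with right-translated divergence-free fields and the horizontal spaces with right-translated gradients, then match the projected metric with the Benamou--Brenier form of $\operatorname{Dist}$ from \cite{BB}. The only difference is presentational: you carry out at a general point $\phi$, via the weighted decomposition relative to $\tilde\mu=\phi_*\mu$ and an explicit computation of $d\pi$ and of the Benamou--Brenier minimizer, what the paper states at the identity and transports by $\SDiff(M)$-invariance.
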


Recall that for two Riemannian manifolds $P$ and $B$ a {\it submersion}
\index{submersion}
$\pi: P\to B$ is a smooth map which has a surjective differential and preserves 
lengths of horizontal tangent vectors to~$P$. 
For a bundle $P\to B$ this means that on $P$ there is a distribution 
of horizontal 
spaces orthogonal to fibers and projecting isometrically to the 
tangent spaces to $B$.
Geodesics on $B$ can be lifted to horizontal geodesics in $P$, and the lift is unique for a given initial point in~$P$.

\begin{remark}
In short, the proposition follows from the Hodge decomposition  $\Vect=\SVect\oplus_{L^2} \Grad$
for vector fields on $M$: any vector field $v$ decomposes uniquely into 
the sum $v=w+\nabla p$ of a divergence-free field $w$
and a gradient field $\nabla p$, which are $L^2$-orthogonal to each other:
$\int_M(w,\nabla p)\,\mu=0$. The vertical tangent space at the identity coincides with  $\SVect(M)$,
while the horizontal space is $\Grad(M)$. The vertical space (tangent to a fiber)
at a point $\phi\in \Diff(M)$ consists of $w\circ\phi$, divergence-free vector fields $w$
right-translated by the diffeomorphism $\phi$, while the horizontal space is given by the translated 
gradient fields, $(\nabla p)\circ\phi$.
The $L^2$-type metric $\metric_\Diff$ on  horizontal 
spaces for different points of the same
fiber projects isometrically to one and the same metric
on the base, due to the $\SDiff$-invariance of the metric. Now the proposition follows from the observation 
that the above metric ${\operatorname{Dist}}$ is Riemannian and generated by the $L^2$ metric 
on gradients, see \cite{BB}.
\end{remark}


One of the main properties of a Riemannian submersion is 
the following feature of geodesics:

\begin{corollary}\label{horizontal}
Any geodesic initially tangent to a horizontal space on the full 
diffeomorphism group $\Diff(M)$ remains horizontal, 
i.e.,  tangent to the gradient distribution on this group. 
There is a one-to-one correspondence between geodesics on the base $\W(M)$ 
starting at the density $\mu$ and horizontal geodesics in $\Diff(M)$ starting 
at the identity diffeomorphism $\id$.
\end{corollary}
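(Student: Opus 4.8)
The plan is to deduce the corollary directly from the Riemannian submersion property established in Proposition \ref{rsub}, invoking the standard behaviour of geodesics under a Riemannian submersion. Recall that for any Riemannian submersion $\pi\colon P\to B$ there is O'Neill's integrability tensor $A$, which on horizontal vector fields $X,Y$ returns the vertical part $A_XY=\mathcal V(\nabla_XY)$ and is skew-symmetric in $X,Y$; in particular $A_XX=0$ for every horizontal $X$. The two facts I would use are: (i) the horizontal lift of a geodesic of $B$ is a geodesic of $P$, and (ii) geodesics in $P$ are determined by their initial position and velocity.

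For (i), let $\sigma$ be a geodesic in $B$ and let $\tilde\sigma$ be its horizontal lift through a chosen point of the fibre. Its velocity $X=\dot{\tilde\sigma}$ is horizontal by construction, so the vertical part of the acceleration is $\mathcal V(\nabla_X X)=A_XX=0$, while the horizontal part projects under $\pi_*$ to the acceleration $\nabla^B_{\dot\sigma}\dot\sigma=0$ of the base geodesic and hence vanishes as well. Thus $\nabla_X X=0$, i.e. $\tilde\sigma$ is a geodesic. First I would then prove the first assertion of the corollary: if $\gamma$ is a geodesic in $\Diff(M)$ with $\dot\gamma(0)$ horizontal, form the base geodesic $\sigma$ with $\sigma(0)=\pi(\gamma(0))$ and $\dot\sigma(0)=\pi_*\dot\gamma(0)$, lift it horizontally through $\gamma(0)$, and note that this lift and $\gamma$ are both geodesics issuing from $\gamma(0)$ with the same initial velocity (the lift's initial velocity is the unique horizontal vector projecting to $\dot\sigma(0)$, namely $\dot\gamma(0)$). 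By uniqueness (ii) they coincide, so $\gamma$ stays horizontal. Since by Proposition \ref{rsub} the horizontal distribution on $\Diff(M)$ consists precisely of right-translated gradient fields, this is exactly the statement that the geodesic remains tangent to the gradient distribution.

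For the bijection, I would specialize to the fibre over $\mu=\pi(\id)$. Sending a horizontal geodesic $\gamma$ with $\gamma(0)=\id$ to its projection $\pi\circ\gamma$ gives a geodesic in $\W(M)$ starting at $\mu$; conversely, the horizontal lift through $\id$ sends a geodesic of $\W(M)$ starting at $\mu$ to a horizontal geodesic of $\Diff(M)$ starting at $\id$, by (i). These two maps are mutually inverse: projecting a horizontal lift recovers the base geodesic since $\pi\circ\tilde\sigma=\sigma$, and lifting a projection recovers $\gamma$ by the uniqueness of horizontal lifts together with (ii). On initial data this correspondence reads $\dot\gamma(0)\mapsto \pi_*\dot\gamma(0)$, an isometry from horizontal gradient fields at $\id$ onto $T_\mu\W(M)$, which makes the one-to-one correspondence explicit.

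The formal geometric argument above is routine in finite dimensions; the genuine obstacle is that $\Diff(M)$ and $\W(M)$ are infinite-dimensional and carry only weak Riemannian metrics, so neither the Levi-Civita connection, the geodesic spray, nor the unique solvability of the horizontal lift is automatic. I would resolve this either by working in the Sobolev $H^s$ category, where the requisite smoothness of the spray and short-time existence and uniqueness of geodesics follow from Ebin-Marsden-type analysis \cite{EbMars70}, or by identifying the horizontal geodesics through $\id$ with solutions of the $L^2$ optimal mass transport problem \eqref{optimal}, whose existence, uniqueness and geodesic property on $\W(M)$ are supplied by \cite{Brenier91, BB, Otto, Villani}. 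In the latter route the corollary becomes the statement that optimal transport interpolations lift uniquely to gradient (potential) flows on $\Diff(M)$, which is exactly the content of Proposition \ref{rsub}.
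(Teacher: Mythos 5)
Your argument is the standard geodesic-lifting consequence of a Riemannian submersion (horizontal lifts of base geodesics are geodesics, plus uniqueness of geodesics with given initial data), which is precisely what the paper invokes: it states the corollary as an immediate property of the submersion of Proposition \ref{rsub} without writing out a proof, having already noted that geodesics on the base lift uniquely to horizontal geodesics. Your treatment, including the caveats about the weak metric and the infinite-dimensional setting handled via \cite{EbMars70} or the optimal transport literature, matches the paper's intended (formal) reasoning.
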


\begin{remark}\label{rem:symp_red}
The Riemannian submersion property for the fibration $\Diff(M) \to \W(M)$
can be put in the framework of symplectic reduction.
For a principal bundle $\pi:P\to B$ with the structure group $G$ 
the symplectic reduction of the cotangent bundle $T^*P$ over the $G$-action
gives the cotangent bundle $T^*B=T^*P/\!/G$. If the bundle $P$ is equipped with a $G$-invariant Riemannian 
metric $\metric_P$  it induces the metric $\metric_B$
on the base $B$. The Riemannian submersion of $P$ 
to the base $B$, equipped with the metrics $\metric_P$
and $\metric_B$ respectively, is the result of 
the symplectic reduction with respect to the $G$-action. Thus 
Proposition \ref{rsub} can be viewed within the  framework of symplectic reduction  with  
$P=\Diff(M)$, $G=\SDiff(M)$, and $B=\W(M)$.
\end{remark}
\medskip


\subsection{Calculus on the space of vortex sheets}\label{sec:vscalculus}

Below we show that the above framework  has an analog for the space of vortex sheets.
We first consider a preliminary model of description of vortex sheets, emphasizing similarities with the 
description above, and adjust it later.

As we mentioned, vortex sheets correspond to a jump discontinuity of the fluid velocity along a hypersurface
in a flow domain. Below we assume that the  velocity is gradient in the complement to the hypersurface.
For a compact connected closed manifold $M$ equipped with a volume form $\mu$ let $\Sheet_0 \subset M$
be  a cooriented closed embedded (but not necessarily connected) smooth hypersurface splitting $M$ 
into two connected parts, $\Dom^+_{\Sheet_0} \!\cup \Sheet_0 \cup\Dom^-_{\Sheet_0} =M$. (The case of more connected components, 
as well as the case of $M$ with boundary, require only minor adjustments.)
Denote by $\VS(M)$  the space of  vortex sheets that are images of $\Sheet_0$ under volume-preserving 
diffeomorphisms of the ambient manifold $M$:
$\VS(M):=\{ \Sheet=\phi(\Sheet_0)~|~\phi\in\SDiff(M)\}$. 
Then by construction the group $\SDiff(M)$ is fibered over $\VS(M)$: the fiber $\F_{\Sheet_0}$
over $\Sheet_0\in \VS(M)$ is the subgroup 
$\SDiff(M)_{\Sheet_0}$ of volume-preserving diffeomorphisms of $M$ mapping $\Sheet_0 \subset M$ into itself, 
while the fiber $\F_{ \Sheet}$ over a hypersurface $ \Sheet$ 
consists of all such volume-preserving diffeomorphisms of $M$ that map $\Sheet_0$ 
to $ \Sheet$. In particular, the map $\SDiff(M)\to \VS(M)$ is surjective. 
The vertical tangent space (at the identity) is the Lie subalgebra of all divergence-free vector fields 
on $M$ tangent to $\Sheet_0$, while at any other point the vertical space consists of those very vector fields
right-translated by a volume-preserving diffeomorphism moving $\Sheet_0$ to $ \Sheet$.
The tangent space to $\VS(M)$ at $\Sheet$ can be regarded as the space of normal vector fields $v$ 
on $\Sheet$ with an additional ``zero-mean constraint":   the $(n-1)$-form $i_v\mu$ has 
zero integral over $\Sheet$, which corresponds to the volume conservation inside $\Sheet$ for any its 
infinitesimal motion.

\begin{definition}\label{def:baseMetric}
Define the following (weak) \textit{metric  on the space $\VS(M)$ of vortex sheets}.\footnote{Weakness of metric means that the  topology induced by the metric is weaker than the $H^k$-topology 
considered on sufficiently smooth vortex sheets.}
Let $v\in T_\Sheet\VS(M)$ be a vector field attached at a vortex sheet $\Sheet \subset M$ and normal to it. 
Then its square length is
\begin{equation}
\normsq{v}_{\vs}:=\inf\left\{\normsq{u}_{L^2} \mid \div u=0 \mbox{ and } (u,\nu)\,\nu = v  \mbox{ on } \Sheet\right\},
\end{equation}
where $u\in \SVect(M)$ is a smooth 
vector field in $M$,  and $\nu$ is the unit normal field to $\Sheet$.
\end{definition}

Recall that the group $\SDiff(M)$ is also endowed with a natural right-invariant $L^2$-metric. At the group identity
it is given by the $L^2$-inner product on divergence-free vector fields:
$\langle u,u\rangle_{L^2}=\int_M(u,u)\mu$, where $u$ is a vector field at $\id\in \SDiff(M)$.
For any diffeomorphism $\phi\in \SDiff(M)$ we set by right-invariance
$\langle u\circ\phi ,u\circ\phi\rangle_{L^2}:=\int_M(u\circ\phi (x),u\circ\phi (x))_{\phi(x)}\mu$, where we
 take into account that  $\phi^*\mu = \mu$ for a volume-preserving diffeomorphism $\phi$.
While vortex sheets cannot be described solely within the setting of diffeomorphism groups, as the motion of 
particles is discontinuous near $\Sheet$, their heuristic description can be given as follows.

\begin{hproposition}  The bundle projection 
$(\SDiff(M), \metric_{L^2})\to (\VS(M),  \metric_{\vs})$
is a Riemannian submersion.
\end{hproposition}

The fact that this is a Riemannian submersion is essentially built in Definition \ref{def:baseMetric} of the metric 
on the space of vortex sheets $\VS(M)$.  This heuristic statement implicitly means that vortex sheets are moved by smooth diffeomorphisms, which turns out to be not the case.
However, such a geometric viewpoint leads to interesting 
consequences: geodesics on the base $\VS(M)$ are obtained by projecting horizontal geodesics on
$\SDiff(M)$. Recall that all geodesics on $\SDiff(M)$  describe incompressible fluid motions
and are given by solutions of the Euler equation, see Remark \ref{rem:EulerDerivation}.
Assuming that for a given vector $v$ there is a ``horizontal" vector $u$ realizing the infimum in the definition 
above, i.e. a smooth divergence-free vector  field $u$ on $M$, having the prescribed normal component $v$ on 
a vortex sheet $\Sheet$ we come to the following corollary.

\begin{corollary}\label{LoeschThm} {\rm{\cite{Loesch}}}
Geodesics with respect to the metric $\metric_{\vs}$ on  the space  $\VS(M)$ 
describe the motion of vortex sheets in an incompressible flow.
\end{corollary}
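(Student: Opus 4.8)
The plan is to derive this corollary directly from the heuristic submersion statement above, mirroring the logic by which Corollary \ref{horizontal} follows from Proposition \ref{rsub} in the Otto--calculus picture. The underlying principle is the standard feature of Riemannian submersions recalled after Proposition \ref{rsub}: geodesics on the base lift uniquely, given an initial point, to horizontal geodesics upstairs, and conversely the projection of a horizontal geodesic is a geodesic on the base. Applied to $\pi\colon(\SDiff(M),\metric_{L^2})\to(\VS(M),\metric_\vs)$, this reduces the description of $\metric_\vs$-geodesics on $\VS(M)$ to the description of horizontal geodesics on the diffeomorphism group.

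First I would invoke Arnold's theorem to identify the upstairs geodesics: every geodesic of the right-invariant $L^2$-metric on $\SDiff(M)$ solves the Euler equation \eqref{idealEuler}, i.e.\ is an incompressible fluid motion (Remark \ref{rem:EulerDerivation}). Thus a horizontal geodesic $\phi(t)$ is in particular an incompressible flow. Second, I would analyze the horizontal lift at the level of velocities. Since the vertical space at the identity is the subalgebra of divergence-free fields tangent to $\Sheet_0$ (those with vanishing normal component on the sheet), a horizontal velocity is the $L^2$-minimizer among divergence-free fields with a prescribed normal component $v$ on $\Sheet_0$ --- precisely the field $u$ realizing the infimum in Definition \ref{def:baseMetric}. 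By construction its normal component on the sheet is $v$, so the flow of $u$ transports $\Sheet_0$ with normal velocity $v$, i.e.\ $\pi_* u = v$, while the identity $\normsq{v}_\vs=\normsq{u}_{L^2}$ built into the metric is exactly the submersion isometry on horizontal vectors.

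Assembling these steps: a $\metric_\vs$-geodesic $\Sheet(t)$ lifts to a horizontal geodesic $\phi(t)$ on $\SDiff(M)$; the latter is an Euler solution, hence an incompressible flow; and the projection $\Sheet(t)=\phi(t)(\Sheet_0)$ is exactly the motion of the vortex sheet carried by that flow. This yields the corollary.

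The hard part --- and the reason the enabling statement is only a ``Proposition'' --- is that the minimizing field $u$ is genuinely \emph{not} smooth on $M$. Solving the variational problem shows the minimizer is a gradient field on each side, $u=\chiplus\grad{f^+}+\chimin\grad{f^-}$ with $f^\pm$ harmonic and satisfying the Neumann condition $\partial_\nu f^\pm = v$ on $\Sheet$; the matching normal derivatives make $u$ continuous across $\Sheet$ in the normal direction, but its tangential components jump. Hence the flow of $u$ does not preserve smoothness across the sheet for positive time, and the naive lift into the group of smooth volume-preserving diffeomorphisms breaks down --- this is precisely the obstruction that forces the passage to the groupoid $\DSDiff(M)$ in the rigorous treatment. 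A secondary technical point is establishing existence and regularity of the minimizer, which amounts to solvability of the two Neumann problems on $\Dompm$ and justifies that the infimum in Definition \ref{def:baseMetric} is attained.
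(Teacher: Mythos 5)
Your argument is essentially identical to the paper's: the corollary is deduced heuristically from the Riemannian submersion property (which is built into Definition \ref{def:baseMetric}) together with Arnold's identification of $L^2$-geodesics on $\SDiff(M)$ with incompressible Euler flows, under the explicit assumption that a horizontal vector realizing the infimum exists. You also correctly identify the same obstruction the paper points out immediately afterwards --- the minimizer is a two-sided gradient field with a tangential jump, so no smooth horizontal vectors exist and the rigorous statement requires the groupoid $\DSDiff(M)$ (Theorem \ref{geodDescription}).
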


However, the rigorous setting is more complicated. The reason is that, for connected $\Sheet$, there are no horizontal vectors for the  
bundle $\SDiff(M)\to \VS(M)$ in the smooth setting: no smooth divergence-free vector field is 
$L^2$-orthogonal to all divergence-free vector fields tangent to $\Sheet$. (Indeed, divergence-free fields are 
$L^2$-orthogonal to gradients off  $\Sheet$, so for smooth fields in the orthogonal complement 
their smooth potentials must be globally defined harmonic functions on $M$, i.e. 
constants. For disconnected $\Sheet$ there is at most a finite-dimensional space of horizontal vectors.) To describe vortex sheets we need to consider
vector fields admitting jumps on a hypersurface, and hence maps which are diffeomorphisms 
on the interior and exterior of the hypersurface. 
Furthermore, in the Hamiltonian setting the vorticity is singular in the case of vortex sheets, 
and should belong to a certain completion of a smooth dual space to the Lie algebra $\SVect(M)$.
While the geometric picture above gives a hint of the framework to follow, the group description is not valid
 for  vortex sheets and we need to consider a groupoid.

\begin{remark}
We will see that while general solutions with vortex sheets can be described as geodesics of 
a certain right-invariant metric on a groupoid, for special solutions this metric can be reduced (as a submersion)
to a much smaller space, the space of vortex sheets. Namely, {\it potential} vortex sheets solutions 
(cf. Remark \ref{rem:types_vs}) can be described as geodesics  in the above metric on the space $\VS(M)$.
For {\it irrotational} vortex sheets solutions we present the corresponding Hamiltonian formulation, see
Section \ref{sec:irrot}.

One should also mention that the metric $\metric_{\vs}$ on  the space  $\VS(M)$ 
can be used to define a natural hydrodynamical metric on the space of shapes (of the same volume) 
and hence the corresponding optimal transport problem, see Appendix \ref{app:shapes}. 
\end{remark}


\medskip

\section[Generalities on Lie groupoids and algebroids]{Generalities on Lie groupoids and Lie algebroids}
In this section we briefly recall basic facts about Lie groupoids and Lie algebroids (details can be found, e.g., in \cite{dufour2006poisson}). For the sake of exposition we assume that all objects in this section are finite-dimensional (except for subsection \ref{section:algsub}).

\label{section:algsub}
\subsection{Lie groupoids}\label{sect:groupoid}

\begin{definition}
A \textit{groupoid} $\G \rightrightarrows \B$ is a pair of sets, $\B$ (the set of \textit{objects}) and $\G$ (the set of \textit{arrows}), endowed with the following structures:
\begin{enumerate} \item There are two maps $\Src,\Trg \colon \G \to \B$, called the \textit{source} map and the \textit{target} map.
\item There is a partial binary operation $(g, h) \mapsto gh$ on $\G$, which is defined for all pairs $g,h \in \G$ such that $\Src(g) = \Trg(h)$, and has the following properties.

\begin{enumerate}
\item  The source of the product is the source of the arrow applied first: $
\Src(gh) = \Src(h)$, while the target of the product is the target of the arrow applied second: $\Trg(gh) = \Trg(g).$
\item Associativity: $g(hk) = (gh)k$ whenever any of these expressions is well-defined.
\item Identity: for each $x \in \B$, there is an element $\id_x \in \G$ such that $\Src(\id_x) = \Trg(\id_x) = x$ and for every $g \in \G$ one has $\id_{\Trg(g)} \cdot  g =  g\cdot \id_{\Src(g)} = g$.
\item Inverse: for each $g \in \G$, there is an element $g^{-1} \in \G$ such that $\Src(g^{-1}) = \Trg(g)$, $\Trg(g^{-1}) = \Src(g)$, 
$g^{-1}g = \id_{\Src(g)}$, and $gg^{-1} = \id_{\Trg(g)}.$
\end{enumerate}
\end{enumerate}
\end{definition}

In what follows, we often use the term \textit{groupoid} referring to the set of arrows $\G$. If $\G \rightrightarrows \B $ is a groupoid, we say that $\G$ is a groupoid over $\B$.

A groupoid $\G \rightrightarrows \B$ is called a \textit{Lie groupoid} if $\G, \B$ are manifolds, the source and target maps are submersions, and the maps $(g, h) \mapsto gh$, $x \mapsto \id_x$, and $g \mapsto g^{-1}$ are 
smooth. (The domain of the multiplication map is $\{(x,y) \in \G \times \G \mid \Src(x) = \Trg(y)\}$. The requirement that $\Src$ and $\Trg$ are submersions guarantees that this set is a submanifold of  $\G \times \G$, so smoothness of multiplication is well-defined.)

\begin{example}\label{groupoids}
\begin{enumerate}[label=(\alph*)] \item Any Lie group $G$ is a Lie groupoid over a point.
\item For any smooth manifold $\B$, the set $\G := \B \times \B$ is a Lie groupoid over $\B$, called \textit{the pair groupoid}. The source and the target are defined by $\Src(x,y) = x$, $\Trg(x,y) = y$, while the product is given by
$
(y,z)(x,y) := (x,z).
$
\item
Let $\B$ be a smooth manifold, and let $G$ be a Lie group acting on $\B$. Then one can define the so-called \textit{action} (or \textit{transformation}) Lie groupoid $G \ltimes \B  \rightrightarrows \B$. The points of $G \ltimes \B$ are triples $(x,y,g)$, where $x,y \in \B$, $g \in G$, and $gx= y$. The source map is given by $\Src(x,y,g) := x$, the target is $\Trg(x,y,g) := y$, and the multiplication is defined by
$
(y,z,h)(x,y,g) := (x,z,hg)\,.
$ 

\end{enumerate}
\end{example}
In the remaining part of this subsection we introduce several standard notions related to groupoids.
\begin{definition}
A groupoid $\G \rightrightarrows \B$ is called \textit{transitive} if for any $x, y\in \B$ there exists $g \in \G$ such that $\Src(g) = x$ and $\Trg(g) = y$. 
\end{definition}
\begin{example}
An action groupoid  $G \ltimes \B$ is transitive if and only if the $G$-action on $\B$ is transitive. For applications in this paper we will consider only transitive groupoids.

\end{example}
%
\begin{definition}
Let $\G \rightrightarrows \B$ be a groupoid.  Then the \textit{source fiber}  $\G_x$ of $\G$ corresponding to $x \in \B$ is the set  $\G_x := \{ g \in \G \mid \Src(g) = x\}$.
The \textit{isotropy group} $\G_x^x$ of $\G$ corresponding to $x \in \B$ is the set $\G_x^x := \{ g \in \G \mid \Src(g) = \Trg(g) = x\}$. (This is indeed  a group under the operation induced from the groupoid multiplication.)
\end{definition}
\begin{example}
For an action groupoid $G \ltimes \B$, any source fiber is canonically identified with the group $G$, while the isotropy group corresponding to $x \in \B$ is the isotropy subgroup (the stabilizer) of $x$ under the $G$-action.
\end{example}

\medskip


\subsection{Lie algebroids}\label{sect:algebroid}
The infinitesimal object associated with a Lie groupoid is a \textit{Lie algebroid}.
\begin{definition}
A \textit{Lie algebroid} $\A$ over a manifold $\B$ is a vector bundle $\A \to \B$ endowed with a Lie bracket $\LieBracket$ on $\Cont^\infty$-smooth sections and a vector bundle morphism $\# \colon \A \to \T \B$, called the \textit{anchor map}, such that for any two $\Cont^\infty$-sections $\zeta, \eta$ of $\A$ and any smooth function $f \in \Cont^\infty(\B)$, one has the following version of the Leibniz rule:
$$
[\zeta,f\eta] = f[\zeta,\eta] + ({\#\zeta} \cdot  f) \eta\,.
$$
\end{definition}
\begin{remark}
Here and in what follows $ {\#\zeta}\cdot  f$ stands for the derivative of the function $f$ along the vector 
field $\#\zeta$ on $B$.  It also follows from the Leibniz rule and the Jacobi identity for the bracket that 
the anchor map induces a Lie algebra homomorphism from sections of $\A$ to vector fields on $B$:
$
\#[\zeta,\eta] = [\#\zeta, \#\eta]$, where the bracket on the right-hand side is the standard Lie bracket of vector fields. 
\end{remark}
 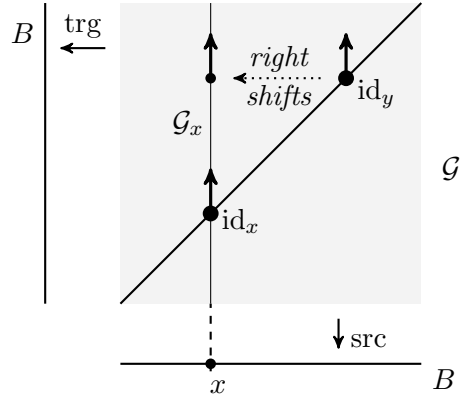
\begin{figure}[t]
\centerline{
\begin{tikzpicture}[thick, scale = 2]
     \fill [opacity = 0.05] (0,0) -- (0,2) -- (2,2) -- (2,0) -- cycle;
     \draw (2,-0.4) -- (0,-0.4);
     \draw (-0.5,0) -- (-0.5,2);
       \draw (0,0) -- (2,2);
            \fill (0.4+0.2,0.4+0.2) circle [radius=1.5pt];
 \draw  [thin] (0.4+0.2,0) -- (0.4+0.2,2);         
     \draw  [dashed] (0.4+0.2,-0.45) -- (0.4+0.2,-0);    
            \node  at (0.6+0.2,0.35+0.2) () {$\id_x$};
             \fill (1.3+0.2,1.3+0.2) circle [radius=1.5pt];
            \node  at (1.5+0.2,1.2+0.2) () {$\id_y$};
             \fill (0.4+0.2,1.3+0.2) circle [radius=1pt];
                 \fill (0.4+0.2,-0.4) circle [radius=1pt];
                      \node  at (0.45+0.2,-0.55) () {$x$};
                         \node  at (0.3+0.15,1.2) () {$\G_x$};
                             \node  at (2.2,0.9) () {$\G$};
 \draw  [dotted, <-] (0.55+0.2,1.3+0.2) -- (1.15+0.2,1.3+0.2);
            \node  at (0.85+0.2,1.4+0.23) () {\it right};
            \node  at (0.85+0.2,1.2+0.17) () {\it shifts};
 \draw  [very thick, ->] (0.4+0.2,0.4+0.2) -- (0.4+0.2,0.7+0.2);
 \draw  [very thick, ->] (0.4+0.2,1.3+0.2) -- (0.4+0.2,1.6+0.2);
 \draw  [very thick, ->] (1.3+0.2,1.3+0.2) -- (1.3+0.2,1.6+0.2);
  \draw  [->] (1.45,-0.1) -- (1.45,-0.3);
            \node  at (1.65,-0.25) () {$\rm src$};
    \draw  [->] (-0.1,1.7) -- (-0.4, 1.7);
            \node  at (-0.25,1.85) () {$\rm trg$};
            \node  at (2.15,-0.5) () {$B$};
            \node  at (-0.65,1.8) () {$B$};
\end{tikzpicture}
}
\caption{A transitive groupoid $\G \rightrightarrows \B$ is depicted as a square. The
vertical projection is the source map $\Src \colon \G \to B$, the  horizontal
projection is the target map  $\Trg \colon \G \to B$, while horizontal arrows are right translations. A section of the algebroid is a
collection of vertical vectors attached to the diagonal $\Src = \Trg$.
}
\label{fig:algebroid}
\end{figure}
\begin{definition}
\textit{The Lie algebroid $\A \to \B$ corresponding to a Lie groupoid $\G \rightrightarrows \B$} is constructed in the following way.
The fiber of $\A$ over $x \in B$ is the tangent space to the source fiber $\G_x$ at the point $\id_x$. The 
{\it anchor map} on this fiber is defined as the differential of the target map $\Trg \colon \G_x \to \B$, while the 
{\it bracket on sections} is defined as follows. Every section of $\A$ can be uniquely extended to 
a right-invariant vector field on $\G$ tangent to source fibers, and the correspondence between such vector 
fields and sections of $\A$ is a vector space isomorphism (see Figure \ref{fig:algebroid}). This allows one to define the bracket of sections 
of $\A$ as the Lie bracket of the corresponding right-invariant vector fields (which is again  
a right-invariant vector field tangent to source fibers, and, therefore, corresponds to a section of $\A$).
\end{definition}

\begin{remark}\label{transAlegbroid}
For a transitive groupoid $\G$, the Lie algebra of right-invariant vector fields tangent to source fibers is isomorphic via restriction to the Lie algebra of vector fields on a fixed source fiber $\G_x$ invariant with respect to the right action of the vertex group $\G^x_x$. (See Figure \ref{fig:algebroid}. In this figure, a vector field tangent to a source fiber $\G_x$ is represented as a collection of vertical vectors tangent to the vertical line $\Src = x$.) Therefore, in the transitive case one can also define the bracket in the algebroid $\A$ as the bracket of $\G_x^x$-invariant vector fields on $\G_x$, and $\A$ is isomorphic to the algebroid $\T\G_x \, / \, \G_x^x$.
\end{remark}

\begin{example}\label{algebroids}For Lie groupoids of Example \ref{groupoids}, the corresponding algebroids are:
\begin{enumerate}[label=(\alph*)] 
\item
The Lie algebra $\g$ of the group $G$, considered as a Lie algebroid over a point.
\item The tangent bundle $\T B$ of the manifold $B$. The corresponding bracket on sections is the standard Lie bracket of vector fields, while the anchor map is the identity.
\item
The \textit{action Lie algebroid} $\g \ltimes B$, where $\g$ is the Lie algebra of the group $G$. As a vector bundle, $\g \ltimes B$ is a trivial bundle over $B$ with fiber $\g$. The anchor map $\g \ltimes B \to \T B$ is defined for an element $(u,x) \in \g \ltimes B$ (where $u \in \g$ and $x \in \B$) by $\#(u, x) = \rho_u({x}) $, with $\rho_u$ being the infinitesimal generator of the $G$-action corresponding to $u \in \g$. The bracket of sections is given by \begin{equation}\label{actionBracket}
[\zeta,\eta](x) := [\zeta(x), \eta(x)] + {\#\zeta}(x) \cdot \eta- {\#\eta}(x) \cdot  \zeta\,,
\end{equation}
where the derivatives $\#\zeta(x) \cdot  \eta$, $\#\eta(x) \cdot  \zeta$ are defined by identifying sections of $\g \ltimes B $ with $\g$-valued functions on $B$. 
\end{enumerate}
\end{example}
In the remaining part of this subsection we introduce several standard notions related to Lie algebroids.
\begin{definition}
A {\it Lie algebroid} $\A \to \B$ is called {\it transitive} if the anchor map is surjective.
\end{definition}
One can prove that the Lie algebroid associated with a transitive Lie groupoid is transitive. (The converse is also true provided that the base $\B$ is connected.)

\medskip

Further, we define an \textit{isotropy algebra}.
Let $x \in B$, and let $ u,  v \in \Ker \#_x$ be elements of the kernel of the anchor map restricted to the fiber of $\A$ over $x$. Let also $\hat u, \hat v$ be arbitrary smooth sections of~$\A$ such that $\hat u(x) = u$ and $\hat v(x) = v$. Then one has the following result.
\begin{proposition}\label{prop:isoalgebra}
The value $[\hat u, \hat v](x)$ depends only on $u ,v$, but not on the choice of the extensions $\hat u, \hat v$. So, the formula $[u,v] := [\hat u, \hat v](x)$ gives a well-defined bracket on $\Ker \#_x$. The so-defined bracket turns the space $\Ker \#_x$  into a Lie algebra. Furthermore, if there is a Lie groupoid $\G$ associated with the algebroid $\A$, then this algebra is the Lie algebra of the isotropy group $\G_x^x$.
\end{proposition}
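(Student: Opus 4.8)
The plan is to establish the three assertions in order, the crux being the first one (independence of $[\hat u,\hat v](x)$ on the extensions), after which the Lie algebra axioms follow almost formally. First I would record that the bracket is \emph{local}: if a section vanishes on an open set $U$, so does its bracket with any other section. This follows by multiplying the vanishing section by a bump function $\rho$ supported in $U$ (so that $\rho\hat u\equiv 0$ globally) and applying the first-slot Leibniz rule $[f\zeta,\eta]=f[\zeta,\eta]-(\#\eta\cdot f)\zeta$, which itself comes from the given second-slot rule together with antisymmetry. Locality permits computing in a local frame $e_1,\dots,e_r$ of $\A$ near $x$. Writing $\hat u=\sum_i a_i e_i$ and $\hat v=\sum_j b_j e_j$ and expanding by bilinearity and the Leibniz rule in both slots gives
\[
[\hat u,\hat v]=\sum_{i,j} a_i b_j[e_i,e_j]+\sum_{i,j} a_i(\#e_i\cdot b_j)e_j-\sum_{i,j} b_j(\#e_j\cdot a_i)e_i .
\]
Evaluating at $x$, the first-order terms reorganize as $(db_j)_x(\#u)$ and $(da_i)_x(\#v)$ times frame vectors, and these vanish precisely because $u,v\in\Ker\#_x$. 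Hence $[\hat u,\hat v](x)=\sum_{i,j}a_i(x)b_j(x)[e_i,e_j](x)$ depends only on $u$ and $v$, which is the required well-definedness.

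Next I would verify that the resulting operation is a Lie bracket on $\Ker\#_x$. Bilinearity and antisymmetry are inherited from the bracket on sections by choosing extensions linearly. Closedness, i.e. $[u,v]\in\Ker\#_x$, uses that the anchor is a bracket homomorphism, $\#[\hat u,\hat v]=[\#\hat u,\#\hat v]$: since $\#\hat u$ and $\#\hat v$ are vector fields on $\B$ vanishing at $x$, so is their Lie bracket, whence $\#[u,v]=0$. For the Jacobi identity the point is that $[\hat v,\hat w]$ is itself a smooth section whose value at $x$ is $[v,w]\in\Ker\#_x$; thus, by the well-definedness just proved, $[u,[v,w]]=[\hat u,[\hat v,\hat w]](x)$, and the cyclic sum of the three such terms equals the Jacobiator of the (genuine Lie) bracket on sections evaluated at $x$, which is zero.

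Finally, assume a Lie groupoid $\G$ integrates $\A$. By construction $\A_x=\T_{\id_x}\G_x$ and $\#_x=d\Trg$, so $\Ker\#_x=\Ker(d\Trg|_{\id_x})=\T_{\id_x}\G_x^x$, identifying the underlying vector space with the Lie algebra of $\G_x^x$. For the bracket I would invoke the definition of the algebroid bracket via right-invariant vector fields tangent to source fibers. The key observation is that if $\hat u(x)=u\in\Ker\#_x$, then the associated right-invariant field $\overrightarrow{\hat u}$, evaluated at $g\in\G_x^x$, equals $(dR_g)_{\id_x}u$; since $R_g$ maps $\G_x^x$ into itself and $u$ is tangent to $\G_x^x$, this vector is again tangent to $\G_x^x$. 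Hence $\overrightarrow{\hat u}$ restricts along $\G_x^x$ to the right-invariant vector field on the group $\G_x^x$ determined by $u$. Because the Lie bracket of two fields tangent to a submanifold restricts to the bracket of their restrictions, $[\overrightarrow{\hat u},\overrightarrow{\hat v}]$ restricts to the group bracket, and evaluating at $\id_x$ identifies $[u,v]=[\overrightarrow{\hat u},\overrightarrow{\hat v}](\id_x)$ with the Lie bracket of $\G_x^x$. I expect the main obstacle to be exactly this tangency step — checking that the source-fiber right-invariant extension is genuinely tangent to the isotropy subgroup along it, which is what makes the restriction (and hence the identification of the two brackets) legitimate — and keeping the left/right-invariant conventions consistent so that no spurious sign appears.
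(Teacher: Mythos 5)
Your proof is correct. The paper itself gives no argument for this proposition: it appears in the ``Generalities on Lie groupoids and Lie algebroids'' section, where such facts are recalled without proof and deferred to the cited literature. Your write-up is a complete and accurate rendition of the standard argument --- locality via the bump-function/Leibniz trick, the local-frame computation showing the first-order terms are $(db_j)_x(\#u)$ and $(da_i)_x(\#v)$ and hence vanish on $\Ker\#_x$, closedness and Jacobi from the anchor being a bracket homomorphism and from well-definedness applied to the inner bracket, and the identification with the Lie algebra of $\G_x^x$ via tangency of the right-invariant extension to the isotropy group --- and it is consistent with the paper's right-invariant conventions.
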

\begin{definition}
The algebra $\Ker \#_x$  is called the \textit{isotropy algebra} at the point $x$. 
\end{definition}

Further, we define the \textit{isotropy representation} of a Lie groupoid. Let $\G  \rightrightarrows \B$ be a Lie groupoid, and let $g \in \G$. Let also $x := \Src(g)$, $y := \Trg(g)$. Then we have a group homomorphism
$
\Phi_g \colon \G_{x}^{x} \to \G_{y}^{y}
$
given by
$
\Phi_g(h) := ghg^{-1},
$
and the corresponding homomorphism of Lie algebras $$\Ad_g \colon \Ker \#_{x} \to \Ker \#_{y}\,,$$ where $\#$ is the anchor map for the algebroid $\A$ corresponding to the groupoid $\G $.
The collection of operators $\{ \Ad_g \mid g \in \G\}$ defines a representation of the Lie groupoid $\G$ on the corresponding bundle of isotropy algebras $\{ \Ker \#_x \mid x \in \B\}$ (the latter is indeed a vector bundle provided that the Lie algebroid $\A$ is transitive, or, more generally, if the anchor map has constant rank). This representation is called the \textit{isotropy representation} of a Lie groupoid. It generalizes the notion of the adjoint representation of a Lie group.\par
Now, recall that one can use the adjoint representation of a group to define the bracket in the corresponding Lie algebra. Likewise, the isotropy representation of a Lie groupoid $\G$ can be used to define the bracket $[\zeta,\eta]$ of sections $\zeta, \eta$ of the corresponding Lie algebroid $\A$, provided that one has $\#\eta = 0$. Namely, one has the following result.

\begin{proposition}
Let $\G  \rightrightarrows \B$ be a Lie groupoid, and let  $\A \to \B$ be the corresponding algebroid. Let also $\zeta,\eta$ be sections of $\A$. Further, assume that $\eta$ belongs to the isotropy algebra at every point: 
$\#\eta= 0$.
Let $g_t$ be any smooth curve in a fixed source fiber $\G_x \subset \G$ such that $g_0 = \id_x$, and the tangent vector to $g_t$ at $\id_x$ is $\zeta(x)$.
Then
\begin{align}\label{isotropyRepr}
[\zeta,\eta](x) = \left.\frac{d}{dt}\right\vert_{t = 0}\!\!\!\! \Ad^{-1}_{g_t}( \eta(\Trg(g_t)))\,.
\end{align}
\end{proposition}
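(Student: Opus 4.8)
The plan is to unwind the definition of the algebroid bracket as the Lie bracket of right-invariant vector fields, and then to prove the groupoid analogue of the classical fact that the flow of a right-invariant vector field is a left translation.

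First I observe that the right-hand side of \eqref{isotropyRepr} is the derivative at $t=0$ of the curve $t\mapsto\Ad_{g_t}^{-1}(\eta(\Trg g_t))$, which takes values in the single vector space $\Ker\#_x$ (indeed $\Ad_{g_t}^{-1}=\Ad_{g_t^{-1}}$ maps $\Ker\#_{\Trg g_t}$ to $\Ker\#_{\Src g_t}=\Ker\#_x$). As the first $t$-derivative of a smooth $\Ker\#_x$-valued function of $g_t$, it depends only on the $1$-jet $(g_0,\dot g_0)=(\id_x,\zeta(x))$, so it suffices to verify the formula for one convenient curve with this $1$-jet. Denote by $\widetilde\zeta,\widetilde\eta$ the right-invariant vector fields on $\G$, tangent to the source fibers, that extend $\zeta,\eta$; by the very definition of the algebroid bracket, $[\zeta,\eta](x)=[\widetilde\zeta,\widetilde\eta]_{\id_x}$ under $\A_x=T_{\id_x}\G_x$. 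Let $\theta_t$ be the local flow of $\widetilde\zeta$, put $g_t:=\theta_t(\id_x)$ (so $\dot g_0=\zeta(x)$) and $y_t:=\Trg(g_t)$. Right-invariance of $\widetilde\zeta$ makes $\theta_t$ commute with every right translation $R_g$, and tangency to source fibers makes $\theta_t$ preserve them.

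Writing the bracket as a Lie derivative along the flow of $\widetilde\zeta$ gives
\[
[\widetilde\zeta,\widetilde\eta]_{\id_x}=\L_{\widetilde\zeta}\widetilde\eta\big|_{\id_x}=\dbydtat{0}(d\theta_{-t})_{g_t}\,\widetilde\eta_{g_t}\,,
\]
so the crux is to identify $(d\theta_{-t})_{g_t}\widetilde\eta_{g_t}$ with $\Ad_{g_t}^{-1}(\eta(y_t))$ for each $t$. This follows from two consequences of right-invariance: applying $\theta_{-t}\circ R_{g_t^{-1}}=R_{g_t^{-1}}\circ\theta_{-t}$ to $g_t$, together with $g_tg_t^{-1}=\id_{y_t}$ and $\theta_{-t}(g_t)=\id_x$, yields $\theta_{-t}(\id_{y_t})=g_t^{-1}$; feeding this into $\theta_{-t}\circ R_k=R_k\circ\theta_{-t}$ evaluated at $\id_{y_t}$ yields $\theta_{-t}(k)=g_t^{-1}k$ for every $k\in\G_{y_t}^{y_t}$. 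Since $\#\eta=0$, the value $\eta(y_t)$ lies in $\Ker\#_{y_t}=\mathrm{Lie}(\G_{y_t}^{y_t})$, so I write $\widetilde\eta_{g_t}=(dR_{g_t})_{\id_{y_t}}\eta(y_t)=\tfrac{d}{ds}\big|_{s=0}(k_sg_t)$ for a curve $k_s$ in $\G_{y_t}^{y_t}$ with $\dot k_0=\eta(y_t)$. Using that $\theta_{-t}$ commutes with $R_{g_t}$ and the relation $\theta_{-t}(k_s)=g_t^{-1}k_s$,
\[
(d\theta_{-t})_{g_t}\,\widetilde\eta_{g_t}=\tfrac{d}{ds}\big|_{s=0}\theta_{-t}(k_sg_t)=\tfrac{d}{ds}\big|_{s=0}g_t^{-1}k_sg_t=\Ad_{g_t}^{-1}(\eta(y_t))\,,
\]
the last equality being the definition of $\Ad_{g_t^{-1}}=\Ad_{g_t}^{-1}$ as the differential at $\id_{y_t}$ of the conjugation $h\mapsto g_t^{-1}hg_t$. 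Substituting into the Lie-derivative formula and differentiating in $t$ gives \eqref{isotropyRepr}.

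The step needing care—and the main obstacle—is precisely this flow/left-translation identity, because on a groupoid left translation by a fixed element is only partially defined and every composition must respect the source and target constraints. The hypothesis $\#\eta=0$ is exactly what makes the argument go through: it confines $\widetilde\eta_{g_t}$ to directions tangent to the isotropy group $\G_{y_t}^{y_t}$, the locus on which conjugation by $g_t$—hence $\Ad_{g_t}$—is defined, so that transporting $\widetilde\eta$ by the flow of $\widetilde\zeta$ reproduces conjugation rather than an ill-defined translation.
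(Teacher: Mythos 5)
Your proof is correct. Note that the paper itself states this proposition without proof, as part of its background material on Lie groupoids (the surrounding propositions in that subsection are quoted as standard facts), so there is no in-paper argument to compare against; your write-up supplies the standard one. The two pillars of your argument both check out: the reduction to a single convenient curve is legitimate because the right-hand side is $dF_{\id_x}(\zeta(x))$ for the smooth $\Ker\#_x$-valued function $F(g)=\Ad_g^{-1}(\eta(\Trg g))$ on $\G_x$, and the identity $(d\theta_{-t})_{g_t}\widetilde\eta_{g_t}=\Ad_{g_t}^{-1}(\eta(y_t))$ follows correctly from the two commutation consequences you derive ($\theta_{-t}(\id_{y_t})=g_t^{-1}$ and $\theta_{-t}(k)=g_t^{-1}k$ for $k\in\G^{y_t}_{y_t}$), with all source/target constraints respected. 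As a sanity check, in the Lie group case your computation reproduces $\frac{d}{dt}\big\vert_{t=0}\Ad_{g_t^{-1}}\eta=-\ad_\zeta\eta$, matching the sign convention the paper flags for brackets of right-invariant fields.
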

\begin{remark}
We have
$
\Ad^{-1}_{g_t} (\eta(\Trg(g_t))) \in \Ker \#_x
$
for every $t$, so the derivative in \eqref{isotropyRepr} is a well-defined element of $\Ker \#_x$.

When $\G$ is a Lie group $G$, the condition $\#\eta = 0$ becomes trivial, while formula~\eqref{isotropyRepr} becomes the relation $\ad = -\diff (\Ad)$ between the adjoint representations of $G$ and the adjoint representation of the corresponding Lie algebra $\g$. The minus sign is due to the fact that we have defined the Lie bracket using right-invariant vector fields instead of left-invariant ones. 
\end{remark}


\subsection{Lie algebroids and Poisson vector bundles}\label{sect:poisson_br}
Recall that the dual space $\g^*$ of any Lie algebra $\g$ carries a natural linear Poisson structure (by definition, a Poisson structure on a vector space is \textit{linear} if the Poisson bracket of two linear functions is again a linear function). 
Conversely, given a vector space $V$ with a linear Poisson structure, its dual space $V^*$ has a natural Lie algebra structure. This duality between Lie algebras and ``Poisson vector spaces'' extends to the vector bundles setting. The corresponding dual objects are {Lie algebroids} and \textit{Poisson vector bundles}.

\begin{definition}
A \textit{Poisson vector bundle} $ E \to B$ is a vector bundle whose total space $E$ is endowed with a fiberwise linear Poisson structure, that is a Poisson structure such that the bracket of any two fiberwise linear functions is again a fiberwise linear function.
\end{definition}
\begin{remark}
It also follows from this definition and the Leibniz rule for the Poisson bracket that the bracket of a fiberwise linear function with a fiberwise constant function is a fiberwise constant function, while the brackets of fiberwise constant functions vanish.
\end{remark}
\begin{example}
The two basic examples of Poisson vector bundles are a vector space endowed with a linear Poisson structure (which is a Poisson vector bundle over a point), and the cotangent bundle of a manifold. These Poisson vector bundles are dual to Lie algebroids of Examples \ref{algebroids}(a) and \ref{algebroids}(b) respectively. \end{example}
For general Lie algebroids, one has the following result.
\begin{proposition} 
The dual bundle $\A^* \to B$ of any Lie algebroid $\A \to B$ has a natural structure of a Poisson vector bundle. The Poisson structure on $\A^*$ is uniquely determined by requiring that for arbitrary fiberwise linear functions $\zeta ,\eta$ and an arbitrary fiberwise constant function~$f$, one has
\begin{equation}\label{DualPoissonBracket}
\{ \zeta,\eta\} := [\zeta,\eta]\,, \quad \{\zeta, f\} :={\#\zeta} \cdot f\,.
\end{equation}
Here we identify fiberwise linear functions on $\A^*$ with sections of $\A$, and fiberwise constant functions on $\A^*$ with functions on the base $B$.\par
Conversely, given a Poisson vector bundle, its dual has a natural Lie algebroid structure. The bracket of sections and the anchor are defined by the same formulas \eqref{DualPoissonBracket} understood as the definitions of the right-hand sides.
\end{proposition}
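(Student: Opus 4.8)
The plan is to work in local coordinates on the total space of $\A^*$ and then observe that the resulting bracket is coordinate-independent. Choose coordinates $x^i$ on a chart $U \subset \B$ and a local frame $e_1,\dots,e_r$ of $\A$ over $U$, with dual linear fiber coordinates $\xi_1,\dots,\xi_r$ on $\A^*$. Under the stated identifications the fiberwise constant functions are the pullbacks of the $x^i$ and the fiberwise linear functions are spanned over $\Cont^\infty(U)$ by the $\xi_a$, with $e_a \leftrightarrow \xi_a$. Writing $[e_a,e_b] = c_{ab}^c e_c$ and $\#e_a = \#_a^i\,\partial_{x^i}$, the three defining formulas fix the bracket of all coordinate functions:
$$\{x^i,x^j\}=0,\qquad \{\xi_a,x^i\}=\#_a^i,\qquad \{\xi_a,\xi_b\}=c_{ab}^c\,\xi_c.$$
Uniqueness is then immediate: since a Poisson bracket is a biderivation, $\{F,G\}$ for arbitrary smooth $F,G$ is determined through the Leibniz rule by the brackets of the coordinate functions, which the formulas pin down. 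The same observation shows that the locally defined skew bivector $\pi$ with the above components does not depend on the chart or frame (two local definitions agreeing on generators agree everywhere), so these $\pi$ patch to a global bivector field on $\A^*$.

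The heart of the matter is the Jacobi identity for $\pi$. Here I would invoke the standard fact that the Jacobiator $J(F,G,H):=\{\{F,G\},H\}+\{\{G,H\},F\}+\{\{H,F\},G\}$ of a skew biderivation is tensorial (it equals $[\pi,\pi]_{\mathrm{SN}}$ evaluated on $dF,dG,dH$), so it suffices to check $J=0$ on triples of coordinate functions, whose differentials span every cotangent space. Triples involving at most one $\xi$ give $J=0$ trivially, since all intermediate brackets land among base functions and brackets of base functions vanish. For two linear and one constant, a short Leibniz computation yields
$$J(\xi_a,\xi_b,x^i)=c_{ab}^c\,\#_c^i-\#_a^j\,\partial_{x^j}\#_b^i+\#_b^j\,\partial_{x^j}\#_a^i,$$
which is exactly the $i$-th component of $\#[e_a,e_b]-[\#e_a,\#e_b]$; this vanishes because the anchor is a Lie algebra homomorphism, as recorded in the remark following the definition of a Lie algebroid.

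Finally, for three linear functions the same Leibniz bookkeeping, now using the algebroid Leibniz rule to absorb the $x$-dependence of the structure functions $c_{ab}^c$, identifies $\{\{\xi_a,\xi_b\},\xi_c\}$ with the fiberwise linear function attached to the section $[[e_a,e_b],e_c]$. Consequently $J(\xi_a,\xi_b,\xi_c)$ is the linear function of the section $[[e_a,e_b],e_c]+[[e_b,e_c],e_a]+[[e_c,e_a],e_b]$, which is zero by the Jacobi identity for the algebroid bracket. This completes the forward direction, skew-symmetry of $\pi$ and the fact that $\{\zeta,\eta\}$ is again fiberwise linear being immediate. I expect this three-linear case to be the main obstacle: one must track the non-constant coefficients $c_{ab}^c(x)$ carefully, and it is precisely the tensoriality reduction to coordinate functions that keeps the verification finite and avoids any density or convergence argument over the fiber-polynomial functions.

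For the converse I would read the formulas backwards. Given a Poisson vector bundle $E \to \B$, set $\A := E^*$ and declare, for sections $\zeta,\eta$ of $\A$ (identified with fiberwise linear functions on $E$) and $f \in \Cont^\infty(\B)$ (identified with fiberwise constant functions), $[\zeta,\eta]:=\{\zeta,\eta\}$ and $\#\zeta \cdot f := \{\zeta,f\}$. By the definition of a Poisson vector bundle, $\{\zeta,\eta\}$ is again fiberwise linear and $\{\zeta,f\}$ fiberwise constant, so these land in the correct spaces. That $\#$ is a genuine bundle map follows from its $\Cont^\infty(\B)$-linearity in $\zeta$: using $\{f,g\}=0$ for $f,g\in\Cont^\infty(\B)$ together with Leibniz gives $\#(f\zeta)\cdot g = f\,(\#\zeta \cdot g)$, hence $\#(f\zeta)=f\,\#\zeta$. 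The algebroid Leibniz rule $[\zeta,f\eta]=f[\zeta,\eta]+(\#\zeta \cdot f)\eta$ is just the biderivation identity $\{\zeta,f\eta\}=f\{\zeta,\eta\}+\{\zeta,f\}\eta$, while skew-symmetry and the Jacobi identity for $[\,,\,]$ are the restrictions of the corresponding Poisson axioms to fiberwise linear functions. Thus the two constructions are mutually inverse, as claimed.
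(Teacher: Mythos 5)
Your verification is correct: the reduction of both uniqueness and the Jacobi identity to coordinate functions via the biderivation/tensoriality argument is sound, the three case computations are right (the two-linear-one-constant case correctly reduces to $\#[e_a,e_b]=[\#e_a,\#e_b]$, which the paper's remark derives from the algebroid axioms, and the three-linear case correctly reduces to the Jacobi identity for the algebroid bracket after absorbing the $x$-dependence of $c_{ab}^c$ via the Leibniz rule), and the converse direction is handled properly, including the use of $\{f,g\}=0$ for base functions to get $\Cont^\infty(\B)$-linearity of the anchor. The paper states this proposition as a standard fact with a reference and gives no proof, so there is nothing to compare against; your coordinate computation is a standard and complete way to establish it in the finite-dimensional setting the paper assumes for this section.
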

In what follows, we will need an explicit formula for the Poisson structure on the dual of a Lie algebroid. We first define the \textit{Lie algebroid differential}:
\begin{definition}
For an arbitrary $1$-form $\xi$ on $\A$ (i.e., a section of $\A^*$) its \textit{algebroid differential} $\diff_\A \xi$ is a $2$-form on $\A$ given on arbitrary $\zeta, \eta$ lying in one fiber of $\A$ by the formula
\begin{align}\label{algDiff}
\diff_{\A}\xi \left(\zeta,\eta\right) := -\xi([\hat \zeta,\hat \eta]) + {\#\zeta} \cdot  (\xi(\hat \eta)) - {\#\eta} \cdot  (\xi(\hat \zeta))\,,
\end{align}
where $\hat \zeta, \hat \eta$ are arbitrary smooth sections of $\A$ extending $\zeta, \eta$.
\end{definition}
\begin{example}
When $\A = \T B$ is the tangent bundle,  $\diff_\A$ is the de Rham differential. When $\A$ is a Lie algebra, considered as an algebroid over a point, $\diff_\A$ is the Chevalley-Eilenberg differential.
\end{example}

\begin{proposition}\label{poissonBracketFormula} {\rm\cite{boucetta2011}} Let $\A$ be a Lie algebroid. Then,   for any $ \xi \in \A^*$ 
and for any smooth functions $f, g \in \Cont^\infty(\A^*)$, one has
\begin{align}\label{PoissonExplicit}
\{f, g\}(\xi) = -\diff_{\A} \hat \xi(\diff_\xi^Ff, \diff_\xi^Fg\vphantom{ \hat \xi}) + {\#\diff_\xi^Ff} \cdot (g \circ \hat \xi)- {\#\diff_\xi^Fg} \cdot  (f \circ  \hat \xi)\,,
\end{align}
where $\hat \xi $ is an arbitrary section of $\A^*$ extending $\xi$, 
 and $\diff_\xi^Ff$, $\diff_\xi^Fg$ are fiberwise differentials of $f$ and $g$ at $\xi$ (i.e. differentials restricted to the tangent space of the fiber of $\xi \in \A^*$), regarded as elements of $\A$.
\end{proposition}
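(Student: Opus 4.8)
The plan is to deduce \eqref{PoissonExplicit} from the preceding proposition, which provides a \emph{unique} fiberwise-linear Poisson bracket $\PoissonBracket$ on $\A^*$ satisfying the defining relations \eqref{DualPoissonBracket}. Being a Poisson bracket, $\PoissonBracket$ is a biderivation, so its value $\{f,g\}(\xi)$ at a point $\xi\in\A^*$ depends only on the differentials $\diff f$ and $\diff g$ at $\xi$. At every $\xi$ the differentials of fiberwise-linear functions (sections of $\A$) together with those of fiberwise-constant functions (pullbacks of functions on $\B$) span the cotangent space $\T^*_\xi\A^*$. Hence it suffices to prove two things: (i) the right-hand side of \eqref{PoissonExplicit} is a well-defined biderivation --- independent of the auxiliary extension $\hat\xi$ and depending only on the first jets of $f,g$ at $\xi$; and (ii) it reproduces \eqref{DualPoissonBracket} on fiberwise-linear and fiberwise-constant arguments. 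Since two biderivations agreeing on a family of functions whose differentials span every cotangent space must coincide, (i) and (ii) identify the right-hand side of \eqref{PoissonExplicit} with $\PoissonBracket$.

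For (i), let $x\in\B$ be the base point of $\xi$ and put $\zeta:=\diff_\xi^Ff$, $\eta:=\diff_\xi^Fg\in\A_x$; these do not involve $\hat\xi$. I would test independence of the extension by replacing $\hat\xi$ with $\hat\xi+\beta$, where $\beta$ is a section of $\A^*$ with $\beta(x)=0$, and computing the change of each of the three terms of \eqref{PoissonExplicit}. Expanding $\diff_\A\beta(\zeta,\eta)$ via \eqref{algDiff} and using $\beta(x)=0$ to kill the term $\beta([\hat\zeta,\hat\eta])$, the change of the first term is $-\#\zeta\cdot(\beta(\hat\eta))+\#\eta\cdot(\beta(\hat\zeta))$ at $x$, for any sections $\hat\zeta,\hat\eta$ extending $\zeta,\eta$. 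On the other hand, differentiating $g\circ(\hat\xi+\beta)$ along $\#\zeta$ and invoking the Leibniz rule together with $\beta(x)=0$ shows that the change of the second term is $+\#\zeta\cdot(\beta(\hat\eta))$ at $x$, and likewise the change of the third term is $-\#\eta\cdot(\beta(\hat\zeta))$; here one uses that the fibre derivative of $\beta$ along $\#\zeta$ is well defined because $\beta$ vanishes at $x$. These three contributions cancel, so the expression is independent of $\hat\xi$. The same inspection shows the right-hand side is bilinear and antisymmetric in $(f,g)$ and enters only through $\zeta,\eta$ and the directional derivatives of $f\circ\hat\xi,g\circ\hat\xi$ at $x$, i.e. only through the differentials of $f,g$ at $\xi$; hence it is a biderivation.

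For (ii), I would first take two fiberwise-linear functions, i.e. sections $\zeta,\eta$ of $\A$ viewed as linear functions on $\A^*$. Then $\diff_\xi^F\zeta=\zeta(x)$ and $\diff_\xi^F\eta=\eta(x)$, while $\eta\circ\hat\xi$ and $\zeta\circ\hat\xi$ are the functions $\hat\xi(\eta)$ and $\hat\xi(\zeta)$ on $\B$. Substituting into \eqref{PoissonExplicit} and expanding $\diff_\A\hat\xi(\zeta(x),\eta(x))$ by \eqref{algDiff} with $\zeta,\eta$ themselves as extensions, the two anchor-derivative terms produced by $\diff_\A$ cancel against the two explicit anchor terms of \eqref{PoissonExplicit}, leaving exactly $\hat\xi([\zeta,\eta])=\langle\xi,[\zeta,\eta](x)\rangle$; that is, $\{\zeta,\eta\}=[\zeta,\eta]$, the first relation in \eqref{DualPoissonBracket}. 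For a fiberwise-linear $\zeta$ and a fiberwise-constant $f$ one has $\diff_\xi^Ff=0$ and $f\circ\hat\xi=f$, so \eqref{PoissonExplicit} collapses to $\#\zeta\cdot f$, the second relation; and for two fiberwise-constant functions both fibre differentials vanish and the bracket is $0$. This verifies \eqref{DualPoissonBracket}.

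Putting (i) and (ii) together proves the formula. I expect the well-definedness step in (i) to be the main obstacle: tracking the first-order variation of the section $\hat\xi$ through $\xi$ and checking that the $\hat\xi$-dependence of the three terms cancels requires careful bookkeeping of signs and the Leibniz rule, and it is precisely the structure of \eqref{algDiff} that makes the cancellation work. As a cross-check, one could instead verify \eqref{PoissonExplicit} in a local frame $e_\alpha$ of $\A$ with $[e_\alpha,e_\beta]=c^\gamma_{\alpha\beta}e_\gamma$ and $\#e_\alpha=\rho^i_\alpha\,\diffX{x^i}$, where the Poisson bivector on $\A^*$ reads $\rho^i_\alpha\,\partial_{\xi_\alpha}\wedge\partial_{x^i}+\tfrac12 c^\gamma_{\alpha\beta}\xi_\gamma\,\partial_{\xi_\alpha}\wedge\partial_{\xi_\beta}$; matching this with \eqref{PoissonExplicit} is routine but less conceptual.
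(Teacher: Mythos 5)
Your argument is correct, and there is nothing in the paper to compare it against: the proposition is quoted from \cite{boucetta2011} without proof, and the only in-paper commentary is the remark that in infinite dimensions formula \eqref{PoissonExplicit} is taken as a definition whose well-definedness is established separately in the hydrodynamical setting. As a self-contained finite-dimensional proof (the standing assumption of that section), your strategy is the natural one: both sides are bivectors contracted with $\diff f(\xi)$, $\diff g(\xi)$, the differentials of fiberwise-linear and fiberwise-constant functions span $\T^*_\xi \A^*$, and the right-hand side reproduces \eqref{DualPoissonBracket} on such functions. I checked the two nontrivial computations. In step (i), replacing $\hat\xi$ by $\hat\xi+\beta$ with $\beta(x)=0$ kills $\beta([\hat\zeta,\hat\eta])(x)$ in \eqref{algDiff}, and the chain rule applied to $g\circ(\hat\xi+\beta)$ at the point $x$ (where the fiber argument is unchanged) produces exactly $+\#\zeta\cdot(\beta(\hat\eta))$, cancelling the $-\#\zeta\cdot(\beta(\hat\eta))$ from $-\diff_\A\beta(\zeta,\eta)$; likewise for the third term. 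In step (ii), for two fiberwise-linear arguments the four anchor terms cancel in pairs, leaving $\langle\xi,[\zeta,\eta](x)\rangle$. Two small points worth making explicit: the right-hand side depends on $f$ and $g$ only through $\diff f(\xi)$ and $\diff g(\xi)$ \emph{linearly} (for the terms $\#\diff^F_\xi f\cdot(g\circ\hat\xi)$ this is the chain-rule identity $\diff(g\circ\hat\xi)_x(\#\diff^F_\xi f)=\diff g(\xi)\bigl(\diff\hat\xi_x(\#\diff^F_\xi f)\bigr)$), which is what lets you conclude it is a genuine bivector and not merely a bilinear pairing; and the whole argument is honestly finite-dimensional — in the Fr\'echet setting of the rest of the paper the spanning claim for $\T^*_\xi\A^*$ fails, which is precisely why the authors must verify well-definedness by direct computation in Section 5.5.
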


\begin{remark}
This formula can be used as a definition in the infinite-dimensional case. Although for general infinite-dimensional algebroids  it is not even clear why this expression is well-defined, 
 we prove it below by obtaining an explicit formula in the setting of diffeomorphism groupoids  and vortex sheets.
\end{remark}

\begin{remark}\label{PVBasRS}
For a Lie algebroid $\A$ of a Lie groupoid  $\G \rightrightarrows \B$, the Poisson structure on the dual bundle can also be defined as follows. Functions on $\A^*$ can be identified with right-invariant functions on the \textit{source-wise cotangent bundle} $ \T^*_{sw}\G:= \bigcup_{x \in \B} \T^* \G_x$. Such right-invariant functions form a Poisson subalgebra with respect to the canonical Poisson bracket on the cotangent bundle, which gives rise to a Poisson bracket on $\A^*$. In other words, the Poisson manifold $\A^*$ is obtained from the Poisson manifold $ \T^*_{sw}\G$ by means of Hamiltonian reduction with respect to the right $\G$-action on $ \T^*_{sw}\G$.\par
Furthermore, in the transitive case, $\A^*$ is simply the quotient of the cotangent bundle $\T^*\G_x$ of an arbitrary source fiber $\G_x$ by the Hamiltonian right action of the vertex group $\G_x^x$ (cf. Remark \ref{transAlegbroid}).
\end{remark}

\medskip
\subsection[Euler-Arnold equations on Lie algebroids]{Euler-Arnold equations on Lie algebroids}\label{section:algsub}
Let $\A \to B$ be a  (finite- or infinite-dimensional) Lie algebroid, and let $\I \colon \A \to \A^*$ be an invertible 
bundle map. ({In the infinite-dimensional case one needs to consider the smooth dual bundle $\A^*$,
similarly to consideration of smooth duals of infinite-dimensional Lie algebras, cf.~\cite{AK}. In the hydrodynamical 
setting we define this smooth dual in detail in Section \ref{sec:kinematics}.})
We call such $\I$ an \textit{inertia operator}. An inertia operator $\I$ defines a metric on $\A$ given by
$$
 \langle u,v\rangle_\A := \langle \I(u), v \rangle \,
$$
for any $u ,v$ in the same fiber of $\A$.
Since the inertia operator $\I$ is invertible, we also get a dual metric on $\A^*$:
$$
\langle \xi_1, \xi_2 \rangle_{\A^*} :=   \langle \I(\xi_1), \xi_2 \rangle = \langle  \I^{-1}(\xi_1), \I^{-1}(\xi_2) \rangle_\A
$$
for any $\xi_1, \xi_2$ in the same fiber of $\A^*$. 
Define also a function $\H \in \Cont^\infty( \A^* )$ by
\begin{equation}\label{Hamiltonian}
\H(\xi) := \frac{1}{2}\langle \xi, \xi \rangle_{\A^*}\quad \forall \,\xi \in \A^*.
\end{equation}

\begin{definition}
The Hamiltonian equation associated with the Poisson structure on $\A^*$ and
the function $\H$ is called the \textit{groupoid Euler-Arnold equation} corresponding to the metric~$\metric_\A$.
\end{definition}
\begin{example}
When $\A$ is a Lie algebra, we obtain the standard notion of an Euler-Arnold equation on a Lie algebra dual. When $\A = \T B$ is the tangent bundle of $B$, the Euler-Arnold equation is the geodesic equation for the metric $\metric_\A$. 
\end{example}
\begin{remark}\label{rem:idofmetrics}
In the case when the algebroid $\A$ is associated with a certain Lie groupoid $\G$, solutions of the Euler-Arnold equation can be interpreted as geodesics of a right-invariant source-wise (i.e. defined only for vectors tangent to source fibers) metric on $\G$. Indeed, the one-to-one correspondence between sections of $\A$ and right-invariant vector fields on $\G$ tangent to source fibers  gives rise to a one-to-one correspondence between metrics on $\A$ and source-wise right-invariant metrics on $\G$. So, given a metric $\metric_\A$ on $\A$, we can consider the geodesic flow of the corresponding source-wise right-invariant metric $\metric_\G$ on $\G$ . This geodesic flow can be considered as a dynamical system on the source-wise cotangent bundle $ \T_{sw}^*\G:= \bigcup_{x \in \B} \T^* \G_x$, and under the Hamiltonian reduction with respect to the right $\G$-action (see Remark \ref{PVBasRS}), the solutions of this system descend to solutions of the above-defined groupoid Euler-Arnold equation.
\par
Furthermore, in the transitive case we have a one-to-one correspondence between metrics on $\A$ and metrics on any source fiber $\G_x$ invariant under the right $\G_x^x$-action, while the groupoid Euler-Arnold flow on $\A^*$ can be viewed as the reduction of the geodesic flow on $\G_x$ by means of the right $\G_x^x$-action.
\end{remark}
Further, we show that an Euler-Arnold equation on a transitive algebroid $\A \to \B$ always gives rise to a certain geodesic flow on the base $\B$. Indeed, let $\A \to \B$ be a Lie algebroid.
Then, since the anchor map $\# \colon A \to \T \B$ is an algebroid morphism (i.e., it preserves the bracket 
and the anchor), the dual map $\#^* \colon \T^*\B \to \A^*$ is Poisson. ({In the infinite-dimensional 
case, one needs to define $\T^*\B$ in such a way that its image under $\#^*$ belongs to the regular 
dual $\A^*$.}) Note that if, moreover, the algebroid $\A$ is transitive, then $\#^*(\T^* \B)$ is a symplectic leaf 
in $\A^*$. Indeed, if $\A$ is transitive, then  the Poisson map $\#^*$ is injective, while  the image of 
an injective Poisson map of a symplectic manifold is always symplectic.

\begin{proposition}\label{prop:sub}{\rm (cf. Proposition \ref{rsub})}
Let $\A \to \B$ be a transitive Lie algebroid, and let $\metric_\A$ be a positive-definite metric on $\A$ 
for an invertible inertia operator $\I \colon \A \to \A^*$. Assume also\footnote{Note that this property is automatic in the finite-dimensional case.} that for this metric $\metric_\A$  there  is an orthogonal decomposition 
$\A = \Ker \# \oplus (\Ker \#)^\bot$.  Then the following is true:
\begin{enumerate}
\item The pullback of the groupoid Euler-Arnold flow corresponding to the metric $\metric_\A$ from the symplectic leaf $\#^*(\T^*\B)$ to $\T^*\B$ is the geodesic flow for a certain metric $\metric_\B$ on $\B$. Explicitly, for any $x \in \B$ and any $\zeta, \eta \in \T_x\B$, the metric $\metric_B$ reads \begin{align}\label{inducedMetric}
\langle \zeta, \eta \rangle_\B = \langle \#^{-1}(\zeta),  \#^{-1}(\eta)\rangle_\A\,,
\end{align}
where $\#^{-1} \colon \T\B \to (\Ker \#)^\bot$ is the inverse for the restriction of the anchor map to $(\Ker \#)^\bot$.
\item The anchor map $\#  \colon (\A, \metric_\A) \to (\T\B, \metric_\B)$ is a Riemannian submersion of vector bundles, meaning that its restriction 
$ \#\vert_{(\Ker \#)^\bot}  \colon ((\Ker \#)^\bot, \metric_\A) \to (\T\B, \metric_\B)$ is an isometry.
\item  Assume, in addition, that the algebroid $\A$ corresponds to a certain transitive groupoid $\G$. Then, for every $x \in \B$, the target mapping $\Trg \colon (\G_x, \metric_\G) \to (\B, \metric_\B)$ is a Riemannian submersion. (Here the metric $\metric_\G$ on $\G_x$ is defined using the identification between metrics on $\A$ and right-invariant source-wise metrics on $\G$, see Remark \ref{rem:idofmetrics}.)
\end{enumerate}
\end{proposition}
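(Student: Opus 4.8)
The plan is to realize the groupoid Euler--Arnold flow on the symplectic leaf $\#^*(\T^*\B)$ as a geodesic flow pulled up from the base. By definition this flow is the Hamiltonian flow of $\H(\xi)=\frac12\langle\xi,\xi\rangle_{\A^*}$ from \eqref{Hamiltonian} on the Poisson manifold $\A^*$. As recorded just before the statement, transitivity makes $\#^*\colon\T^*\B\to\A^*$ an injective Poisson map whose image is a symplectic leaf; hence $\#^*$ is a symplectomorphism of $\T^*\B$ onto that leaf. Since a Hamiltonian flow preserves symplectic leaves and restricts on each to the Hamiltonian flow of the restricted function, transporting it back through $\#^*$ produces exactly the Hamiltonian flow on $\T^*\B$ of the pulled-back function $\H\circ\#^*$. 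Thus part (1) reduces to identifying $\H\circ\#^*$ with $\H_\B(p):=\frac12\langle p,p\rangle_{\T^*\B}$, the energy of the metric dual to $\metric_\B$, whose Hamiltonian flow on $\T^*\B$ is the geodesic flow of $\metric_\B$.

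I would first dispatch part (2), which is essentially built into \eqref{inducedMetric} and which the other parts use. By transitivity $\#$ is a fiberwise surjection with kernel $\Ker\#$, so the orthogonality hypothesis $\A=\Ker\#\oplus(\Ker\#)^\bot$ makes $\#|_{(\Ker\#)^\bot}\colon(\Ker\#)^\bot\to\T\B$ a linear isomorphism whose inverse is the map $\#^{-1}$ of \eqref{inducedMetric}. For $a,b\in(\Ker\#)^\bot$ set $\zeta=\#a$ and $\eta=\#b$; then $\#^{-1}\zeta=a$, $\#^{-1}\eta=b$, and \eqref{inducedMetric} becomes $\langle\#a,\#b\rangle_\B=\langle a,b\rangle_\A$, which is exactly the assertion that $\#|_{(\Ker\#)^\bot}$ is an isometry.

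The heart of part (1) is then the fiberwise identity $\H\circ\#^*=\H_\B$. Here the inertia operator $\I$ is the musical ``flat'' map of $\metric_\A$, so $\langle\,,\rangle_{\A^*}$ is precisely the dual metric to $\metric_\A$, and I would invoke the general linear fact that the transpose of a linear Riemannian submersion is a dual-isometric embedding. Concretely, fixing a fiber and writing $A=\#$, $K=\Ker\#$ and $U=A|_{K^\bot}$ (an isometry by part (2)), the metric adjoint $A^\dagger\colon\T\B\to\A$ satisfies $A^\dagger(\T\B)\subseteq K^\bot$ and $A^\dagger=U^{-1}$, hence is itself an isometry; dualizing via the musical isomorphisms shows $\#^*$ is an isometric embedding of $(\T^*\B,\langle\,,\rangle_{\T^*\B})$ into $(\A^*,\langle\,,\rangle_{\A^*})$. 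Therefore $\H(\#^*p)=\frac12\langle\#^*p,\#^*p\rangle_{\A^*}=\frac12\langle p,p\rangle_{\T^*\B}=\H_\B(p)$, and with the first paragraph this proves part (1). This identity is the main obstacle of the argument: it is the only place where the explicit formula \eqref{inducedMetric} and the splitting $\A=\Ker\#\oplus(\Ker\#)^\bot$ are genuinely used, and in the infinite-dimensional setting it is also where one must check that $\#^*$ lands in the regular dual and that the Hamiltonian flow exists, so that the leaf reduction of the first paragraph is legitimate.

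For part (3) I would propagate the infinitesimal statement of part (2) over the source fiber by right-invariance. At the unit $\id_x\in\G_x$ the differential of $\Trg$ is the anchor $\#_x$, the source-wise metric $\metric_\G$ restricts to $\metric_\A$, and the $\Trg$-fiber is tangent to $\Ker\#_x$, so part (2) says $\Trg$ is a Riemannian submersion at $\id_x$. For arbitrary $g\in\G_x$ with $\Trg(g)=y$, the right translation $R_{g^{-1}}\colon\G_x\to\G_y$ carries $g$ to $\id_y$, is an isometry by the source-wise right-invariance of $\metric_\G$, and satisfies $\Trg\circ R_{g^{-1}}=\Trg$ since right multiplication preserves targets. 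Hence $d\Trg_g=\#_y\circ dR_{g^{-1}}$ is a linear isometry followed by the linear Riemannian submersion $\#_y$, so it is a Riemannian submersion at $g$; as $g$ is arbitrary, $\Trg\colon(\G_x,\metric_\G)\to(\B,\metric_\B)$ is a Riemannian submersion.
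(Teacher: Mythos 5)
Your proposal is correct and follows essentially the same route as the paper's proof: part (2) read off from \eqref{inducedMetric}, part (1) obtained by showing that $\#^*$ pulls the dual metric on $\A^*$ back to the dual of $\metric_\B$ (so the Euler--Arnold Hamiltonian pulls back to the geodesic Hamiltonian) and then using that $\#^*$ is Poisson, and part (3) by right-invariance. The only difference is that you spell out the adjoint computation $A^\dagger=U^{-1}$ that the paper dismisses as ``a straightforward computation.''
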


\begin{proof}
A straightforward computation shows that the metric on $\T^*\B$ dual to~\eqref{inducedMetric} 
is the pull-back of the metric $\metric$ on $\A^*$ by the map $\#^*$. But this means that the Hamiltonian of the geodesic flow for the metric $\metric_B$ is the pull-back of the Euler-Arnold Hamiltonian, and, since the mapping $\#^*$ is Poisson, the same is true for the flows. This proves the first statement. Further, the second statement follows directly from formula \eqref{inducedMetric}, while the third statement follows from the second one and right-invariance. Thus, the proposition is proved.
\end{proof}

\begin{remark}
Another approach to the proof is the following. The source fiber $\G_x$ is a total space of a principal $\G^x_x$ bundle over~$\B$, with $\G^x_x$ acting on the fiber $\G_x$ by multiplication on the right. 
So, the proof follows from the symplectic reduction  outlined in Remark~\ref{rem:symp_red} combined with Remark~\ref{rem:idofmetrics}.
\end{remark}

\begin{example}\label{groupoidOverDensities} 
Let $M$ be a Riemannian manifold. Consider the natural transitive action of its diffeomorphisms group $\Diff(M)$ on the space $\W(M)$ of densities on $M$ of unit total mass, and let $\Diff(M) \ltimes \W(M)$ be the corresponding action groupoid (see Example \ref{groupoids}(c)). Define a metric on the corresponding action algebroid $\vect(M) \ltimes \W(M)$ by setting
$$
\langle u, v \rangle_{L^2} := \int_M (u,v)\mu
$$
for $u, v$ lying in the fiber of $\vect(M) \ltimes \W(M)$ over $\mu \in \W(M)$. (Recall that the fibers of $\vect(M) \ltimes \W(M)$ are identified with the Lie algebra $\vect(M)$, see Example \ref{algebroids}(c).) Then,  according to Remark \ref{rem:idofmetrics}, for any $\mu \in \W(M)$, there is a corresponding metric on the source fiber $(\Diff(M) \ltimes \W(M))_\mu = \Diff(M)$ invariant under the right action of the isotropy group $(\Diff(M) \ltimes \W(M))_\mu^\mu =  \SDiff(M)=\{ \phi \in \Diff(M) \mid \phi^*\mu = \mu\}$. This metric is exactly~\eqref{dmetric}.  Thus, Proposition \ref{rsub} is a special case of Proposition \ref{prop:sub}.
\end{example}

\medskip

\section{Discontinuous calculus}\label{sec:dcalculus}
\subsection{Basic definitions and properties}\label{sec:dtf}
Let $M$ be a compact connected oriented manifold without boundary endowed with a volume form $\mu$, and let $\Sheet \subset M$ be a smooth embedded hypersurface splitting $M$ in two parts, $\Domplus$ and $\Dommin$. In this section we define various spaces of tensor fields discontinuous across $\Sheet$. All such tensor fields will be assumed to be of the form $\chiplus \xi^+ + \chiplus \xi^-$, where $\chi^\pm_\Sheet$ are characteristic functions of the domains $\Dom^\pm_\Sheet$, and the fields $\xi^\pm$ are $\Cont^\infty$-smooth in $\Dompm$ up to the boundary. We introduce the following spaces:
\begin{gather*}
\vphantom{\dforms(M,\Sheet) }\dcinfty(M, \Sheet) := \{ \chiplus f^+ + \chimin f^- \mid f^\pm \in \Cont^\infty(\Dom^\pm_\Sheet)\} \qquad \mbox{(\textit{discontinuous functions}),}\\
\vphantom{\dforms(M,\Sheet) }\dvect(M,\Sheet) := \{ \chiplus u^+ + \chimin u^- \mid u^\pm \in \Vect(\Dom^\pm_\Sheet)\} \qquad  \mbox{(\textit{discontinuous vector fields})},\\
\dforms(M,\Sheet) := \{ \chiplus \alpha^+ + \chimin \alpha^- \mid \alpha^\pm \in \Omega^1(\Dom^\pm_\Sheet)\} \qquad \mbox{(\textit{discontinuous $1$-forms})}.
\end{gather*}
We also introduce the subspace of \textit{divergence-free discontinuous vector fields}
\begin{align*}
\dsvect(M,\Sheet)  := \{ \chiplus u^+ + \chimin u^- \in \dvect(M,\Sheet)   \mid \div u^\pm = 0\,,\, u^+\vert_{\Sheet} - u^-\vert_{\Sheet} \mbox{ is tangent to } \Sheet\}
\end{align*}
and the subspace of \textit{exact discontinuous $1$-forms}
\begin{align*}
\dexactforms(M,\Sheet) := \{ \chiplus \diff f^+ + \chimin \diff f^- \mid f^+\vert_{\Sheet} = f^-\vert_{\Sheet}\}\,.
\end{align*}

 \begin{remark}
One can show that the space $\dexactforms(M,\Sheet)$ consists of precisely those $1$-forms 
$\alpha \in\dforms(M,\Sheet)$ which are exact as de Rham currents on $M$, i.e. those currents 
which have vanishing pairing with smooth closed $(n-1)$-forms on $M$. (Equivalently, 
this space is the $L^2$-closure of exact forms inside $\dforms(M, \Sheet)$ for any Riemannian metric on $M$.)
In particular, 
the $(n-1)$-current $\alpha =  \chiplus \diff f^+ + \chimin \diff f^-\!$, where $f^+\vert_{\Sheet} = f^-\vert_{\Sheet}$, is the de Rham differential of the $n$-current $f = \chiplus f^+ + \chimin f^-$.\par
Similarly, the space $\dsvect(M,\Sheet) $ consists of 
those fields which have vanishing pairing with smooth exact $1$-forms on $M$, i.e., those vector fields $v$ for which $i_v \mu$ is a closed $1$-current. (These are precisely those vector fields $v \in \dvect(M, \Sheet)$ which are divergence-free in a weak sense, see Corollary \ref{dense}.)
 \end{remark}
In the presence of a metric on $M$, we also introduce the dual versions of the above subspaces:
\begin{gather*}
\dccforms(M,\Sheet) := \{u^\flat \mid u \in \dsvect(M)_\Sheet\} = \{ \chiplus \alpha^+ + \chimin \alpha^- \mid \diff^* \alpha^\pm = 0\,,\,  \alpha^+(\nu) = \alpha^-(\nu)\} \,,\\
\dgrad(M,\Sheet) := \{\alpha^\sharp \mid \alpha \in \dexactforms(M,\Sheet) \}= \{ \chiplus \nabla f^+ + \chimin \nabla f^- \mid f^+\vert_{\Sheet} = f^-\vert_{\Sheet}\}\,.
\end{gather*}
Here $d^*$ is the adjoint of the de Rham $d$ operator, and $\nu$ is the unit normal field on $\Sheet$.\par
\smallskip
Further, we define differential operations on discontinuous fields in the following way. Let $\mathcal D$ be a differential operator on $M$ acting on tensor fields of certain type (functions, vector fields, or $1$-forms). Let also $\xi = \chiplus \xi^+ + \chimin \xi^-$ be such a tensor field discontinuous across $\Gamma$. 
Then we define the regularized version $\mathcal D^\Reg$ of the operator $\mathcal D$ by setting
$$\mathcal D^\Reg \xi := \chiplus \mathcal D\xi^+ + \chimin \mathcal D\xi^-\!.$$
For example, the regularized {differential} of a discontinuous form $\chiplus \alpha^+ + \chimin \alpha^- \in \dforms(M, \Sheet)$ is
$$
\diff^\Reg (\chiplus \alpha^+ + \chimin \alpha^-) := \chiplus \diff\alpha^+ + \chimin \diff\alpha^-\!.
$$
(Compare this with the actual differential
$
\diff (\chiplus \alpha^+ + \chimin \alpha^-) = \diff^\Reg (\chiplus \alpha^+ + \chimin \alpha^-) +\delta_\Sheet \wedge (\alpha^+ - \alpha^-)
$
containing the ``singular part" as well.)
 In a similar way, we define regularized differential operators taking several arguments.
For example, for discontinuous vector fields $\chiplus u^+ + \chimin u^-, \chiplus v^+ + \chimin v^- \in \dvect(M, \Sheet)$, we have
$$
[\chiplus u^+ + \chimin u^-\!,\, \chiplus v^+ + \chimin v^-]^\Reg := \chiplus [u^+, v^+] + \chimin [u^-, v^-]\,.
$$
(One should distinguish this bracket from the actual Lie bracket of discontinuous vector fields, which is a distribution.) 
\par
Similarly, for any tensor field $\xi_t = \chiplust \xi^+_t + \chimint \xi^-_t$ depending smoothly on a parameter $t$ and discontinuous across a $t$-dependent hypersurface~$\Sheet_t$ we define
$$
{\partial_t^\Reg \xi_t}{} := \chiplust {\partial_t \xi^+_t}{}  + \chimint{\partial_t \xi^-_t}{} .
$$

Below we will need the following two useful formulas for operations with discontinuous objects. For $f =   \chi^+_{\Sheet}f^+ +  \chi^-_{\Sheet}f^- \in \dcinfty(M, \Sheet)$, let  
 $jump(f) := f^+\vert_{\Sheet} - f^-\vert_\Sheet$.

\begin{lemma}\label{derIntDiscFnLemma}
Let $\Sheet_t \in \VS(M)$ be a family of hypersurfaces parametrized by $t \in \R$, and let $f_t \in  \dcinfty(M, \Sheet_t)$ be a smooth family of functions discontinuous across $\Sheet_t$. Then
\begin{equation}\label{derIntDiscFn}
\frac{d}{dt} \int_M f_t\mu \,=\, \int_M \frac{d^\Reg f_t}{dt}\mu \,+\, \int_{\Sheet_t} jump( f_t)\frac{d\Sheet_t}{dt}\,.
\end{equation}
\end{lemma}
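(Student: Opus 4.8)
The plan is to reduce the statement to the classical transport (Reynolds) theorem applied separately on each side of the moving interface, and then to let the two boundary contributions combine into the jump term. First I would split the integral according to the two domains,
\[
\int_M f_t\,\mu \;=\; \int_{\Dom^+_{\Sheet_t}} f_t^+\,\mu \;+\; \int_{\Dom^-_{\Sheet_t}} f_t^-\,\mu\,,
\]
so that on each piece the integrand $f_t^\pm$ is smooth up to the boundary of a smoothly moving domain whose boundary is exactly $\Sheet_t$.

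For each piece I would use the following transport formula for a single moving domain $\Dom_t\subset M$: if $w_t$ is the generator of a smooth ambient isotopy $\Psi_t$ with $\Psi_0=\id$ and $\Psi_t(\Dom_0)=\Dom_t$, then
\[
\frac{d}{dt}\int_{\Dom_t} g_t\,\mu \;=\; \int_{\Dom_t} \partial_t g_t\,\mu \;+\; \int_{\partial \Dom_t} g_t\, i_{w_t}\mu\,.
\]
To prove this I would pull back to the fixed domain, $\int_{\Dom_t} g_t\mu=\int_{\Dom_0}\Psi_t^*(g_t\mu)$, differentiate under the integral sign over the now fixed $\Dom_0$ using $\frac{d}{dt}\Psi_t^*\omega_t=\Psi_t^*(\partial_t\omega_t+\L_{w_t}\omega_t)$, and then rewrite the Lie-derivative term by Cartan's formula: since $g_t\mu$ is a top-degree form, $\L_{w_t}(g_t\mu)=d\,i_{w_t}(g_t\mu)=d(g_t\, i_{w_t}\mu)$, so that after pushing forward and applying Stokes' theorem the exact part becomes the boundary integral. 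Note that $i_{w_t}\mu$ restricted to $\partial\Dom_t$ annihilates the tangential part of $w_t$, so this boundary term depends only on the normal velocity of $\partial\Dom_t$, i.e. on $\frac{d\Sheet_t}{dt}\in T_{\Sheet_t}\VS(M)$; in particular one may choose any ambient isotopy moving $\Sheet_t$ with the prescribed normal velocity.

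Applying this to $\Dom^+_{\Sheet_t}$ and $\Dom^-_{\Sheet_t}$ and adding the results, the two interior integrals recombine into $\int_M \frac{d^\Reg f_t}{dt}\,\mu$ since $\frac{d^\Reg f_t}{dt}=\chiplust\,\partial_t f_t^+ +\chimint\,\partial_t f_t^-$. For the boundary terms, the induced (Stokes) orientation of $\Sheet_t$ as $\partial\Dom^+_{\Sheet_t}$ is opposite to its orientation as $\partial\Dom^-_{\Sheet_t}$; hence, orienting $\Sheet_t$ once and for all as the boundary of $\Domplus$, the sum of the two boundary integrals equals
\[
\int_{\Sheet_t}\bigl(f_t^+ - f_t^-\bigr)\, i_{w_t}\mu \;=\; \int_{\Sheet_t} jump(f_t)\,\frac{d\Sheet_t}{dt}\,,
\]
which is exactly the claimed correction term. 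I expect the only real care to be needed in the bookkeeping at the interface: fixing the coorientation of $\Sheet_t$ so that the jump appears with a plus sign, checking that the boundary term is independent of the chosen isotopy (so that $\frac{d\Sheet_t}{dt}$ enters only through its normal component), and justifying differentiation under the integral together with the fact that $f_t^\pm$ and their time derivatives extend continuously to $\Sheet_t$ from each side. The analytic core --- the pullback identity, Cartan's formula, and Stokes --- is routine.
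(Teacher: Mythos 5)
Your proposal is correct and follows essentially the same route as the paper: splitting the integral over $\Dom^\pm_{\Sheet_t}$, applying the transport (Reynolds) formula to each moving domain, and letting the oppositely oriented boundary contributions combine into the jump term. The only difference is that you spell out the proof of the single-domain transport formula (pullback, Cartan, Stokes) and the independence of the boundary term from the choice of isotopy, which the paper takes for granted.
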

\begin{remark}
Since $\Sheet_t$ is a family of unparametrized hypersurfaces, the derivative $d\Sheet_t / dt$ is a section of the normal bundle ${N}\Sheet := (\T M)\vert_{\Sheet} \, / \, \T \Sheet$ of $\Sheet$, which can be identified, by means of the volume form $\mu$, with the canonical bundle (see Lemma \ref{lemma:vstanspace} below). 
\end{remark}
\begin{proof}[Proof of Lemma \ref{derIntDiscFnLemma}] It follows from 
$$
\frac{d}{dt} \int_{D_{\Sheet_t}^\pm} f_t\mu \,=\, \int_{D_{\Sheet_t}^\pm}\frac{d f_t^\pm}{dt}\mu \,+ \,\int_{\partial D_{\Sheet_t}^\pm } f^\pm_t \frac{d}{dt}\left(\partial D_{\Sheet_t}^\pm\right),
$$
where $\partial D_{\Sheet_t}^\pm = \pm \Sheet_t$.
\end{proof}

\begin{lemma}\label{intDerLemma} Let $f \in \dcinfty(M, \Sheet)$, and let $v \in \dsvect(M, \Sheet)$. Then
$$
\int_M \left(\L^\Reg_v f\right)\mu = \int_\Sheet jump(f)\,i_v \mu\,.
$$
(Notice that since $v \in \dsvect(M, \Sheet)$, we have $(i_{v^+}\mu)\vert_\Sheet = (i_{v^-} \mu)\vert_\Sheet$, so the restriction of $i_v\mu$ to $\Sheet$ is well-defined).
\end{lemma}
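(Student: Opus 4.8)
The plan is to reduce the identity to Stokes' theorem applied separately on each of the two domains $\Domplus$ and $\Dommin$, after using the divergence-free condition to recognize the integrand on each side as an exact $(n-1)$-form. Writing out the definition of $\L^\Reg_v$ and of the characteristic functions, the left-hand side splits as $\int_M (\L^\Reg_v f)\mu = \int_{\Domplus}(\L_{v^+}f^+)\,\mu + \int_{\Dommin}(\L_{v^-}f^-)\,\mu$, so it suffices to treat each domain on its own.

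First I would rewrite the one-sided integrands. Since $\div v^\pm = 0$ on $\Dompm$, i.e. $\L_{v^\pm}\mu = 0$, and since $f^\pm\mu$ is a top-degree form on the $n$-manifold $M$ (so that $\diff{(f^\pm\mu)} = 0$), Cartan's formula $\L_{v^\pm} = \diff{}\, i_{v^\pm} + i_{v^\pm}\, \diff{}$ together with the Leibniz rule gives
\begin{equation}
(\L_{v^\pm} f^\pm)\,\mu = \L_{v^\pm}(f^\pm\mu) = \diff{\,i_{v^\pm}(f^\pm\mu)} = \diff{(f^\pm\, i_{v^\pm}\mu)}\,.
\end{equation}
Thus on each side the regularized integrand is the differential of the $(n-1)$-form $f^\pm\, i_{v^\pm}\mu$, which is smooth up to $\Sheet$ by the standing assumption that $f^\pm$ and $v^\pm$ extend smoothly to the boundary.

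Next I would apply Stokes' theorem on each domain. Using the orientation convention $\partial\Dompm = \pm\Sheet$ adopted in the proof of Lemma \ref{derIntDiscFnLemma}, this yields
\begin{equation}
\int_{\Dompm}(\L_{v^\pm} f^\pm)\,\mu = \int_{\partial\Dompm} f^\pm\, i_{v^\pm}\mu = \pm\int_{\Sheet}\left(f^\pm\, i_{v^\pm}\mu\right)\big\vert_\Sheet\,.
\end{equation}
Adding the two contributions gives $\int_M (\L^\Reg_v f)\mu = \int_\Sheet\left(f^+\, i_{v^+}\mu - f^-\, i_{v^-}\mu\right)\big\vert_\Sheet$.

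Finally I would invoke the defining property of $\dsvect(M,\Sheet)$ to collapse the two boundary terms. The jump $v^+ - v^-$ is tangent to $\Sheet$, so $i_{(v^+-v^-)}\mu$ restricts to zero on the $(n-1)$-dimensional submanifold $\Sheet$ (evaluating it on a frame of $\T\Sheet$ amounts to feeding $n$ vectors tangent to $\Sheet$ into $\mu$, hence a linearly dependent family). Therefore $(i_{v^+}\mu)\vert_\Sheet = (i_{v^-}\mu)\vert_\Sheet =: (i_v\mu)\vert_\Sheet$, and the difference above becomes $\int_\Sheet (f^+ - f^-)\, i_v\mu = \int_\Sheet jump(f)\, i_v\mu$, which is the claim. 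There is no deep obstacle here; the only points demanding care are the orientation bookkeeping for $\partial\Dompm$ and the verification that $i_v\mu$ admits a well-defined restriction to $\Sheet$ — precisely the place where the tangency condition built into $\dsvect(M,\Sheet)$ enters.
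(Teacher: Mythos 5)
Your proof is correct and follows exactly the route the paper takes: the paper's own (one-line) proof is to split the integral over $\Dompm$ and apply Stokes, which is precisely what you carry out, with the divergence-free condition and the tangency of $v^+ - v^-$ to $\Sheet$ entering in the same places. The only difference is that you have written out the details the paper leaves implicit.
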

\begin{proof}
The proof is achieved by writing the left-hand side as the sum of integrals over $\Dompm$ and applying the Stokes formula.
\end{proof}

\medskip
\subsection{Singular Hodge decomposition}
In this section we present the Hodge decomposition in the setting of discontinuous forms and vector fields.
 \begin{proposition}[(Singular Hodge decomposition)]\label{Hodge}
There exist orthogonal (with respect to the $L^2$-inner product) decompositions
\begin{equation}\label{SHD}
\dforms(M, \Sheet) =\dccforms(M,\Sheet) \,\oplus_{L^2} \, \dexactforms(M, \Sheet)\,
\end{equation}
and
\begin{equation}\label{SHD2}
 \dvect(M, \Sheet) =\dsvect(M,\Sheet) \,\oplus_{L^2} \,  \dgrad(M, \Sheet)\,.
\end{equation}
\end{proposition}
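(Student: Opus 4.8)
The plan is to prove the vector-field decomposition \eqref{SHD2} and deduce \eqref{SHD} from it. Indeed, the musical isomorphism $\flat \colon \dvect(M,\Sheet) \to \dforms(M,\Sheet)$, $u \mapsto u^\flat$, is a fibrewise $L^2$-isometry, and by the very definitions of $\dccforms(M,\Sheet)$ and $\dgrad(M,\Sheet)$ it carries $\dsvect(M,\Sheet)$ onto $\dccforms(M,\Sheet)$ and $\dgrad(M,\Sheet)$ onto $\dexactforms(M,\Sheet)$. Hence an orthogonal splitting of $\dvect$ transports verbatim to an orthogonal splitting of $\dforms$, and it suffices to treat \eqref{SHD2}.

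First I would check orthogonality $\dsvect \perp_{L^2} \dgrad$. Take $w = \chiplus w^+ + \chimin w^- \in \dsvect$ and $\chiplus \nabla f^+ + \chimin \nabla f^- \in \dgrad$ with $f^+\vert_\Sheet = f^-\vert_\Sheet$. Integrating by parts on each side and using $\div w^\pm = 0$,
\[
\langle w, \nabla f\rangle_{L^2} = \int_{\Domplus}(w^+,\nabla f^+)\mu + \int_{\Dommin}(w^-,\nabla f^-)\mu = \int_\Sheet \big[(w^+,\nu) - (w^-,\nu)\big]\, f\vert_\Sheet\,,
\]
where $\nu$ is the unit normal pointing out of $\Domplus$; the two boundary contributions carry opposite outward normals, producing the difference. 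This vanishes because $w \in \dsvect$ forces $(w^+,\nu) = (w^-,\nu)$ on $\Sheet$, while $f^+\vert_\Sheet = f^-\vert_\Sheet$ makes the common factor $f\vert_\Sheet$ well defined.

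The substantive step is spanning: given $v = \chiplus v^+ + \chimin v^- \in \dvect$, I would produce $f$ with $\chiplus \nabla f^+ + \chimin \nabla f^- \in \dgrad$ such that $w := v - \nabla f \in \dsvect$. Functions that are smooth up to $\Sheet$ on each side and continuous across it form the smooth part of $H^1(M)$, and the requirement $w \perp \dgrad$ is precisely the Euler--Lagrange equation of the variational problem: find $f$ with $\int_M (\nabla f, \nabla g)\,\mu = \int_M (v, \nabla g)\,\mu$ for all $g \in H^1(M)$. The Dirichlet form is coercive on $H^1(M)/\R$ (Poincar\'e on the closed manifold $M$) and the right-hand side annihilates constants, so Lax--Milgram yields $f$, unique modulo constants and hence with $\nabla f$ unique. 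Setting $w := v - \nabla f$, testing against $g$ supported in the interior of $\Dompm$ gives $\div w^\pm = 0$, while testing against general $g$ leaves the interface term $\int_\Sheet[(w^+,\nu) - (w^-,\nu)]\,g\vert_\Sheet = 0$, whence the normal components of $w^\pm$ agree; thus $w \in \dsvect$ and $v = w + \nabla f$.

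The main obstacle is regularity up to the interface: Lax--Milgram delivers only $f \in H^1(M)$, whereas membership of $\nabla f$ in $\dgrad$ and of $w$ in $\dvect$ requires $f^\pm$ to be $\Cont^\infty$ up to $\Sheet$ from each side. This I would extract from elliptic regularity for the transmission problem $\Delta f^\pm = \div v^\pm$ in $\Dompm$ with matching trace $f^+\vert_\Sheet = f^-\vert_\Sheet$ and prescribed normal-derivative jump $\partial_\nu f^+ - \partial_\nu f^- = (v^+ - v^-, \nu)$: since $\Sheet$ is a smooth closed hypersurface and the data $v^\pm$ are smooth up to $\Sheet$, the solution is smooth up to $\Sheet$ on each side (though generally not across it). With this the integrations by parts are justified classically, orthogonality and spanning combine to give \eqref{SHD2}, and applying $\flat$ gives \eqref{SHD}.
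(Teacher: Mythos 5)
Your reduction of \eqref{SHD} to \eqref{SHD2} via the musical isomorphism and your orthogonality computation coincide with the paper's argument (the paper phrases the boundary term as $\int_{\Sheet}(f^+ i_{v^+}\mu - f^- i_{v^-}\mu)$, which is your expression in different notation). Where you genuinely diverge is in the spanning step. The paper proceeds piecewise: it applies the Hodge decomposition for manifolds with boundary separately in $\Domplus$ and $\Dommin$ to write $u^\pm = v^\pm + \nabla f^\pm$ with $v^\pm$ divergence-free and tangent to $\Sheet$, and then repairs the mismatch $g = f^+\vert_\Sheet - f^-\vert_\Sheet$ by subtracting the gradient of a double layer potential $h = \chiplus h^+ + \chimin h^-$ (Theorem \ref{dlp}), whose construction in Appendix \ref{app:layer} rests on the ellipticity of $\DTN^+ + \DTN^-$ on $\Sheet$. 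You instead solve a single global variational problem on $H^1(M)$ by Lax--Milgram and identify the result as the solution of a transmission problem, importing piecewise smoothness up to $\Sheet$ from elliptic regularity for transmission problems. Both routes are correct and both ultimately lean on elliptic theory at the interface; the trade-off is that the paper's version reuses machinery (layer potentials, Dirichlet-to-Neumann operators) that it needs elsewhere anyway and keeps everything in the smooth category throughout, whereas yours is more self-contained at the level of the weak formulation but concentrates the analytic burden in a single citation to transmission regularity, which is standard but not proved in the paper. You correctly flag that the classical integrations by parts and the identification of the interface conditions must wait until after the regularity step; that ordering is the one delicate point in your argument and you handle it properly.
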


\begin{proof}
These decompositions are metric-dual to each other, so it suffices to prove \eqref{SHD2}. 
The orthogonality follows from a straightforward application of the Stokes formula and divergence-free condition. 
Indeed, for $v \in \dsvect(M,\Sheet)$ and $w = \chiplus \nabla f^+ + \chimin \nabla f^-  \in \dgrad(M, \Sheet)$
the pairing $\langle v,w \rangle_{L^2} $ reduces to 
$\int_{\Sheet}   (f^+  i_{v^+}\mu- f^-  i_{v^-}\mu)$, which vanishes due to relations $(i_{v^+}\mu)\vert_{\Sheet} =(i_{v^-}\mu)\vert_{\Sheet}$ and  $f^+ = f^-$ on $\Sheet$.

Now we need to show that the sum of $ \dsvect(M,\Sheet)$ and $\dgrad(M, \Sheet)$ is the whole space 
$\dvect(M, \Sheet)$. Let $u =  \chiplus u^+ +  \chimin u^- \in  \dvect(M, \Sheet) $. 
Using Hodge decomposition for manifolds with boundary, write $u^\pm$ as $u^\pm =    v^\pm  + \nabla f^\pm,$
 where the vector fields $ v^\pm \in \Vect(\Dom_\Sheet^\pm)$ are divergence-free and tangent to $\Sheet$, while $f^\pm \in \Cont^\infty(\Dom_\Sheet^\pm)$ are functions. Further, let  $g:= f^+\vert_\Sheet - f^-\vert_\Sheet \in \Cont^\infty(\Sheet)$. Then, by Theorem \ref{dlp} from Appendix \ref{app:layer}, there exist harmonic functions $h^\pm  \in \Cont^\infty(\Dompm)$ such that the normal derivatives of $h^+$ and $h^-$ at $\Sheet$ coincide, while $h^+\vert_{\Sheet} - h^-\vert_{\Sheet} = g$. (The function $h = \chiplus h^++ \chimin h^-$ is known as a \textit{double layer potential}.)
%
 Using these functions, write $u$ as
  $$
  u = (\chiplus ( v^+ + \nabla h^+ ) +  \chimin ( v^- + \nabla h^- )) + ( \chiplus ( \nabla f^+ - \nabla h^+ ) +  \chimin ( \nabla f^- - \nabla h^- ))\,.
  $$
Then the first bracket is in  $\dsvect(M,\Sheet)$, while the second bracket is in $ \dgrad(M,\Sheet)$. Thus, the proposition is proved.
\end{proof}
 \begin{corollary}\label{dense}\label{dgradgrad}
\begin{enumerate} \item The space $\dsvect(M, \Sheet)$ is precisely the $L^2$-closure of $\svect(M)$ inside $\dvect(M, \Sheet)$.
\item The space $\dgrad(M, \Sheet)$ is  the $L^2$-closure of  $\Grad(M)$  inside $\dvect(M, \Sheet)$. \end{enumerate}
\end{corollary}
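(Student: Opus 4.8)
The plan is to derive both statements by combining the classical (smooth) Hodge decomposition $\Vect(M) = \SVect(M) \oplus_{L^2} \Grad(M)$ with the singular one from Proposition \ref{Hodge}. I would work inside the $L^2$-completion $L^2\Vect(M)$ of smooth vector fields and write $S$ and $G$ for the $L^2$-closures of $\SVect(M)$ and $\Grad(M)$, respectively. Completing the smooth Hodge decomposition shows that $L^2\Vect(M) = S \oplus G$ is an orthogonal direct sum; in particular $S = G^\perp$, $G = S^\perp$, and $S \cap G = \{0\}$. Reading the ``$L^2$-closure of $\SVect(M)$ inside $\dvect(M, \Sheet)$'' as $S \cap \dvect(M, \Sheet)$ (and likewise for $\Grad$), the two claims become $\dsvect(M, \Sheet) = S \cap \dvect(M, \Sheet)$ and $\dgrad(M, \Sheet) = G \cap \dvect(M, \Sheet)$.

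First I would record the inclusions of the discontinuous spaces into the relevant closures. The containments $\SVect(M) \subseteq \dsvect(M, \Sheet)$ and $\Grad(M) \subseteq \dgrad(M, \Sheet)$ are immediate, since a globally smooth field has vanishing jump across $\Sheet$. For $\dsvect(M, \Sheet) \subseteq S$, note that $\Grad(M) \subseteq \dgrad(M, \Sheet)$, so the $L^2$-orthogonality $\dsvect(M,\Sheet) \perp \dgrad(M, \Sheet)$ already proved in Proposition \ref{Hodge} yields $\dsvect(M, \Sheet) \perp_{L^2} \Grad(M)$, hence $\dsvect(M, \Sheet) \subseteq G^\perp = S$. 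For $\dgrad(M, \Sheet) \subseteq G$, take $w = \chiplus \nabla f^+ + \chimin \nabla f^- \in \dgrad(M, \Sheet)$ with $f^+\vert_\Sheet = f^-\vert_\Sheet$; the piecewise-smooth function $f = \chiplus f^+ + \chimin f^-$ is continuous on $M$, so its distributional gradient has no layer contribution along $\Sheet$ and equals $w$. Thus $f \in H^1(M)$ with $\nabla f = w$, and approximating $f$ by functions $g_n \in \Cont^\infty(M)$ in the $H^1$-norm gives $\nabla g_n \to w$ in $L^2$, so $w \in G$.

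With these inclusions in hand, the reverse inclusions are pure linear algebra built on the singular Hodge decomposition $\dvect(M, \Sheet) = \dsvect(M, \Sheet) \oplus_{L^2} \dgrad(M, \Sheet)$ together with $S \cap G = \{0\}$. For the first claim, take $w \in S \cap \dvect(M, \Sheet)$ and split $w = w_s + w_g$ with $w_s \in \dsvect(M, \Sheet) \subseteq S$ and $w_g \in \dgrad(M, \Sheet) \subseteq G$; then $w_g = w - w_s \in S$, so $w_g \in S \cap G = \{0\}$ and $w = w_s \in \dsvect(M, \Sheet)$. The second claim is symmetric: for $w \in G \cap \dvect(M, \Sheet)$ the same splitting gives $w_s = w - w_g \in G \cap S = \{0\}$, whence $w = w_g \in \dgrad(M, \Sheet)$. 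Together with the inclusions of the previous paragraph this proves both equalities.

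I expect the main obstacle to be the inclusion $\dgrad(M, \Sheet) \subseteq G$, i.e. the $H^1$-approximation of a discontinuous gradient field by smooth ones. The essential point is that the matching condition $f^+\vert_\Sheet = f^-\vert_\Sheet$ makes the potential $f$ genuinely continuous across $\Sheet$, so that its weak gradient picks up no singular double-layer term and coincides with $w$; once this is established, density of $\Cont^\infty(M)$ in $H^1(M)$ finishes the step. Everything else reduces to orthogonality bookkeeping for the two Hodge decompositions.
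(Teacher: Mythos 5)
Your argument is correct and follows essentially the same route as the paper: both proofs combine the closure identities $\overline{\svect(M)}=\Grad(M)^\bot$, $\overline{\Grad(M)}=\svect(M)^\bot$ coming from the smooth Hodge decomposition with the singular decomposition of Proposition \ref{Hodge}, and then finish by orthogonality bookkeeping exactly as you do. The only divergence is your direct $H^1$-approximation argument for $\dgrad(M,\Sheet)\subseteq\overline{\Grad(M)}$ (continuity of the glued potential across $\Sheet$, then density of $\Cont^\infty(M)$ in $H^1(M)$); the paper instead gets this inclusion in one line from $\dgrad(M,\Sheet)\perp\svect(M)$ and $\overline{\Grad(M)}=\svect(M)^\bot$ --- both are valid, yours being more constructive but longer.
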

\begin{proof} We prove $i)$ only, as the other one is similar.
We have $\overline{\svect(M)} = \Grad(M)^\bot$ (where the bar stands for the $L^2$-closure and  $\bot$ stands for the orthogonal complement in $L^2$), so the inclusion $\dsvect(M, \Sheet) \subset  \overline{\svect(M)} $ directly follows from Proposition~\ref{Hodge}. Therefore, it is suffices to show that   $ \overline{\svect(M)}  \cap \dvect(M, \Sheet) \subset \dsvect(M, \Sheet)$. 
To that end, we first note that $\overline{\Grad(M)} = \svect(M)^\bot$, so by Proposition~\ref{Hodge} we have  $\overline{\Grad(M)} \supset \dgrad(M, \Sheet)$. Therefore, given $u \in  \overline{\svect(M)} =  \Grad(M)^\bot$, we have that  $u  \in \Grad(M, \Sheet)^\bot$. So, if $u$ is also in $\dvect(M, \Sheet)$, then $u \in \dsvect(M, \Sheet)$ by Proposition~\ref{Hodge}, as desired.
\end{proof}

\medskip
\section{Kinematics of vortex sheets}\label{sec:kinematics}
In this section, $M$ is a compact connected oriented manifold without boundary endowed with a volume form $\mu$. 

\subsection{The Lie groupoid of discontinuous diffeomorphisms}\label{sect:diffeo_pair}
In this subsection, we define the Lie groupoid $\DSDiff(M)$ of volume-preserving diffeomorphisms discontinuous 
along a hypersurface.
 This groupoid (or, more precisely, any of its source fibers) can be viewed as the configuration space of a fluid with an immersed vortex sheet.\par
The base of the groupoid $\DSDiff(M)$ is, by definition, the space $\VS(M)$ of  vortex sheets~$\Sheet$ confining  diffeomorphic
domains of the same total volume (see Section \ref{sec:vscalculus}).
The elements of  $\DSDiff(M)$ are volume-preserving diffeomorphisms discontinuous along a hypersurface, i.e. quadruples $(\Sheet_1, \Sheet_2, \phi^+, \phi^-)$ where $\Sheet_1, \Sheet_2 \in \VS(M)$, and $\phi^\pm \colon \Dom^\pm_{\Sheet_1} \to \Dom^\pm_{\Sheet_2}$ are volume preserving diffeomorphisms. Here, 
as above, $\Dom_{\Sheet_i}^+, \Dom_{\Sheet_i}^-$ are connected  components of $M\, \setminus \, \Sheet_i$. 
The source and the target of $ (\Sheet_1, \Sheet_2, \phi^+, \phi^-)$ are, by definition, $\Sheet_1$ 
and $\Sheet_2$ respectively. The multiplication in $\DSDiff(M)$ is given by composition of 
discontinuous diffeomorphisms (see Figure \ref{fig:composition}):
$$
(\Sheet_2, \Sheet_3, \psi^+, \psi^-)(\Sheet_1, \Sheet_2, \phi^+, \phi^-) := (\Sheet_1, \Sheet_3, \psi^+\phi^+, \psi^-\phi^-)\,.
$$

 
 \begin{proposition}\label{frechet}
 	$\DSDiff(M) \rightrightarrows \VS(M)$  is a transitive Lie-Fr\'echet groupoid.
 \end{proposition}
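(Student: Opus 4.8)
The plan is to verify the groupoid axioms, establish transitivity, and then address the smooth (Fréchet) manifold structure; the last of these is where the real work lies. First I would check the purely algebraic groupoid structure, which is essentially routine: the composition $(\Sheet_2, \Sheet_3, \psi^+, \psi^-)(\Sheet_1, \Sheet_2, \phi^+, \phi^-) := (\Sheet_1, \Sheet_3, \psi^+\phi^+, \psi^-\phi^-)$ is defined exactly when the source $\Sheet_2$ of the first arrow matches the target of the second, associativity follows from associativity of diffeomorphism composition on each of the $\pm$ components separately, the identity over $\Sheet$ is $(\Sheet, \Sheet, \id_{\Domplus}, \id_{\Dommin})$, and the inverse of $(\Sheet_1, \Sheet_2, \phi^+, \phi^-)$ is $(\Sheet_2, \Sheet_1, (\phi^+)^{-1}, (\phi^-)^{-1})$. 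Each of these preserves the volume form on its respective domain because the composition and inverse of volume-preserving maps are volume-preserving.

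Next I would establish transitivity. Given any two vortex sheets $\Sheet_1, \Sheet_2 \in \VS(M)$, by the very definition of $\VS(M)$ in Section \ref{sec:vscalculus} both are images of the reference hypersurface $\Sheet_0$ under volume-preserving diffeomorphisms of $M$, so there exists a single $\phi \in \SDiff(M)$ with $\phi(\Sheet_1) = \Sheet_2$. Restricting $\phi$ to the two components $\Dom^\pm_{\Sheet_1}$ yields volume-preserving diffeomorphisms $\phi^\pm \colon \Dom^\pm_{\Sheet_1} \to \Dom^\pm_{\Sheet_2}$ (after matching coorientations), giving an arrow from $\Sheet_1$ to $\Sheet_2$; hence the groupoid is transitive. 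One minor point to watch is the coorientation bookkeeping, ensuring $\phi$ sends $\Domplus$ to $\Domplus$ rather than swapping the two sides, but this is guaranteed by the connectedness of $\SDiff(M)$-orbits and continuity.

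The hard part will be the smooth structure, i.e. verifying that $\DSDiff(M)$ and $\VS(M)$ are Fréchet manifolds and that $\Src, \Trg$ are submersions with smooth multiplication and inversion. My approach would be to model $\DSDiff(M)$ locally as a bundle over $\VS(M)$ whose fiber over $(\Sheet_1, \Sheet_2)$ is the product $\SDiff(\Domplus_{\Sheet_1}, \Domplus_{\Sheet_2}) \times \SDiff(\Dommin_{\Sheet_1}, \Dommin_{\Sheet_2})$ of volume-preserving diffeomorphisms between manifolds-with-boundary. The space $\VS(M)$ is a Fréchet manifold whose tangent space at $\Sheet$ is the space of normal vector fields with the zero-mean constraint (as described before Definition \ref{def:baseMetric}); local charts come from the tubular neighborhood theorem together with Moser's argument to stay within $\SDiff$. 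For $\DSDiff(M)$ itself, the key technical ingredient is that spaces of volume-preserving diffeomorphisms between fixed domains are smooth Fréchet (or tame Fréchet) manifolds, which is classical in the spirit of \cite{EbMars70}; one then assembles these fiberwise over the base $\VS(M) \times \VS(M)$ and checks that the bundle structure is locally trivial using the implicit function theorem in the tame category. The submersion property of $\Src$ and $\Trg$ then reduces to the surjectivity of their differentials, which follows from the ability to extend a prescribed infinitesimal motion of $\Sheet$ to a genuine discontinuous velocity field via the singular Hodge decomposition of Proposition \ref{Hodge}, and the smoothness of multiplication and inversion follows from the corresponding smoothness for the group of volume-preserving diffeomorphisms applied on each component. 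Since a fully rigorous treatment of these infinite-dimensional manifold structures is delicate, I would expect the proof to invoke the $C^\infty$ Fréchet framework referenced in the paper and to defer the most technical smoothness estimates to the analogy with the classical group case rather than reproving them from scratch.
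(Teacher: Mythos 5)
Your overall strategy coincides with the paper's: routine verification of the algebraic axioms; transitivity by restricting a single ambient $\phi\in\SDiff(M)$ with $\phi(\Sheet_1)=\Sheet_2$ to the two components; $\VS(M)$ made a Fr\'echet manifold via normal-bundle charts (the paper identifies nearby sheets with zero-mean top-degree forms on $\Sheet$, realizing $\VS(M)$ as a closed submanifold of the Hamilton manifold of all hypersurfaces); and $\DSDiff(M)$ assembled as a bundle over $\VS(M)\times\VS(M)$ whose typical fiber is the Lie--Fr\'echet group $\SDiff(\Dom^+_{\Sheet_0})\times\SDiff(\Dom^-_{\Sheet_0})$, with Ebin--Marsden supplying the fiber's manifold structure.

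The one place where your argument would not survive scrutiny as written is the submersion step. In the Fr\'echet category, surjectivity of the differential of $\Src$ or $\Trg$ does \emph{not} imply the submersion property (there is no inverse function theorem), and the paper accordingly defines a submersion as a map that is locally a projection of a direct product onto a factor. So the appeal to ``surjectivity of the differentials via the singular Hodge decomposition'' proves nothing by itself; everything must instead come from exhibiting the local product structure directly. The paper does this concretely and without Nash--Moser machinery: choose, for each $\Sheet$ in a neighborhood $O(\Sheet_1)$, a volume-preserving diffeomorphism $\phi_\Sheet$ carrying a reference sheet $\Sheet_0$ to $\Sheet$, depending smoothly on $\Sheet$ (Hamilton, Example 4.3.6), and trivialize $\pi^{-1}(O(\Sheet_1)\times O(\Sheet_2))$ by $\psi\mapsto\phi_{\Sheet_2}^{-1}\circ\psi\circ\phi_{\Sheet_1}$; the transition maps are then group operations in the vertex group, hence smooth, and $\Src$, $\Trg$ become coordinate projections. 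This same trivialization is also what makes smoothness of multiplication, inversion, and the unit map immediate, whereas your proposal leaves those as an analogy with the group case. If you replace your implicit-function-theorem and surjective-differential steps with this explicit conjugation trivialization, your proof becomes essentially the paper's.
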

 \begin{proof}
Verification of groupoid axioms is straightforward, while transitivity follows from the definition of $\VS(M)$: for any $\Sheet, \tilde \Sheet \in \VS(M)$ there exists a \textit{smooth} volume-preserving diffeomorphism $\phi \in \SDiff(M)$ taking $\Sheet$ to $ \tilde \Sheet$. This diffeomorphism can be regarded as an element $ (\Sheet, \tilde \Sheet, \phi\vert_{\high \Domplus},  \phi\vert_{\high \Dommin})\in \DSDiff(M)$ with source $\Sheet$ and target $\tilde \Sheet$.\par
To prove the Lie property, we first show that  $\VS(M)$  is a Fr\'echet manifold. It is well known that the ambient space $\Sigma(M)$ of all hypersurfaces in $M$ (with no restriction on the volume) is a Fr\'echet manifold, see, e.g., \cite{Hamilton}, Example 4.1.7. The corresponding local charts are constructed by identifying surfaces close to a given $\Sheet \subset \Sigma(M)$ with sections of the normal bundle $N\Sheet$ of $\Sheet$ in the vicinity of the zero section. 
In the presence of a volume form on $M$, those sections can be identified with top-degree forms on $\Sheet$, while the identification between the neighborhood of $\Sheet$ and the neighborhood of the zero section in $N\Sheet$ can be made in such a way that $\Sheet, \tilde \Sheet \subset \Sigma(M)$ bound the same volume if and only if the top-degree form on $\Sheet$ representing $\tilde \Sheet$ has zero integral over $\Sheet$. For such a choice of charts on $\Sigma(M)$, its subset $\VS(M)$ is locally described as a closed subspace of top-degree forms on $\Sheet$ with zero mean. Therefore, $\VS(M)$ is a Fr\'echet submanifold of $\Sigma(M)$.\par
Further, we show that $\DSDiff(M)$ can be endowed with a Fr\'echet manifold structure in such a way that the source and target map are submersions. (Recall that a map between Fr\'echet manifolds is called a \textit{submersion} if it can be locally represented as the projection of a direct product to one of the factors.) Moreover, the manifold structure we define has the property that the mapping $\pi \colon \DSDiff(M) \to \VS(M) \times \VS(M)$ given by $\phi \mapsto (\Src(\phi), \Trg(\phi))$ is a fiber bundle. To that end, choose a reference vortex sheet $\Sheet_0 \in \VS(M)$. Then the vertex group $ \pi^{-1}(\Sheet_0, \Sheet_0)$ is the direct product $\SDiff(\Dom_{\Sheet_0}^+) \times \SDiff(\Dom_{\Sheet_0}^+)$. Each of the factors is a Lie-Fr\'echet group (see \cite{EbMars70}), so $ \pi^{-1}(\Sheet_0, \Sheet_0)$ is a Lie-Fr\'echet group as well. Further, for any $\Sheet \in \VS(M)$, there exists a (non-canonical) smooth volume-preserving diffeomorphism $\phi_\Sheet \in \SDiff(M)$ such that $\phi_\Sheet(\Sheet_0) = \Sheet$. This gives rise to bijections
\begin{gather}\Phi_{\Sheet_1, \Sheet_2} \colon \pi^{-1}(\Sheet_1, \Sheet_2) \to \pi^{-1}(\Sheet_0, \Sheet_0)\\
\Phi_{\Sheet_1, \Sheet_2}(\psi) := \phi_{\Sheet_2}^{-1} \circ \psi \circ \phi_{\Sheet_1} \end{gather}
between fibers of $\pi$.
Furthermore, for any $\Sheet \in \VS(M)$ there exists its neighborhood $O(\Sheet)$ in which the mapping $\phi_\star \colon O(\Sheet) \to \SDiff(M)$ given by $\tilde \Sheet \mapsto \phi_{\tilde \Sheet}$ can be chosen to be smooth (cf.~\cite{Hamilton}, Example 4.3.6). Then the corresponding maps $\Phi_{\Sheet_1, \Sheet_2}$ provide local identifications $$\pi^{-1}(O(\Sheet_1) \times O(\Sheet_2)) \simeq O(\Sheet_1) \times O(\Sheet_2) \times \pi^{-1}(\Sheet_0, \Sheet_0)\,.$$
Covering $\DSDiff(M)$ by open sets of the form $O(\Sheet_1) \times O(\Sheet_2) \times \pi^{-1}(\Sheet_0, \Sheet_0)$, we endow it with a Fr\'echet manifold structure such that $\pi \colon \DSDiff(M) \to \VS(M) \times \VS(M)$ is a fiber bundle. (Note that the transition maps are given by group operations in $\pi^{-1}(\Sheet_0, \Sheet_0)$ and hence smooth.) It follows that the source and target maps are fiber bundles as well, and hence submersions.\par
Finally, notice that with our local trivializations the groupoid operations (multiplication and inversion) in $\DSDiff(M)$ boil down to group operations in $\pi^{-1}(\Sheet_0, \Sheet_0)$, while the unit map $\Sheet \mapsto \id_\Sheet$ reads $\id_\Sheet = (\Sheet, \Sheet, \id) \in O(\Sheet) \times O(\Sheet) \times \pi^{-1}(\Sheet_0, \Sheet_0)$. Therefore, $\DSDiff(M) \rightrightarrows \VS(M)$ is indeed a Lie-Fr\'echet groupoid, as desired. 
 \end{proof}
  \begin{remark}One can also consider the groupoid $\DSDiff(M)$ in the category of Hilbert manifolds modeled on Sobolev $H^s$ spaces for sufficiently large $s$, 
similarly to, e.g., \cite{EbMars70} or Remark 3.3 in~\cite{KLMP}. 
\end{remark}

\begin{remark}
Since $\DSDiff(M)$ is a Lie-Fr\'echet groupoid, it follows that the corresponding algebroid is well-defined as a Fr\'echet vector bundle over $\VS(M)$ with a bracket and anchor on smooth sections. We describe this algebroid in detail in the next section.
\end{remark}
 
 \begin{remark}\label{subgroupoid}
 Note that if we impose an additional requirement that the maps $\phi^\pm$ in the definition of the groupoid $\DSDiff(M)$ are restrictions of  one 
 and the same smooth volume-preserving diffeomorphism $\phi \in \SDiff(M)$, then we obtain the definition 
 of the action groupoid $ \SDiff(M) \ltimes \VS(M)$ (see Example~\ref{groupoids}(c)) corresponding to the natural 
 action of $\SDiff(M)$ on $\VS(M)$. So, the groupoid $\DSDiff(M)$ comes with an action subgroupoid 
 $ \SDiff(M) \ltimes \VS(M)$ of smooth volume-preserving diffeomorphisms. We will see below that the 
 groupoid $\DSDiff(M)$ inherits some properties of the action groupoid $  \SDiff(M) \ltimes \VS(M)$. 
 In particular, the brackets in the algebroids corresponding to these groupoids look similarly.
 \end{remark}

 \medskip
 
 \subsection{The Lie algebroid of discontinuous vector fields}\label{sect:algebroid_vf}
 In this subsection we describe the Lie algebroid
$\dsvect(M) \to \VS(M)$
corresponding to the Lie groupoid $\DSDiff(M)$. This algebroid serves as the space of velocities for a fluid with an immersed vortex sheet.

\begin{theorem}\label{algFibers}
The fibration $ \dsvect(M) \to  \VS(M)$ can be equipped with the structure of a Lie algebroid 
corresponding to the groupoid $\DSDiff(M)$ as follows:
\begin{enumerate}\item
The fiber of $\dsvect(M)$ over $\Sheet \in \VS(M)$ is the space $\dsvect(M, \Sheet)$ which consists
 of discontinuous vector fields on $M$ of the form 
$u = \chiplus u^+ + \chimin u^-$, where 
 $u^\pm \in \SVect(\Dompm)$ are such that $u^+\vert_{\Sheet} - u^-\vert_{\Sheet}$ is tangent to $\Sheet$ (in other words, the normal component of $u$ on $\Sheet$ is continuous, see Section \ref{sec:dtf}). 
 \item  The anchor map  $
 \# \colon \dsvect(M, \Sheet) \to \T_\Sheet \VS(M)\,
 $ is given by the projection of $u^+\vert_{\Sheet}$ or, equivalently, $u^-\vert_{\Sheet}$ to the normal bundle ${N}\Sheet := (\T M)\vert_{\Sheet} \, / \, \T \Sheet$  (these projections coincide by the previous statement).
 \item Let $U, V$ be sections of $\dsvect(M)$.
 Then their algebroid bracket is
 \begin{equation}\label{sectionsBracket}
[U,V](\Sheet) = [U(\Sheet), V(\Sheet)]^\Reg +{\#U(\Sheet)}\cdot V - {\#V(\Sheet)}\cdot U\,.
\end{equation}

 \end{enumerate}
\end{theorem}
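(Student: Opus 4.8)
The plan is to unwind the general construction of the algebroid of a Lie groupoid (Section \ref{sect:algebroid}) in the concrete case of $\DSDiff(M)$, establishing the three assertions in turn. For (1), the fiber over $\Sheet$ is by definition $\T_{\id_\Sheet}\DSDiff(M)_\Sheet$, the tangent space to the source fiber at the unit. A curve through $\id_\Sheet$ inside this source fiber is a family $(\Sheet,\Sheet_t,\phi^+_t,\phi^-_t)$ with $\phi^\pm_0=\id$, and its velocity is $u=\chiplus u^+ + \chimin u^-$ with $u^\pm:=\partial_t\phi^\pm_t|_{t=0}$. Volume preservation of each $\phi^\pm_t$ forces $\div u^\pm=0$, while the fact that $\phi^+_t$ and $\phi^-_t$ together carry the single interface $\Sheet$ onto the single interface $\Sheet_t$ forces the normal components of $u^+$ and $u^-$ on $\Sheet$ to agree (each equals the normal velocity of $\Sheet_t$ at $t=0$); hence the fiber is exactly $\dsvect(M,\Sheet)$. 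For (2), the anchor is the differential of $\Trg$, and since $\Trg(\Sheet,\Sheet_t,\phi^+_t,\phi^-_t)=\Sheet_t$, differentiating returns $\partial_t\Sheet_t|_{t=0}$, which is precisely the common normal component of $u^\pm$, i.e.\ the projection to $N\Sheet$. These two steps are routine.

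The substance is (3). I would extend $U,V$ to right-invariant vector fields $\tilde U,\tilde V$ on $\DSDiff(M)$ tangent to source fibers and evaluate $[\tilde U,\tilde V]$ along the unit section. Since groupoid multiplication is componentwise, right translation by $g=(\Sheet,\Sheet',\phi^+,\phi^-)$ sends a vector $W=\chi^+_{\Sheet'}W^++\chi^-_{\Sheet'}W^-$ at $\id_{\Sheet'}$ to $\chiplus(W^+\circ\phi^+)+\chimin(W^-\circ\phi^-)$, so the right-invariant extension is $\tilde U_g=\chiplus(U^+(\Trg g)\circ\phi^+)+\chimin(U^-(\Trg g)\circ\phi^-)$. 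With this formula in hand, the bracket computation mirrors, step for step, the action-algebroid computation of Example \ref{algebroids}(c): the role of the group is played by the direct product $\SDiff(\Domplus)\times\SDiff(\Dommin)$, whose Lie algebra carries the componentwise bracket $[\,\cdot\,,\cdot\,]^\Reg$, and the role of the base action is played by the motion of the single interface $\Sheet\in\VS(M)$. The vertex-group part of the bracket then yields $[U(\Sheet),V(\Sheet)]^\Reg$, while differentiating the sections along the anchor directions yields $\#U(\Sheet)\cdot V-\#V(\Sheet)\cdot U$, reproducing \eqref{sectionsBracket} --- in agreement with the similarity of the two brackets anticipated in Remark \ref{subgroupoid}.

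The hard part is making the derivative terms precise and the reduction rigorous. Unlike in \eqref{actionBracket}, the bundle $\dsvect(M)\to\VS(M)$ has genuinely moving fibers $\dsvect(M,\Sheet)$, so there is no canonical trivialization in which to differentiate a section; I would resolve this using the local trivialization of Proposition \ref{frechet}, pulling sections back to the reference vertex group $\pi^{-1}(\Sheet_0,\Sheet_0)=\SDiff(\Domplus)\times\SDiff(\Dommin)$ by means of the chosen maps $\phi_\Sheet$, in which frame $\#U(\Sheet)\cdot V$ becomes the directional derivative of the $\Sheet$-dependent representative of $V$ along the normal velocity $\#U(\Sheet)$. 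Two checks then remain: that the result is independent of the auxiliary choices $\phi_\Sheet$ (the choice-dependent contributions must cancel, exactly as in the standard verification that \eqref{actionBracket} defines an algebroid bracket), and that the distributional terms produced by differentiating across the discontinuity $\Sheet$ --- the analog of the $\diff$ versus $\diff^\Reg$ discrepancy of Section \ref{sec:dtf} --- do not spoil membership of $[U,V](\Sheet)$ in $\dsvect(M,\Sheet)$, which holds because the matched normal components of $U$ and $V$ keep the singular contributions aligned on the two sides. The Jacobi identity and Leibniz rule are then automatic, being inherited from the groupoid construction of Section \ref{sect:algebroid}.
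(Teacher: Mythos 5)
Your treatment of parts (1) and (2) matches the paper's: the paper likewise differentiates a curve $(\Sheet,\Sheet_t,\phi_t^+,\phi_t^-)$, using the relation $\phi_t^+\vert_\Sheet=\phi_t^-\vert_\Sheet\circ\psi_t$ to get tangency of $u^+\vert_\Sheet-u^-\vert_\Sheet$ to $\Sheet$, and reads off the anchor from $\Trg(\phi_t)=\Sheet_t$. For part (3), however, you take a genuinely different route. You propose a direct computation: extend $U,V$ to right-invariant vector fields (your formula for the right-invariant extension is correct) and evaluate their Lie bracket in the local trivializations of Proposition \ref{frechet}, arguing by analogy with the action-algebroid computation of Example \ref{algebroids}(c). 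The paper instead splits an arbitrary section as $U=U_{sm}+U_{tan}$, with $U_{sm}$ a section of the action subalgebroid $\SVect(M)\ltimes\VS(M)$ and $\#U_{tan}=0$, and by bilinearity and skew-symmetry reduces to two cases: both sections smooth (where \eqref{sectionsBracket} is the known action-algebroid bracket \eqref{actionBracket}), and $\#V=0$ (where the general isotropy-representation formula \eqref{isotropyRepr}, combined with the lemma that $\Ad_\phi$ is pushforward, gives the answer in two lines). The paper's decomposition buys exactly what your approach must pay for by hand: it never differentiates a section of the genuinely non-trivial bundle $\dsvect(M)$ in an auxiliary trivialization, so there is no choice-independence check, and each of the two cases is covered by an already-established general formula.

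The one place where your outline understates the difficulty is the cancellation of normal-component jumps. You attribute it to ``the matched normal components of $U$ and $V$ keeping the singular contributions aligned,'' but this is not yet an argument: the whole point of Remarks \ref{remark:compensation} and \ref{subAlgBracket} is that $[U(\Sheet),V(\Sheet)]^\Reg$ does \emph{not} lie in $\dsvect(M,\Sheet)$ even when $U(\Sheet)$ and $V(\Sheet)$ do --- the fibers are not Lie algebras, which is precisely why $\dsvect(M)$ is not an action algebroid and why the analogy with Example \ref{algebroids}(c) cannot be invoked ``step for step.'' The jump of $[u^+,w]$ must be computed explicitly (as is done in \eqref{derDiffJump} via $i_{[u,w]}=[\L_u,i_w]$ and the divergence-free condition) and shown to be offset by the jump of the covariant-derivative terms. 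In the paper's proof this issue never arises because $[U,V](\Sheet)$ is a priori a section of the algebroid and each of the two reduced cases produces the right-hand side of \eqref{sectionsBracket} directly; in your approach it is a substantive verification that still needs to be carried out.
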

\begin{remark}\label{connectionInExtBundle} The derivative ${\#U(\Sheet)}\cdot V$ is a (discontinuous) vector field defined by
$$
{\#U(\Sheet)}\cdot V :=\left.\frac{ \hphantom{a}d^\Reg}{d t}\right\vert_{t = 0}\!\!\!\!\!{ V(\Sheet_t)}\,,
$$
where $\Sheet_t$ is any smooth curve in $\VS(M)$ with $\Sheet_0 = \Sheet$ and the tangent vector at $\Sheet$ 
given by ${\#U(\Sheet)}$. 
The derivative $\#U(\Sheet)\cdot V$ does not have to lie in $\dsvect(M, \Sheet)$, but belongs to the bigger
space  
$$
\dsvectext(M,\Sheet)  := \{   \chiplus u^+ + \chimin u^- \mid u^\pm \in \SVect(\Dom^\pm_\Sheet)\}
$$
of divergence-free vector fields with no conditions on $\Sheet$.
(The normal component of $\#U(\Sheet)\cdot V$ at $\Sheet$ does not have to be continuous.)
 The so-defined derivative 
can be viewed as a covariant derivative $\nabla_{\#U} V$ in the extended bundle $\dsvectext(M)  \to \VS(M)$ whose fiber over $\Sheet \in \VS(M)$ is the space $\dsvectext(M, \Sheet)$, cf.~\cite{Hamilton}, Example 4.5.4.
(We use the notation $\#U(\Sheet)\cdot V $ to emphasize its  relation to the formula from Example \ref{algebroids}(c).)

\end{remark}
\begin{remark}\label{remark:compensation}
Note that the first term on the right-hand side of \eqref{sectionsBracket} is not an element of $\dsvect(M, \Sheet)$. Indeed, for two vector fields whose normal components are continuous across $\Gamma$, their (regularized) Lie bracket $[\,,]^\Reg$ does not necessarily have this property. However, the last two terms also have discontinuous normal components that 
turn out to compensate the discontinuity of the first term, as we will see below.
  \end{remark}


\begin{remark}\label{subAlgBracket}
The Lie algebroid $\dsvect(M)$ contains a subalgebroid $ \svect(M) \ltimes \VS(M)$ of smooth divergence-free vector fields, corresponding to the subgroupoid $\SDiff(M) \ltimes \VS(M)$ of smooth volume-preserving diffeomorphisms (see Remark \ref{subgroupoid}). Note that since  $\SDiff(M) \ltimes \VS(M)$ is an action groupoid,  $\svect(M) \ltimes \VS(M)$ is an action algebroid, and the corresponding bracket automatically has form~\eqref{sectionsBracket} (cf. Example \ref{algebroids}(c)). The non-trivial part of the third statement of Theorem~\ref{algFibers} is that the bracket has the same form \textit{on the whole} Lie algebroid $\dsvect(M)$, 
despite the fact that $\dsvect(M)$ is \textit{not} an action algebroid. (In particular, the fibers of  $\dsvect(M)$ are not closed under the Lie bracket of vector fields (see Remark \ref{remark:compensation}) and hence do not have any natural Lie algebra structure.)
\end{remark}

\proof[Proof of Theorem~\ref{algFibers}]
We begin with the first statement. By definition, the fiber  of $\dsvect(M)$ over $\Sheet$ consists of tangent vectors at $\units_\Sheet \in \DSDiff(M)$ to curves of the form $(\Sheet, \Sheet_t, \phi_t^+, \phi_t^- )$, where $\Sheet_0 = \Sheet$ and $\phi_0^\pm = \id$. 
The  tangent vector to such a curve is a pair of divergence-free vector fields
$$
u^\pm =\left.\frac{ \hphantom{}d}{d t}\right\vert_{t = 0}\!\!\!\!\! \phi^\pm_t \,\,\in \,\SVect(\Dompm)\,.
$$
Also note that, by definition, both $\phi_t^+$ and $\phi_t^-$ map the surface $\Sheet$ to the same surface $\Sheet_t$. In other words, we have
\begin{equation}\label{boundaryRelation}
\phi_t^+\vert_{\Sheet} = \phi_t^-\vert_{\Sheet} \circ \psi_t\,,
\end{equation}
where $\psi_t \in \Diff(\Sheet)$ is a diffeomorphism of the sheet $\Sheet$, and $\psi_0 = \id$. Differentiating \eqref{boundaryRelation} with respect to $t$ at $t = 0$, we get
$$
u^+\vert_{\Sheet} = u^-\vert_{\Sheet} +\left.\frac{ \hphantom{}d}{d t}\right\vert_{t = 0}\!\!\!\!\! \psi_t\,,
$$
which means that $u^+\vert_{\Sheet} - u^-\vert_{\Sheet}$ is tangent to $\Sheet$, as desired.
\par
Conversely, given any pair of divergence-free vector fields $u^\pm \in \SVect(\Dompm)$ such that $u^+\vert_{\Sheet} - u^-\vert_{\Sheet}$  is tangent to $\Sheet$, one can construct a curve $\phi_t$ in the source fiber $\DSDiff(M)_\Sheet$ whose tangent vector at $\units_\Sheet$ coincides with $\chiplus u^+ + \chimin u^-$. So, the fiber of $\dsvect(M)$ over $\VS(M)$ is indeed the space $\dsvect(M, \Sheet)$.

To prove the second statement we need the following. 

\begin{lemma}\label{lemma:vstanspace}The tangent space  $\T_\Sheet \VS(M)$ is the space of sections of the normal bundle ${N}\Sheet$ (equivalently, the space of top-degree forms on $\Sheet$) having zero mean.  \end{lemma}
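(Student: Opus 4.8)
The plan is to obtain the description of $\T_\Sheet\VS(M)$ by cutting out $\VS(M)$ inside the ambient space $\Sigma(M)$ of all hypersurfaces by the volume constraint. First I would recall, as already used in the proof of Proposition~\ref{frechet} (and \cite{Hamilton}, Example~4.1.7), that $\Sigma(M)$ is a Fr\'echet manifold whose tangent space $\T_\Sheet\Sigma(M)$ at $\Sheet$ is the space of smooth sections of the normal bundle $N\Sheet := (\T M)\vert_\Sheet \, / \, \T\Sheet$: an infinitesimal deformation of $\Sheet$ is recorded by the normal velocity of its points, tangential components only reparametrizing the same hypersurface.

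Next I would set up the fiberwise isomorphism between sections of $N\Sheet$ and top-degree forms $\Omega^{n-1}(\Sheet)$ on $\Sheet$ (where $n = \dim M$), given by $v \mapsto (i_v\mu)\vert_\Sheet$. This map is well-defined on $N\Sheet$ because $(i_v\mu)\vert_\Sheet$ vanishes whenever $v$ is tangent to $\Sheet$ (then $v$ and the $n-1$ tangent arguments are linearly dependent), and it is a fiberwise isomorphism because a vector transverse to $\Sheet$ together with a basis of $\T_x\Sheet$ spans $\T_x M$, so the resulting top-form is nonzero by nondegeneracy of $\mu$. Under this isomorphism the quantity $\int_\Sheet i_v\mu$ becomes the mean $\int_\Sheet \rho$ of the corresponding top-degree form $\rho$, which gives the ``equivalently'' clause of the statement.

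Then I would realize $\VS(M)$ locally as a level set of the volume functional. By Moser's theorem, a hypersurface close to $\Sheet$ lies in $\VS(M)$ (i.e.\ is the image of $\Sheet_0$ under a volume-preserving diffeomorphism) precisely when it bounds the same volume; hence near $\Sheet$ the set $\VS(M)$ coincides with a level set of the smooth functional $V(\Sheet) := \mathrm{Vol}(\Dom^+_\Sheet)$. Extending $v$ to an ambient vector field $\tilde v$ with flow $\Psi_t$, the transport theorem together with $\L_{\tilde v}\mu = d\, i_{\tilde v}\mu$ and Stokes gives
$$
dV_\Sheet(v) = \frac{d}{dt}\Big\vert_{t=0}\int_{\Psi_t(\Dom^+_\Sheet)}\mu = \int_{\Dom^+_\Sheet} d\, i_{\tilde v}\mu = \int_\Sheet i_{\tilde v}\mu = \int_\Sheet i_v\mu\,,
$$
the last step holding because $(i_{\tilde v}\mu)\vert_\Sheet$ depends only on the normal component of $\tilde v$ along $\Sheet$. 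Since $dV_\Sheet$ is a nonzero (hence surjective) functional, $\VS(M)$ is a submanifold of $\Sigma(M)$ and $\T_\Sheet\VS(M) = \ker dV_\Sheet = \{\, v \in \T_\Sheet\Sigma(M) \mid \int_\Sheet i_v\mu = 0\,\}$, i.e.\ exactly the space of zero-mean sections, or equivalently zero-mean top-degree forms.

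The fiberwise isomorphism and the Stokes computation are routine; the point requiring care is the passage from the defining description of $\VS(M)$ as an $\SDiff(M)$-orbit of $\Sheet_0$ to its local description as a volume level set, which is where Moser's theorem is essential and which guarantees that linearizing the single volume constraint yields the \emph{entire} tangent space rather than a proper subspace of it.
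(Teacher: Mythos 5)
Your proposal is correct and follows essentially the same route as the paper: the authors' proof is a two-line sketch (differentiate a family $\Sheet_t$, note the motion is determined by the normal field, get the zero-mean condition from conservation of the volumes of $\Dompm$, and identify sections of $N\Sheet$ with top-degree forms via $v \mapsto (i_v\mu)\vert_\Sheet$), and your argument is that sketch made explicit, with the volume-level-set description of $\VS(M)$ already supplied by the charts constructed in the proof of Proposition~\ref{frechet}. Your closing remark correctly identifies the only point needing care, namely that Moser's theorem is what lets one pass from the $\SDiff(M)$-orbit definition of $\VS(M)$ to the local volume-constraint description, so that the kernel of $dV_\Sheet$ is the whole tangent space and not just an upper bound for it.
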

The proof of this lemma is achieved by differentiating a family $\Sheet_t$ at $t=0$. Its motion is determined by the normal 
vector field, while the zero mean condition follows from the conservation of volumes of  $\Dompm$ (cf. \cite{Hamilton}, Example 4.5.5). The equivalence between sections of $N\Sheet$ and top-degree forms of $\Sheet$ is provided by the map $v \mapsto (i_v \mu)\vert_\Sheet$.

 Now, we compute the anchor map. Let $u =  \chiplus u^+ + \chimin u^- \in \dsvect(M)_\Sheet$. Consider a curve $\phi_t = (\Sheet, \Sheet_t, \phi_t^+, \phi_t^- )$ whose tangent vector at $\units_\Sheet$ is $u$. Then, by definition of the anchor map for the algebroid of a Lie groupoid, we have 
 $$
 \#u = \left.\frac{ \hphantom{}d}{d t}\right\vert_{t = 0}\!\!\!\!\! \Trg(\phi_t) =\left.\frac{ \hphantom{}d}{d t}\right\vert_{t = 0}\!\!\!\!\! \Sheet_t \,. 
 $$
 Note that, as an unparametrized surface, $\Sheet_t$ coincides with $\phi_t^+(\Sheet)$, so the latter derivative is equal to the normal component of $$
\left.\frac{ \hphantom{}d}{d t}\right\vert_{t = 0}\!\!\!\!\! \phi_t^+(\Sheet) = u^+\!, 
 $$
 which means that $\#u$ is exactly the normal component of $u^+$ (equivalently, $u^-$), as desired.\par
Finally, we prove the third statement. First, notice that any section $U$ of $\dsvect(M)$ can be written (nonuniquely) as $U = U_{sm} + U_{tan}$, 
where $U_{sm}$ is a section of $\SVect(M) \ltimes \VS(M)$ (i.e., for every $\Sheet \in \VS(M)$ 
the vector field $U_{sm}(\Sheet)$ is divergence-free  and smooth), while $U_{tan}$ satisfies $\#U_{tan} = 0$ 
(i.e., for every $\Sheet \in \VS(M)$ the vector field $U_{tan}(\Sheet)$ is tangent to $\Sheet$).
(The existence of $U_{sm}$ is equivalent to the ability to trace any infinitesimal motion of $\Sheet$ by a smooth
divergence-free field in $M$.) 
Therefore, since both the left- and the right-hand sides of  \eqref{sectionsBracket} are skew-symmetric and additive in $U$ and $V$, it suffices to prove  this formula in the following two cases:
\begin{enumerate}\item $U, V$ are sections of $\SVect(M) \ltimes \VS(M)$; 
\item $\#V = 0$. \end{enumerate}
The first case follows from  $\SVect(M) \ltimes \VS(M)$ being an action algebroid (see Remark \ref{subAlgBracket}), so we only need to consider the case $\#V = 0$. To deal with this case, we use the following lemma.
\begin{lemma}\label{AdIsPushForward}
For the Lie groupoid $\DSDiff(M)$, the adjoint operator $\Ad_\phi \colon \Ker \#_{\Src(\phi)} \to  \Ker \#_{\Trg(\phi)}$ is given by pushforward:
$
\Ad_\phi u = \phi_*u\,.
$
\end{lemma}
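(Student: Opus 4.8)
The plan is to unwind the definition of the adjoint operator $\Ad_\phi$ of the isotropy representation from Section \ref{sect:algebroid} and to reduce it, component by component, to the classical fact that conjugation in a diffeomorphism group differentiates to the pushforward of vector fields. First I would record the isotropy data. Write $\phi = (\Sheet_1, \Sheet_2, \phi^+, \phi^-)$ with $\Sheet_1 = \Src(\phi)$, $\Sheet_2 = \Trg(\phi)$. The isotropy group $\DSDiff(M)_{\Sheet_i}^{\Sheet_i}$ consists of the quadruples $(\Sheet_i, \Sheet_i, \psi^+, \psi^-)$ with $\psi^\pm \in \SDiff(\Dom^\pm_{\Sheet_i})$ preserving $\Sheet_i$, and its Lie algebra $\Ker \#_{\Sheet_i}$ consists of fields $u = \chi^+_{\Sheet_i} u^+ + \chi^-_{\Sheet_i} u^-$ with $u^\pm \in \SVect(\Dom^\pm_{\Sheet_i})$ tangent to $\Sheet_i$ (the condition $\#u = 0$).

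Next I would compute the conjugation homomorphism $\Phi_\phi(h) = \phi h \phi^{-1}$ explicitly, using $\phi^{-1} = (\Sheet_2, \Sheet_1, (\phi^+)^{-1}, (\phi^-)^{-1})$ and the groupoid multiplication rule, which composes the $\pm$-parts separately. Applying the rule twice to $h = (\Sheet_1, \Sheet_1, \psi^+, \psi^-)$ gives
$$
\Phi_\phi(h) = \bigl(\Sheet_2,\, \Sheet_2,\, \phi^+ \psi^+ (\phi^+)^{-1},\, \phi^- \psi^- (\phi^-)^{-1}\bigr)\,,
$$
so conjugation in $\DSDiff(M)$ acts diagonally, by ordinary conjugation within each factor $\SDiff(\Dom^\pm)$. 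Then I would differentiate at the identity: by definition $\Ad_\phi = d(\Phi_\phi)_{\units_{\Sheet_1}}$, so choosing curves $\psi^\pm_t$ through the identity with $\frac{d}{dt}\big|_{t=0}\psi^\pm_t = u^\pm$ and differentiating $\phi^\pm \psi^\pm_t (\phi^\pm)^{-1}$ reproduces, on each side independently, the standard diffeomorphism-group identity $\frac{d}{dt}\big|_{t=0}\phi^\pm \psi^\pm_t (\phi^\pm)^{-1} = (\phi^\pm)_* u^\pm$. Hence $\Ad_\phi u = \chi^+_{\Sheet_2}(\phi^+)_* u^+ + \chi^-_{\Sheet_2}(\phi^-)_* u^-$, which is exactly the pushforward $\phi_* u$ by the discontinuous diffeomorphism $\phi$.

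Finally I would verify consistency of the target: since each $\phi^\pm$ carries $\Sheet_1$ to $\Sheet_2$ and $u^\pm$ is tangent to $\Sheet_1$, the field $(\phi^\pm)_* u^\pm$ is divergence-free and tangent to $\Sheet_2$, so $\phi_* u$ indeed lands in $\Ker \#_{\Sheet_2}$, as required. This last step is essentially automatic, and in fact I do not expect a genuine obstacle here: the entire content is the observation that groupoid multiplication is componentwise on the two sides of the sheet, which decouples the conjugation into two classical conjugations in $\SDiff(\Dom^\pm)$. The only point demanding care is the Fr\'echet-analytic justification of differentiating $\Phi_\phi$ term by term, but this is inherited from the smooth structure of $\SDiff(\Dom^\pm)$ established in Proposition \ref{frechet} together with the fact that $\phi^\pm$ restrict to honest diffeomorphisms taking $\Sheet_1$ to $\Sheet_2$.
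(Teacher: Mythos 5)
Your proposal is correct and follows essentially the same route as the paper: the paper's proof is a one-line reduction to the classical statement for $\SDiff(M)$ (Theorem 3.11 of \cite{AK}), observing that conjugation in the groupoid acts as a volume-preserving change of coordinates on each $\Dompm$ separately, which is exactly the componentwise decoupling you spell out. Your version just makes the computation of $\Phi_\phi$ and its differentiation explicit, which the paper leaves implicit.
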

\begin{proof}
The proof follows the lines of that for the corresponding statement about the group $\SDiff(M)$, see, e.g., 
Theorem 3.11 of \cite{AK}, as the adjoint action is a volume-preserving change of coordinates on $\Dompm$.
\end{proof}

Now, let $\phi_t$ be any smooth curve in the source fiber $\DSDiff(M)_\Sheet$ such that $\phi_0 = \units_\Sheet$, and the tangent vector to $\phi_t$ at $\units_\Sheet$ is $U(\Sheet)$. Then, using formula~\eqref{isotropyRepr} and Lemma~\ref{AdIsPushForward}, we have
\begin{align*}
[U,V](\Sheet) =\left.\frac{ \hphantom{}d}{d t}\right\vert_{t = 0}\!\!\!\!\! \phi_t^*\left(V(\Trg(\phi_t))\right)   &= [\left.\frac{ \hphantom{}d}{d t}\right\vert_{t = 0}\!\!\!\!\! \phi_t\,, V(\Trg(\phi_0))]^\Reg +  \left.\frac{ \hphantom{a}d^\Reg}{d t}\right\vert_{t = 0}\!\!\!\!\! V(\Trg(\phi_t))  \\
&= [U(\Sheet), V(\Sheet)]^\Reg + {\#U(\Sheet)}\cdot V\,,
\end{align*}
which, due to the condition $\#V = 0$, is equivalent to~\eqref{sectionsBracket}. Thus, the theorem is proved.
\proofend

%

 \begin{remark}\label{rem:normcot}
 In what follows, we identify the normal bundle $N\Sheet$ with the canonical bundle $\Lambda^{n-1}(\T^*\Sheet)$, where $n:=\dim M$ (i.e. $n-1:=\dim \Sheet$). With this identification, the tangent space $ \T_\Sheet \VS(M)$ is the space $\Omega^{n-1}_0(\Sheet)$ of top-degree forms on $\Sheet$ with zero mean, while the anchor map is given by
 $$
 \#u= (i_u \mu)\vert_{\Sheet} =  (i_{u^+} \mu)\vert_{\Sheet} =  (i_{u^-} \mu)\vert_{\Sheet} \quad \forall \, u \in \dsvect(M, \Sheet)\,.
 $$
 \end{remark}
 
 \begin{corollary}\label{cor:isotropy}
 The isotropy algebra $\Ker \#_\Sheet$ for the Lie algebroid $\dsvect(M)$ consists of vector fields of the form $u =  \chiplus u^+ + \chimin u^-$, where $u^\pm \in \SVect(\Dompm)$ are tangent to $\Sheet$. The Lie bracket on $\Ker \#_\Sheet$ is the regularized Lie bracket of vector fields.
 \end{corollary}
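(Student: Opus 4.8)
The plan is to read off both statements directly from the description of the anchor map and the bracket furnished by Theorem \ref{algFibers}, combined with the general definition of the isotropy algebra in Proposition \ref{prop:isoalgebra}.

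First I would identify the kernel. By Theorem \ref{algFibers}(2), the anchor $\#\colon \dsvect(M,\Sheet) \to \T_\Sheet\VS(M)$ sends $u = \chiplus u^+ + \chimin u^-$ to the common normal component of $u^+\vert_\Sheet$ and $u^-\vert_\Sheet$ along $\Sheet$. Hence $\#u = 0$ precisely when this normal component vanishes, i.e. when both $u^+\vert_\Sheet$ and $u^-\vert_\Sheet$ are tangent to $\Sheet$. In that case the defining condition of $\dsvect(M,\Sheet)$, namely that $u^+\vert_\Sheet - u^-\vert_\Sheet$ be tangent to $\Sheet$, is automatic. So the kernel is exactly the set of $u = \chiplus u^+ + \chimin u^-$ with $u^\pm \in \SVect(\Dompm)$ tangent to $\Sheet$, which is the first assertion.

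Next I would compute the bracket. By Proposition \ref{prop:isoalgebra}, the isotropy bracket of $u,v \in \Ker\#_\Sheet$ equals $[\hat U,\hat V](\Sheet)$ for any sections $\hat U,\hat V$ of $\dsvect(M)$ extending $u,v$, independently of the chosen extensions. I would therefore choose extensions with identically vanishing anchor, $\#\hat U \equiv 0$ and $\#\hat V \equiv 0$ (for instance, sections of the isotropy subbundle passing through $u$ and $v$). Substituting into the explicit formula \eqref{sectionsBracket},
\begin{equation}
[\hat U,\hat V](\Sheet) = [\hat U(\Sheet),\hat V(\Sheet)]^\Reg + \#\hat U(\Sheet)\cdot \hat V - \#\hat V(\Sheet)\cdot \hat U\,,
\end{equation}
the last two directional-derivative terms drop out because $\#\hat U(\Sheet) = \#\hat V(\Sheet) = 0$: the derivative of a section along the zero tangent vector to $\VS(M)$ vanishes (take the constant curve $\Sheet_t \equiv \Sheet$ in the definition of Remark \ref{connectionInExtBundle}). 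This leaves $[u,v] = [u,v]^\Reg = \chiplus[u^+,v^+] + \chimin[u^-,v^-]$, the regularized Lie bracket, as claimed.

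Finally, for internal consistency I would check that this output indeed lies back in $\Ker\#_\Sheet$: the Lie bracket of two divergence-free fields is divergence-free, and the Lie bracket of two fields tangent to $\Sheet$ is again tangent to $\Sheet$, so each $[u^\pm,v^\pm] \in \SVect(\Dompm)$ remains tangent to $\Sheet$. As a sanity check this also matches Proposition \ref{prop:isoalgebra}'s identification of $\Ker\#_\Sheet$ with the Lie algebra of the isotropy group $\SDiff(\Domplus)\times\SDiff(\Dommin)$, whose bracket is precisely the direct-product bracket of $\SVect(\Domplus)\times\SVect(\Dommin)$. I do not expect any genuine obstacle here — the corollary is essentially bookkeeping once Theorem \ref{algFibers} is available. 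The only point requiring a little care is the vanishing of the derivative terms in \eqref{sectionsBracket}, which rests on the directional-derivative interpretation of $\#U\cdot V$ from Remark \ref{connectionInExtBundle}; choosing extensions whose anchor vanishes identically (not merely at $\Sheet$) makes this transparent.
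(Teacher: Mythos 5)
Your proposal is correct and follows exactly the route the paper intends: the paper states this as an immediate corollary of Theorem \ref{algFibers} together with Proposition \ref{prop:isoalgebra}, and your spelled-out computation (kernel read off from the anchor, derivative terms in \eqref{sectionsBracket} vanishing because $\#\hat U(\Sheet)=\#\hat V(\Sheet)=0$) is precisely the implicit argument. The only remark worth making is that you do not even need extensions whose anchor vanishes identically — any extensions of $u,v$ have vanishing anchor \emph{at} $\Sheet$, which already kills the two covariant-derivative terms.
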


Notice that $\Ker \#_\Sheet$ is exactly the Lie algebra of the vertex group $\SDiff(\Domplus) \times \SDiff(\Dommin)$ (cf. Proposition \ref{prop:isoalgebra}).




\medskip

 \subsection{The tangent space to the algebroid of discontinuous fields}

Here we describe the tangent space to $\dsvect(M)$. Since $\dsvect(M)$ is the space of possible velocities 
for a fluid with a vortex sheet, its tangent space is, in a sense, the space of possible accelerations. 
In what follows we use this description to show that the velocity uniquely determines the pressure 
terms $p^\pm$ in \eqref{twoPhaseEuler}.

\begin{definition}\label{def:tanVectToDSVECT} 
Let $u_t =  \chiplust u_t^+ + \chimint u_t^- \in \dsvect(M)$ be a smooth 
curve in the space $\dsvect(M)$.
Then the \textit{tangent vector} to $u_t$ at $t = t_0$ is a pair
$
(v,\xi) \in \dsvectext(M,{\Sheet_{t_0}}) \oplus  \T_{\Sheet_{t_0}}\VS(M)
$ defined by
\begin{equation}\label{tanVectToDSVECT}
v:=  \left.\frac{ \hphantom{a}d^\Reg}{d t}\right\vert_{t = t_0}\!\!\!\!\! u_t\,, 
\quad \xi:=\left.\frac{ \hphantom{}d}{d t}\right\vert_{t = t_0}\!\!\!\!\! \Sheet_t \,.
\end{equation}
Note that the $v$-component of the tangent vector is the covariant derivative of $u_t$ along the curve $\Sheet_t$,
where we regard $u_t$ as a section of the algebroid over $\Sheet_t$, cf.  Remark \ref{connectionInExtBundle}.
\end{definition}

 Let  $\tanjump \colon  \dsvect(M,\Sheet) \to \Vect(\Sheet)$ be the map assigning to each 
 $u \in \dsvect(M,\Sheet)$ the jump of its tangential component at $\Sheet$:
 \begin{equation}\label{tanJumpOperator}
 \tanjump(\chiplus u^+ + \chimin u^-) := (u^+ - u^-)\vert_{\Sheet}\,.
  \end{equation}
 Let also $\normjump \colon  \dsvectext(M,\Sheet) \to \T_\Sheet \VS(M) \simeq \Omega_0^{n-1}(\Sheet)$ be the map assigning to each $v \in \dsvectext(M, \Sheet)$ the jump of its normal component at $\Sheet$:
 \begin{equation}\label{normJumpOperator}
 \normjump(\chiplus v^+ + \chimin v^-) := (i_{(v^+ - v^-)} \mu) \vert_{\Sheet}\,.
 \end{equation}

\begin{proposition}\label{tanSpaceDes}
The (Fr\'{e}chet) tangent space to $\dsvect(M)$ at a point $u \in \dsvect(M, \Sheet)$ is a vector subspace of $ \dsvectext(M,\Sheet) \oplus  \T_\Sheet \VS(M) $ consisting of pairs $(v, \xi)$ satisfying 
\begin{align}\label{tangentSpaceEq}
\normjump(v) = \L_{\tanjump(u)}\xi\,.
\end{align}
Here $\xi$ is understood as an $(n-1)$-form with zero mean on $\Sheet$, cf. Remark \ref{rem:normcot}.
\end{proposition}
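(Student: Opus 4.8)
The plan is to read off \eqref{tangentSpaceEq} by differentiating, along the given curve $u_t$, the very condition that defines membership in the fibres $\dsvect(M,\Sheet_t)$ — namely that the one-sided fields $u_t^+$ and $u_t^-$ have equal normal components along the moving sheet $\Sheet_t$. Writing $\beta_t^\pm := i_{u_t^\pm}\mu$, this condition is the equality of the restricted $(n-1)$-forms $\beta_t^+\vert_{\Sheet_t} = \beta_t^-\vert_{\Sheet_t}$ on $\Sheet_t$ (cf. Remark \ref{rem:normcot}). To differentiate a restriction to a \emph{moving} hypersurface, I would fix an ambient isotopy $\Psi_t$ of $M$ with $\Psi_0 = \id$ and $\Psi_t(\Sheet) = \Sheet_t$, carrying $\Dompm$ to $\Dom^\pm_{\Sheet_t}$, and let $w := \frac{d}{dt}\big\vert_{0}\Psi_t$ be its infinitesimal generator. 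Under the identification $\T_\Sheet\VS(M) \simeq \Omega^{n-1}_0(\Sheet)$ of Remark \ref{rem:normcot}, the sheet velocity is $\xi = (i_w\mu)\vert_\Sheet$, which depends only on the normal part of $w$ along $\Sheet$.

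For the necessity direction, first I would pull the constraint back to the fixed sheet via the inclusion $\iota\colon\Sheet\hookrightarrow M$, writing it as $\iota^*\Psi_t^*\beta_t^+ = \iota^*\Psi_t^*\beta_t^-$. Applying the standard transport formula $\frac{d}{dt}\big\vert_{0}\Psi_t^*\beta_t = \partial_t^\Reg\beta_t\big\vert_{0} + \L_w\beta_0$ and noting that $\partial_t^\Reg\beta_t^\pm\big\vert_{0} = i_{v^\pm}\mu$, where $v = \chiplus v^+ + \chimin v^-$ is the regularized derivative of $u_t$, the difference of the two differentiated identities gives
$$
\normjump(v) = \iota^*\!\left(i_{v^+}\mu - i_{v^-}\mu\right) = -\,\iota^*\L_w\!\left(\beta_0^+ - \beta_0^-\right).
$$

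The key simplification is to identify the right-hand side with the intrinsic Lie derivative $\L_{\tanjump(u)}\xi$ on $\Sheet$. Here I would use that each $u^\pm$ is divergence-free, so $d\beta_0^\pm = \L_{u^\pm}\mu = (\div u^\pm)\mu = 0$, whence Cartan's formula collapses $\L_w\beta_0^\pm = d\,i_w\beta_0^\pm$. Since $\iota^*$ commutes with $d$, the right-hand side becomes $-\,d\,\iota^*\!\left(i_w\beta_0^+ - i_w\beta_0^-\right)$, in which, after restriction to $\Sheet$, only the boundary jump $(u^+-u^-)\vert_\Sheet = \tanjump(u) =: s$ enters; as $s$ is tangent to $\Sheet$, anticommuting the contractions yields $\iota^*\!\left(i_w i_s\mu\right) = -\,i_s\,\iota^*(i_w\mu) = -\,i_s\xi$. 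Hence the right-hand side equals $d\,i_s\xi = \L_s\xi - i_s\,d\xi = \L_s\xi$, the last step because $\xi$ is top-degree on $\Sheet$ so $d\xi = 0$. This gives exactly $\normjump(v) = \L_{\tanjump(u)}\xi$, and incidentally shows the expression is independent of the choice of $\Psi_t$ and of the extension of $w$.

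Finally, to see that the subspace cut out by \eqref{tangentSpaceEq} is \emph{all} of the tangent space, and not merely contained in it, I would run this construction in reverse: given any pair $(v,\xi) \in \dsvectext(M,\Sheet)\oplus\T_\Sheet\VS(M)$ satisfying \eqref{tangentSpaceEq}, prescribe the sheet motion by $\xi$ through a suitable isotopy and the one-sided accelerations by $v^\pm$, and check that the resulting curve lies in $\dsvect(M)$ with tangent vector $(v,\xi)$ — the compatibility \eqref{tangentSpaceEq} being precisely the first-order obstruction to preserving the matching-normal-components condition along the curve. I expect the genuine difficulty to lie entirely in the necessity computation above: differentiating the restriction to the moving sheet and recognizing the transport term, after the divergence-free reduction, as the intrinsic Lie derivative $\L_{\tanjump(u)}\xi$ on $\Sheet$. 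Once this first-order dictionary is established, the converse construction is routine.
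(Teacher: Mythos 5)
Your proposal is correct and follows essentially the same route as the paper: both transport the constraint to the fixed sheet via an ambient isotopy with generator $w$, differentiate at $t=0$, and use the divergence-free condition to collapse the transport term into the intrinsic Lie derivative along the tangential jump. The only differences are bookkeeping — the paper differentiates the pulled-back vector field $\phi_t^*u_t$ and invokes $i_{[u,w]}=[\L_u,i_w]$ where you differentiate the pulled-back $(n-1)$-forms $i_{u_t^\pm}\mu$ and invoke Cartan's formula, and the paper makes your sketched converse explicit via the curve $u_t=\exp(tw)_*\bigl(u+t(v+[w,u]^\Reg)\bigr)$, whose correction term $[w,u]^\Reg$ is exactly the first-order obstruction you identify.
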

\begin{remark}
Recall that the bundle $\pi \colon \dsvectext(M)^{} \to \VS(M)$ is equipped with a natural connection, see 
Remark \ref{connectionInExtBundle}. As any connection in a vector bundle, it defines a splitting
$$
\T_u\dsvectext(M)^{} \simeq \dsvectext(M, \pi(u))^{}_{} \oplus  \T_{\pi(u)} \VS(M)\,.
$$
In Proposition \ref{tanSpaceDes} we described the subspace $\T_u\dsvect(M)^{}  \subset \T_u\dsvectext(M)^{} $ in terms of this splitting.
\end{remark}
\begin{proof}[Proof of Proposition \ref{tanSpaceDes}]
Let $u_t = \chiplust u_t^+ + \chimint u_t^- \in \dsvect(M)$ be a curve with $u_ 0 = u$. We prove that its tangent vector $(v,\sigma)$ at $u$, given by \eqref{tanVectToDSVECT}, satisfies \eqref{tangentSpaceEq}. 
Let $\phi_t$ be a curve in $\SDiff(M)$ such that $\phi_{0} = \id$ and $(\phi_t)_*\Sheet = \Sheet_{t}$. Then the vector field $\phi_t^*u_t$ belongs to $\dsvect(M, \Sheet)$ for every $t$, and thus so does its time derivative\begin{align}\label{derDiff}
\left.\frac{ \hphantom{}d}{d t}\right\vert_{t = 0}\!\!\!\!\!  \left( \phi_t^*u_t \right) =    [w, u]^\Reg + v\,, 
\end{align}
where
$
w:= {\partial_t  \phi_t}$ at $t = 0$. 
Applying $\normjump$ to both sides of \eqref{derDiff} 
and using that the left-hand side is in $\dsvect(M, \Sheet)$, we get 
\begin{align}\label{derDiffJump}
\begin{aligned}
\normjump(v) &= \normjump[u,w]^\Reg = (i_{[u^+\!, w]} \mu)\vert_{\Sheet} -  (i_{[u^-\!, w]} \mu)\vert_{\Sheet} \\ &=  ([\L_{u^+}, i_{w}]\mu)\vert_{\Sheet} -([\L_{u^-},i_{w} ]\mu)\vert_{\Sheet}   = \L_{\tanjump(u)}((i_w\mu)\vert_{\Sheet})\,,
\end{aligned}
\end{align}
where we used the standard formula  $i_{[u,w]} = [\L_u, i_w]$, and  divergence-free conditions $\L_{u^+} \mu = \L_{u^-} \mu = 0$. 
Now it suffices to notice that for $t = 0$ we have $ {\partial_t\Sheet_t}{} = (i_w\mu)\vert_{\Sheet}$, so \eqref{derDiffJump} is equivalent to \eqref{tangentSpaceEq}. 
\par
Conversely, given any pair $(v ,\xi)$ satisfying \eqref{tangentSpaceEq}, the curve
$$
u_t := \exp(tw)_*(u + t(v+[w, u]^\Reg))\,,
$$
where $w \in \svect(M)$ is any divergence-free vector field on $M$ whose normal component at $\Sheet$ is $\xi$, has $(v, \xi)$ as its tangent vector at $u$. Thus, the space of tangent vectors to $\dsvect(M)$ at $u$ coincides with the solution space of \eqref{tangentSpaceEq}, as desired.
\end{proof}
\medskip


 \subsection{The dual algebroid and its tangent space}\label{sect:dual}
 In this subsection, we describe the dual of the Lie algebroid $\dsvect(M)$. This space can be viewed as the space of momenta (or the space of circulations) for a fluid with an immersed vortex sheet.\par
As the dual of $\dsvect(M)$, we consider the ``smooth dual bundle'' defined as follows. Let 
 $$\dforms(M) \,:=\, \bigcup\nolimits_{\Sheet \in \VS(M)} \dforms(M,\Sheet)\,.$$ This is a vector bundle over $\VS(M)$. Similarly, let
 $$\dexactforms(M)\, := \,\bigcup\nolimits_{\Sheet \in \VS(M)}\dexactforms(M,\Sheet)\,.$$
 This is a subbundle in $\dforms(M)$.
 \begin{definition}
 The \textit{smooth dual bundle} $\dsvect(M)^*$ is the quotient
 \begin{align}\label{dualFiber}
 \dsvect(M)^* := \dforms(M) \, / \, \dexactforms(M)\,.
\end{align}
The pairing between a coset $[\alpha] \in  \dsvect(M, \Sheet)^* =  \dforms(M, \Sheet) \, / \,  \dexactforms(M,\Sheet)$ and a vector field $u  \in   \dsvect(M)_\Sheet$ is given by the formula
\begin{align}\label{cosetAction}
\langle[\alpha], u\rangle := \int_M (i_u \alpha)  \mu\,, 
\end{align}
where $\alpha$ is any representative of the coset $[\alpha]$. 
\end{definition}

The value of $\langle[\alpha], u\rangle$ does not depend on the choice of the representative $\alpha \in [\alpha]$ because the pairing between $\dexactforms(M,\Sheet)$ and $\dsvect(M, \Sheet)$ vanishes by Proposition \ref{Hodge}.
So, any  coset $[\alpha] \in  \dforms(M, \Sheet) \, / \,  \dexactforms(M,\Sheet)$  gives rise to a well-defined linear functional on the space $\dsvect(M, \Sheet)$. For $[\alpha] \neq 0$ this functional is non-zero by the following proposition.

\begin{proposition}\label{prop:ccRepr}
For any choice of a Riemannian metric on $M$, any coset $[\alpha] \in \dforms(M, \Sheet) \, / \,  \dexactforms(M,\Sheet)$ has a unique co-closed representative $\alpha \in \dccforms(M, \Sheet)$.
\end{proposition}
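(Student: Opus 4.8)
The plan is to obtain this proposition directly from the singular Hodge decomposition (Proposition \ref{Hodge}), which supplies the $L^2$-orthogonal splitting
$$
\dforms(M,\Sheet) = \dccforms(M,\Sheet) \oplus_{L^2} \dexactforms(M,\Sheet)\,.
$$
All of the analytic content I need is already contained in this decomposition, so the task is essentially to repackage it in the language of cosets. Indeed, the assertion that every coset in $\dforms(M,\Sheet)/\dexactforms(M,\Sheet)$ contains a unique element of $\dccforms(M,\Sheet)$ is precisely the statement that $\dccforms(M,\Sheet)$ is a complement to $\dexactforms(M,\Sheet)$ inside $\dforms(M,\Sheet)$, which is what the splitting above says.

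For existence, I would take an arbitrary representative $\alpha$ of the given coset $[\alpha]$ and apply the decomposition to write $\alpha = \beta + \gamma$ with $\beta = \chiplus \beta^+ + \chimin \beta^- \in \dccforms(M,\Sheet)$ and $\gamma \in \dexactforms(M,\Sheet)$. Since $\gamma$ lies in $\dexactforms(M,\Sheet)$, the forms $\alpha$ and $\beta$ differ by an exact discontinuous form and therefore represent the same coset, so $\beta$ is the desired co-closed representative. I would emphasize that $\beta$ genuinely belongs to $\dccforms(M,\Sheet)$: not only is each piece $\beta^\pm$ co-closed, $d^*\beta^\pm = 0$, but the normal components also match across $\Sheet$, namely $\beta^+(\nu) = \beta^-(\nu)$. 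This interface condition is exactly what the Hodge decomposition provides, and it is the reason one cannot simply solve $d^*\beta^\pm = 0$ on the two sides $\Domplus$ and $\Dommin$ independently.

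For uniqueness, suppose $\beta, \beta'$ are both co-closed representatives of $[\alpha]$. Their difference $\beta - \beta'$ lies in $\dexactforms(M,\Sheet)$, being a difference of representatives of a single coset, and simultaneously in $\dccforms(M,\Sheet)$, being a difference of two elements of that space. By the $L^2$-orthogonality in the decomposition above, $\dccforms(M,\Sheet) \cap \dexactforms(M,\Sheet) = \{0\}$, whence $\beta = \beta'$. I expect no serious obstacle here, since Proposition \ref{Hodge} already carries out the nontrivial analytic work (in particular the double-layer-potential construction that realizes the matching of normal derivatives across $\Sheet$). The only point demanding care is conceptual rather than technical: the distinguished representative is pinned down by the coupled interface relation $\beta^+(\nu) = \beta^-(\nu)$, and it is precisely this coupling across $\Sheet$, rather than piecewise co-closedness alone, that forces uniqueness within the coset.
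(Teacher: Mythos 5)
Your proof is correct and follows the paper's own argument exactly: the paper derives the proposition immediately from the singular Hodge decomposition \eqref{SHD}, and your write-up simply spells out the existence and uniqueness steps that the paper leaves implicit. No issues.
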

\begin{proof}
This immediately follows from decomposition \eqref{SHD}.
\end{proof}
\begin{corollary}\label{cor:nonVanish}
Any non-zero coset $[\alpha] \in \dforms(M, \Sheet) \, / \,  \dexactforms(M,\Sheet)$ defines a non-zero functional on the space $\dsvect(M, \Sheet)$.
\end{corollary}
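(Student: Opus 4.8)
The plan is to exhibit, for a given non-zero coset $[\alpha]$, an explicit vector field $u_0 \in \dsvect(M,\Sheet)$ on which the functional $\langle [\alpha],\,\cdot\,\rangle$ takes a strictly positive value. The natural candidate is the vector field metrically dual to the distinguished co-closed representative of $[\alpha]$, so the whole argument reduces to choosing the right test vector and observing that the pairing against it collapses to a squared $L^2$-norm.

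First I would invoke Proposition \ref{prop:ccRepr} to replace $\alpha$ by the unique co-closed representative $\alpha \in \dccforms(M,\Sheet)$ of the coset. By the very definition of $\dccforms(M,\Sheet)$ as $\{u^\flat \mid u \in \dsvect(M,\Sheet)\}$, this representative has the form $\alpha = u_0^\flat$ for a unique $u_0 \in \dsvect(M,\Sheet)$. Next, since the zero coset has $0$ as its co-closed representative and this representative is unique, the assumption $[\alpha]\neq 0$ forces $\alpha \neq 0$, and hence $u_0 \neq 0$. Finally, I would evaluate the functional on the test field $u_0$ itself: using the pairing formula \eqref{cosetAction} together with $i_{u_0}\alpha = \alpha(u_0) = u_0^\flat(u_0) = (u_0,u_0)$, one obtains
\begin{align*}
\langle [\alpha], u_0 \rangle = \int_M (i_{u_0}\alpha)\,\mu = \int_M (u_0, u_0)\,\mu = \normsq{u_0}_{L^2} > 0\,,
\end{align*}
the strict positivity being exactly the non-degeneracy of the $L^2$-inner product combined with $u_0 \neq 0$. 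Hence the functional does not vanish identically.

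There is essentially no serious obstacle here: the entire content is packaged into Proposition \ref{prop:ccRepr}, and the remaining argument is the observation that the metric dual $u_0$ of the co-closed representative is both an admissible test vector (it lies in $\dsvect(M,\Sheet)$, the domain of the functional) and one for which the pairing reduces to a positive quantity. The only point requiring a moment's care is the implication $[\alpha]\neq 0 \Rightarrow u_0 \neq 0$, which follows from the \emph{uniqueness} clause of Proposition \ref{prop:ccRepr}, preventing a non-zero coset from having the zero form as its co-closed representative. It is worth noting that this is precisely the step where a choice of metric on $M$ enters, matching the hypothesis of Proposition \ref{prop:ccRepr}.
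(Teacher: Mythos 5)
Your proposal is correct and follows essentially the same route as the paper: both take the unique co-closed representative $\alpha_{cc}\in[\alpha]$, test against its metric dual $\alpha_{cc}^\sharp\in\dsvect(M,\Sheet)$, and observe that the pairing equals $\normsq{\alpha_{cc}^\sharp}_{L^2}>0$. Your explicit remark that $[\alpha]\neq 0$ forces $\alpha_{cc}\neq 0$ via the uniqueness clause of Proposition \ref{prop:ccRepr} is a point the paper leaves implicit, but the argument is the same.
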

\begin{proof}
Indeed, let $[\alpha] \in  \dforms(M, \Sheet) \, / \,  \dexactforms(M,\Sheet)$ be non-zero, and let $\alpha_{cc} \in [\alpha]$ be the co-closed representative (with respect to arbitrary Riemannian metric on $M$ compatible with the volume form $\mu$). Then $\alpha_{cc}^\# \in \dsvect(M, \Sheet)$, and $\langle[\alpha], \alpha_{cc}^\#\rangle = \langle \alpha_{cc}^\#, \alpha_{cc}^\# \rangle_{L^2} > 0$, so the functional defined by $[\alpha]$ is non-zero.
\end{proof}

It follows that the smooth dual $ \dsvect(M, \Sheet)^* =  \dforms(M, \Sheet) \, / \,  \dexactforms(M,\Sheet)$ is indeed a subspace of the (continuous) dual space to $\dsvect(M, \Sheet)$. (Another important property is that this subspace ``separates points'', i.e. for any non-zero $u \in \dsvect(M, \Sheet)$ there exists $[\alpha] \in \dsvect(M, \Sheet)^*$ such that $\langle [\alpha], u\rangle \neq 0$. This is equivalent to saying that $\dsvect(M, \Sheet)$ injects into the dual of its smooth dual, which is needed for the Poisson bracket on $\dsvect(M, \Sheet)^*$ to be well-defined.)

 \medskip

 In the next section we present a Poisson bracket on the dual bundle $\dsvect(M)^*$. For this we describe the tangent and cotangent spaces to this dual. Start with the tangent space. 
 
 \begin{definition}
Let $\alpha_t =  \chiplust \alpha_t^+ + \chimint \alpha_t^- \in \dforms(M)$ be a smooth 
curve. Then the \textit{tangent vector} to $\alpha_t$ at $t = t_0$ is the pair
\begin{equation}\label{tanVectToForms}
\left.\frac{ \hphantom{a}d^\Reg}{d t}\right\vert_{t = t_0}\!\!\!\!\!  \alpha_t \,\in\, \dforms(M, \Sheet_{t_0})\,, 
\quad \left.\frac{ \hphantom{}d}{d t}\right\vert_{t = t_0}\!\!\!\!\!  \Sheet_t \,\in\,  \T_{\Sheet_{t_0}}\VS(M)\,
\end{equation}
(cf. Definition \ref{def:tanVectToDSVECT}).
\end{definition}
 Note that two curves $ [\alpha]_t,  {[\beta]}_t$  in $\dsvect(M)^*$ are {tangent} to each other at $t= t_0$ if and only if they admit lifts to $ \dforms(M)$ with the same tangent vector at $t = t_0$.
\begin{proposition}
Let $[\alpha]_t$ be a curve in $\dsvect(M)^*$. Consider an arbitrary lift $\alpha_t$ of this curve to $ \dforms(M)$. Then the coset of  ${\partial_t^\Reg \alpha_t}$
in $ \dforms(M) \, / \,  \dexactforms(M)$  at $t = t_0$ depends only on $\alpha_{t_0}$, not on the whole lift $\alpha_t$.
\end{proposition}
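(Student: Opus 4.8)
The plan is to reduce the statement to a computation about the difference of two lifts. Suppose $\alpha_t$ and $\tilde\alpha_t$ are two lifts of the same coset curve $[\alpha]_t$ with $\alpha_{t_0} = \tilde\alpha_{t_0}$. Their difference $\epsilon_t := \tilde\alpha_t - \alpha_t$ lies in $\dexactforms(M, \Sheet_t)$ for every $t$ (both project to $[\alpha]_t$), and $\epsilon_{t_0} = 0$. Since $\partial_t^\Reg$ and the projection to cosets are linear, the cosets of $\partial_t^\Reg \alpha_t$ and $\partial_t^\Reg \tilde\alpha_t$ at $t_0$ differ exactly by the coset of $\partial_t^\Reg \epsilon_t$. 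So it suffices to show that $\partial_t^\Reg \epsilon_t\vert_{t_0} \in \dexactforms(M, \Sheet_{t_0})$, i.e. that its coset vanishes.

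To this end I would write $\epsilon_t = \chiplust \diff{f_t^+} + \chimint \diff{f_t^-}$ with $f_t^\pm \in \Cont^\infty(\Dom^\pm_{\Sheet_t})$ satisfying the matching condition $f_t^+\vert_{\Sheet_t} = f_t^-\vert_{\Sheet_t}$. Because the regularized time derivative acts separately on each side and commutes with the spatial differential $\diff{}$, we get $\partial_t^\Reg \epsilon_t = \chiplust \diff(\partial_t f_t^+) + \chimint \diff(\partial_t f_t^-)$. By the very definition of $\dexactforms(M,\Sheet_{t_0})$, membership of this field at $t_0$ is equivalent to the single trace identity $(\partial_t f_t^+)\vert_{\Sheet_{t_0}} = (\partial_t f_t^-)\vert_{\Sheet_{t_0}}$.

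The key step is to derive this trace identity from the hypothesis $\epsilon_{t_0}=0$. First, $\epsilon_{t_0}=0$ forces $\diff{f_{t_0}^\pm}=0$ on each connected domain $\Dom^\pm_{\Sheet_{t_0}}$, so $f^+_{t_0}$ and $f^-_{t_0}$ are constants. Then I would differentiate the matching condition along the moving sheet: fixing a smooth family of points $x(t)\in\Sheet_t$ (obtained from any smooth isotopy $\Sheet_{t_0}\to\Sheet_t$) and differentiating $f_t^+(x(t)) = f_t^-(x(t))$ in $t$ at $t_0$ via the chain rule gives $(\partial_t f_t^+)\vert_{t_0}(x) + \diff{f^+_{t_0}}(\dot x) = (\partial_t f_t^-)\vert_{t_0}(x) + \diff{f^-_{t_0}}(\dot x)$. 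Since $f^\pm_{t_0}$ are constant, the two moving-boundary correction terms $\diff{f^\pm_{t_0}}(\dot x)$ vanish, leaving precisely the desired equality $(\partial_t f_t^+)\vert_{\Sheet_{t_0}} = (\partial_t f_t^-)\vert_{\Sheet_{t_0}}$ at every point of $\Sheet_{t_0}$.

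I expect the moving-boundary differentiation to be the main obstacle: the restrictions $f_t^\pm\vert_{\Sheet_t}$ are taken on a $t$-dependent hypersurface, so naively differentiating in $t$ produces a correction proportional to the normal derivative of the potential times the sheet velocity, and this term does \emph{not} vanish for a general lift. The whole force of the hypothesis $\alpha_{t_0}=\tilde\alpha_{t_0}$ is that it renders $f^\pm_{t_0}$ constant and thereby kills exactly this correction term; this is also the precise reason the derivative coset depends on the chosen representative $\alpha_{t_0}$ rather than merely on the coset $[\alpha_{t_0}]$. Combining the vanishing of the coset of $\partial_t^\Reg\epsilon_t$ at $t_0$ with the linearity reduction from the first paragraph shows that $[\partial_t^\Reg\alpha_t]$ at $t_0$ is the same for any two lifts agreeing at $t_0$, which proves the proposition.
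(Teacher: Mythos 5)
Your proof is correct, but it proceeds by a different route than the paper's. The paper takes the difference $\gamma_t:=\tilde\alpha_t-\alpha_t\in\dexactforms(M,\Sheet_t)$ with $\gamma_{t_0}=0$ and pulls it back by a family of diffeomorphisms $\phi_t$ with $\phi_{t_0}=\id$ and $(\phi_t)_*\Sheet_{t_0}=\Sheet_t$, so that $\phi_t^*\gamma_t$ becomes a curve in the \emph{fixed} subspace $\dexactforms(M,\Sheet_{t_0})$; its ordinary $t$-derivative therefore stays in that subspace, and the identity $\left.\tfrac{d}{dt}\right\vert_{t=t_0}\phi_t^*\gamma_t=\partial_t^\Reg\gamma_t\vert_{t=t_0}$ holds because the transport term $\L_w\gamma_{t_0}$ (with $w=\dot\phi_{t_0}$) is killed by $\gamma_{t_0}=0$. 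You instead unwind the definition of $\dexactforms$: you represent $\gamma_t$ by potentials $f_t^\pm$, reduce membership of $\partial_t^\Reg\gamma_t\vert_{t=t_0}$ in $\dexactforms(M,\Sheet_{t_0})$ to a trace identity for $\partial_t f_t^\pm$ on $\Sheet_{t_0}$, and note that the moving-boundary correction $\diff f^\pm_{t_0}(\dot x)$ vanishes because $\gamma_{t_0}=0$ forces $f^\pm_{t_0}$ to be constant on the connected domains $\Dom^\pm_{\Sheet_{t_0}}$. Both arguments hinge on the same mechanism --- vanishing of $\gamma$ at $t_0$ kills the correction coming from the motion of the sheet --- but yours makes that mechanism completely explicit (and thereby explains why the derivative coset depends on the representative $\alpha_{t_0}$ and not merely on the coset), while the paper's is shorter and avoids choosing potentials. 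Two small points you should supply to make your version airtight: (i) the potentials $f_t^\pm$ are determined by $\gamma_t$ only up to a common additive constant, so you should fix a normalization to obtain a family depending smoothly on $t$ before differentiating it; (ii) since a point $x\in\Sheet_{t_0}$ need not lie in $\overline{\Dom^\pm_{\Sheet_t}}$ for $t\neq t_0$, the chain-rule step should be performed on smooth extensions of $f_t^\pm$ across $\Sheet_t$ (or interpreted as a limit from the interior), in the same spirit as the paper's definition of $\partial_t^\Reg$. Neither point affects the substance of your argument.
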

\begin{proof}
Without loss of generality assume that $t_0 = 0$. 
 Let $\alpha_t$, $\tilde \alpha_t$ be two different lifts starting at $\alpha_0$. Then, for the $1$-form $\gamma_t := \tilde \alpha_t - \alpha_t $, we have 
 $
\gamma_t  \in  \dexactforms(M, \Sheet_t)
 $ and $\gamma_0 = 0$. Let $\phi_t$ be any smooth curve in $\Diff(M)$ such that $\phi_0 = \id$ and $(\phi_t)_*\Sheet_0 = \Sheet_t$. Then
 $$
\left.\frac{ \hphantom{}d}{d t}\right\vert_{t = 0}\!\!\!\!\! \phi_t^*\gamma_t  =\left.\frac{ \hphantom{a}d^\Reg}{d t}\right\vert_{t = 0}\!\!\!\!\! \gamma_t =\left.\frac{ \hphantom{a}d^\Reg}{d t}\right\vert_{t = 0}\!\!\!\!\! \tilde \alpha_t - \left.\frac{ \hphantom{a}d^\Reg}{d t}\right\vert_{t = 0}\!\!\!\!\!  \alpha_t  \,.
 $$
 (Here we used that $\phi_0 = \id$ and $\gamma_0 = 0$.) Furthermore, we have $\phi_t^*\gamma_t \in \dexactforms(M, \Sheet_0)$, so the same holds for its time derivative, and the result follows.
 \end{proof}
   \begin{corollary}
 Let $[\alpha] \in \dsvect(M, \Sheet)^*$. Then any choice of $\alpha \in \dforms(M, \Sheet)$ representing the coset $[\alpha]$ gives rise to a splitting
 \begin{equation}\label{tanSpaceSplitting}
 \T_{[\alpha]}\dsvect(M)^* \simeq \dsvect(M, \Sheet)^*_{}\, \oplus \, \T_{\Sheet}\VS(M)\,,
 \end{equation}
 depending on $\alpha$.
 \end{corollary}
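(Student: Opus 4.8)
The plan is to exhibit $\dsvect(M)^*$ as the fiberwise quotient bundle $\dforms(M)\,/\,\dexactforms(M)$ over $\VS(M)$ and to push the canonical splitting of the tangent space of the total space $\dforms(M)$ down to this quotient, using the chosen representative $\alpha$ to pin down the complement. First I would recall that $\dforms(M)\to\VS(M)$ carries the regularized-derivative connection (the analogue for $1$-forms of the connection of Remark~\ref{connectionInExtBundle}), so that, by the definition of the tangent vector to a curve $\alpha_t$ in \eqref{tanVectToForms}, the tangent space splits canonically as $\T_\alpha\dforms(M)\simeq\dforms(M,\Sheet)\oplus\T_\Sheet\VS(M)$, a curve $\alpha_t$ being sent to the pair $(\partial_t^{\Reg}\alpha_t,\,\partial_t\Sheet_t)$ evaluated at the base point. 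Writing $q\colon\dforms(M)\to\dsvect(M)^*$ for the quotient map, which is a fiberwise surjective bundle morphism over $\VS(M)$, its differential $dq_\alpha$ is onto, so $\T_{[\alpha]}\dsvect(M)^*$ is identified with $\T_\alpha\dforms(M)\,/\,\Ker dq_\alpha$.

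The key step is to identify this kernel. Composing $dq_\alpha$ with the bundle projection $\pi\colon\dsvect(M)^*\to\VS(M)$ forces any element of $\Ker dq_\alpha$ to be vertical, i.e. of the form $(v,0)$ with $v\in\dforms(M,\Sheet)$; and $dq_\alpha(v,0)$ is precisely the coset $[v]\in\dforms(M,\Sheet)\,/\,\dexactforms(M,\Sheet)$, which vanishes exactly when $v\in\dexactforms(M,\Sheet)$. Hence $\Ker dq_\alpha=\dexactforms(M,\Sheet)\oplus 0$, and quotienting the splitting by it yields the asserted isomorphism $\T_{[\alpha]}\dsvect(M)^*\simeq\dsvect(M,\Sheet)^*\oplus\T_\Sheet\VS(M)$. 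The preceding Proposition is exactly what makes the induced projection onto the first summand, namely $X\mapsto[\,\partial_t^{\Reg}\alpha_t\,]$ for a lift $\alpha_t$ with $\alpha_0=\alpha$, well-defined independently of the lift; choosing a different representative $\alpha'$ of $[\alpha]$ changes the horizontal complement determined by $dq_{\alpha'}$, and with it the splitting, which is the asserted dependence on $\alpha$.

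The main obstacle I expect is the careful bookkeeping of well-definedness in the Fr\'echet setting: I must check that the $\alpha$-relative vertical projection $X\mapsto[\,\partial_t^{\Reg}\alpha_t\,]$ is insensitive both to the lift $\alpha_t$ (this is the content of the preceding Proposition) and to the curve representing $X$. I would settle the latter by verifying that this map coincides with the intrinsic projection onto the first factor of the direct-sum decomposition above, and I would confirm that the pair formed by this projection and the differential $d\pi$ is an isomorphism by exhibiting an explicit inverse: given $([\beta],\xi)$, transport $\alpha$ horizontally along a path $\Sheet_t$ realizing $\xi$ and add $t\,\beta_t$, obtaining a curve in $\dsvect(M)^*$ whose tangent vector maps back to $([\beta],\xi)$.
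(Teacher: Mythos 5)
Your proposal is correct and follows essentially the same route as the paper: the paper's proof likewise assigns to a curve $[\alpha]_t$ the pair $\bigl([\partial_t^\Reg \alpha_t],\, \partial_t\Sheet_t\bigr)$ for a lift $\alpha_t$ with $\alpha_0=\alpha$, invokes the preceding proposition for independence of the lift, and asserts the resulting isomorphism. Your quotient-map framing ($\Ker dq_\alpha = \dexactforms(M,\Sheet)\oplus 0$) and the explicit inverse merely spell out the details the paper leaves as ``easy to see.''
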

 \begin{proof}
 Let $[\alpha]_t$ be a curve in $\dsvect(M)^*$ lifting the curve $\Sheet_t$ in $\VS(M)$ and such that $[\alpha]_0 = \alpha$. Then to this curve one can associate a pair
$$\left.\frac{ \hphantom{a}d^\Reg}{d t}\right\vert_{t = 0}\!\!\!\!\!  [\alpha]_t \,\in\,  \dsvect(M, \Sheet_{t})^*, \quad\left.\frac{ \hphantom{}d}{d t}\right\vert_{t = 0}\!\!\!\!\! \Sheet_t \,\in\,  \T_{\Sheet}\VS(M)\,,$$
where 
$$\frac{ \hphantom{a}d^\Reg}{d t} [\alpha]_t:= [\frac{d^\Reg \alpha_t}{dt}]\,,$$
and 
$\alpha_t$ is an arbitrary lift  of the curve  $[\alpha]_t$  to $\dforms(M)$ such that $\alpha_{0}  = \alpha$. It is easy to see that this correspondence gives rise to an isomorphism between $\T_{[\alpha]}\dsvect(M)^*$ and $\dsvect(M, \Sheet)^*\, \oplus \, \T_{\Sheet}\VS(M)$.
 \end{proof}

 
  \subsection{Poisson bracket on the dual algebroid}\label{sect:poisson_vs}
 In this section we show that formula \eqref{PoissonExplicit} gives a well-defined Poisson bracket on $\dsvect(M)^*$. For this we need to describe the cotangent space to  $\dsvect(M)^*$ and we  start by defining the cotangent space to the base,
  $\T^*_\Sheet \VS(M)$.
 \begin{definition}
The \textit{smooth cotangent space} $\T^*_\Sheet \VS(M)$ is the space $\Cont^\infty(\Sheet) \, / \, \R$ of functions on $\Sheet$ modulo constants. The pairing between a coset $[f] \in   \Cont^\infty(\Sheet) \, / \, \R $ and a top degree form $\xi \in  \T_\Sheet \VS(M) = \Omega_0^{n-1}(\Sheet)$ (where $n = \dim M$), which is
an element of the corresponding tangent space, is given by
  $$
  \langle [f], \xi \rangle := \int_\Sheet f\xi\,.
  $$
  (The right-hand side is independent on the choice of a representative $f \in [f]$ thanks to the zero mean condition on $\xi$.)
  \end{definition}
  Now we define the cotangent space to $\dsvect(M)^*$ by dualizing splitting \eqref{tanSpaceSplitting}.
  
   \begin{definition}
 Let $[\alpha] \in \dsvect(M)^*_\Sheet$. Then the \textit{smooth cotangent space} to $\dsvect(M)^*$ at $[\alpha]$ is
 \begin{equation}\label{cotanSpaceSplitting}
 \T^*_{[\alpha]}\dsvect(M)^* := \dsvect(M, \Sheet)_{}\, \oplus \, \T^*_{\Sheet}\VS(M)\,,
\end{equation}
 where the second summand is the smooth cotangent space. One can see that
this defines the same space for any choice of the representative $\alpha$, although the isomorphism $\T^*_{[\alpha]}\dsvect(M)^* \simeq \dsvect(M, \Sheet)_{}\, \oplus \, \T^*_{\Sheet}\VS(M)$ depends on the choice of $\alpha$.
 \end{definition}
 Further, we define the notion of a differentiable function on $\dsvect(M)^*$. Roughly speaking, a function is differentiable if it has a differential belonging to the smooth cotangent space.
 \begin{definition}
 A function $\F \colon \dsvect(M)^* \to \R$ is \textit{differentiable} if there exists a section $\delta F$ of the smooth cotangent bundle  $\T^*_{}\dsvect(M)^*$
such that for any smooth curve $[\alpha]_t$ in $\dsvect(M)^*$ one has
 $$
\tfrac{d}{dt}\left(\F({[\alpha_t]})\right) = \langle  \delta \F({[\alpha]_t}),\left({\partial_t^\Reg [\alpha]_t}{}, {\partial_t\Sheet_t}{}  \right) \rangle\,.
 $$
 \end{definition}
 Using splitting \eqref{cotanSpaceSplitting}, we decompose $\delta F({[\alpha]})$ for $[\alpha] \in \dsvect(M, \Sheet)^*$ into the fiber and base parts: 
  $$
\delta \F({[\alpha]}) = (
 {\delta^F \F}({[\alpha]}),
  {\delta^B  \F}({[\alpha]})
  ),$$
  where
  $$
 {\delta^F \F}({[\alpha]})\in \dsvect(M, \Sheet)\,, \quad     {\delta^B  \F}({[\alpha]}) \in \T^*_{\Sheet}\VS(M) = \Cont^\infty(\Sheet) \, / \, \R\,.
 $$
 
 \begin{theorem}\label{thm:bracket}
 Let  $\F_1, \F_2 \colon \dsvect(M)^* \to \R$ be differentiable functions. Then their Poisson bracket reads
 \begin{equation}
   \{\F_1, \F_2\} = \P(\delta \F_1, \delta \F_2)\,,
 \end{equation}
 where the value of the Poisson tensor $\P$ on two cotangent vectors $(v_1, [f_1]), (v_2, [f_2]) \in \T^*_{[\alpha]}\dsvect(M)^* = \dsvect(M, \Sheet)\, \oplus \, \T^*_{\Sheet}\VS(M)$ at a point $[\alpha] \in \dsvect(M)^*_\Sheet$ is
\begin{equation}\label{pbf}
 \P_{[\alpha]}\left((v_1, [f_1]), (v_2, [f_2])\right)
  =
  -\int_M \diff^\Reg \alpha(v_1, v_2)\mu  - \int_\Sheet f_1 i_{v_2} \mu + \int_\Sheet f_2 i_{v_1} \mu\,.
\end{equation}
Here $\alpha_{} \in \dforms{}(M, \Sheet)$ is  the  representative of the coset $[\alpha]$ used to define splittings \eqref{tanSpaceSplitting}, \eqref{cotanSpaceSplitting}.
 \end{theorem}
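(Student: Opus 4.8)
The plan is to specialize the universal Poisson formula \eqref{PoissonExplicit} to the algebroid $\A = \dsvect(M)$ and verify that it collapses to \eqref{pbf}; since \eqref{pbf} is manifestly finite, this simultaneously shows that the a priori formal expression \eqref{PoissonExplicit} is well-defined in our infinite-dimensional setting, as promised after Proposition \ref{poissonBracketFormula}. Fix $[\alpha] \in \dsvect(M, \Sheet)^*$ and write the fiber and base parts of the differentials as $v_i := \delta^F \F_i([\alpha]) \in \dsvect(M, \Sheet)$ and $[f_i] := \delta^B \F_i([\alpha]) \in \Cont^\infty(\Sheet)/\R$. Restricting the defining identity of a differentiable function to fiber directions ($\partial_t \Sheet_t = 0$) identifies the fiberwise differential $\diff^F_{[\alpha]}\F_i$ of \eqref{PoissonExplicit} with $v_i$. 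Taking for $\hat\xi$ the section of $\dsvect(M)^*$ determined by the chosen representative $\alpha$ and the splitting \eqref{tanSpaceSplitting}, I would evaluate the three summands of \eqref{PoissonExplicit} in turn: the algebroid-differential term $-\diff_\A \hat\xi(v_1,v_2)$ (Term A) and the anchor-derivative terms $\#v_1\cdot(\F_2\circ\hat\xi)$ and $-\#v_2\cdot(\F_1\circ\hat\xi)$ (Terms B and C).

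For Term A I would expand $\diff_\A\hat\xi$ via \eqref{algDiff}, extend $v_i$ to sections $\hat v_i$, and substitute the bracket \eqref{sectionsBracket}. Using the pairing \eqref{cosetAction}, the summand $\hat\xi([\hat v_1,\hat v_2])$ splits into $\langle[\alpha],[v_1,v_2]^\Reg\rangle$ plus the extension-dependent terms $\langle[\alpha],\#v_1\cdot \hat v_2\rangle - \langle[\alpha],\#v_2\cdot \hat v_1\rangle$. The delicate point is the two derivatives $\#v_1\cdot(\hat\xi(\hat v_2))$: the function being differentiated, $\Sheet'\mapsto \int_M (i_{\hat v_2}\alpha)\,\mu$, is an integral over $M$ of a quantity \emph{discontinuous along the moving sheet}, so its derivative along the base direction $\#v_1$ must be computed through Lemma \ref{derIntDiscFnLemma}. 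This yields (i) a regularized-derivative term which, after a Leibniz expansion, equals $\langle[\alpha],\#v_1\cdot\hat v_2\rangle + \langle \nabla_{\#v_1}\hat\xi, v_2\rangle$ (here $\nabla_{\#v_1}\hat\xi = \partial_t^\Reg[\alpha]_t \in \dsvect(M,\Sheet)^*$ is the covariant derivative of the dual section), and (ii) a genuine boundary term $\int_\Sheet jump(i_{\hat v_2}\alpha)\,i_{v_1}\mu$. Collecting everything, the extension-dependent terms $\langle[\alpha],\#v_i\cdot\hat v_j\rangle$ cancel, so Term A equals $\langle[\alpha],[v_1,v_2]^\Reg\rangle - \langle\nabla_{\#v_1}\hat\xi,v_2\rangle + \langle\nabla_{\#v_2}\hat\xi,v_1\rangle$ minus the boundary contribution $\int_\Sheet jump(i_{v_2}\alpha)\,i_{v_1}\mu - \int_\Sheet jump(i_{v_1}\alpha)\,i_{v_2}\mu$.

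Terms B and C follow directly from differentiability of $\F_i$: lifting $\Sheet_t$ (with $\partial_t\Sheet_t = \#v_1$) by $\hat\xi$ gives $\#v_1\cdot(\F_2\circ\hat\xi) = \langle\nabla_{\#v_1}\hat\xi, v_2\rangle + \int_\Sheet f_2\, i_{v_1}\mu$, and symmetrically $-\#v_2\cdot(\F_1\circ\hat\xi) = -\langle\nabla_{\#v_2}\hat\xi, v_1\rangle - \int_\Sheet f_1\, i_{v_2}\mu$. Adding Terms A, B, C, all four covariant-derivative terms $\langle\nabla_{\#v_i}\hat\xi, v_j\rangle$ cancel, and the two $\int_\Sheet f_i\, i_{v_j}\mu$ terms of \eqref{pbf} appear. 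It then remains to convert $\langle[\alpha],[v_1,v_2]^\Reg\rangle$ into the bulk term. Applying on each side $\Dompm$ the Cartan identity $i_{[v_1,v_2]}\alpha = \L_{v_1}(i_{v_2}\alpha) - \diff\alpha(v_1,v_2) - \L_{v_2}(i_{v_1}\alpha)$ and then Lemma \ref{intDerLemma} to the two Lie-derivative integrals gives $\langle[\alpha],[v_1,v_2]^\Reg\rangle = -\int_M \diff^\Reg\alpha(v_1,v_2)\,\mu + \int_\Sheet jump(i_{v_2}\alpha)\,i_{v_1}\mu - \int_\Sheet jump(i_{v_1}\alpha)\,i_{v_2}\mu$.

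The crux of the argument, and its main obstacle, is the bookkeeping of these $\Sheet$-boundary terms. The boundary contribution just produced by Lemma \ref{intDerLemma} is \emph{exactly the negative} of the leftover boundary contribution from Term A, so the two cancel and only $-\int_M\diff^\Reg\alpha(v_1,v_2)\,\mu$ together with the two $\int_\Sheet f_i\, i_{v_j}\mu$ terms survive, giving \eqref{pbf}. Making this cancellation rigorous requires care with orientations (I will use $\partial\Dompm = \pm\Sheet$, as in the proof of Lemma \ref{derIntDiscFnLemma}) and with the fact that $i_v\mu\restrict{\Sheet}$ depends only on the normal component of $v$; the latter guarantees $i_{v_i^+}\mu\restrict{\Sheet} = i_{v_i^-}\mu\restrict{\Sheet}$ for $v_i \in \dsvect(M,\Sheet)$, so the jump factors $jump(i_{v_j}\alpha)$ that pair against them are precisely the ones appearing on both sides of the cancellation. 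Finally, the assembly of everything into the finite expression \eqref{pbf} confirms that \eqref{PoissonExplicit} is well-defined on $\dsvect(M)^*$.
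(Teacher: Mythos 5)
Your proposal is correct and follows essentially the same route as the paper's proof: both specialize \eqref{PoissonExplicit} with \eqref{algDiff} and \eqref{sectionsBracket}, use Lemma \ref{derIntDiscFnLemma} to differentiate integrals of discontinuous quantities along the moving sheet, and convert between the intermediate form \eqref{pbf2} and \eqref{pbf} via the identity $i_{[v_1,v_2]} = [\L_{v_1}, i_{v_2}]$ together with Lemma \ref{intDerLemma}. The only difference is organizational — the paper establishes \eqref{pbf2} first and then notes its equivalence with \eqref{pbf}, whereas you carry out that conversion at the end of a single computation — and your sign bookkeeping and boundary-term cancellations check out.
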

 
\begin{remark}
 Equivalently, this bracket can be written in the form, similar to  a Lie-Poisson bracket with boundary terms:
\begin{align}\label{pbf2}
 \begin{aligned}
 \P_{[\alpha]}((v_1, [f_1]),\, &(v_2, [f_2])) = \int_M\alpha_{}([ v_1, v_2 ]^\Reg)\,\mu  \\
 &+ \, \int_\Sheet \left( jump(i_{v_1}\alpha)-f_1 \right) i_{v_2}\mu\, 
 -\, \int_\Sheet \left( jump(i_{v_2}\alpha)-f_2 \right) i_{v_1}\mu\,.
\end{aligned}
\end{align}
\end{remark}
 \begin{proof}[Proof of Theorem \ref{thm:bracket}]
 Formulas \eqref{pbf} and   \eqref{pbf2} are equivalent to each other. To see this, rewrite the first term in  \eqref{pbf2} using the formula $i_{[v_1, v_2]} = [\L_{v_1}, i_{v_2}]$ and then rewrite the integrals of jumps using Lemma~\ref{intDerLemma}. So, it suffices to derive formula   \eqref{pbf2}.

  Formula \eqref{PoissonExplicit} for the Poisson bracket in the dual of an algebroid combined with formula~\eqref{sectionsBracket} for the bracket of sections of $\dsvect(M)$  gives
\begin{align}\label{PB1}
  \{\F_1, \F_2\}([\alpha]) = \left( \int_M\alpha(\left[  {\delta^F \F_1}{ }([\alpha]),   {\delta^F \F_2}{ }([\alpha])\right]^\Reg)\,\mu \right) +  S_{12} - S_{21}\,, 
\end{align}
where $S_{ij}$ is given by
\begin{align}\label{sij}
S_{ij} :=  \#U_i(\Sheet) \cdot \left(\F_j ( A)\right) - \#U_i(\Sheet)\cdot \langle A ,U_j \rangle  + \int_M \alpha( \#U_i(\Sheet) \cdot U_j) \,\mu\,,
\end{align}
 the section $A$ of $\dsvect(M)^*$ is an arbitrary extension of $[\alpha]$, and $U_i$ is a section of $\dsvect(M)$ given by
$
U_i:=  {\delta^F \F_{i}}(A)
$.
\par
To compute the expression $S_{ij}$, take any curve $ \Sheet_i(t) \in \VS(M)$ such that $\Sheet_i(0) = \Sheet$, and the tangent vector to $\Sheet_i(t)$ at $\Sheet$  is $\#U_i(\Sheet)$. 
Then
\begin{align}\label{sijsum1}
\begin{aligned}
 \#U_i(\Sheet)\, \cdot &\left(\F_j ( A)\right) =\left.\frac{ \hphantom{}d}{d t}\right\vert_{t = 0}\!\!\!\!\!  \F_j(A(\Sheet_i(t))) \\ 
  &=  \langle U_j(\Sheet), \left.\frac{ \hphantom{a}d^\Reg}{d t}\right\vert_{t = 0}\!\!\!\!\! A(\Sheet_i(t))\rangle + \langle \vphantom{\left.\frac{ \hphantom{a}d^\Reg}{d t}\right\vert_{t = 0}\!\!\!\!\!  [\alpha_i] } {\delta^B  \F_j}{ }([\alpha]), \#U_i(\Sheet) \rangle.
 \end{aligned}
\end{align}
Further, let $\alpha_i(t)$ be any curve in $\dforms(M)$ lifting $A(\Sheet_i(t))$ and such that $\alpha_i(0) = \alpha$. Then, using Lemma \ref{derIntDiscFnLemma}, we get 
\begin{gather}
\#U_i(\Sheet)\cdot \langle A ,U_j \rangle = \left.\frac{ \hphantom{}d}{d t}\right\vert_{t = 0}\!\!\! \langle A(\Sheet_i(t)) ,U_j(\Sheet_i(t)) \rangle = \left.\frac{ \hphantom{}d}{d t}\right\vert_{t = 0} \int_M (i_{U_j(\Sheet_i(t))} \alpha_i(t)) \mu
 \\ =  \langle U_j(\Sheet),  \left.\frac{ \hphantom{a}d^\Reg}{d t}\right\vert_{t = 0}\!\!\!\!\!  A(\Sheet_i(t)) \rangle  + \int_M \alpha( \#U_i(\Sheet) \cdot U_j) \,\mu  + \int_\Sheet jump( \alpha ({\delta^F \F_j}{ ([\alpha])}) ) \#U_i(\Sheet)\,.
\end{gather}
Substituting this, along with \eqref{sijsum1}, into \eqref{sij},  and then plugging the resulting formula for $S_{ij}$ into~\eqref{PB1}, one gets  \eqref{pbf2}, as desired.  \end{proof}

 \begin{corollary}
 The  Hamiltonian operator
 $$
 \P_{[\alpha]}^\# \colon \T^*_{[\alpha]}\dsvect(M)^* \to  \T_{[\alpha]}\dsvect(M)^*
 $$
 corresponding to the Poisson bracket on $\dsvect(M)^* $  is given by
 \begin{equation}\label{HamOperator}
 (v, [f]) \mapsto (-[i_v\diff^\Reg \alpha] - \#^*[f], \#v)\,,
\end{equation}
 where $\#^* \colon \T^* \VS(M) \to \dsvect(M)^*$ is the dual of the anchor map, explicitly given by
 \begin{align}\label{dualAnchor}\#^*[f] := \left\{\diff^\Reg h  \mid h \in \dcinfty(M),  jump(h) = f\right\}\end{align}
 (cf. Proposition \ref{dualAnchorProp} below).
 \end{corollary}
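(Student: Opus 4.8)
The statement is purely a matter of reading the bundle map $\P^\#_{[\alpha]}$ off the Poisson tensor \eqref{pbf}, so the plan is to write down its defining identity and match the three summands. Recall that $\P^\#_{[\alpha]}\colon \T^*_{[\alpha]}\dsvect(M)^* \to \T_{[\alpha]}\dsvect(M)^*$ is characterized, with the sign convention that reproduces \eqref{HamOperator} (equivalently, the one making $\partial_t^\Reg[\alpha] = \P^\#\delta\H$ the Euler--Arnold equation), by
$$
\langle (v_1,[f_1]),\, \P^\#_{[\alpha]}(v_2,[f_2]) \rangle = \P_{[\alpha]}\big((v_2,[f_2]),(v_1,[f_1])\big)
$$
for all cotangent vectors $(v_1,[f_1]),(v_2,[f_2]) \in \dsvect(M,\Sheet)\oplus \T^*_\Sheet\VS(M)$. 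Here the pairing of a cotangent vector $(v_1,[f_1])$ with a tangent vector $(\beta,\xi)\in \dsvect(M,\Sheet)^*\oplus \T_\Sheet\VS(M)$ is $\int_M i_{v_1}\beta\,\mu + \int_\Sheet f_1\xi$, by \eqref{cosetAction} and the definition of the cotangent space to the base. Thus it suffices to produce a tangent vector $(\beta,\xi)$ whose pairing with every $(v_1,[f_1])$ reproduces the right-hand side, and to read off $\beta$ and $\xi$.

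First I would determine the base component $\xi$ by collecting the terms linear in $f_1$. Substituting into \eqref{pbf}, the $f_1$-part of $\P_{[\alpha]}((v_2,[f_2]),(v_1,[f_1]))$ is $\int_\Sheet f_1\, i_{v_2}\mu$, which must equal $\int_\Sheet f_1\xi$; since $v_2\in \dsvect(M,\Sheet)$ has continuous normal component, $(i_{v_2}\mu)\vert_\Sheet = \#v_2$ by Remark \ref{rem:normcot}, forcing $\xi = \#v_2$. For the fiber component $\beta$ I would match the two remaining summands against $\int_M i_{v_1}\beta\,\mu$. The antisymmetry identity $i_{v_1}(i_{v_2}\diff^\Reg\alpha) = -\diff^\Reg\alpha(v_1,v_2)$ rewrites the term $-\int_M \diff^\Reg\alpha(v_2,v_1)\mu = \int_M\diff^\Reg\alpha(v_1,v_2)\mu$ as $\int_M i_{v_1}(-i_{v_2}\diff^\Reg\alpha)\mu$, contributing $-[i_{v_2}\diff^\Reg\alpha]$ to $\beta$; and the defining property of the dual anchor, $\langle \#^*[f_2], v_1\rangle = \langle [f_2],\#v_1\rangle = \int_\Sheet f_2\, i_{v_1}\mu$, rewrites the term $-\int_\Sheet f_2\, i_{v_1}\mu$ as $\langle -\#^*[f_2], v_1\rangle$, contributing $-\#^*[f_2]$. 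Together $\beta = -[i_{v_2}\diff^\Reg\alpha]-\#^*[f_2]$ and $\xi=\#v_2$, which is exactly \eqref{HamOperator}. One checks that $i_{v_2}\diff^\Reg\alpha$ indeed lies in $\dforms(M,\Sheet)$, so $\beta$ is a genuine element of the smooth dual.

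It remains to justify the explicit expression \eqref{dualAnchor} for $\#^*$ used above (also recorded as Proposition \ref{dualAnchorProp}). Given $[f]\in \T^*_\Sheet\VS(M)$, pick any $h=\chiplus h^+ + \chimin h^- \in \dcinfty(M)$ with $jump(h)=f$. For arbitrary $u\in \dsvect(M,\Sheet)$ one has $i_u(\diff^\Reg h) = \L^\Reg_u h$, so Lemma \ref{intDerLemma} gives
$$
\langle [\diff^\Reg h],\, u\rangle = \int_M (\L^\Reg_u h)\,\mu = \int_\Sheet jump(h)\, i_u\mu = \int_\Sheet f\, i_u\mu = \langle [f],\#u\rangle = \langle \#^*[f], u\rangle.
$$
Since the smooth dual separates points (Corollary \ref{cor:nonVanish}), this identity of functionals forces $[\diff^\Reg h] = \#^*[f]$, which is \eqref{dualAnchor}; in particular the right-hand side is independent of the chosen $h$.

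The computation carries no analytic difficulty, so the only real hazard is bookkeeping: keeping the sign convention for $\P^\#$ consistent with \eqref{HamOperator}, correctly using the antisymmetry of $\diff^\Reg\alpha$ under the double contraction, and remembering that $\beta$ is defined only as a coset modulo $\dexactforms(M,\Sheet)$ --- which is harmless because pairing against divergence-free fields annihilates that subspace. Everything is then pinned down by the separation-of-points property, so no uniqueness issue arises.
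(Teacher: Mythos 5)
Your proposal is correct and follows essentially the same route as the paper: read the operator off the Poisson tensor \eqref{pbf} via the adjunction identity, identify the base component as $\#v$ from the $f_1$-terms, and convert the boundary term $\int_\Sheet f\, i_w\mu$ into $\int_M i_w\,\diff^\Reg h\,\mu$ via Lemma \ref{intDerLemma} to recognize $\#^*[f]$. Your extra verification of \eqref{dualAnchor} using Corollary \ref{cor:nonVanish} is exactly the content of Proposition \ref{dualAnchorProp}, which the paper simply cites forward.
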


 \begin{proof}
By definition, we have
$$
 \langle (w, [g]) ,  \P_{[\alpha]}^\#(v, [f])\rangle =  \P_{[\alpha]}\left((v, [f]), (w, [g])\right)=  -\int_M d^\Reg \alpha(v,w)\mu - \int_\Sheet f i_w \mu + \int_\Sheet g i_v \mu\,.
$$
Take any $h \in \dcinfty(M)$ with $jump(h) = f$. Then, by Lemma \ref{intDerLemma}, we have
$$
\int_\Sheet f i_w \mu = \int_M (i_w\diff^\Reg h)\mu\,,
$$
so we end up with 
$$
 \langle (w, [g]) ,  \P_{[\alpha]}^\#(v, [f])\rangle =  \int_M i_w ( -i_vd^\Reg \alpha - \diff^\Reg h)\mu  + \int_\Sheet g i_v \mu\,.
$$
The result follows.
\end{proof}
 \medskip
  

\section{Dynamics of vortex sheets}\label{sect:dyn_vs}
In this section, $M$ is a compact connected oriented manifold without boundary endowed with a Riemannian metric $(\,,)$ and the corresponding Riemannian volume form $\mu$.

\subsection{Evolution of vortex sheets an an algebroid Euler-Arnold equation}\label{sect:euler-arn_vs}
The $L^2$ product of vector fields associated with the metric $(\,,)$ on $M$ defines a metric $\metric_{L^2}$ on the Lie algebroid $\dsvect(M)$.
\begin{proposition}
\begin{enumerate}
\item
The inertia operator $\I$ associated with the $L^2$-metric $\metric_{L^2}$ on $\dsvect(M)$  takes values in the smooth dual $\dsvect(M)^*$. For $u \in \dsvect(M, \Sheet)$, one has
$
\I(u) = [u^\flat]\,
$,
where $u^\flat$ denotes the $1$-form dual to $u$ with respect to the Riemannian metric $(\,,)$ on $M$, and $[u^\flat]$ stands for the coset of $u^\flat$ in $ \dforms(M, \Sheet) \, / \, \dexactforms(M,\Sheet)$.
\item The inertia operator $\I \colon \dsvect(M) \to  \dsvect(M)^*$ is an isomorphism of vector bundles.
\end{enumerate}
\end{proposition}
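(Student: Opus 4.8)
The plan is to reduce both statements to the Singular Hodge decomposition of Proposition \ref{Hodge}, so that the substance of the proposition is essentially packaged into that decomposition together with Proposition \ref{prop:ccRepr}. For the first statement, I would begin by noting that for $u = \chiplus u^+ + \chimin u^- \in \dsvect(M, \Sheet)$ the dual $1$-form $u^\flat = \chiplus (u^+)^\flat + \chimin (u^-)^\flat$ is a discontinuous $1$-form in $\dforms(M, \Sheet)$, and in fact lies in $\dccforms(M, \Sheet)$ by the very definition of that space: $\div u^\pm = 0$ forces $(u^\pm)^\flat$ to be co-closed, and the matching normal components of $u^\pm$ on $\Sheet$ give $\alpha^+(\nu) = \alpha^-(\nu)$. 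By definition the inertia operator sends $u$ to the functional $v \mapsto \langle u, v\rangle_{L^2}$, and the content of the first statement is precisely that this a priori merely continuous functional is represented by an element of the \emph{smooth} dual. This is a one-line computation: for any $v \in \dsvect(M, \Sheet)$,
\begin{equation}
\langle [u^\flat], v\rangle = \int_M i_v(u^\flat)\,\mu = \int_M u^\flat(v)\,\mu = \int_M (u, v)\,\mu = \langle u, v\rangle_{L^2}\,,
\end{equation}
using the pairing \eqref{cosetAction} and the definition of the musical isomorphism. Hence $\I(u) = [u^\flat] \in \dsvect(M, \Sheet)^*$, establishing the first part.

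For the second statement I would first prove fiberwise bijectivity. \textbf{Injectivity:} if $\I(u) = [u^\flat] = 0$, then $u^\flat \in \dexactforms(M, \Sheet)$; but $u^\flat \in \dccforms(M, \Sheet)$ as well, and since \eqref{SHD} is a \emph{direct} ($L^2$-orthogonal) sum, the two summands intersect only in $0$, whence $u^\flat = 0$ and $u = 0$. \textbf{Surjectivity:} given a coset $[\alpha] \in \dsvect(M, \Sheet)^*$, Proposition \ref{prop:ccRepr} supplies a co-closed representative $\alpha_{cc} \in \dccforms(M, \Sheet)$; by the definition of $\dccforms(M, \Sheet)$ one may write $\alpha_{cc} = u^\flat$ for a unique $u \in \dsvect(M, \Sheet)$, and then $\I(u) = [u^\flat] = [\alpha_{cc}] = [\alpha]$. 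Thus on each fiber $\I$ is a linear isomorphism, with inverse given by $[\alpha] \mapsto (\alpha_{cc})^\sharp$.

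It then remains to upgrade this fiberwise isomorphism to an isomorphism of Fr\'echet vector bundles over $\VS(M)$. The map $u \mapsto [u^\flat]$ is manifestly a smooth bundle morphism, being the fiberwise musical isomorphism (defined pointwise by the metric on $M$) followed by the quotient projection. The delicate point, which I expect to be the main obstacle, is the smoothness of the inverse: one must show that selecting the co-closed representative — equivalently, applying the $L^2$-orthogonal projection of \eqref{SHD} onto $\dccforms(M, \Sheet)$ — depends smoothly on the vortex sheet $\Sheet$. This calls for a family version of Proposition \ref{Hodge}, in which the projection is assembled from the double-layer/boundary-value problem of Theorem \ref{dlp} on the domains $\Dompm$, and one verifies that its solution varies smoothly as $\Sheet$ moves through $\VS(M)$. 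Granting this smooth dependence, which follows from elliptic regularity for the associated boundary problems, $\I$ is a smooth bundle isomorphism with smooth inverse, completing the proof.
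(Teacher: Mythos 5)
Your proof is correct and follows essentially the same route as the paper's: the same one-line pairing computation for the first statement, and inversion of $\I$ via the co-closed representative of Proposition \ref{prop:ccRepr} for the second. You are somewhat more explicit than the paper on two points it leaves implicit, namely injectivity via the directness of the decomposition \eqref{SHD} and the smooth dependence on $\Sheet$ of the co-closed projection needed to upgrade the fiberwise inverse to a bundle isomorphism.
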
\label{prop:invertibility}

\begin{proof}
By definition of the inertia operator, for $u ,v \in \dsvect(M, \Sheet)$, one has
$$
\langle \I(u), v \rangle = \langle u, v \rangle_{L^2} =  \int_{M} (u, v) \mu =  \int_{M} i_v u^\flat \mu\,.
$$
This means that the functional $\I(u)$ coincides with the functional represented by the coset of $u^\flat \in \dforms(M, \Sheet)$, proving the first statement. Further, the inertia operator $\I$ has an inverse given by $[\alpha] \to \alpha^\#$, where $\alpha \in [\alpha]$ is the co-closed representative (see Proposition \ref{prop:ccRepr}). So, $\I$ is an isomorphism of vector bundles, as desired.
\end{proof}
Since the inertia operator is invertible, we also obtain an $L^2$-metric on  $\dsvect(M)^*$, and the corresponding Euler-Arnold Hamiltonian
$$
\mathcal H([\alpha]) := \tfrac{1}{2}\normsq{ \alpha}_{L^2},
$$
where $\alpha \in [\alpha]$ is the co-closed representative.

\begin{theorem}\label{thmMain}
The Euler-Arnold equation corresponding to the $L^2$-metric on $\dsvect(M)$ written in terms of a coset $[\alpha] \in \dsvect(M)^*_\Sheet$ reads
\begin{align}\label{twoPhaseEulerCosets}
\begin{cases}
\partial_t^\Reg [\alpha] + [i_u \diff^\Reg \alpha + \tfrac{1}{2}\diff^\Reg i_u\alpha]  =0\,,\\
\qquad \partial_t \Sheet = \# u\,,
\end{cases}
\end{align}
where $\alpha \in [\alpha]$ is the co-closed representative, and $u = \alpha^\#$ is the corresponding divergence-free fluid velocity field. 
It is a Hamiltonian equation on the algebroid dual $\dsvect(M)^*$ with respect to the natural Poisson structure described above and the energy Hamiltonian function~$\mathcal H$. 
\end{theorem}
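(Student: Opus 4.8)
The statement that the equation is Hamiltonian is immediate from the definition of the groupoid Euler-Arnold equation: it is \emph{by construction} the Hamiltonian flow of $\H$ with respect to the Poisson structure of Theorem \ref{thm:bracket}. Thus the entire content of the theorem is to make this flow explicit, i.e. to evaluate $\P^\#_{[\alpha]}(\delta\H)$ using the Hamiltonian operator \eqref{HamOperator}. My plan is therefore two-fold: first compute the differential $\delta\H = (\delta^F\H, \delta^B\H) \in \dsvect(M,\Sheet) \oplus \T^*_\Sheet\VS(M)$, and then substitute it into \eqref{HamOperator} and simplify.

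To compute $\delta\H$, I would write $\H([\alpha]) = \tfrac12\int_M (u,u)\,\mu$ with $u = \alpha^\#$ for the co-closed representative $\alpha$, and differentiate along an arbitrary curve $[\alpha]_t$ lying over a curve $\Sheet_t$ in $\VS(M)$. Since the integrand $(u_t,u_t)$ is discontinuous across the moving hypersurface $\Sheet_t$, the natural tool is Lemma \ref{derIntDiscFnLemma}. Because the metric on $M$ is $t$-independent, the musical isomorphism commutes with $\partial_t^\Reg$, so the interior term becomes $\int_M i_u (\partial_t^\Reg\alpha_t)\,\mu = \langle \partial_t^\Reg[\alpha]_t, u\rangle$; this identifies the fiber part $\delta^F\H = u$. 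The boundary term of Lemma \ref{derIntDiscFnLemma} contributes $\tfrac12\int_{\Sheet_t} jump((u,u))\,\partial_t\Sheet_t$, which, against the pairing $\langle [f],\xi\rangle = \int_\Sheet f\xi$, identifies the base part $\delta^B\H = \tfrac12\, jump((u,u)) = \tfrac12\big((u^+,u^+) - (u^-,u^-)\big)\big\vert_\Sheet \in \Cont^\infty(\Sheet)/\R$. (This computation simultaneously verifies that $\H$ is differentiable in the required sense, with $\delta\H$ lying in the smooth cotangent bundle.)

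It then remains to substitute $(v,[f]) = \big(u,\ [\tfrac12\,jump((u,u))]\big)$ into \eqref{HamOperator}. The second component gives immediately $\partial_t\Sheet = \#u$, the kinematic condition. The first component gives $\partial_t^\Reg[\alpha] = -[i_u\diff^\Reg\alpha] - \#^*\big[\tfrac12\,jump((u,u))\big]$. To bring this to the stated form I would use that $i_u\alpha = (u,u)$ pointwise, so the discontinuous function $h := \tfrac12\, i_u\alpha$ satisfies $jump(h) = \tfrac12\, jump((u,u))$; by the explicit description \eqref{dualAnchor} of the dual anchor this means $\#^*[\tfrac12\, jump((u,u))] = [\diff^\Reg h] = [\tfrac12\diff^\Reg i_u\alpha]$. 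Combining the two terms yields $\partial_t^\Reg[\alpha] + [i_u\diff^\Reg\alpha + \tfrac12\diff^\Reg i_u\alpha] = 0$, which is precisely \eqref{twoPhaseEulerCosets}.

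The main obstacle is the boundary contribution $\delta^B\H$. The Hamiltonian depends on the base point $\Sheet$ both through the moving domain of integration and through the co-closed splitting used to select the representative, so care is needed to see that the variation produces only the moving-domain term and that the choice of representative is handled consistently (as encoded in the splitting \eqref{cotanSpaceSplitting}). It is exactly this term that generates the factor $\tfrac12$ and the whole extra summand $\tfrac12\diff^\Reg i_u\alpha$ — the boundary correction absent from the classical single-phase Euler-Arnold equation \eqref{1-forms}, which, via \eqref{dualAnchor}, encodes the dynamic (Bernoulli-type) pressure balance across the sheet.
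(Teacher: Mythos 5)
Your proposal is correct and follows essentially the same route as the paper's proof: differentiate $\H$ along a curve using Lemma \ref{derIntDiscFnLemma} to extract $\delta^F\H=u$ and $\delta^B\H=\tfrac12[jump((u,u))]$ in the splitting determined by the co-closed representative, then apply the Hamiltonian operator \eqref{HamOperator} and identify $\#^*[\tfrac12\,jump(i_u\alpha)]=[\tfrac12\diff^\Reg i_u\alpha]$ via \eqref{dualAnchor}. Your explicit attention to why the base variation produces only the moving-domain term is a useful elaboration of a point the paper passes over quickly, but it is the same argument.
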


\begin{remark}
Note that singular Hodge decomposition \eqref{SHD} gives us a way to choose a canonical representative $\alpha \in \dforms(M, \Sheet)$ in every coset $[\alpha] \in \dsvect(M)^*_\Sheet$. Therefore, the derivative $\partial_t^\Reg [\alpha]$, which, in the absence of a metric, depends on the choice of a lift $\alpha$, is now well-defined.

Note also that in the absence of a vortex sheet, the equation \eqref{twoPhaseEulerCosets}
is equivalent to $\partial_t [\alpha] + [i_u \diff \alpha]  =0$, and therefore to the Euler equation \eqref{1-forms}:
$\partial_t [\alpha]+\L_u [\alpha]=0$.

\end{remark}

\begin{proof}[Proof of Theorem \ref{thmMain}]
It suffices to compute $\delta \H([\alpha])$ and apply the Hamiltonian operator. Let $[\alpha]_s$ be an arbitrary smooth curve in $\dsvect(M)^*$ with $[\alpha]_{s=0} = [\alpha]$, and let $\Sheet_s$ be its projection to $\VS(M)$. Let also $\alpha_s \in [\alpha]_s$ be the co-closed representative. Applying Lemma~\ref{derIntDiscFnLemma}, we get
\begin{align}
\frac{d}{ds}\restrict{s = 0}\mathcal H([\alpha]_s) \,&=\,  \frac{1}{2}\left.\frac{ \hphantom{}d}{d s}\right\vert_{s = 0} \int_{M} (\alpha_s, \alpha_s)\,\mu 
 &=  \langle\left.\frac{ \hphantom{a}d^\Reg}{d s}\right\vert_{s = 0}\!\!\!\!\!  [\alpha]_s\,, u\, \rangle  + \langle\, \frac{1}{2}\,
jump(\alpha, \alpha), \left.\frac{ \hphantom{}d}{d s}\right\vert_{s = 0}\!\!\!\!\!\Sheet_s\, \rangle\,,
\end{align}
meaning that
$$
 {\delta^F \H}{}([\alpha]) = u, \quad {\delta^B \H}{}([\alpha]) = \frac{1}{2}\,[jump(\alpha, \alpha)]\,.
$$
Here we use splitting \eqref{cotanSpaceSplitting} coming from the choice of a co-closed representative  $\alpha \in \dforms(M, \Sheet)$ in every coset $[\alpha] \in \dsvect(M, \Sheet)^*$. 
Now, to get \eqref{twoPhaseEulerCosets}, it suffices to apply the Hamiltonian operator \eqref{HamOperator} and notice that
$
\#^*[jump(\alpha, \alpha)] = [\diff^\Reg i_u\alpha]\,.
$
Thus, the theorem is proved.
\end{proof}

\begin{theorem}\label{thmMain2}
The Euler-Arnold equation corresponding to the $L^2$-metric on $\dsvect(M)$ written in terms of the fluid velocity field $u := \I^{-1}([\alpha]) \in \dsvect(M)$ reads
\begin{align}\label{twoPhaseEuler2}
\begin{cases}
\partial_t u^+ + \nabla_{u^+}u^+ = -\grad p^+\!, \\ 
\partial_t u^- + \nabla_{u^-}u^- = -\grad p^-\!,\\
 \qquad \partial_t \Sheet = \# u\,,
\end{cases}
\end{align}
where $p^\pm \in \Cont^\infty(\Dompm)$ are functions satisfying 
\begin{align}\label{presCont2} p^+\vert_{\Sheet} = p^-\vert_{\Sheet}\,. \end{align}
 The functions $p^+$, $p^-$ are defined uniquely modulo a common additive constant by the consistency conditions for \eqref{twoPhaseEuler2} and the constraint \eqref{presCont2}. 
\end{theorem}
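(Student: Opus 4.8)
The plan is to convert the coset form of the Euler--Arnold equation, established in Theorem~\ref{thmMain}, into the velocity form~\eqref{twoPhaseEuler2}, and then to characterize the pressures $p^\pm$ as the Lagrange multipliers enforcing the constraints that cut out $\dsvect(M)$. The starting point is a pointwise Riemannian computation on each side of $\Sheet$. Writing $\alpha = u^\flat$ for the co-closed representative, we have $i_u \alpha = (u,u)$, and the interior identity $i_u \diff{u^\flat} = (\nabla_u u)^\flat - \tfrac{1}{2}\diff{(u,u)}$, valid on each open set $\Dompm$, shows that the bracketed $1$-form in~\eqref{twoPhaseEulerCosets} simplifies on each of $\Domplus,\Dommin$ to
\begin{align*}
i_u \diff^\Reg\alpha + \tfrac{1}{2}\,\diff^\Reg i_u\alpha = (\nabla_u u)^\flat .
\end{align*}
(The factor $\tfrac{1}{2}$ is exactly what is needed to cancel the two $\tfrac{1}{2}\,\diff(u,u)$ terms.) Hence the first line of~\eqref{twoPhaseEulerCosets} becomes $\partial_t^\Reg[u^\flat] + [(\nabla_u u)^\flat] = 0$.

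Next I would unwind the coset. Vanishing of this coset in $\dforms(M)\,/\,\dexactforms(M)$ means precisely that
$$
\partial_t^\Reg u^\flat + (\nabla_u u)^\flat = -\diff^\Reg(\chiplus p^+ + \chimin p^-)
$$
for some $p^\pm \in \Cont^\infty(\Dompm)$ with $p^+\vert_\Sheet = p^-\vert_\Sheet$, since this matching is built into the definition of $\dexactforms(M,\Sheet)$. Applying $\sharp$ and restricting to each component gives the first two lines of~\eqref{twoPhaseEuler2}, while the matching condition is~\eqref{presCont2}. The converse is immediate: if~\eqref{twoPhaseEuler2} and~\eqref{presCont2} hold, then the left-hand side above is a regularized exact form whose potential is continuous across $\Sheet$, hence lies in $\dexactforms(M,\Sheet)$, which is the coset equation.

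It remains to pin down $p^\pm$. Uniqueness modulo a common constant is soft: the displayed equation determines $\diff^\Reg(\chiplus p^+ + \chimin p^-)$ from $u$, and by~\eqref{presCont2} the function $p := \chiplus p^+ + \chimin p^-$ is continuous across $\Sheet$; a continuous function on the closed manifold $M$ with prescribed $\diff^\Reg$ is determined up to a single additive constant. To exhibit $p^\pm$ intrinsically from $u$ (without reference to the coset), I would read off the consistency conditions making $(\partial_t^\Reg u, \#u)$ a genuine tangent vector to $\dsvect(M)$. Preservation of $\div u^\pm = 0$ forces the Poisson equations $\Delta p^\pm = -\div(\nabla_{u^\pm}u^\pm)$ in $\Dompm$; preservation of the continuity of the normal component along the moving sheet --- that is, the relation $\normjump(\partial_t^\Reg u) = \L_{\tanjump(u)}(\#u)$ of Proposition~\ref{tanSpaceDes} --- prescribes the jump $\partial_\nu p^+ - \partial_\nu p^-$ of the normal derivative on $\Sheet$; and~\eqref{presCont2} is the Dirichlet matching $p^+\vert_\Sheet = p^-\vert_\Sheet$.

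Together these state that $p$ is continuous across $\Sheet$ with distributional Laplacian $\Delta p = g + h\,\delta_\Sheet$, where $g^\pm = -\div(\nabla_{u^\pm}u^\pm)$ and $h$ is the computed normal-derivative jump, both determined by $u$. This is an elliptic transmission problem on the closed manifold $M$; its solvability condition $\int_M g\,\mu + \int_\Sheet h = 0$ follows from the divergence theorem applied to $\nabla_{u^\pm}u^\pm$ on $\Dompm$ together with the vanishing of $\int_\Sheet \L_{\tanjump(u)}(\#u)$ by Stokes' theorem on the closed hypersurface $\Sheet$, and uniqueness up to an additive constant follows since the only harmonic functions on closed $M$ are constants. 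I expect the main obstacle to be exactly this step: correctly deriving the normal-derivative jump condition from Proposition~\ref{tanSpaceDes} and verifying the solvability identity. The elliptic input needed to solve the transmission problem is the double-layer potential construction of Theorem~\ref{dlp} in Appendix~\ref{app:layer}, already invoked in the proof of the singular Hodge decomposition (Proposition~\ref{Hodge}).
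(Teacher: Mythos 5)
Your proposal is correct and follows essentially the same route as the paper: the pointwise Riemannian identity reducing the coset equation to $\partial_t^\Reg u + \nabla^\Reg_{u}u \in \dgrad(M, \Sheet)$, and then the determination of $p^\pm$ from $u$ alone via the normal-jump relation of Proposition~\ref{tanSpaceDes} together with layer-potential theory (the paper packages your transmission problem as the decomposition $\dsvectext(M, \Sheet) = \dsvect(M, \Sheet) \oplus_{L^2} (\dsvectext(M, \Sheet) \cap \dgrad(M, \Sheet))$, whose second summand consists of single layer potentials). One small correction: the elliptic input for continuous Dirichlet data with a prescribed jump of the normal derivative is the \emph{single} layer potential, Theorem~\ref{slp}, not the double layer potential of Theorem~\ref{dlp}.
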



\begin{proof}
By definition of the derivative $\partial_{t}^\Reg [\alpha]$, the first of equations \eqref{twoPhaseEulerCosets} is equivalent to the condition
$$
\partial_t^\Reg \alpha + i_{u}\diff^\Reg \alpha   + \tfrac{1}{2}\diff^\Reg i_u\alpha  \,\in\, \dexactforms(M, \Sheet)\,,
$$
for the co-closed representative $\alpha \in [\alpha]$. Equivalently, this can be written as
$$
\partial_t^\Reg \alpha + \L_{u}^\Reg \alpha   - \tfrac{1}{2}\diff^\Reg i_u\alpha  \,\in\, \dexactforms(M, \Sheet)\,.
$$
%
Taking the metric dual vector field and applying the formula
 $ (\L_u u^\flat - \tfrac{1}{2}\diff (u,u))^\sharp = \nabla_u u$, 
 we get
\begin{equation}\label{twoPhaseEulerDgrad}
 \partial_t^\Reg u + \nabla^\Reg_{u}u \in \dgrad(M, \Sheet)\,,
\end{equation}
 which is equivalent to the first two equations in  \eqref{twoPhaseEuler2} supplemented by condition \eqref{presCont2}. Thus, the first statement of the theorem is proved.
\par
To prove that the functions $p^\pm$ are uniquely determined by $u$, rather than by its time derivative, we need to show that the projection of $\partial_t^\Reg u \in \dvect(M,\Sheet)$ to $\dgrad(M, \Sheet)$ can be expressed in terms of $u$. 
To that end, notice that $\partial_t^\Reg u$ belongs to $\dsvectext(M, \Sheet)$, and we know the jump of its normal component by 
Proposition~\ref{tanSpaceDes}. On the other hand, the map $\normjump$ defines an isomorphism between the quotient 
$\dsvectext(M, \Sheet)\,/\,\dsvect(M, \Sheet)$ and the space of possible jumps $\T_\Sheet \VS(M)$. 
Hence we know the coset of  $\partial_t^\Reg u$ in $\dsvectext(M, \Sheet)\,/\,\dsvect(M, \Sheet)$, 
which means that we know its projection to $\dgrad(M, \Sheet)$, as desired.

Somewhat more explicitly, one has  the decomposition 
$\dsvectext(M, \Sheet) = \dsvect(M, \Sheet) \oplus_{L^2} (\dsvectext(M, \Sheet) \cap \dgrad(M, \Sheet))$. 
The latter space is exactly the space of single layer potentials, cf. Appendix \ref{app:layer}. By Theorem \ref{slp}
one can reconstruct the harmonic potentials $s^\pm$ (for a divergence-free vector field) 
continuous on $\Sheet$ and satisfying a given jump condition. 
Thus the $\dgrad(M, \Sheet)$ component of  $\partial_t^\Reg u$ is uniquely determined 
by the jump of the normal component, while the sum $\partial_t^\Reg u + \nabla^\Reg_{u}u$ defines the required 
$p^\pm$.
\end{proof}

Recall that for a fluid velocity field $u$, the corresponding vorticity is the $2$-form $\omega := \diff u^\flat$. For a vector field $u \in \dsvect(M)$, the vorticity is a de Rham current given by
$$
\omega = \diff (\chiplus \alpha^+ + \chimin \alpha^-) =  \chiplus \omega^+ + \chimin\omega^- + (\alpha^+ - \alpha^-) \wedge \delta_\Sheet\,,
$$
where $\omega^\pm = \diff \alpha^\pm$. 
\begin{corollary}[(Singular Kelvin's theorem)]\label{cor:singKelvin}
For a fluid with an immersed vortex sheet, the regular part of the vorticity is transported by the flow:
$$
\partial_t \omega^\pm + \L_u \omega^\pm = 0\,.
$$
\end{corollary}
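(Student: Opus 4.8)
The plan is to recognize the claim as a two-sided, regularized version of the classical Kelvin--Helmholtz theorem. Away from $\Sheet$ the flow on each component $\Dompm$ is governed by the ordinary Euler equation, namely the first two lines of \eqref{twoPhaseEuler2}, so I would reduce the statement to taking the spatial exterior derivative of the Euler equation separately on $\Domplus$ and $\Dommin$. Throughout, $\partial_t$ applied to a one-sided quantity is understood as the regularized derivative of Section \ref{sec:dtf}, i.e. differentiation of the smooth field on the moving domain $\Dompm_{\Sheet_t}$.

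Concretely, I would start from \eqref{twoPhaseEulerDgrad}, or equivalently from the first two lines of \eqref{twoPhaseEuler2}. Restricting to a single side and passing to $1$-forms via the musical isomorphism $u^\pm \mapsto \alpha^\pm := (u^\pm)^\flat$, I would use the identity $(\L_{u^\pm}\alpha^\pm - \tfrac{1}{2}\diff (u^\pm, u^\pm))^\sharp = \nabla_{u^\pm}u^\pm$ already invoked in the proof of Theorem \ref{thmMain2}. This turns \eqref{twoPhaseEuler2} into
\[
\partial_t \alpha^\pm + \L_{u^\pm}\alpha^\pm = \diff P^\pm, \qquad P^\pm := \tfrac{1}{2}(u^\pm, u^\pm) - p^\pm,
\]
so the right-hand side of the Euler equation on each side is \emph{exact}. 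Applying $\diff$ to this relation, using that $\diff$ commutes with $\partial_t$ pointwise in the interior (for a fixed interior point the form $\alpha^\pm$ is defined for a range of $t$ and $\diff$ is a spatial operator) and with $\L_{u^\pm}$ (the standard commutation $\diff\,\L_{u^\pm} = \L_{u^\pm}\,\diff$), together with $\diff \diff P^\pm = 0$, yields
\[
\partial_t \omega^\pm + \L_{u^\pm}\omega^\pm = 0,
\]
where $\omega^\pm = \diff \alpha^\pm$ is exactly the regular part of the vorticity. Since $u$ coincides with $u^\pm$ on $\Dompm$, we have $\L_u\omega^\pm = \L_{u^\pm}\omega^\pm$ there, which is the asserted identity.

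The only genuine subtlety --- and hence the step I would treat most carefully --- is the meaning of $\partial_t$ for quantities living on the moving components $\Dompm_{\Sheet_t}$, and the fact that this one-sided $\partial_t$ commutes with the spatial $\diff$. This is transparent because each $u^\pm$ is $\Cont^\infty$ up to $\Sheet$ and all operations are performed on a single side; equivalently, one may phrase the conclusion via the genuine smooth flows $\phi^\pm_t \colon \Dompm_{\Sheet_0} \to \Dompm_{\Sheet_t}$ generated by $u^\pm$, for which $\partial_t \omega^\pm + \L_{u^\pm}\omega^\pm = 0$ is precisely the statement that $(\phi^\pm_t)^*\omega^\pm$ is constant in $t$, i.e. the vorticity is frozen into the flow on each side. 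The essential observation is that, by Theorem \ref{thmMain2}, the discontinuity across $\Sheet$ enters only through the pressure matching \eqref{presCont2} and the kinematic condition $\partial_t \Sheet = \# u$, neither of which affects the interior freezing-in, so no new obstruction arises beyond the classical argument.
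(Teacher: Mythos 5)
Your argument is correct and is essentially the paper's own proof: the paper simply takes the exterior derivative of the first equation in \eqref{twoPhaseEulerCosets}, which is exactly your step of passing to the $1$-form formulation $\partial_t \alpha^\pm + \L_{u^\pm}\alpha^\pm = \diff P^\pm$ on each side and applying $\diff$ to kill the exact right-hand side. Your added remarks on the commutation of the regularized $\partial_t$ with the spatial $\diff$ and the interpretation via the one-sided flows $\phi_t^\pm$ are a reasonable elaboration of details the paper leaves implicit, but the route is the same.
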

\begin{proof}
Take the exterior derivative of both sides in the first of equations \eqref{twoPhaseEulerCosets}.
\end{proof}
In particular, the regular part of the vorticity remains in the same diffeomorphism class during the Euler-Arnold evolution. Note that the latter property in fact holds for any Hamiltonian system on $\dsvect(M)^*$ and is related to the structure of symplectic leaves. 
%
%
 
\medskip
  
  \subsection{Pure vortex sheet motions as geodesics on the space of hypersurfaces}
  \label{sect:pure_motion}
  Now, we apply Proposition \ref{prop:sub} to obtain a geodesic description of pure vortex sheet motions.\par
  
    \begin{proposition}\label{dualAnchorProp}
    Let $[f] \in  \T^*_\Sheet \VS(M)$ (recall that the latter space is $\Cont^\infty(\Sheet) \, / \, \R $). Then its image under the map $\#^* \colon \T^*_\Sheet \VS(M) \to \dsvect(M, \Sheet)^*$ is given by 
   \begin{align}\label{dualAnchor2}\#^*[f] := \left\{\diff^\Reg h  \mid h \in \dcinfty(M),  jump(h) = f\right\}.\end{align}
    \end{proposition}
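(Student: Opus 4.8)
The plan is to verify that the set on the right-hand side of \eqref{dualAnchor2} is a well-defined coset and that it represents exactly the functional which the dual anchor must produce. Recall from Remark \ref{rem:normcot} that the anchor is $\#u = (i_u\mu)\vert_\Sheet$, and that the relevant pairings are $\langle[f],\xi\rangle = \int_\Sheet f\,\xi$ on the base and $\langle[\alpha],u\rangle = \int_M (i_u\alpha)\,\mu$ on the fibers. By definition, $\#^*[f]$ is the element of the smooth dual $\dsvect(M,\Sheet)^* = \dforms(M,\Sheet)\,/\,\dexactforms(M,\Sheet)$ (if one exists) satisfying $\langle \#^*[f], u\rangle = \langle [f],\#u\rangle = \int_\Sheet f\,(i_u\mu)\vert_\Sheet$ for every $u \in \dsvect(M,\Sheet)$. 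So the content of the proposition is to exhibit such an element explicitly and to confirm that it lies in the smooth dual rather than merely the full continuous dual.

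The key step is a transfer of the boundary integral to a bulk integral via Lemma \ref{intDerLemma}. Given a representative $f \in \Cont^\infty(\Sheet)$, choose any $h = \chiplus h^+ + \chimin h^- \in \dcinfty(M)$ with $jump(h) = f$ (for instance, extend $f$ smoothly to $\overline{\Domplus}$ and set $h^- = 0$; existence is thus immediate). Since $h^\pm$ are functions, $i_{u^\pm}\diff h^\pm = \L_{u^\pm} h^\pm$, and hence $i_u \diff^\Reg h = \L^\Reg_u h$. Applying Lemma \ref{intDerLemma} to $h$ then gives, for every $u \in \dsvect(M,\Sheet)$,
$$
\langle[\diff^\Reg h], u\rangle = \int_M (i_u\diff^\Reg h)\,\mu = \int_M (\L^\Reg_u h)\,\mu = \int_\Sheet jump(h)\,i_u\mu = \int_\Sheet f\,i_u\mu = \langle[f],\#u\rangle\,.
$$
Thus the coset $[\diff^\Reg h] \in \dforms(M,\Sheet)\,/\,\dexactforms(M,\Sheet)$ represents $\#^*[f]$, which is precisely formula \eqref{dualAnchor2}; in particular $\#^*[f]$ automatically lands in the smooth dual once this representative is produced.

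It remains to check well-definedness, which is where I would be most careful, although I do not expect a genuine obstacle. If $h_1, h_2 \in \dcinfty(M)$ both satisfy $jump(h_i) = f$, then $jump(h_1 - h_2) = 0$, so $\diff^\Reg(h_1 - h_2) \in \dexactforms(M,\Sheet)$ by the very definition of that space, and therefore $\diff^\Reg h_1$ and $\diff^\Reg h_2$ lie in the same coset; this shows the right-hand side of \eqref{dualAnchor2} is a single element of $\dsvect(M,\Sheet)^*$. Finally, replacing the representative $f$ by $f + c$ for a constant $c$ amounts to replacing $h$ by $h + \chiplus c$, which leaves $\diff^\Reg h$ unchanged; hence the coset depends only on $[f] \in \Cont^\infty(\Sheet)\,/\,\R$, as required. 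The only points demanding attention are matching the two pairing conventions and confirming the image lies in the smooth dual, both of which are handled by the explicit representative above.
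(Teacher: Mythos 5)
Your proposal is correct and follows essentially the same route as the paper: both arguments reduce to the single computation $\langle \#^*[f],u\rangle = \int_\Sheet f\, i_u\mu = \int_M (\L^\Reg_u h)\,\mu = \int_M (i_u\,\diff^\Reg h)\,\mu$ via Lemma \ref{intDerLemma}. The extra checks you include (existence of $h$, independence of the choice of $h$ since $jump(h_1-h_2)=0$ forces $\diff^\Reg(h_1-h_2)\in\dexactforms(M,\Sheet)$, and invariance under $f\mapsto f+c$) are correct and are left implicit in the paper's proof.
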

    \begin{proof}
    Let $[f] \in  \T^*_\Sheet \VS(M) $, and let $u \in \dsvect(M, \Sheet)$. Then
    $$
    \langle \#^*[f], u \rangle = \langle [f], \#u \rangle = \int_\Sheet f i_u\mu\,.
    $$
    By Lemma \ref{intDerLemma}, for any $h \in  \dcinfty(M)$ such that  $jump(h) = f$, we can rewrite the latter integral as
  $$
  \int_M (\L_u^\Reg h)\mu =   \int_M (i_u \diff^\Reg h)\mu\,.
  $$
  The latter is exactly the coset on the right-hand side of \eqref{dualAnchor2} paired with $u$, as desired.
    \end{proof}
     By Proposition \ref{prop:ccRepr}, every coset \eqref{dualAnchor2} has a unique co-closed representative $\alpha \in\dccforms(M, \Sheet)$. Explicitly, it reads
     $
     \alpha = \chiplus\diff f^+  + \chimin\diff f^- 
     $,
    where the functions $f^\pm \in \Cont^\infty(\Dompm)$ are harmonic, have equal normal derivatives at $\Sheet$ and satisfy $f^+\vert_{\Sheet} - f^-\vert_{\Sheet} = f$ (these functions $f^\pm$ can be found as the solution of the double layer potential problem, see Theorem \ref{dlp} in Appendix A). The metric dual vector field $v: = \I^{-1}(\#^*[f])$ thus has the form $v = \chiplus\grad f^+  + \chimin\grad f^-  $. This means that the symplectic leaf $\#^*(\T^*\VS(M)) \subset \dsvect(M)^*$ is metric dual to velocity fields of pure vortex sheet motions.

   \begin{theorem}\label{geodDescription}
   Consider the vortex sheet algebroid $\dsvect(M)\to \VS(M)$, equipped with the $L^2$-metric. Then the following holds:  
    \begin{enumerate}
    \item Pure vortex sheets evolve along geodesics of a metric $\metric_{\VS}$ on $\VS(M)$ 
    obtained as the projection of the   $L^2$-metric on $\dsvect(M)$. Explicitly, for
    a tangent vector $\xi$ to the base  this metric reads
    \begin{align}\label{vsExplicit}
     \langle \xi, \xi \rangle_{\vs} = \langle  \#^{-1}(\xi),  \#^{-1}(\xi) \rangle_{L^2} = \int_{D^+_\Sheet}(\grad f^+, \grad f^+)\,\mu 
+  \int_{D^-_\Sheet}(\grad f^-, \grad f^-)\,\mu\,,
\end{align}
where $\Delta f^\pm=0$ in $D^\pm_\Sheet$ and the normal component of $\nabla f^\pm$ at $\Sheet$ is $\xi$.
Equivalently, 
$$
\langle \xi,\xi\rangle_{\vs}= \langle (\NTD^+ + \NTD^-)\xi, \xi \rangle\,,
$$
where the Neumann-to-Dirichlet operators $\NTD^\pm$ 
on the domains $D^\pm_\Sheet$  are regarded as maps $\T_\Sheet \VS(M) \to \T^*_\Sheet \VS(M)$.

    \item The anchor map $\#  \colon (\dsvect(M), \metric_{L^2}) \to (\T\VS(M), \metric_\vs)$ is a Riemannian submersion of vector bundles.
        \item For any $\Sheet \in \VS(M)$, the target mapping $\Trg \colon (\DSDiff(M)_{\Sheet}, \metric_{L^2}) \to (\VS(M), \metric_\vs)$ is a Riemannian submersion, see Figure \ref{fig:submersion}. Here $ \metric_{L^2}$ is the restriction of the right-invariant source-wise metric on $\DSDiff(M)$ corresponding to the $L^2$-metric on $\dsvect(M)$.
    \end{enumerate}
    \end{theorem}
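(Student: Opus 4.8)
The plan is to deduce the entire theorem from Proposition \ref{prop:sub} applied to the transitive Lie algebroid $\A = \dsvect(M) \to \VS(M) = \B$ equipped with the $L^2$-metric $\metric_{L^2}$, together with the transitive Lie groupoid $\G = \DSDiff(M)$ whose algebroid is $\dsvect(M)$ by Theorem \ref{algFibers}. The three conclusions of the theorem are precisely parts (1)--(3) of Proposition \ref{prop:sub}, so the task reduces to checking that all hypotheses of that proposition hold in the present infinite-dimensional setting. Transitivity of the algebroid follows from Proposition \ref{frechet} (the groupoid $\DSDiff(M)$ is transitive) together with the general fact that the algebroid of a transitive groupoid is transitive; alternatively the anchor is surjective since any zero-mean normal field $\xi \in \T_\Sheet\VS(M)$ arises as $\#(\chiplus \grad f^+ + \chimin \grad f^-)$ for harmonic $f^\pm$ solving the Neumann problem with datum $\xi$. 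Positive-definiteness of $\metric_{L^2}$ and invertibility of the inertia operator $\I \colon \dsvect(M) \to \dsvect(M)^*$ are the content of Proposition \ref{prop:invertibility}.

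First I would establish the only nontrivial hypothesis, namely the orthogonal splitting $\dsvect(M) = \Ker\# \oplus_{L^2} (\Ker\#)^\bot$ (the footnote to Proposition \ref{prop:sub} notes this is automatic only in finite dimensions). By Corollary \ref{cor:isotropy}, the isotropy algebra $\Ker\#_\Sheet$ consists of fields $\chiplus u^+ + \chimin u^-$ with $u^\pm \in \SVect(\Dompm)$ tangent to $\Sheet$. To split an arbitrary $u = \chiplus u^+ + \chimin u^- \in \dsvect(M, \Sheet)$, I would apply the classical Hodge decomposition on each manifold-with-boundary $\Dompm$, writing $u^\pm = w^\pm + \grad f^\pm$ with $w^\pm$ divergence-free and tangent to $\Sheet$ and $f^\pm$ harmonic (as $u^\pm$ is divergence-free). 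Continuity of the normal component of $u$ forces $\partial_\nu f^+ = \partial_\nu f^-$ on $\Sheet$, so $\chiplus \grad f^+ + \chimin \grad f^-$ again lies in $\dsvect(M, \Sheet)$, while $\chiplus w^+ + \chimin w^-$ lies in $\Ker\#_\Sheet$. The two summands are $L^2$-orthogonal by Stokes' theorem (on each $\Dompm$ a gradient is $L^2$-orthogonal to a divergence-free field tangent to the boundary). This identifies the horizontal space $(\Ker\#)^\bot$ with the discontinuous harmonic gradient fields $\chiplus \grad f^+ + \chimin \grad f^-$ having matching normal derivatives at $\Sheet$, and shows that $\#$ restricts to a bijection $(\Ker\#)^\bot \to \T_\Sheet\VS(M)$ whose inverse $\#^{-1}$ solves the pair of Neumann problems.

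With the hypotheses in place, parts (2) and (3) of the theorem follow immediately from parts (2) and (3) of Proposition \ref{prop:sub}. For part (1), the induced metric formula \eqref{inducedMetric} gives $\langle \xi, \xi\rangle_\vs = \langle \#^{-1}(\xi), \#^{-1}(\xi)\rangle_{L^2}$; substituting $\#^{-1}(\xi) = \chiplus \grad f^+ + \chimin \grad f^-$ yields precisely \eqref{vsExplicit}. The Neumann-to-Dirichlet reformulation follows from Green's identity $\int_{\Dompm}(\grad f^\pm, \grad f^\pm)\,\mu = \int_\Sheet f^\pm\, \partial_\nu f^\pm$, which expresses each side as $\langle \NTD^\pm \xi, \xi\rangle$ once we recall that $\NTD^\pm$ sends the Neumann datum $\xi$ to the Dirichlet trace $f^\pm\vert_\Sheet$. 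I would then note that this metric coincides with the one of Definition \ref{def:baseMetric}: fixing $\#u = \xi$ fixes the $(\Ker\#)^\bot$-component of $u$ while leaving the $\Ker\#$-component free, so minimizing $\normsq{u}_{L^2} = \normsq{\#^{-1}\xi}_{L^2} + \normsq{u_{\mathrm{tan}}}_{L^2}$ returns exactly the horizontal norm. Finally, pure vortex sheet motions are, by Remark \ref{rem:types_vs}, precisely the solutions whose velocity is a global gradient off $\Sheet$, i.e. horizontal, so by Proposition \ref{prop:sub}(1) they project to geodesics of $\metric_\vs$.

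The main obstacle is the infinite-dimensional orthogonal decomposition, equivalently the solvability and elliptic regularity of the Neumann problems realizing $\#^{-1}$: one must know that for zero-mean data $\xi$ there exist harmonic $f^\pm$ on $\Dompm$ with the prescribed (and matching) normal derivatives, depending smoothly on $\xi$ and on $\Sheet$, so that $(\Ker\#)^\bot$ is a genuine closed complement and $\#^{-1}$ is a well-defined bundle map. This is exactly the layer-potential analysis of Appendix \ref{app:layer} (Theorems \ref{slp} and \ref{dlp}); the remaining groupoid-theoretic and Green's-formula computations are routine.
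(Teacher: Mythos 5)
Your proposal is correct and follows essentially the same route as the paper: both reduce the theorem to Proposition \ref{prop:sub}, with the only substantive hypothesis being the orthogonal splitting $\dsvect(M)=\Ker\#\oplus_{L^2}(\Ker\#)^\bot$, which the paper obtains exactly as you do, by solving the Neumann problems with datum $\#u$ on $\Dompm$ and peeling off the harmonic gradient part (your ``Hodge decomposition on each $\Dompm$'' is the same computation). The additional items you include --- the explicit derivation of \eqref{vsExplicit}, the Neumann-to-Dirichlet reformulation, and the agreement with Definition \ref{def:baseMetric} --- are treated in the paper immediately after the theorem (the last one as a separate proposition), so nothing is missing or different in substance.
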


    \begin{proof}
    This theorem follows from Proposition \ref{prop:sub} employed in the setting of the algebroid $\dsvect(M)$
    of discontinuous vector fields. To apply 
    Proposition \ref{prop:sub} we need to show that $\dsvect(M) = \Ker \# \oplus (\Ker \#)^\bot$. Take any $u \in \dsvect(M, \Sheet)$. Consider the solutions $f^\pm \in \Cont^\infty(\Dompm)$ of the Laplace equation with Neumann boundary conditions given by $\#u$. Then one has
  $$
    u = ({u - \chiplus \grad f^+ - \chimin \grad f^-})+ ({ \chiplus \grad f^+ + \chimin \grad f^-})
 $$
    with the first bracket being in $\Ker \# $ and the second bracket being in $(\Ker \#)^\bot$, as desired.
(Note that the above means the decomposition of the space $\dsvect(M, \Sheet)$ as ``fields tangent to $\Sheet$'' + ``gradients of double layer potentials,'' which is similar to the one described in the proof of Theorem \ref{thmMain2}.)
    \end{proof}
    Recall that $\DSDiff(M)_{\Sheet_0}$ is the configuration space of a fluid with an immersed vortex sheet. The motion of the fluid follows the geodesics of the $\metric_{L^2}$-metric. Pure vortex sheets thus correspond to horizontal (with respect to the target mapping) geodesics.\par
    
    The following result shows that the first statement of Theorem \ref{geodDescription} is equivalent to Corollary \ref{LoeschThm}.
    \begin{proposition}
    The metric $\metric_\vs$ constructed above coincides with the metric provided by Definition~\ref{def:baseMetric}.
    \end{proposition}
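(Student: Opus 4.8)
The plan is to realize both the metric $\metric_\vs$ produced by Theorem \ref{geodDescription} and the metric of Definition \ref{def:baseMetric} as the value of one and the same $L^2$-minimization over divergence-free fields with prescribed normal component on $\Sheet$, the only difference being the class of admissible fields. First I would recall that, by Proposition \ref{prop:sub} applied to the algebroid $\dsvect(M)$, for $\xi \in \T_\Sheet\VS(M)$ one has $\normsq{\xi}_\vs = \langle \#^{-1}\xi, \#^{-1}\xi\rangle_{L^2}$, where $\#^{-1}\xi$ is the unique element of $(\Ker\#)^\bot$ with anchor $\xi$; equivalently $\normsq{\xi}_\vs = \min\{\normsq{u}_{L^2} \mid u \in \dsvect(M,\Sheet),\ \#u = \xi\}$, the minimum being attained at the orthogonal projection of the competitor coset onto $(\Ker\#)^\bot$. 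As computed in the proof of Theorem \ref{geodDescription}, this minimizer is the harmonic field $u_* = \chiplus\grad f^+ + \chimin\grad f^-$ with $\Delta f^\pm = 0$ in $\Dompm$ and Neumann data on $\Sheet$ determined by $\xi$, and $\normsq{u_*}_{L^2}$ is exactly the right-hand side of \eqref{vsExplicit}.

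For one inequality I would observe that any smooth divergence-free field $u \in \SVect(M)$ with $(u,\nu)\nu = v$ on $\Sheet$ restricts to a pair of divergence-free fields whose normal traces agree on $\Sheet$, so $u \in \dsvect(M,\Sheet)$ with $\#u = \xi := (i_v\mu)\vert_\Sheet$. Hence the admissible set in Definition \ref{def:baseMetric} is contained in the admissible set for the discontinuous minimization above, which at once gives $\normsq{v}_\vs^{\mathrm{Def}} \geq \normsq{\xi}_\vs = \normsq{u_*}_{L^2}$.

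The reverse inequality is the substantive part, and it requires an approximation because $u_*$ is itself discontinuous: its jump across $\Sheet$ is purely tangential, equal to the $\Sheet$-tangential gradient of $(f^+-f^-)\vert_\Sheet$ (cf. Remark \ref{rem:types_vs}). I would approximate $u_*$ in $L^2$ by genuinely smooth divergence-free fields carrying the exact normal trace $v$. In a tubular collar $\Sheet \times (-\eps,\eps)$ the plan is to interpolate the tangential component of $u_*$ from the $-$ side to the $+$ side through a monotone cutoff, then to recover the normal component from the divergence-free constraint by integrating the resulting tangential divergence in the normal variable, and finally to restore the normal trace to be exactly $v$ on $\Sheet$ by adding a harmonic-gradient correction with Neumann data of size $O(\eps)$. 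Since every correction is supported in (or controlled by) a layer of width $\eps$, the extra energy is $O(\eps)$, so the smooth fields $u_\eps$ satisfy $\normsq{u_\eps}_{L^2} = \normsq{u_*}_{L^2} + O(\eps)$; letting $\eps \to 0$ yields $\normsq{v}_\vs^{\mathrm{Def}} \leq \normsq{u_*}_{L^2}$, and combined with the previous paragraph this gives equality.

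The main obstacle is precisely this smoothing: one must dissolve the $O(1)$ tangential jump of $u_*$ across a layer of thickness $\eps$ while keeping the field both divergence-free and of normal component $v$ on $\Sheet$. Interpolating the potentials directly would be fatal, as the differentiated cutoff produces an $O(1/\eps)$ normal spike; the remedy is to interpolate only the tangential part and let the divergence-free condition dictate the normal part, confining the cost to $O(\eps)$. That no smooth minimizer exists for connected $\Sheet$ (the remark following Corollary \ref{LoeschThm}) is exactly why an approximation argument, rather than a direct comparison of minimizers, is unavoidable here.
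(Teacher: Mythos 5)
Your proposal is correct and shares the paper's overall strategy: both arguments realize the two metrics as the same constrained $L^2$-minimization $\inf\{\normsq{u}_{L^2} \mid \#u=\xi\}$, differing only in whether $u$ ranges over $\svect(M)$ or over $\dsvect(M,\Sheet)$, with the latter infimum attained at the orthogonal projection $\#^{-1}(\xi)=\chiplus\grad f^+ +\chimin\grad f^-$. Where you diverge is in how the two infima are identified. The paper dispatches this in one line by citing Corollary \ref{dense} ($\svect(M)$ is $L^2$-dense in $\dsvect(M,\Sheet)$, proved abstractly from the singular Hodge decomposition via orthogonal complements) and then computes the infimum over the affine subspace $\#^{-1}(\xi)+\Ker\#_\Sheet$. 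You instead build the approximating smooth divergence-free fields by hand with a collar cutoff — a construction very much in the spirit of Lemma \ref{weakLemma2} in Appendix \ref{app:weak}. Your route is longer but buys something real: the constrained infimum requires approximants that carry the \emph{exact} normal trace $v$ on $\Sheet$, and mere $L^2$-density of $\svect(M)$ in $\dsvect(M,\Sheet)$ does not by itself preserve traces, so the paper's appeal to Corollary \ref{dense} leaves a small step implicit that your explicit construction supplies. Conversely, the paper's argument avoids the technical wrinkles your sketch still has to smooth over (gluing at the collar boundary $\Sheet\times\{\pm\eps\}$, smoothness there, and the zero-mean condition on the $O(\eps)$ Neumann data for the correction), all of which are standard but would need to be written out. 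Either way the identity $\normsq{u_*}_{L^2}=\normsq{\xi}_{\vs}$ with formula \eqref{vsExplicit} closes the argument as you indicate.
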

    
    \begin{proof}
    Denote the metric from Definition~\ref{def:baseMetric} by $\metric'_\vs$. For $\xi \in \T_\Sheet\VS(M)$, that metric reads
    $$
        \langle \xi, \xi \rangle'_\vs = \inf \left\{{\langle u, u \rangle_{L^2} \mid  u \in \svect(M), \,\# u = \xi} \right\}
        $$
        (here we regard $\svect(M)$ as a subspace of $\dsvect(M, \Sheet)$).
        But by Corollary \ref{dense} the subspace $\svect(M)$ is $L^2$-dense in $\dsvect(M, \Sheet)$, which allows 
        one to rewrite the definition of the metric as
            $$
        \langle \xi, \xi \rangle'_\vs = \inf \left\{{\langle u, u \rangle_{L^2} \mid  u \in \dsvect(M, \Sheet), \,\# u = \xi} \right\}.
        $$
        Furthermore, we have
        $$
\left\{   u \in \dsvect(M, \Sheet) \mid \# u = \xi\right\} = \#^{-1}(\xi) + \Ker \#_\Sheet\,,
        $$
        where    $\#^{-1}$ is the inverse of the restriction of the anchor map to $ (\Ker \#)^\bot$, and $ \Ker \#_\Sheet$ stands for the kernel of the anchor map at the fiber $\dsvect(M)_\Sheet$. So,
    $$
       \langle \xi, \xi \rangle'_\vs=  \inf \left\{{\langle u, u \rangle_{L^2} \mid  u \in  \#^{-1}(\xi) + \Ker \#_\Sheet} \right\} = \langle  \#^{-1}(\xi),  \#^{-1}(\xi) \rangle_{L^2} =     \langle \xi, \xi \rangle_{\vs} \,,
    $$
    where in the second equality we used that $ \#^{-1}(\xi) \in  (\Ker \#_\Sheet)^\bot$.
    \end{proof}


  \medskip

\subsection{Irrotational flows with vortex sheets as a Newtonian system in a magnetic field} \label{sec:irrot}
  In this section we consider flows with vortex sheets which are irrotational outside of the vortex sheet. In terms of the vector field $u = \chiplus u^+ + \chimin u^- \in \dsvect(M)$ this means that its smooth parts $u^\pm \in \svect(\Dompm)$ are irrotational (i.e. locally potential or, equivalently, harmonic), while in terms of the dual coset $[\alpha] := \I(u) \in \dsvect(M)^*$ this means that  $\diff \alpha^\pm = 0$ for some (equivalently, any) form $ \chiplus\alpha^+  +\chimin \alpha^-  \in [\alpha]$. As follows from Corollary~\ref{cor:singKelvin}, the space of such cosets is invariant under the Euler-Arnold flow \eqref{twoPhaseEulerCosets}.
Moreover, assume that there is a canonical way to identify the cohomology groups $\Hom^1(\Dompm, \R)$ for different $\Gamma$'s (this happens when the Gauss-Manin connection in the bundles $\Hom^1(D^\pm_\Sheet, \R) \to \VS(M)$ has trivial monodromy).

\begin{proposition}
The cohomolgy classes of $1$-forms $\alpha^\pm$ in $\Hom^1(\Dompm, \R)$ are invariants of the Euler-Arnold flow. In other words, in the irrotational case the Euler-Arnold flow \eqref{twoPhaseEulerCosets} on $\dsvect(M)^*$ restricts to the affine subbundle
$$ 
\dsvect(M)^*_{\theta^\pm} := \{[\chiplus\alpha^+  +\chimin \alpha^- ] \mid \diff \alpha^+ = \diff \alpha^- = 0, [\alpha^+] = \theta^+, [\alpha^-] = \theta^-\} \subset \dsvect(M)^*\,,
$$
where $\theta^\pm \in \Hom^1(\Dompm, \R)$ are fixed cohomology classes. 
\end{proposition}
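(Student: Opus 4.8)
The plan is to reduce the statement to the conservation of periods and then run a Kelvin-type circulation argument separately on each side of the sheet. By the de Rham theorem a class in $\Hom^1(\Dompm, \R)$ is determined by its periods $\oint_\gamma \alpha^\pm$ over loops $\gamma$ representing a basis of $\Hom_1(\Dompm)$. The hypothesis of trivial Gauss--Manin monodromy is exactly what lets us speak of ``the same'' homology class at different times: the per-side Lagrangian flow $\phi_t^\pm \colon \Dom^\pm_{\Sheet_0} \to \Dom^\pm_{\Sheet_t}$ obtained by integrating $u^\pm$ carries a fixed loop $\gamma_0 \subset \Dom^\pm_{\Sheet_0}$ to $\gamma_t := \phi_t^\pm(\gamma_0) \subset \Dom^\pm_{\Sheet_t}$, and (by triviality of the monodromy) $\phi_{t*}$ realizes the canonical identification on homology. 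So it suffices to show that $\tfrac{d}{dt}\oint_{\gamma_t}\alpha_t^\pm = 0$ for every such transported loop.

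The core computation is the classical circulation theorem on $\Domplus$ and on $\Dommin$ separately. By Theorem~\ref{thmMain2}, each smooth piece $u^\pm$ obeys the ordinary Euler equation on $\Dompm$, so in $1$-form language $\partial_t^\Reg \alpha^\pm + \L_{u^\pm}\alpha^\pm = \diff\bigl(\tfrac12(u^\pm, u^\pm) - p^\pm\bigr)$ is exact on $\Dompm$ (here $\partial_t^\Reg$ is the regularized, i.e. per-side, time derivative, which accounts for the moving domain). Using the transport identity
\begin{equation}
\frac{d}{dt}\oint_{\gamma_t}\alpha_t^\pm = \oint_{\gamma_t}\left(\partial_t^\Reg \alpha_t^\pm + \L_{u^\pm}\alpha_t^\pm\right),
\end{equation}
the integrand is an exact $1$-form and hence integrates to zero over the closed loop $\gamma_t$. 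Thus every period is constant, and the classes $[\alpha^\pm] \in \Hom^1(\Dompm, \R)$, read off through the canonical identification, are invariants of the flow.

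The same conclusion follows directly from the coset equation \eqref{twoPhaseEulerCosets}, which serves as a cross-check and makes the role of the subbundle transparent. In the irrotational case $\diff^\Reg \alpha = 0$, so $i_u \diff^\Reg \alpha = 0$ and the equation collapses to $\partial_t^\Reg [\alpha] = -[\tfrac12 \diff^\Reg i_u \alpha]$. The representative $\tfrac12 \diff^\Reg i_u\alpha$ is exact on each of $\Dompm$, so it is annihilated by the projection $\kappa$ from per-piece-closed discontinuous forms to $\Hom^1(\Domplus,\R)\oplus\Hom^1(\Dommin,\R)$ that records the two de Rham classes (this $\kappa$ kills per-piece exact forms, hence descends to the quotient by $\dexactforms(M,\Sheet)$). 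Since $\diff^\Reg$ commutes with $\partial_t^\Reg$, the map $\kappa$ intertwines $\partial_t^\Reg$ with the Gauss--Manin covariant derivative on the bundle $\Hom^1(\Dompm,\R) \to \VS(M)$; applying $\kappa$ to the equation gives $\nabla^{\mathrm{GM}}[\alpha^\pm] = 0$, i.e. the classes are parallel, and trivial monodromy upgrades parallelism to genuine constancy.

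The main obstacle is the bookkeeping of these identifications rather than any single estimate. One must verify (i) that the flow-induced isomorphism $\phi_{t*}$ on homology coincides with Gauss--Manin parallel transport, so that the period argument and the coset argument track the same invariant; and (ii) that the regularized derivative $\partial_t^\Reg$, which is a covariant derivative defined through a choice of extension (Remark~\ref{connectionInExtBundle}), genuinely descends to the de Rham cohomology bundle independently of that choice --- this independence is precisely what identifies the induced connection with the Gauss--Manin connection, and it is guaranteed by the commutation of $\partial_t^\Reg$ with $\diff^\Reg$. With these two points settled, de Rham's theorem closes the argument.
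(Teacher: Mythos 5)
Your proposal is correct, and its second argument (reading \eqref{twoPhaseEulerCosets} in the irrotational case, where the right-hand side becomes per-piece exact and hence kills no periods) is precisely the paper's own proof, which the paper compresses into the single observation that integrals of $\alpha^\pm$ over closed cycles staying away from $\Sheet$ are dynamically invariant. Your leading Kelvin-circulation argument with Lagrangian transport of loops is an equivalent, somewhat more classical elaboration of the same fact; the only point worth keeping explicit is the one you already flag, namely that the flow maps $\phi^\pm_t$ realize Gauss--Manin parallel transport on $\Hom_1(\Dompm)$, which holds because any continuous family of diffeomorphisms of the domains induces the same (homotopy-invariant) identification on (co)homology.
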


\begin{proof}
It is easy to see from \eqref{twoPhaseEulerCosets} that the integrals of $\alpha^\pm$ over closed cycles are  dynamically invariant, provided that these cycles stay away from $\Sheet$. 
\end{proof}

Notice also that $\dsvect(M)^*_{\theta^\pm} = A + \#^*(\T^*\VS(M))$ for any section $A$ of $\dsvect(M)^*_{\theta^\pm}$. Furthermore, the metric allows us to choose the section $A$ in a canonical way: for any $\Sheet \in \VS(M)$ there exists a unique form $ \alpha^{\bot}_\Sheet := \chiplus\alpha_\Sheet^+  +\chimin \alpha_\Sheet^-$, where the $1$-forms $\alpha_\Sheet^\pm$ are harmonic, belong to the cohomology classes $\theta^\pm$, and vanish in the normal direction to $\Sheet$. 
Note that the form  $ \alpha^{\bot}_\Sheet$ chosen in such a way is $L^2$-orthogonal
to the space $\#^*(\T^*\VS(M))$. 
We then set $A(\Sheet)$ to be the coset of $\alpha^{\bot}_\Sheet$. This gives an identification $\dsvect(M)^*_{\theta^\pm} \simeq \#^*(\T^*\VS(M))$. The latter image 
$\#^*(\T^*\VS(M))$ can be identified with $\T^*\VS(M)$ itself, since
the map $\#^*$ is injective. 

It turns out that with this identification the Euler-Arnold flow on $\dsvect(M)^*_{\theta^\pm}$ can be described as a Newtonian system in a magnetic field. The corresponding potential $\mathcal P \colon \VS(M) \to \R$ is given by $$\mathcal P(\Sheet) := \frac{1}{2}\langle \alpha^{\bot}_\Sheet,  \alpha^{\bot}_\Sheet\rangle_{L^2}\,,$$ while the magnetic term is defined as follows. Take any $\xi \in \T_\Sheet \VS(M)$. Then, since  $  \alpha^{\bot}_\Sheet = \chiplus\alpha_\Sheet^+  + \chimin\alpha_\Sheet^-  $, where the $1$-forms $\alpha_\Sheet^\pm$ are closed and belong to the fixed cohomology classes, it follows that the derivative $\xi \cdot \alpha^{\bot}_\Sheet$ of $\alpha^{\bot}_\Sheet$ with respect to $\Sheet$ in the direction $\xi$ has the form $\chiplus\diff f^+  + \chimin\diff f^- $, which means that
  $
 [ \xi \cdot \alpha^{\bot}_\Sheet] \in \#^*(\T_\Sheet^*\VS(M)) \simeq \T_\Sheet^*\VS(M)
  $. 
  Then, for any $\xi_1, \xi_2 \in \T_\Sheet \VS(M)$, we set
  $$
  \Omega(\xi_1, \xi_2) := \langle  [\xi_1 \cdot \alpha^{\bot}_\Sheet], \xi_2 \rangle - \langle  [\xi_2 \cdot \alpha^{\bot}_\Sheet], \xi_1 \rangle \,.
  $$
  One can check that this skew-symmetric $2$-form on $\VS(M)$ is closed. Thus, it defines a symplectic structure on $\T^*\VS(M)$ by the formula
  $
  \Omega_{mag} := \Omega_{can} + \pi^* \Omega,
  $
  where $\Omega_{can}$ is the canonical symplectic form on $\T^*\VS(M)$ and
$\pi \colon \T^*\VS(M) \to \VS(M)$ is the canonical projection.
  \begin{theorem}
 The evolution of a  fluid with a vortex sheet defined by an irrotational
initial vector field outside of the sheet is a Hamiltonian system on $\T^*\VS(M)$ with symplectic structure given by $ \Omega_{mag} := \Omega_{can} + \pi^* \Omega$ and the Hamiltonian function given by $\H:= \mathcal K + \pi^*\mathcal P$, where $\mathcal K$ is the kinetic energy corresponding to the metric $\metric_\vs$, and $\mathcal P$ is the potential term defined above. 
  \end{theorem}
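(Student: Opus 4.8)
The plan is to construct an explicit symplectomorphism between $(\T^*\VS(M), \Omega_{mag})$ and the invariant affine subbundle $\dsvect(M)^*_{\theta^\pm}$ carrying the algebroid Poisson structure $\P$, under which the energy Hamiltonian $\mathcal H$ becomes $\mathcal K + \pi^*\mathcal P$. The identification is the fiberwise affine bijection $\Psi \colon \T^*\VS(M) \to \dsvect(M)^*_{\theta^\pm}$, $\Psi(\Sheet, [f]) := [\alpha^{\bot}_\Sheet] + \#^*[f]$, which is well defined because $\#^*$ is injective and $\dsvect(M)^*_{\theta^\pm} = A + \#^*(\T^*\VS(M))$ with $A(\Sheet) = [\alpha^{\bot}_\Sheet]$. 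In the irrotational case the regular vorticity $\omega^\pm = \diff \alpha^\pm$ vanishes and stays zero, so by the symplectic-leaf interpretation noted after Corollary \ref{cor:singKelvin} the circulations $\theta^\pm$ are Casimirs and $\dsvect(M)^*_{\theta^\pm}$ is a symplectic leaf; let $\omega$ denote its leaf symplectic form. It then suffices to show $\Psi^*\omega = \Omega_{mag}$ and $\mathcal H \circ \Psi = \mathcal K + \pi^*\mathcal P$, after which the theorem follows since the Euler-Arnold flow is the Hamiltonian flow of $\mathcal H$ for $\P$.

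First I would dispatch the Hamiltonian, which is the routine half. Using the co-closed (here harmonic) representative $\alpha = \alpha^{\bot}_\Sheet + (\#^*[f])_{cc}$ and the $L^2$-orthogonality of $\alpha^{\bot}_\Sheet$ to $\#^*(\T^*\VS(M))$ established just above the theorem (the dual field of $\alpha^{\bot}_\Sheet$ is harmonic and tangent to $\Sheet$, while $(\#^*[f])_{cc}$ is dual to a gradient field, so the cross term integrates to zero by the Stokes argument of Proposition \ref{Hodge}), the kinetic energy splits as $\mathcal H(\Psi(\Sheet, [f])) = \tfrac12\normsq{\alpha^{\bot}_\Sheet}_{L^2} + \tfrac12\normsq{(\#^*[f])_{cc}}_{L^2}$. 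The first summand is by definition $\pi^*\mathcal P$, and the second is the cometric norm of $[f]$ for $\metric_\vs$, i.e. the kinetic term $\mathcal K$ expressed through the operators $\NTD^\pm$, because $\#$ is a Riemannian submersion (Theorem \ref{geodDescription}). Hence $\mathcal H \circ \Psi = \mathcal K + \pi^*\mathcal P$.

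The core is matching the symplectic structures. I would factor $\Psi = T_A \circ \#^*$, where $\#^* \colon (\T^*\VS(M), \Omega_{can}) \to \dsvect(M)^*$ is the injective Poisson map whose image is the leaf for $\theta^\pm = 0$, so that $(\#^*)^*\omega = \Omega_{can}$ in that case, and $T_A$ is fiberwise translation by the section $A$. The whole point is that $T_A$ is not Poisson: moving in a base direction $\xi$ at fixed $\T^*\VS(M)$-fiber coordinate shifts the coset in $\dsvect(M)^*$ by $[\xi \cdot \alpha^{\bot}_\Sheet]$, and since $\alpha^{\bot}_\Sheet$ is closed and in a fixed cohomology class, $\xi \cdot \alpha^{\bot}_\Sheet = \chiplus \diff f^+ + \chimin \diff f^-$ is exact, so $[\xi \cdot \alpha^{\bot}_\Sheet] \in \#^*(\T^*\VS(M))$. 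Feeding the pushed-forward tangent vectors into the bracket formula \eqref{pbf}, the first term $-\int_M \diff^\Reg\alpha(v_1,v_2)\mu$ vanishes identically (the representative $\alpha$ is closed in the irrotational case), the surviving boundary terms reproduce the canonical pairing $\Omega_{can}$ between fiber and base directions, and the extra base–base contribution produced by the section derivative is exactly
$$
\Omega(\xi_1, \xi_2) = \langle [\xi_1 \cdot \alpha^{\bot}_\Sheet], \xi_2\rangle - \langle [\xi_2 \cdot \alpha^{\bot}_\Sheet], \xi_1\rangle,
$$
the magnetic form defined above the theorem. Assembling the blocks yields $\Psi^*\omega = \Omega_{can} + \pi^*\Omega = \Omega_{mag}$; equivalently, one verifies directly that $\{F \circ \Psi, G \circ \Psi\}_{\Omega_{mag}} = (\{F, G\}_{\P}) \circ \Psi$ using both bracket formulas.

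The main obstacle will be the bookkeeping in this last step: organizing $\Psi^*\omega$ into base–base, base–fiber, and fiber–fiber blocks and, in particular, computing the base–base block, which requires differentiating the harmonic section $A(\Sheet) = [\alpha^{\bot}_\Sheet]$ with respect to $\Sheet$ and controlling how its co-closed representative, the solution of a $\Sheet$-dependent boundary value problem, varies. A secondary subtlety is justifying rigorously in the Fréchet setting that $\dsvect(M)^*_{\theta^\pm}$ is a genuine symplectic leaf, so that dualizing the restricted $\P$ to a symplectic form is legitimate; this I would reduce to the Casimir property of the circulations together with nondegeneracy transported back from $\Omega_{mag}$ via $\Psi$. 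Once $\Psi$ is shown to intertwine $(\Omega_{mag}, \mathcal K + \pi^*\mathcal P)$ with $(\P, \mathcal H)$ and the invariance of $\dsvect(M)^*_{\theta^\pm}$ is in hand, the theorem follows.
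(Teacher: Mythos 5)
The paper does not actually prove this theorem: immediately after the statement the authors write that the proof ``is based on a detailed study of symplectic leaves of $\dsvect(M)^*$'' and defer it to a forthcoming publication. So there is no proof to compare yours against; what can be said is that your strategy is exactly the one the surrounding text sets up. The identification $\Psi(\Sheet,[f]) = [\alpha^{\bot}_\Sheet] + \#^*[f]$ is the paper's own identification $\dsvect(M)^*_{\theta^\pm}\simeq \T^*\VS(M)$, and your Hamiltonian computation is sound: the $L^2$-orthogonality of $\alpha^{\bot}_\Sheet$ to $\#^*(\T^*\VS(M))$ kills the cross term, the first summand is $\pi^*\mathcal P$ by definition, and the second is $\mathcal K$ because $\#\vert_{(\Ker\#)^\bot}$ is an isometry onto $(\T\VS(M),\metric_\vs)$ (Theorem \ref{geodDescription}). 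Likewise, feeding the translated tangent vectors into \eqref{pbf} with $\diff^\Reg\alpha=0$ is the right way to see where $\Omega_{can}$ and the magnetic term come from.

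That said, be aware that the two items you list as ``obstacles'' are not peripheral bookkeeping --- they are precisely the content the authors themselves defer. In particular: (i) differentiating the $\Sheet$-dependent harmonic representative $\alpha^{\bot}_\Sheet$ and verifying that the base--base block of $\Psi^*$ of the bracket is exactly $\Omega(\xi_1,\xi_2)=\langle[\xi_1\cdot\alpha^{\bot}_\Sheet],\xi_2\rangle - \langle[\xi_2\cdot\alpha^{\bot}_\Sheet],\xi_1\rangle$ is the substance of the proof, not an afterthought; and (ii) your plan for the symplectic-leaf issue is mildly circular as stated --- you propose to transport nondegeneracy back from $\Omega_{mag}$ via $\Psi$, but you need the leaf form $\omega$ to exist before you can assert $\Psi^*\omega=\Omega_{mag}$. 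A cleaner organization avoids the leaf language altogether: show directly that $\Psi$ intertwines the Hamiltonian vector field of $\mathcal H$ for the bracket $\P$ (restricted to the invariant affine subbundle, whose invariance you correctly get from Corollary \ref{cor:singKelvin} and the conservation of circulations) with the Hamiltonian vector field of $\mathcal K+\pi^*\mathcal P$ for $\Omega_{mag}$, i.e.\ verify $\{F\circ\Psi, G\circ\Psi\}_{\Omega_{mag}} = \{F,G\}_{\P}\circ\Psi$ on the relevant class of functions. With that reorganization your outline is a correct roadmap; as written it is a proof sketch whose hardest step is acknowledged but not carried out, which is the same position the paper itself is in.
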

  \begin{remark}
  In the particular case when the flow is globally potential outside of 
the vortex sheet, this is Theorem \ref{geodDescription}. In this case, $\mathcal P = 0$ and $\Omega = 0$. 
In the general case of locally potential flow with a vortex sheet, the fluid evolution in this case can be regarded as motion on $\VS(M)$ under the influence of potential $\mathcal P$ and magnetic field $\Omega$.
Note that a mechanical analog of this system is a charged particle $q$ moving in a magnetic field depending on $q$ in the presence of a potential field, i.e. a Hamiltonian system with Hamiltonian of the form ``kinetic energy'' + ``potential energy'' in the twisted symplectic structure.
 \end{remark}
  \begin{remark}
  In the case when the monodromy of the Gauss-Manin connection in $\Hom^1(D^\pm_\Sheet, \R) \to \VS(M)$ is non-trivial, one should pass to a certain covering  $\widetilde{\VS}(M) \to \VS(M)$ trivializing the monodromy.
  \end{remark}
 \begin{figure}[t]
\centerline{
\begin{tikzpicture}[thick, scale = 1]
          \node at (4.5,0) () {
          \begin{tikzpicture}[thick, rotate = 0, scale = 0.8]
                                     \node  at (1, -0.4) () {$u^- = 0$};
                                       \node  at (1, 2.3) () {$u^+ = \grad \phi$};
 \node  at (0.95,0.5) () {
\begin{tikzpicture}[thick, rotate = -90, scale = 1]
    \draw (3,-3) ellipse (1.2cm and 2cm);
    \draw   (3.05,-3.87) arc (225:135:1.2cm);
    \draw   (2.9,-4) arc (-55:55:1.2cm);
        \draw   (1.83,-3.4) arc (120:60:0.93cm);
        \draw   (1.83,-2.6) arc (120:60:0.95cm);
           \draw [->]   (1.9,-3.1) arc (120:60:0.7cm);
                \draw [->]   (1.9,-2.8) arc (120:60:0.7cm);
     \end{tikzpicture}
     };
    %
\end{tikzpicture}
          };
\end{tikzpicture}
}
\caption{An irrotational steady solution with a vortex sheet. This solution delivers the global maximum of the corresponding potential $\mathcal P$ on $\VS(M)$.
}\label{torus2}
\end{figure}
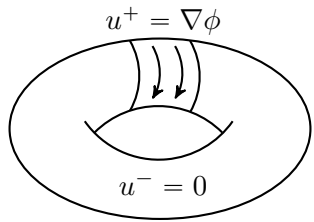
\begin{example}
Consider a flat torus with vortex sheets being two meridians and dividing the torus into two parts, where the irrotational flow is zero in one of the parts and constant $\grad \phi$ parallel to the sheets in the other, see Figure \ref{torus2}.
Such a flow is only locally potential, as the would-be potential function $\phi$ is linear and multivalued on the torus. 
This is a steady solution of the Euler equation. In general, steady solutions
with vortex sheets correspond to critical points of the potential $\mathcal P$ in the irrotational case. It is easy to show that the above case of plane parallel flow on
the torus corresponds to the global maximum of $\mathcal P$.
On the other hand, in the pure vortex sheets case there cannot be any steady solutions (as there are no geodesics consisting of a single point).
\end{example}
 
    \medskip

 The proof of the above theorem is based on a detailed study of symplectic leaves of
 $\dsvect(M)^*$. 
This will be a subject of our forthcoming publication.

  
\medskip

\section{Appendix \ref{app:layer}: Layer potentials on Riemannian manifolds}
\refstepcounter{AppCounter}
\label{app:layer}
In this appendix we establish existence of single and double layer potentials on compact  Riemannian manifolds. Although this result is definitely known to experts, we were not able to find a proof of the general case in  literature, so we sketch the proof here.
\begin{theorem}[(On the single layer potential)]\label{slp}
Let $M$ be a compact Riemannian manifold without boundary, and let $\Sheet \subset M$ be a closed hypersurface splitting $M$ into two parts $\Domplus$, $\Dommin$. Let also $\nu$ be the unit normal field to $\Sheet$, and let $f \in \Cont_0^\infty(\Sheet)$ be a smooth function on $\Gamma$ with zero mean. Then there exist smooth up to $\Sheet$ harmonic functions $s^\pm \in \Cont^{\infty}(\Dompm)$ which satisfy
\begin{equation}
s^+\vert_{\Sheet} = s^-\vert_{\Sheet}\,,\quad
(\grad s^+ - \,\grad s^-, \nu) = f\,.
\end{equation}
Such functions $s^+$, $s^-$ are unique up to a common additive constant. ({The function $s$ equal to $s^+$ in $\Domplus$ and $s^-$ in $\Dommin$ is called a \rm{single layer potential}.})
\end{theorem}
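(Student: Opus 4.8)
The plan is to recast the interface problem as a single global elliptic equation carrying a distributional (single-layer) source and to solve it variationally. First I would glue the unknowns into one function $s:=\chiplus s^+ + \chimin s^-$ on $M$; the requirement $s^+\vert_\Sheet=s^-\vert_\Sheet$ is precisely the condition that $s$ lie in $H^1(M)$ (its two one-sided traces agree), while harmonicity on each side together with the prescribed jump $(\grad{s^+}-\grad{s^-},\nu)=f$ is, by Stokes' formula, equivalent to the weak equation
\begin{equation}\label{slpweak}
\int_M (\grad{s},\grad{\phi})\,\mu = \int_\Sheet f\,\phi\,d\sigma\qquad\text{for all }\phi\in H^1(M)
\end{equation}
(the sign on the right being fixed by the coorientation of $\Sheet$), where $d\sigma$ is the induced surface measure; equivalently, the distributional Laplacian of $s$ is the single-layer source supported on $\Sheet$ with density $f$.

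Next I would establish existence and uniqueness of a weak solution of \eqref{slpweak} on the quotient $H^1(M)/\R$. The bilinear form $a(s,\phi):=\int_M(\grad{s},\grad{\phi})\,\mu$ is symmetric, bounded, and — since $M$ is compact and connected — coercive on $H^1(M)/\R$ by the Poincar\'e inequality. The right-hand side $\ell(\phi):=\int_\Sheet f\,\phi\,d\sigma$ is a bounded functional by the trace theorem, and it descends to $H^1(M)/\R$ exactly because $\ell(1)=\int_\Sheet f\,d\sigma=0$; this is precisely where the zero-mean hypothesis $f\in\Cont_0^\infty(\Sheet)$ enters. The Riesz representation theorem (equivalently, Lax--Milgram) then yields a unique $s\in H^1(M)/\R$ solving \eqref{slpweak}.

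Then I would extract the classical statement by regularity. Testing \eqref{slpweak} against $\phi$ supported in $\Domplus$ or $\Dommin$ shows that $s^\pm$ is weakly, hence (by interior elliptic regularity) smoothly, harmonic on each side, and integrating by parts back in \eqref{slpweak} recovers the jump relation $(\grad{s^+}-\grad{s^-},\nu)=f$. The essential point, and the main obstacle, is smoothness up to $\Sheet$. Here I would argue through Dirichlet-to-Neumann maps: writing $g:=s\vert_\Sheet$ for the common trace, the jump condition reads $(\DTN^+ + \DTN^-)g=f$, where $\DTN^\pm$ is the Dirichlet-to-Neumann operator of the harmonic extension into $\Dompm$. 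Each $\DTN^\pm$ is a first-order elliptic pseudodifferential operator on $\Sheet$ with principal symbol $|\xi|$, so their sum is elliptic (principal symbol $2|\xi|$) with kernel consisting only of constants. Elliptic regularity for pseudodifferential operators then upgrades $g\in H^{1/2}(\Sheet)$ to $g\in\Cont^\infty(\Sheet)$, and classical elliptic boundary regularity for the Dirichlet problem on each side gives $s^\pm\in\Cont^\infty(\overline{\Dompm})$, as required.

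Finally, uniqueness up to a common additive constant is immediate: if $s_1,s_2$ are two solutions, their difference glues to a function on $M$ that is continuous across $\Sheet$, harmonic on each side, and has vanishing jump in normal derivative, hence is weakly harmonic on all of $M$; by Weyl's lemma it is smooth and harmonic, and therefore constant on the connected manifold $M$. This forces $s_1^\pm-s_2^\pm$ to be one and the same constant, completing the argument.
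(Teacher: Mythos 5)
Your argument is correct, and it diverges from the paper's proof in where the existence comes from. The paper works entirely on the interface: it reduces the problem at once to the pseudodifferential equation $(\DTN^+ + \DTN^-)g = f$ for the common trace $g$, gets uniqueness from $\Ker(\DTN^+ + \DTN^-) = \Ker \DTN^+ \cap \Ker \DTN^- = \R$, and gets existence from the Fredholm-type identity $\Im\,\mathcal D = (\Ker\,\mathcal D)^\perp$ valid for elliptic formally self-adjoint pseudodifferential operators on a compact manifold, using that the principal symbol of the sum is $2\sqrt{(\xi,\xi)}$; solvability in $\Cont^\infty(\Sheet)$ and smoothness up to $\Sheet$ then come in one stroke. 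You instead obtain existence and uniqueness variationally, from Lax--Milgram on $H^1(M)/\R$ applied to the global weak formulation with a single-layer source, which needs only coercivity (Poincar\'e) and the trace theorem, and cleanly isolates where the zero-mean hypothesis enters (so that the functional descends to the quotient); the price is that you must then separately upgrade the $H^1$ solution to a classical one, and for the crucial smoothness up to $\Sheet$ you are driven back to exactly the same ingredient as the paper, namely ellipticity of $\DTN^+ + \DTN^-$ as a first-order pseudodifferential operator with symbol $2\sqrt{(\xi,\xi)}$, now used for regularity of $g$ rather than for solvability. In exchange, your route is more robust for rough data (it works verbatim for $f \in H^{-1/2}(\Sheet)$ with zero mean) and replaces the Fredholm alternative for pseudodifferential operators by the elementary Riesz representation theorem, while the paper's route is shorter once the pseudodifferential machinery is granted and never leaves the smooth category. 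Your uniqueness argument via Weyl's lemma on the glued function is also a clean alternative to the paper's kernel computation; both implicitly use that $M$ is connected, which is forced by the hypothesis that $\Sheet$ splits $M$ into exactly two parts.
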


\begin{proof} 
 In terms of the function $g := s^\pm\restrict{\Sheet}$, the problem can be reformulated as
\begin{equation}\label{sumdtn}
 (\DTN^+ + \DTN^-)g = f\,
\end{equation}
where $\DTN^\pm \colon \Cont^\infty(\Sheet) \to \Cont^\infty(\Sheet)$ are Dirichlet-to-Neumann maps associated with the domains $\Dompm$. (We have a sum instead of a difference because an outward normal for $\Domplus$ is an inward normal for $\Dommin$.) Since functions $s^\pm$ can be reconstructed from $g$ uniquely (as harmonic functions with boundary values $s^\pm\vert_{\Sheet} = g$), it suffices to show that given $f$ the equation \eqref{sumdtn} has a unique, up to an additive constant, solution $g$.\par

It is straightforward to verify that the Dirichlet-to-Neumann map is a non-negative formally self-adjoint 
operator whose kernel consists of constant functions. Therefore, $$
\Ker\, (\DTN^+ + \DTN^-) = \Ker\, \DTN^+ \cap \Ker\, \DTN^- \!= \R\,. 
$$
This proves the uniqueness part. To prove existence, we use that each of the Dirichlet-to-Neumann maps 
$\DTN^\pm$ is an elliptic pseudo-differential operator with principal symbol $S \colon \T^*\Sheet \to \R$ 
given by the Riemannian length $S(\xi) = \sqrt{(\xi, \xi)}$ (see \cite{LU89}, Proposition 1.2).
It follows that $\DTN^+ + \DTN^-$ is a  pseudo-differential operator with  principal symbol $2\sqrt{(\xi, \xi)}$ 
and hence elliptic. 
Now recall that for any elliptic formally self-adjoint pseudo-differential operator $\mathcal D \colon C^\infty(X) \to C^\infty(X)$ on a compact Riemannian manifold $X$ one has
$
\Im \, \mathcal D = (\Ker\,\mathcal D )^\bot 
$
(see, e.g., \cite{Taylor}, Chapter 7, Section 10). Therefore,
$$
\Im \,  (\DTN^+ + \DTN^-) =  (\Ker\,(\DTN^+ + \DTN^- ) )^\bot =\Cont_0^\infty(\Sheet).
$$
Thus, \eqref{sumdtn} is solvable for any $f$ with zero mean, as desired.
\end{proof}

\begin{theorem}[(On the double layer potential)]\label{dlp}
Let $M$ be a compact Riemannian manifold without boundary, and let $\Sheet \subset M$ be a closed hypersurface splitting $M$ into two parts $\Domplus$, $\Dommin$. Let also $\nu$ be the unit normal field to $\Sheet$, and let $g \in \Cont^\infty(\Sheet)$ be arbitrary. Then there exist  smooth up to $\Sheet$ harmonic functions $h^\pm \in \Cont^{\infty}(\Dompm)$ which satisfy
\begin{equation}
h^+\vert_{\Sheet} - h^-\vert_{\Sheet} = g\,,\quad
(\grad h^+, \nu) = (\grad h^-, \nu)\,.
\end{equation}
Such functions $h^+$, $h^-$ are unique up to a common additive constant. ({The function $h$ equal to $h^+$ in $\Domplus$ and $h^-$ in $\Dommin$ is called a \rm{double layer potential}.})
\end{theorem}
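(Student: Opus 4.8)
The plan is to reduce the transmission problem to a single pseudodifferential equation on $\Sheet$ governed by the very operator $\DTN^+ + \DTN^-$ analysed in the proof of Theorem \ref{slp}, so that essentially no new analysis is required.

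First I would encode the unknowns by their Dirichlet traces. Setting $a := h^+\vert_\Sheet$ and $b := h^-\vert_\Sheet$, the harmonic functions $h^\pm$ are recovered as the unique harmonic extensions of $a$ and $b$ to $\Dompm$; hence the problem is equivalent to finding $a,b \in \Cont^\infty(\Sheet)$ with $a - b = g$ whose harmonic extensions have matching normal derivatives along $\Sheet$. Since the outward normal of $\Domplus$ is $\nu$ while that of $\Dommin$ is $-\nu$, the Dirichlet-to-Neumann maps give $(\grad h^+, \nu) = \DTN^+ a$ and $(\grad h^-, \nu) = -\DTN^- b$, so the matching condition $(\grad h^+, \nu) = (\grad h^-, \nu)$ reads $\DTN^+ a + \DTN^- b = 0$.

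Eliminating $a$ through $a = b + g$ yields the scalar equation
\[
(\DTN^+ + \DTN^-)\, b = -\,\DTN^+ g\,.
\]
At this point I would invoke verbatim the facts established in the proof of Theorem \ref{slp}: $\DTN^+ + \DTN^-$ is a formally self-adjoint elliptic pseudodifferential operator with principal symbol $2\sqrt{(\xi,\xi)}$ whose kernel is the constants, so that its image is exactly the subspace $\Cont_0^\infty(\Sheet)$ of zero-mean functions. The decisive observation is that the right-hand side $-\DTN^+ g$ lies in this image automatically: by the divergence theorem $\int_\Sheet \DTN^+ g = \int_{\Domplus}\Delta H^+ \mu = 0$, where $H^+$ is the harmonic extension of $g$ to $\Domplus$. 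Consequently the equation is solvable for $b$, uniquely up to an additive constant, for \emph{arbitrary} $g$; this is precisely the structural difference with Theorem \ref{slp}, where the zero-mean hypothesis had to be imposed on the data. Setting $a := b + g$ and taking harmonic extensions then produces the desired $h^\pm$, which are smooth up to $\Sheet$ by elliptic regularity.

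For uniqueness it is cleanest to argue directly: if $g = 0$ and the normal derivatives agree, then the patched function $h := \chiplus h^+ + \chimin h^-$ is continuous across $\Sheet$ with continuous normal derivative, hence a distributional solution of $\Delta h = 0$ on the closed manifold $M$; by elliptic regularity it is smooth and harmonic, therefore constant, so $h^+$ and $h^-$ coincide with one and the same constant. The main obstacle here is not analytic — all of the hard work (ellipticity, self-adjointness, the kernel computation, and the identity $\Im\mathcal D = (\Ker\mathcal D)^\bot$) is inherited from Theorem \ref{slp} — but rather the careful sign bookkeeping in the reduction and the verification that the compatibility condition is satisfied for free, which is exactly what makes the double layer potential exist without any restriction on $g$.
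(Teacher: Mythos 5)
Your argument is correct, and it is a genuine (if close) variant of the paper's. The paper proves Theorem \ref{dlp} in one line by reduction to the \emph{statement} of Theorem \ref{slp}: pick arbitrary harmonic $g^\pm$ with $g^+\vert_\Sheet - g^-\vert_\Sheet = g$ and solve the single layer problem for $s^\pm := g^\pm - h^\pm$ with data $f := (\grad g^+ - \grad g^-, \nu)$. You instead re-run the reduction to the boundary and solve the pseudodifferential equation $(\DTN^+ + \DTN^-)\,b = -\DTN^+ g$ directly, reusing the ellipticity, self-adjointness and kernel facts from the \emph{proof} of Theorem \ref{slp}. The two routes rest on the same operator and the same compatibility observation: the paper's reduction silently needs $f$ to have zero mean in order to invoke Theorem \ref{slp}, and that is exactly the divergence-theorem identity $\int_\Sheet \DTN^\pm(\cdot) = 0$ that you verify explicitly; making this check visible is a small but real gain of your presentation. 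Your sign bookkeeping ($(\grad h^-,\nu) = -\DTN^- b$ because $\nu$ is the inward normal for $\Dommin$) is consistent with the convention used in the proof of Theorem \ref{slp}. Your direct uniqueness argument --- patching $h^\pm$ into a distributional harmonic function on the closed manifold $M$, which is then constant by elliptic regularity --- is also fine and is arguably cleaner than inheriting uniqueness through the reduction, since it does not depend on the kernel computation for $\DTN^+ + \DTN^-$.
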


\begin{proof}
By taking any harmonic functions $g^\pm \in \Cont^{\infty}(\Dompm)$ with $g^+\vert_{\Sheet} - g^-\vert_{\Sheet} = g$, we reduce the question to the single layer potential problem in terms of the functions $s^\pm := g^\pm - h^\pm$.
\end{proof}
\medskip


\section{Appendix \ref{app:weak}: Lie algebroid geodesics as weak solutions of the Euler equation}
\refstepcounter{AppCounter}
\label{app:weak}
In this appendix we prove that solutions of  the algebroid Euler-Arnold equation~\eqref{twoPhaseEuler2} can be regarded as weak solutions of the Euler equation \eqref{idealEulerIntro}. And conversely, weak solutions with vortex sheet type discontinuity solve equations~\eqref{twoPhaseEuler2}. We believe that this result is well known to experts, at least in some particular cases. However, we were not able to find the proof of the general case in the literature, so we provide it here.

   \begin{definition}\label{weak}{\rm (cf. e.g. \cite{shnirelman1997nonuniqueness})}
  A time-dependent vector field $u$ on $M$ is said to be a \textit{weak solution} for the incompressible Euler equation if $u \in L^2_{loc}(M \times \R)$, 
  \begin{equation}\label{wc1}
\langle u , \grad f \rangle_{L^2}= 0 \quad \forall\, f \in \Cont^\infty(M)\,,
 \end{equation}
 and 
 \begin{equation}\label{wc2}
  \int_{-\infty}^{+\infty} \langle u, \partial_t v + \nabla_u v \rangle_{L^2}\,\diff t = 0 \quad \forall \, v \in \Cont^\infty_c(M \times \R) \mbox{ such that } \div v(., t_0) = 0 \,\, \forall \, t_0 \in \R.
  \end{equation}
 Here $L^2_{loc}$ stands for square integrability on compact subsets, while $ \Cont^\infty_c$ means $\Cont^\infty$ with compact support.
  \end{definition}

  There is a version of the definition for a finite time interval and adapted to an initial value problem.
%
%
%
   It is well-known that for $\Cont^\infty$ (in both space and time) vector fields $u$ these conditions are equivalent to the divergence-free condition on $u$ and the Euler equation~\eqref{idealEulerIntro}.
 For sufficiently regular solutions one can also restate condition \eqref{wc2} as follows:

    \begin{lemma}\label{firstWeakLemma}
  Assume that a time-dependent vector field $u \in L^2_{loc}(M \times \R)$ is continuous in $t$ with respect to the $L^2$-norm. Furthermore, assume that for any divergence-free $ v \in \Cont^\infty_c(M \times \R) $ the inner product $\langle u, v \rangle_{L^2}$ is a continuously differentiable function of $t$. Then \eqref{wc2} holds if and only if
\begin{equation}\label{wc2p}
\partial_t\langle u, w \rangle_{L^2} = \langle u, \nabla_u w \rangle_{L^2}  \quad \forall \, w \in \svect(M)\,.
  \end{equation}
  \end{lemma}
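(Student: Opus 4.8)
The plan is to prove the two implications separately, after a preliminary observation that makes the right-hand side of \eqref{wc2p} meaningful under the stated hypotheses. First I would apply the $C^1$-hypothesis to product test fields $v(x,t) = \phi(t)\,w(x)$ with $\phi \in \Cont_c^\infty(\R)$ and $w \in \SVect(M)$ (such $v$ is divergence-free in $M$ for every $t$ and compactly supported): this shows that $t \mapsto \langle u(\cdot,t), w\rangle_{L^2}$ is continuously differentiable for every time-independent $w \in \SVect(M)$, since taking $\phi \equiv 1$ near a given instant strips off the cutoff. I would also record that $t \mapsto \langle u(\cdot,t), \nabla_{u(\cdot,t)} w\rangle_{L^2}$ is continuous: this quantity is a pointwise-quadratic expression in $u$ with bounded smooth coefficients, so its increment is controlled by $\|w\|_{C^1}\,(\|u(\cdot,t)\|_{L^2}+\|u(\cdot,s)\|_{L^2})\,\|u(\cdot,t)-u(\cdot,s)\|_{L^2}$, which tends to $0$ by the assumed $L^2$-continuity of $u$.

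For the implication \eqref{wc2} $\Rightarrow$ \eqref{wc2p}, I would substitute the product test field $v = \phi(t)w(x)$ into \eqref{wc2}. Since $\partial_t v = \phi'(t)w$ and $\nabla_u v = \phi(t)\nabla_u w$, condition \eqref{wc2} becomes $\int_\R \phi'(t)\langle u, w\rangle_{L^2}\,dt + \int_\R \phi(t)\langle u, \nabla_u w\rangle_{L^2}\,dt = 0$. Integrating the first integral by parts in $t$ (legitimate because $\langle u, w\rangle_{L^2}$ is $C^1$ and $\phi$ has compact support) turns this into $\int_\R \phi(t)\big(\langle u, \nabla_u w\rangle_{L^2} - \partial_t\langle u, w\rangle_{L^2}\big)\,dt = 0$. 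As the bracketed function is continuous in $t$ by the preliminary step and $\phi$ ranges over all of $\Cont_c^\infty(\R)$, the fundamental lemma of the calculus of variations forces the bracket to vanish identically, which is exactly \eqref{wc2p}.

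For the converse \eqref{wc2p} $\Rightarrow$ \eqref{wc2}, the crux is a product rule for $t \mapsto \langle u(\cdot,t), v(\cdot,t)\rangle_{L^2}$ valid for a general divergence-free $v \in \Cont_c^\infty(M\times\R)$, despite $u$ being only $L^2$ in space. At a fixed instant $t_0$ I would split the difference quotient as $\langle u(\cdot,t),\,(v(\cdot,t)-v(\cdot,t_0))/(t-t_0)\rangle_{L^2} + (\langle u(\cdot,t), v(\cdot,t_0)\rangle_{L^2} - \langle u(\cdot,t_0), v(\cdot,t_0)\rangle_{L^2})/(t-t_0)$. In the first term $u(\cdot,t)\to u(\cdot,t_0)$ in $L^2$ while the difference quotient of $v$ converges uniformly to $\partial_t v(\cdot,t_0)$, giving $\langle u, \partial_t v\rangle_{L^2}$ in the limit. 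The second term is the difference quotient of $t\mapsto \langle u(\cdot,t), w\rangle_{L^2}$ for the time-independent field $w := v(\cdot,t_0)\in\SVect(M)$; by the preliminary step it is differentiable, and by \eqref{wc2p} its derivative equals $\langle u(\cdot,t_0), \nabla_u v(\cdot,t_0)\rangle_{L^2}$. Hence $\frac{d}{dt}\langle u,v\rangle_{L^2} = \langle u, \partial_t v + \nabla_u v\rangle_{L^2}$, and integrating over $\R$ — where $\langle u, v\rangle_{L^2}$ has compact support, so the total derivative integrates to zero — yields \eqref{wc2}.

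I expect the main obstacle to be the product-rule justification in the last paragraph: the test field is genuinely time-dependent while $u$ carries no spatial regularity and only $L^2$-continuity in time, so the two halves of the split difference quotient must be handled by different mechanisms — uniform convergence of the smooth factor against $L^2$-continuity of $u$ for the first, and the one-variable $C^1$-hypothesis together with \eqref{wc2p} for the second. The remaining steps are routine once the continuity statements of the preliminary paragraph are in hand.
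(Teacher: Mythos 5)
Your proof is correct and follows essentially the same route as the paper's: the forward direction is the paper's $F_{\phi v}=\phi F_v$ localization argument recast as integration by parts plus the fundamental lemma, and your split difference quotient in the converse is exactly the paper's decomposition $v = v(t_0) + (t-t_0)\tilde v$. The only difference is cosmetic — you spell out the continuity of $t\mapsto\langle u,\nabla_u w\rangle_{L^2}$ via the $L^2$-continuity of $u$, which the paper uses implicitly when asserting that $F_v$ is continuous.
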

  \begin{proof}
  Assume that  \eqref{wc2} holds. Let
  $$
  F_v(t) := \partial_t\langle u, v \rangle_{L^2}  - \langle u, \partial_t v + \nabla_u v \rangle_{L^2}\,.
  $$
  Then, by \eqref{wc2}, for any  time-dependent  divergence-free smooth vector field $v$ with compact support we have
  $
  \int_{-\infty}^\infty F_vdt = 0.
  $
  Also notice that $F_{\phi v} = \phi F_v$ for any smooth scalar function $\phi = \phi(t)$. So, we get that
    $$
  \int_{-\infty}^\infty \phi F_vdt =   \int_{-\infty}^\infty F_{\phi v}dt = 0
  $$
  for any $\Cont^\infty$-smooth $\phi(t)$. Taking into account that $  F_v(t)$ is a continuous function, it follows that $F_v(t) \equiv 0$. Now, if $w \in \svect(M)$, we take any smooth function $\phi = \phi(t) \neq 0$ with compact support and consider $v := \phi w$. Then  $F_v \equiv 0$ implies \eqref{wc2p}.\par \smallskip
  Conversely, assume that  \eqref{wc2p} holds. 
 Take any  time-dependent  divergence-free smooth vector field $v$ with compact support and write it as $v = v(t_0) + (t-t_0) \tilde v$. Then $F_v = F_{v(t_0)} + (t-t_0)F_{\tilde v}$. Notice that the first summand vanishes by  \eqref{wc2p}, while the second summand vanishes for $t = t_0$. Since $t_0$ is arbitrary, it follows that $F_v = 0$ for any $t$. Integrating with respect to $t$, we get \eqref{wc2}, as desired.
  \end{proof}

  \begin{theorem}\label{thm:weak}
  Let $u(t)$ be a smooth curve in $\dsvect(M)$. Then the following is true.
\begin{enumerate}
\item If $u$ satisfies the algebroid Euler-Arnold equation~\eqref{twoPhaseEuler2} (with condition \eqref{presCont2}), then it is a weak solution of the incompressible Euler equation. 
\item If $u$ is a weak solution of the incompressible Euler equation, and, in addition, its jump $\tanjump(u)$ is non-zero almost everywhere on $\Sheet$ for every $t$, then $u$ satisfies the algebroid Euler-Arnold equation~\eqref{twoPhaseEuler2} and condition \eqref{presCont2}.
\end{enumerate}
  \end{theorem}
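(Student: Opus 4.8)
The plan is to route both implications through Lemma~\ref{firstWeakLemma}, which (since any smooth curve in $\dsvect(M)$ is $t$-continuous in $L^2$ and pairs differentiably with divergence-free test fields) shows that the weak condition~\eqref{wc2} is equivalent to the differentiated identity~\eqref{wc2p}, namely $\partial_t\langle u, w\rangle_{L^2} = \langle u, \nabla_u w\rangle_{L^2}$ for all $w\in\svect(M)$. I would expand both sides of~\eqref{wc2p} by the same mechanism. For the left side, I differentiate the integral over the moving interface via Lemma~\ref{derIntDiscFnLemma}, obtaining $\int_M(\partial_t^\Reg u, w)\,\mu + \int_\Sheet jump((u,w))\,\beta$, where $\beta := d\Sheet_t/dt$ is the normal velocity of $\Sheet_t$. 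For the right side, metric-compatibility of the Levi-Civita connection gives $(u,\nabla_u w) = \L_u(u,w) - (\nabla_u u, w)$ in each domain, and Lemma~\ref{intDerLemma} converts $\int_M\L_u^\Reg(u,w)\,\mu$ into $\int_\Sheet jump((u,w))\,i_u\mu$. Cancelling the common term $\int_M(\nabla_u^\Reg u,w)\,\mu$ and using $jump((u,w)) = (\tanjump(u),w)$ together with $i_u\mu|_\Sheet = \#u$, the identity~\eqref{wc2p} becomes, for any smooth curve $u(t)$ in $\dsvect(M)$, the master relation
\[
\int_M\bigl(\partial_t^\Reg u + \nabla_u^\Reg u,\, w\bigr)\mu \;=\; \int_\Sheet (\tanjump(u),\, w)\,(\#u - \beta)\qquad\forall\, w\in\svect(M).
\]

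For the forward implication~(1), condition~\eqref{wc1} holds automatically because membership $u\in\dsvect(M,\Sheet)$ is precisely the weak divergence-free property (Corollary~\ref{dense}). Assuming~\eqref{twoPhaseEuler2} and~\eqref{presCont2}, the two sides of the master relation vanish \emph{separately}: the equation gives $\partial_t^\Reg u + \nabla_u^\Reg u = -\chiplus\grad p^+ - \chimin\grad p^-$, so the left side equals $-\int_\Sheet jump(p)\,i_w\mu$ by Lemma~\ref{intDerLemma}, which is zero by $p^+|_\Sheet = p^-|_\Sheet$; and the third equation $\partial_t\Sheet = \#u$ makes $\#u - \beta = 0$, killing the right side. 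Hence~\eqref{wc2p} holds and $u$ is a weak solution.

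For the converse~(2), I would read the master relation as constraints on $a := \partial_t^\Reg u + \nabla_u^\Reg u\in\dvect(M,\Sheet)$. First, testing against $w\in\svect(M)$ compactly supported in $\Domplus$ (respectively $\Dommin$) annihilates the interface term, so $\int_{\Domplus}(a^+,w)\mu = 0$ for all compactly supported divergence-free $w$; by the de Rham/Helmholtz characterization this forces $a^\pm = \grad p^\pm$ for smooth functions $p^\pm$, i.e. $a = \chiplus\grad p^+ + \chimin\grad p^-$. Substituting this back and applying Lemma~\ref{intDerLemma} once more, the master relation collapses to
\[
\int_\Sheet jump(p)\,i_w\mu \;=\; \int_\Sheet (\tanjump(u),\, w)\,(\#u - \beta)\qquad\forall\, w\in\svect(M).
\]

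The final step separates the two boundary conditions according to the normal and tangential parts of $w|_\Sheet$, using the extension fact that every field along $\Sheet$ with zero-mean normal flux is the restriction of some $w\in\svect(M)$. Testing with $w$ whose restriction is purely tangential makes the left side vanish ($i_w\mu|_\Sheet = 0$) while leaving $\int_\Sheet(\tanjump(u),w^{tan})(\#u-\beta) = 0$; choosing $w^{tan}$ aligned with $\tanjump(u)$ and invoking the hypothesis $\tanjump(u)\neq 0$ almost everywhere forces $\#u - \beta = 0$, i.e. the kinematic equation $\partial_t\Sheet = \#u$. With $\beta = \#u$ the right side disappears and $\int_\Sheet jump(p)\,i_w\mu = 0$ for all $w$; since $i_w\mu|_\Sheet$ exhausts the zero-mean top-degree forms, $jump(p)$ is constant, and absorbing it into the free additive constants of $p^\pm$ yields $p^+|_\Sheet = p^-|_\Sheet$, which is~\eqref{presCont2} (after the sign change $p\mapsto -p$). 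I expect the extraction of the kinematic condition to be the main obstacle: it is exactly here that $\tanjump(u)\neq 0$ is indispensable, since a vanishing tangential jump would leave the interface normal velocity undetermined by the weak formulation.
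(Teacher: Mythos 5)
Your proposal is correct and follows the paper's own skeleton almost exactly: reduce \eqref{wc2} to \eqref{wc2p} via Lemma~\ref{firstWeakLemma}, then expand both sides with Lemmas~\ref{derIntDiscFnLemma} and~\ref{intDerLemma} to arrive at your ``master relation,'' which is precisely the paper's identity~\eqref{wc2pp}; the forward direction is then handled identically. The only real divergence is in part~(2). The paper first kills the interface term by testing with fields \emph{orthogonal} to $\Sheet$ (which are $L^2$-dense in $\svect(M)$ by Lemma~\ref{weakLemma2}), concluding in one stroke that $\partial_t^\Reg u+\nabla^\Reg_u u\in\dsvect(M,\Sheet)^\bot=\dgrad(M,\Sheet)$ via the singular Hodge decomposition --- this yields the gradient form \emph{and} the pressure continuity simultaneously --- and only afterwards extracts the kinematic equation from tangential tests. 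You instead localize with compactly supported divergence-free test fields in $\Domplus$ and $\Dommin$ to get $a^\pm=\grad p^\pm$ by a de~Rham/Helmholtz argument on each domain, then reduce everything to a boundary identity on $\Sheet$ from which the kinematic equation and the continuity of $p$ are read off separately. Both routes are sound; yours decouples the three conclusions more transparently at the cost of invoking the Helmholtz characterization (closedness plus Poincar\'e--Lefschetz duality) on each domain, whereas the paper leans on its already-established Proposition~\ref{Hodge} and Corollary~\ref{dense}. One small point of care shared by both write-ups: the step forcing $\#u=\partial_t\Sheet$ really uses the identity for \emph{all} tangential $w$ (e.g.\ $w=\phi\,\tanjump(u)$ for arbitrary $\phi$), not just the single choice $w=\tanjump(u)$, since a single vanishing integral does not force the integrand to vanish; your phrasing, like the paper's, leaves this implicit.
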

  \begin{remark}
The assumption on $\tanjump(u)$ in the second part of the theorem can not be omitted. For instance, if $u$ is a $\Cont^\infty$-smooth solution of the Euler equation, and $\Sheet =  \Sheet(t)$ is any smooth curve in $\VS(M)$, then the pair $(u,\Sheet)$, viewed as a curve in  $\dsvect(M)$ (here we regard $u$ as a vector field discontinuous across $\Sheet$, although there is no actual discontinuity), is a weak solution of the Euler equation, but does not, generally speaking, satisfy  the last equation in~\eqref{twoPhaseEuler2}.   \end{remark}
  
  The proof of Theorem \ref{thm:weak} is based on the following lemma.
  \begin{lemma}\label{weakLemma2}
  Let $M$ be a compact Riemannian manifold, and let $\Sheet \subset M$ be a closed hypersurface. Let also $v \in \vect(\Sheet)$ be a $\Cont^\infty$-smooth vector field on $\Sheet$ (tangent to $\Sheet$). Then there exists a $\Cont^\infty$-smooth divergence-free extension of $v$ on $M$ with an arbitrary small $L^2$-norm.
    \end{lemma}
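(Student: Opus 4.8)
The plan is to build the extension so that it is supported in an arbitrarily thin collar of $\Sheet$; the support then has volume $O(\eps)$, and this alone will force the $L^2$-norm to be small as the collar width $\eps \to 0$. The only delicate point is keeping the field divergence-free while still imposing the prescribed boundary value $v$ along $\Sheet$. To handle the divergence-free constraint automatically, I would work with the $(n-1)$-form $i_V\mu$ rather than with $V$ directly: since $d\mu = 0$, we have $\L_V\mu = d\,i_V\mu$, so $V$ is divergence-free precisely when $i_V\mu$ is closed, and I will simply arrange $i_V\mu$ to be \emph{exact}.

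Concretely, fix a collar $\Sheet \times (-1,1) \hookrightarrow M$ with normal coordinate $s$ (with $s=0$ on $\Sheet$), and write the volume form as $\mu = ds \wedge \omega_s$, where $\omega_s := i_{\partial_s}\mu$ is a smooth family of volume forms on $\Sheet$. I would then take the ansatz $\beta = \chi(s)\,\gamma$, where $\gamma \in \Omega^{n-2}(\Sheet)$ is pulled back to the collar and $\chi$ is a scalar cutoff in $s$, and define $V$ by $i_V\mu = d\beta$ (so $V$ is divergence-free by construction and vanishes off $\operatorname{supp}\chi$). Decomposing $V = V^s\partial_s + W$ with $W$ tangential and matching $i_V\mu = V^s\omega_s - ds\wedge i_W\omega_s$ against $d\beta = \chi'(s)\,ds\wedge\gamma + \chi(s)\,d_\Sheet\gamma$ yields $i_W\omega_s = -\chi'(s)\gamma$ and $V^s\omega_s = \chi(s)\,d_\Sheet\gamma$. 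Evaluating at $s=0$, the restriction $V|_\Sheet = v$ holds iff $i_v\omega_0 = -\chi'(0)\gamma$ together with $\chi(0) = 0$ (the latter kills the normal component $V^s$ at $\Sheet$). So I would choose $\gamma := -\,i_v\omega_0$ and a cutoff with $\chi(0)=0$, $\chi'(0)=1$, and $\operatorname{supp}\chi \subset (-\eps,\eps)$, e.g. $\chi(s) = s\,\psi(s/\eps)$ for a bump $\psi$ equal to $1$ near $0$. Since $\omega_s$ is nondegenerate, $W$ and $V^s$ depend smoothly on their data, and $V$ extends by zero to a smooth divergence-free field on $M$ restricting to $v$.

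For the $L^2$-estimate, the one potentially dangerous term is $W$, whose size is controlled by $\|\chi'\|_\infty$; with $\chi(s) = s\psi(s/\eps)$ one has $\chi'(s) = \psi(s/\eps) + (s/\eps)\psi'(s/\eps)$, which stays bounded uniformly in $\eps$, so $|W|$ is $O(1)$ but supported in a collar of width $2\eps$, giving $\|W\|_{L^2}^2 = O(\eps)$; the term $V^s$ is even smaller since $|\chi| \le \eps$, whence $\|V^s\|_{L^2}^2 = O(\eps^3)$. Thus $\|V\|_{L^2} = O(\eps^{1/2}) \to 0$. The main obstacle is precisely this interplay, and it is where the naive approaches fail: one cannot multiply a fixed extension of $v$ by a cutoff (that destroys divergence-freeness), and one cannot make $\chi'$ small (that would kill the boundary value). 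The resolution is that $\chi'$ need only be \emph{bounded}, while the smallness of the $L^2$-norm is bought entirely from the shrinking support, and the exact form $i_V\mu=d\beta$ guarantees the divergence-free condition for free.
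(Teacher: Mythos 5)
Your construction is correct and is essentially the paper's proof in a different notation: the paper normalizes the collar so that $\mu = \omega\wedge dz$ with $\omega$ independent of $z$ and writes the field explicitly as $u_n = \psi'(nz)\,v - \tfrac{1}{n}\psi(nz)\,(\div v)\,\partial_z$, which satisfies $i_{u_n}\mu = d\bigl(-\tfrac{1}{n}\psi(nz)\, i_v\omega\bigr)$ --- exactly your $d(\chi\gamma)$ with $\chi(z)=\tfrac{1}{n}\psi(nz)$ and $\gamma=-i_v\omega$. Your exact-form formulation has the minor advantage of not requiring the product normalization of the volume form on the collar, but the mechanism (shrinking support, $\chi(0)=0$, $\chi'(0)=1$, $\chi'$ bounded) is identical.
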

    \begin{proof}
    We choose an identification of a neighborhood of $\Sheet$ in $M$ with $\Sheet \times \R$ such that the volume form on $M$ splits into a product of the volume form on $M$ and the standard volume form $\diff z$ on $\R$. Then, for any smooth function $\psi(z)$ with compact support such that $\psi(0) = 0$ and $\psi'(0) = 1$, the vector field
    $$
   u_n :=  \psi'(nz) v - \frac{1}{n}\psi(nz) \div v \frac{\partial}{\partial z}
    $$
    is a smooth divergence-free extension of $v$ supported in a small neighborhood of $\Sheet$. Furthermore, we have $||u_n||_{L^2} \to 0$ as $n \to \infty$, as desired.
    \end{proof}
    
%
  \begin{proof}[Proof of Theorem \ref{thm:weak}]
 Let $u$ be any smooth curve in $\dsvect(M)$ and let $w \in \svect(M)$. Then, by Lemma \ref{derIntDiscFnLemma}, we have
  $$
  \partial_t \langle u, w \rangle_{L^2} =   \langle \partial_t^\Reg u, w \rangle_{L^2}  + \int_{\Sheet} jump(u, w) {\partial_t\Sheet}{}\,.
  $$
At the same time, by Lemma \ref{intDerLemma}, we have
\begin{align*}
\int_{M} ( u,  \nabla_{u} w)\mu = \int_{M} &( -  ( \nabla^\Reg_{u}u, w) + \L_u^\Reg (u,w))\mu =   - \langle  \nabla^\Reg_{u}u, w \rangle_{L^2} +  \int_{\Sheet} jump(u,w)\#u\,.
\end{align*}
So, for a smooth curve in $\dsvect(M)$, condition \eqref{wc2p} is equivalent to
\begin{align}\label{wc2pp}
 \langle \partial_t^\Reg u +  \nabla^\Reg_{u}u, w \rangle_{L^2}  + \int_{\Sheet} jump(u, w) ({\partial_t\Sheet}{} - \#u) = 0 \quad \forall \, w \in \svect(M)\,.
\end{align}
  Now, notice that the latter holds for any solution of~\eqref{twoPhaseEuler2}. So, \eqref{wc2p} holds as well, and so does~\eqref{wc2} by Lemma \ref{firstWeakLemma}. Finally, notice that \eqref{wc1} is true for any curve in $\dsvect(M)$ due to the singular Hodge decomposition~\eqref{SHD2}. This proves the first statement of the theorem.\par
  To prove the second statement, we need to show that \eqref{wc2pp} implies equations~\eqref{twoPhaseEuler2}. To that end, take $w \in \svect(M)$ orthogonal to $\Sheet$ at every point. Then, for such $w$, we have $jump(u, w) = (\tanjump(u), w) = 0$, so the second summand in \eqref{wc2pp} vanishes. Therefore, the first summand vanishes as well. Since this holds for any  $w \in \svect(M)$ orthogonal to $\Sheet$, and such vector fields are $L^2$-dense in $\svect(M)$ be Lemma \ref{weakLemma2}, it follows that the first summand in  \eqref{wc2pp} vanishes for every $w \in \svect(M)$. But since $\svect(M)$ is $L^2$-dense in $\dsvect(M, \Sheet)$ (Corollary \ref{dense}), we get that
  $$
  \partial_t^\Reg u +  \nabla^\Reg_{u}u \in \dsvect(M, \Sheet)^\bot = \dgrad(M, \Sheet)\,,
  $$
  which is equivalent to the first two of equations~\eqref{twoPhaseEuler2} supplemented by condition \eqref{presCont2}.\par
  To derive the last equation, we use that the first summand in  \eqref{wc2pp} vanishes for every $w \in \svect(M)$, and thus so does the second summand. In particular, applying this for vector fields $w$ tangent to $\Sheet$, we get
  $$
  \int_{\Sheet} jump(u, w) ({}{\partial_t}\Sheet - \#u) = 0  \quad \forall \, w \in \vect(\Sheet)\,.
  $$
 (Here we use that any vector field on $\Sheet$ can be realized as a restriction of a divergence-free vector field.) 
 Taking $w:= \tanjump(u)$, we get
    $$
  \int_{\Sheet}(\tanjump(u), \tanjump(u)) (\partial_t\Sheet - \#u) = 0\,.
  $$
  Since $\tanjump(u) \neq 0$ almost everywhere, it follows that $\#u = \partial_t\Sheet$, ending the proof.
  \end{proof}
  \medskip
  \section{Appendix \ref{app:shapes}: Vorticity metric on shape spaces}
  \refstepcounter{AppCounter}
\label{app:shapes}
As we mentioned before, the metric $ \langle \,, \, \rangle_{\vs}$ on the space of vortex sheets $\VS(M)$ 
can be used to define a metric of hydrodynamical origin on the space of shapes in $M$.

Now we regard hypersurfaces $\Sheet\in \VS(M)$  as boundaries of $D_\Sheet:=D^+_\Sheet\subset M$ called shapes. Define the distance between two shapes of equal volume 
by means of the (weak) Riemannian metric  $\VS(M)$.
Note that this metric has an explicit description, which follows from the above consideration of submersion.
By definition, to find the square length $\langle v,v\rangle_{\vs}$ of a normal vector field $v$ attached to the boundary 
$\Sheet=\partial D_\Sheet$ (and of total zero flux through $\Sheet$) one needs to find infimum of 
$\int_M (u,u)\,\mu$ over all divergence-free
fields  $u =  \chiplus u^+ + \chimin u^-$ in $M$ whose normal component to $\Sheet$ is $v$.
The following is a corollary of the  submersion property.

\begin{corollary}\label{cor:horiz}
The length of vectors tangent to the manifold $\VS(M)$ of vortex sheets at $\Sheet$ can be found 
 as the solution of the Neumann problem: for a normal vector field $v = g\nu$  attached to $\Sheet$ (where $\nu$ is a fixed unit normal field)
 the infimum of $\int_M (u,u)\,\mu$ is attained on 
 gradient vector fields $u^\pm =\grad f^\pm$ and 
 $$
\langle v,v\rangle_{\vs}= \int_{D^+_\Sheet}(\grad f^+, \grad f^+)\,\mu 
+  \int_{D^-_\Sheet}(\grad f^-, \grad f^-)\,\mu\,,
$$
where $\Delta f^\pm=0$ in $D^\pm_\Sheet$ and $\partial f^\pm/\partial \nu|_\Sheet=   g$.

Equivalently, by applying the Stokes formula, one can rewrite this metric via the Neumann-to-Dirichlet operators
$\NTD^\pm$ on the domains  $D^\pm_\Sheet$ as
$$
\langle v,v\rangle_{\vs}=\langle  g, (\NTD^+ + \NTD^-)g \rangle_{L^2(\Sheet)}\,.
$$ 
\end{corollary}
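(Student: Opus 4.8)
The final statement to prove is Corollary~\ref{cor:horiz}: that the length of a tangent vector $v = g\nu$ to $\VS(M)$ at $\Sheet$ is realized by gradient (Neumann-potential) fields, yielding the explicit Dirichlet-energy formula and its Neumann-to-Dirichlet reformulation.

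The plan is to deduce the corollary from the Riemannian submersion property of the anchor map proved in Theorem~\ref{geodDescription}, turning the constrained infimum of Definition~\ref{def:baseMetric} into an orthogonal projection onto harmonic gradient fields. First I would pass from smooth to discontinuous competitors: by Corollary~\ref{dense} the space $\svect(M)$ is $L^2$-dense in $\dsvect(M,\Sheet)$, so the infimum over smooth divergence-free fields with prescribed normal component $v = g\nu$ coincides with the infimum over $\dsvect(M,\Sheet)$,
$$
\langle v,v\rangle_\vs = \inf\left\{\normsq{u}_{L^2} \mid u \in \dsvect(M,\Sheet),\ \#u = v\right\}.
$$
The gain is that on the completed space the infimum is genuinely attained, whereas over smooth fields it is not (for connected $\Sheet$ there are no smooth horizontal fields).

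Next I would use the orthogonal splitting $\dsvect(M,\Sheet) = \Ker\#_\Sheet \oplus (\Ker\#_\Sheet)^\bot$ established in the proof of Theorem~\ref{geodDescription}. Here $\Ker\#_\Sheet$ consists of divergence-free fields tangent to $\Sheet$ (Corollary~\ref{cor:isotropy}), while $(\Ker\#_\Sheet)^\bot$ consists of the harmonic gradient fields $\chiplus\grad f^+ + \chimin\grad f^-$ with matching normal components on $\Sheet$. Writing any competitor as $u = u_0 + k$ with $u_0 = \#^{-1}(v) \in (\Ker\#_\Sheet)^\bot$ and $k \in \Ker\#_\Sheet$, the constraint $\#u = v$ forces $\#u_0 = v$ and leaves $k$ arbitrary; by $L^2$-orthogonality $\normsq{u}_{L^2} = \normsq{u_0}_{L^2} + \normsq{k}_{L^2}$, so the minimum is $\normsq{u_0}_{L^2}$, attained uniquely at $k = 0$. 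This pins the minimizer down as $u_0 = \chiplus\grad f^+ + \chimin\grad f^-$ with $f^\pm$ harmonic in $D^\pm_\Sheet$ and $\partial f^\pm/\partial\nu|_\Sheet = g$, which is exactly the first displayed formula. Existence and uniqueness (modulo constants) of $f^\pm$ amounts to solvability of the Neumann problem on each domain for the zero-mean datum $g$, where the zero-mean property is precisely the zero-flux constraint encoded in $\T_\Sheet\VS(M)$ (cf. Theorem~\ref{slp} and Appendix~\ref{app:layer}).

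For the Neumann-to-Dirichlet reformulation I would integrate by parts on each side. Since $f^\pm$ is harmonic, Green's identity gives $\int_{D^\pm_\Sheet}(\grad f^\pm,\grad f^\pm)\mu = \int_\Sheet f^\pm\,\partial_{n^\pm} f^\pm\,dA$ with $n^\pm$ the outward normal of $D^\pm_\Sheet$, and by definition $f^\pm|_\Sheet = \NTD^\pm(\partial_{n^\pm} f^\pm)$. Because the outward normals of $D^+_\Sheet$ and $D^-_\Sheet$ are opposite, the two signs conspire so that each term equals $\int_\Sheet g\,\NTD^\pm(g)$ rather than cancelling, and summing yields $\langle v,v\rangle_\vs = \langle g,(\NTD^+ + \NTD^-)g\rangle_{L^2(\Sheet)}$.

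The main obstacle is conceptual rather than computational: the infimum is not attained in the smooth category, so the argument genuinely relies on the passage to $\dsvect(M,\Sheet)$, the density result of Corollary~\ref{dense}, and the solvability of the Neumann/layer-potential problem from Appendix~\ref{app:layer}. Once these are in place, the Pythagorean minimization and the integration by parts are routine.
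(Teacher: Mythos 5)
Your proposal is correct and follows essentially the same route as the paper: the corollary is obtained there by combining Theorem \ref{geodDescription}(1) with the proposition identifying the projected metric with Definition \ref{def:baseMetric}, whose proof uses exactly your ingredients --- the $L^2$-density of $\svect(M)$ in $\dsvect(M,\Sheet)$ (Corollary \ref{dense}), the orthogonal splitting $\Ker\#\oplus(\Ker\#)^\bot$ with $(\Ker\#)^\bot$ realized by Neumann-problem gradients, and the Pythagorean minimization, with the Neumann-to-Dirichlet form then following from Green's identity. Your sign bookkeeping for the two outward normals in the $\NTD^\pm$ reformulation is also correct.
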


This problem of reconstruction of a harmonic potential from the normal derivative data requires an integration 
of the fundamental solution in $M$ against the boundary data over $\Sheet$. 
So this metric is non-local in terms of $v$. 
Since the  Neumann-to-Dirichlet operators $\NTD^\pm$  have order $-1$ as pseudodifferential operators on the boundary $\Sheet$,
the corresponding metric is $H^{-1/2}$-like. It is interesting to compare it with other metrics
 on shape spaces  (see e.g. \cite{KLMP}), where (local) metrics of $H^s$-type with $s\ge 0 $ are usually used.
 
 Note that by regarding shapes $\Sheet=\partial D_\Sheet$ as measures $\mu_\Sheet$ supported on $D_\Sheet\subset M$
 one can define the Wasserstein distance between the shapes. Then Wasserstein's
 ${\rm Dist} (\mu_\Sheet, \mu_{\tilde\Sheet})$ is not greater than the distance between $\Sheet$ and $\tilde\Sheet$ 
 in the sense of the $\langle \,,\,\rangle_{\vs}$-metric, cf. \cite{Loesch}. This follows from the 
definition: in both cases one takes the $L^2$-norm of the vector fields moving the shape/mass, but 
in the Wasserstein distance one minimizes over all, 
not necessarily volume-preserving, diffeomorphisms of $M$.

\bibliographystyle{plain}
\bibliography{vs}

\end{document}